\newlength\myindent 
\newcommand{\re}{\mathbb{R}} 
\newcommand{\en}{\mathbb{E}} 
\newcommand{\thresh}{\mathbb{T}} 
\newcommand{\slope}{\delta}
\newcommand{\uspx}{\bw^{\ast, +x}}
\newcommand{\uspmx}{\bw^{\ast, \pm x}}
\newcommand{\uspmy}{\bw^{\ast, \pm y}}
\newcommand{\uspxy}{\bw^{\ast, +xy}}
\newcommand{\unphpx}{\bw^{\nph, +x}}
\newcommand{\unphmx}{\bw^{\nph, -x}}
\newcommand{\unphpmx}{\bw^{\nph, \pm x}}
\newcommand{\unphpmy}{\bw^{\nph, \pm y}}
\newcommand{\unphsx}{\bw^{\nph, \ast x}}
\newcommand{\unphsy}{\bw^{\nph, \ast y}}
\newcommand{\unphpy}{\bw^{\nph, +y}}
\newcommand{\unphmy}{\bw^{\nph, -y}}
\newcommand{\unpx}{\bw^{n, +x}}
\newcommand{\unmx}{\bw^{n, -x}}
\newcommand{\unpy}{\bw^{n, +y}}
\newcommand{\unmy}{\bw^{n, -y}}
\newcommand{\unpmx}{\bw^{n, \pm x}}
\newcommand{\unpmy}{\bw^{n, \pm y}}
\newcommand{\thetapx}{\mathcal{\theta}_{ij}^{+x}}
\newcommand{\thetapy}{\mathcal{\theta}_{ij}^{+y}}
\newcommand{\utilow}{\tilde{\bw}^{\text{low},n+1}}
\newcommand{\poly}{\mathbb{P}} 
\newcommand{\emh}{{e-\frac{1}{2}}}
\newcommand{\eymh}{{e_y-\frac{1}{2}}}
\newcommand{\eph}{{e+\frac{1}{2}}}
\newcommand{\exph}{{e_x+\frac{1}{2}}}
\newcommand{\iph}{{i+\frac{1}{2}}}
\newcommand{\imh}{{i-\frac{1}{2}}}
\newcommand{\ipmh}{{i\pm\frac{1}{2}}}
\newcommand{\jmh}{{j-\frac{1}{2}}}
\newcommand{\jph}{{j+\frac{1}{2}}}
\newcommand{\jpmh}{{j\pm\frac{1}{2}}}
\newcommand{\nph}{{n+\frac{1}{2}}}
\newcommand{\Nmh}{{N-\frac{1}{2}}}
\newcommand{\Nph}{{N+\frac{1}{2}}}
\newcommand{\half}{\frac{1}{2}}
\newcommand{\ud}{\textrm{d}}
\newcommand{\pd}[2]{\frac{\partial #1}{\partial #2}}
\newcommand{\od}[2]{\frac{\ud #1}{\ud #2}}
\newcommand{\mum}{\mu_-}
\newcommand{\mup}{\mu_+}
\newcommand{\mumx}{\mu_{-x}}
\newcommand{\mupx}{\mu_{+x}}
\newcommand{\mupmx}{\mu_{\pm x}}
\newcommand{\mumy}{\mu_{-y}}
\newcommand{\mupy}{\mu_{+y}}
\newcommand{\mupmy}{\mu_{\pm y}}
\newcommand{\ee}{\mathbf{e}}
\newcommand{\ex}{e_x}
\newcommand{\ey}{e_y}
\newcommand{\bo}{\mathbf{0}}
\newcommand{\expmh}{e_x \pm\half}
\newcommand{\exmh}{\ex - \half}
\newcommand{\eypmh}{\ey \pm \half}
\newcommand{\eexpmh}{{(\expmh, \ey)}}
\newcommand{\eexmh}{{(\exmh, \ey)}}
\newcommand{\eeypmh}{{(\ex, \eypmh)}}
\newcommand{\eeymh}{{(\ex, \eymh)}}
\newcommand{\avg}[1]{\overline{#1}}
\newcommand{\au}{\avg{u}}
\newcommand{\myvector}[1]{\mathsf{#1}}
\newcommand{\vu}{\myvector{u}}
\newcommand{\vR}{\myvector{R}}
\DeclareMathOperator{\sign}{sign}
\DeclareMathOperator{\minmod}{minmod}
\newcommand{\Uad}{\mathcal{U}_{\textrm{ad}}}
\newtheorem{remark}{Remark}
\newtheorem{definition}{Definition}
\newcommand{\nobracket}{}
\newcommand{\tmop}[1]{\ensuremath{\operatorname{#1}}}
\newenvironment{proof}{\noindent\textbf{Proof\ }}{\hspace*{\fill}$\Box$\medskip}
\newtheorem{lemma}{Lemma}
\newtheorem{theorem}{Theorem}
\newcommand{\bw}{u}
\newcommand{\bv}{v}
\newcommand{\ff}{f}
\newcommand{\fg}{g}
\newcommand{\correction}[1]{{#1}}
\title{Admissibility preserving subcell limiter for Lax-Wendroff flux reconstruction}
\author{
Arpit~Babbar \orcidlink{0000-0002-9453-370X} \\
Centre for Applicable Mathematics\\
Tata Institute of Fundamental Research\\
Bangalore -- 560065\\
\texttt{arpit@tifrbng.res.in} \\
\And
Sudarshan~Kumar~Kenettinkara \orcidlink{0009-0005-1186-4167}\\
School of Mathematics\\
Indian Institute of Science Education and Research\\
Trivandrum -- 695551\\
\texttt{sudarshan@iisertvm.ac.in} \\
\And
Praveen~Chandrashekar \orcidlink{0000-0003-1903-4107}\thanks{Corresponding author}\\
Centre for Applicable Mathematics\\
Tata Institute of Fundamental Research\\
Bangalore -- 560065\\
\texttt{praveen@math.tifrbng.res.in}
}
\begin{document}
\maketitle
\begin{abstract}
Lax-Wendroff Flux Reconstruction (LWFR) is a single-stage, high order, quadrature free method for solving hyperbolic conservation laws. We develop a subcell based limiter by blending LWFR with a lower order scheme, either first order finite volume or MUSCL-Hancock scheme. While the blending with a lower order scheme helps to control \correction{spurious} oscillations, it may not guarantee admissibility of discrete solution, e.g., positivity property of quantities like density and pressure. By exploiting the subcell structure and admissibility of lower order schemes, we devise a strategy to ensure that the blended scheme is admissibility preserving for the mean values and then use a scaling limiter to obtain admissibility of the polynomial solution. For MUSCL-Hancock scheme on non-cell-centered subcells, we develop a slope limiter, time step restrictions and suitable blending of higher order fluxes, that ensures admissibility of lower order updates and hence that of the \correction{cell averages}. By using the MUSCL-Hancock scheme on subcells and Gauss-Legendre points in flux reconstruction, we improve small-scale resolution compared to the subcell-based RKDG blending scheme with first order finite volume method and Gauss-Legendre-Lobatto points. We demonstrate the performance of our scheme on compressible Euler's equations, showcasing its ability to handle shocks and preserve small-scale structures.
\end{abstract}
\keywords{Conservation laws \and hyperbolic PDE \and Lax-Wendroff \and flux reconstruction \and Shock Capturing \and Admissibility preservation}
\section{Introduction}

The current state of memory-bound HPC hardware~\cite{attig2011,subcommittee2014} makes a strong case for development of \correction{high order} discrete methods in computational fluid dynamics (CFD). By incorporating more higher order terms, these methods can achieve greater numerical accuracy per degree of freedom while minimizing memory usage and data transfers. In particular, \correction{high order} methods have higher arithmetic intensity and are thus less likely to be memory-bound. However, lower order methods are still routinely applied in practical applications, in part due to their robustness. This work is in direction of using high order methods while retaining the robustness properties of lower order methods.

Discontinuous Galerkin (DG) is a Spectral Element Method first introduced by Reed and Hill~\cite{reed1973} for neutron transport equations and developed for fluid dynamics equations by Cockburn and Shu and others, see~\cite{cockburn2000} and the references therein. The DG method uses an approximate solution which is a polynomial within each element and is allowed to be discontinuous across interfaces. The neighbouring DG elements are coupled only through the numerical flux and thus bulk of computations are local to the element, minimizing data transfers.

Flux Reconstruction (FR) is also a class of discontinuous Spectral Element Methods introduced by Huynh~\cite{Huynh2007}. FR method is obtained by using the numerical flux and correction functions to construct a continuous flux approximation and collocating the differential form of the equation. Thus, FR is quadrature free and all local operations can be vectorized. The choice of the correction function affects the accuracy and stability of the method~\cite{Huynh2007,Vincent2011a,Vincent2015,Trojak2021}; by properly choosing the correction function and solution points, FR method can be shown to be equivalent to some discontinuous Galerkin and spectral difference schemes~\cite{Huynh2007,Trojak2021}. \correction{In~\cite{Cicchino2022}, a nonlinearly stable FR scheme was constructed in split form where a key idea was application of correction functions to the volume terms.}

FR and DG are procedures for discretizing the equation in space and can be used to obtain a system of ODEs in time, i.e., a semi-discretization of the PDE. The standard approach to solve the semi-discretization is to use a high order multi-stage Runge-Kutta method. In this approach, the spatial discretization has to be performed in every RK stage and thus the expensive operations of MPI communication and limiting have to be performed multiple times per time step.

An alternative approach is to use a single-stage solver, see~\cite{babbar2022} for a more in-depth review. ADER (Arbitrary high order DERivative) schemes are one class of single-stage evolution methods which originated as ADER Finite Volume (FV) schemes~\cite{Titarev2002,Titarev2005} and are also used as ADER-DG schemes~\cite{Dumbser2008,Dumbser2014}. Another class of single-stage evolution methods are the Lax-Wendroff schemes which were originally proposed in the finite difference framework in~\cite{Qiu2003} with the WENO approximation of spatial derivatives~\cite{Shu1989} and later extended to DG framework in~\cite{Qiu2005b}. These schemes were based on computation of flux Jacobian which leads to a problem dependent and expensive procedure especially for higher dimension equations with many variables. A Jacobian free method, called \textit{approximate Lax-Wendroff}, using finite differences in time was developed in~\cite{Zorio2017} and further studied in several other works~\cite{Burger2017,Carrillo2021, Carrillo2021a}. In ~\cite{Lee2021}, the flux Jacobians were directly computed as finite difference derivatives with
a parameter $\epsilon$ for accuracy of finite difference. In~\cite{Burger2017}, the approximate Lax-Wendroff procedure was used in the DG framework and the performance benefit of Jacobian free methods was observed.

In~\cite{babbar2022}, the present authors proposed a Lax-Wendroff Flux Reconstruction (LWFR) scheme which used the approximate Lax-Wendroff procedure of~\cite{Zorio2017} to obtain high order accuracy. The numerical flux was carefully constructed in~\cite{babbar2022}; the dissipative part of the numerical flux was computed with the time averaged solution (called D2 dissipation) leading to an upwind flux in the linear case and improved CFL numbers at no additional computational cost. The central part of the numerical flux was computed by performing the approximate Lax-Wendroff procedure at the faces (EA scheme) rather than using the extrapolated time averaged flux from solution points (AE scheme). It was observed that the EA scheme improved accuracy of the LWFR scheme and some tests showed optimal order of convergence only with the EA scheme. Thus, in this work, we use the LWFR scheme with numerical flux computed with D2 dissipation and EA scheme.

Although arbitrary high order accuracy in smooth test cases and Wall Clock Time (WCT) performance improvement over RKFR (Runge-Kutta Flux Reconstruction) was observed for the LWFR scheme proposed in~\cite{babbar2022}, robustness and maintaining \correction{high order} accuracy in presence of discontinuities remained to be addressed. Solutions to hyperbolic conservation laws contain shocks in many practical applications and it is well known that \correction{high order} schemes produce spurious oscillations in those cases. These oscillations can lead not only to incorrect solutions but can also easily generate nonphysical solutions like gases with negative density or pressure. Thus, these schemes require limiters which reduce the \correction{high order} scheme to a robust lower order scheme in non-smooth regions. In~\cite{babbar2022}, a TVB limiter was used which reduces the scheme to first order or linear in FR elements using a minmod function (Section~\ref{sec:tvd}). The TVB limiter is inadequate for the following reasons - it doesn't preserve any subcell information other than the element mean and trace values, and it is not provably admissibility preserving for Lax-Wendroff schemes even when used with the scaling limiter of Zhang and Shu~\cite{Zhang2010b}. The second issue has been considered in~\cite{moe2017, Xu2022} by modifying the numerical flux to obtain admissibility in means making the scaling limiter applicable. In~\cite{moe2017}, admissibility in means is obtained by limiting the numerical flux. In~\cite{Xu2022}, a third order maximum-principle satisfying Lax-Wendroff DG scheme is constructed using the direct DG numerical flux from~\cite{Chen2016}. We now give a brief literature review of the schemes which deal with the first issue of TVB limiter which is preservation of subcell information, see~\cite{Vilar2019, Dumbser2014} for a more in-depth review.

Moment limiters~\cite{Biswas1994,Burbeau2001, Krivodonova2007} can be seen as an extension of TVB limiters where coefficients in an orthonormal basis (moments) are limited in a decreasing sequence, from higher to lower degree. The hierarchical nature of moment limiters enables preservation of subcell information. Another popular strategy is the (H)WENO limiting
procedure~\cite{Qiu2005,Balsara2007}, where the DG polynomial is substituted in troubled regions by a reconstructed (H)WENO polynomial that is computed by a WENO procedure using subcell and neighboring cells information. There are also the methods of artificial viscosity where a second order diffusion term is added in elements where the solution is non-smooth, preserving the subcell information as the high order polynomial solution is still used. In~\cite{Persson2006}, an artificial viscosity model was introduced for the Runge-Kutta (RK) Discontinuous Galerkin (DG) method to add dissipation to the \correction{high order} method based on a modal smoothness indicator. The indicator of~\cite{Persson2006} was further refined and detailed in~\cite{klockner2011}.

There have also been several schemes which limit the solution by breaking the element into subcells which offers some advantages over artificial viscosity methods, including problem independence over boundary conditions and no additional time step restrictions, even when high dissipation is required, as noted in~\cite{henemann2021}. In~\cite{Peraire2012}, the modal smoothness indicator of~\cite{Persson2006} was used to adapt local basis functions, e.g., switching to finite volume basis in the presence of discontinuities. In~\cite{Peraire2013}, subcells were used to assign different values to artificial viscosity within each element. In~\cite{Sonntag2014,Rosa2018}, after having detected the troubled zones using the modal indicator of~\cite{Persson2006}, cells are subdivided into subcells, and a robust first-order finite volume scheme is performed on the subgrid in troubled cells. In~\cite{henemann2021}, the modal smoothness indicator of~\cite{Persson2006} was used to perform limiting by blending a high order DG scheme with Gauss-Legendre-Lobatto (GLL) points with a lower order finite volume scheme on subcells. In~\cite{Ramirez2020}, the method of~\cite{henemann2021} was extended to compressible magnetohydrodynamics (MHD) and \correction{high order} reconstruction on subcells was used to improve accuracy. In~\cite{ramirez2021}, it was shown that the subcell FV method of~\cite{henemann2021} can be made positivity preserving by an \textit{a posteriori} modification of the blending coefficient. In~\cite{ramirez2022}, the subcell finite volume method of~\cite{henemann2021} with Rusanov's flux~\cite{Rusanov1962} was shown to be equivalent to the sparse Invariant Domain Preserving method of Pazner~\cite{Pazner2021}.

The approaches explained above can be classified as \textit{a priori} limiters. We briefly discuss \textit{a posteriori} limiting techniques \correction{where the solution is updated to time $t^{n+1}$, and low order re-updates are conducted in the elements that fail certain carefully chosen admissibility checks}. One of these is the MOOD technique~\cite{Clain2011,Diot2012,Diot2013} where the local re-updates are computed with reduced order of accuracy until the admissibility checks pass. In~\cite{Dumbser2014,Dumbser2019}, the subcell based technique of~\cite{Sonntag2014,Rosa2018} is applied in an \textit{a posteriori} fashion using $2N+1$ subcells for $N+1$ degrees of freedom per element in the 1-D case, using least squares approximation to convert back to a degree $N$ polynomial. In case the least square transformation leads to violation of admissibility constraints, the subcell solution values are used in the next evolution and thus the scheme is guaranteed to not crash. In~\cite{Vilar2019}, the DG scheme was reformulated as subcell Finite Volume (FV) method with appropriate subcells. An indicator was used to mark troubled subcells and thus the solution could be modified in a very localized manner, preserving subcell information well.

Other techniques for shock capturing exist that do not fit strictly into the aforementioned categories. In~\cite{dzanic2022}, positivity preservation and shock capturing were achieved by filtering and enforcing the minimum entropy principle, while in~\cite{Lu2021}, a numerical damping term was introduced in the DG scheme to control spurious oscillations.

In this work, we use the \textit{a priori} blending limiter of~\cite{henemann2021} for LWFR as its choice of subcells gives a natural correction to the time averaged numerical flux to obtain admissibility preservation in means. The key idea of the blending scheme is to reduce spurious oscillations by using a low order scheme in regions where the solution is not smooth, as detected by a smoothness indicator. The blending limiter by itself is not guaranteed to control all oscillations and thus unphysical solutions may still be obtained. Thus, we perform additional limiting to obtain a provably admissibility preserving scheme. Special attention is also paid to improving accuracy to capture small scale structures.  We use Gauss-Legendre (GL) solution points and subcells obtained from GL quadrature weights instead of the GLL points and weights used in~\cite{henemann2021}. This is because of their accuracy advantage as observed by us, and as reported in the literature. In the non-linear stability analysis for E-fluxes in~\cite{Jameson2012}, Gauss-Legendre points were found to be the most resistant to aliasing driven instability. In another study on accuracy with different choices of solution points~\cite{Witherden2021}, the optimality of Gauss-Legendre points was again observed. In~\cite{babbar2022}, optimal convergence rates for some non-linear problems were observed only for Gauss-Legendre solution points.

As observed in~\cite{ramirez2021}, accuracy can be improved by performing a \correction{high order} reconstruction on the subcells. Since LWFR is a single-stage method, we improve accuracy by using the single-stage, second order MUSCL-Hancock scheme~\cite{vanleer1984} on the subcells. As explained in~\cite{henemann2021}, \correction{for a DG method of degree $N$,} maintaining conservation requires the subcell sizes to be given by the $N+1$ quadrature weights and the solution points to be the solution points of DG scheme. This implies that the subcells are non-uniform and the finite volumes are neither cell-centered nor vertex centered. Thus, as a first step to ensuring that the blended scheme is admissible, we extend the work of~\cite{Berthon2006} to the non-cell centered grids that occur from demanding conservation in the blending scheme. Enforcing admissibility as in~\cite{Berthon2006} requires an additional slope limiting step and we propose a problem independent procedure to do the same.

In order to maintain conservation, low and high order updates need to use the same numerical flux at the FR element interfaces (see Remark~\ref{rmk:why.same.flux}). This numerical flux has to be chosen by blending between the high order time averaged flux and the low order FV flux. Thus, as the next step to enforce admissibility of the blended Lax-Wendroff scheme, we carefully select the blended numerical flux using a scaling procedure to ensure that the lower order updates at solution points neighboring the interfaces are admissible.

In~\cite{ramirez2021}, the blending limiter of~\cite{henemann2021} has been made admissibility preserving by changing the blending coefficients in an \textit{a posteriori} fashion. \correction{Since our choice of the blended numerical flux implies the admissibility of lower order updates at all solution points, we could take the same approach}. In this work, we instead use the fact that, with the blended numerical flux, admissibility of lower order scheme implies admissibility in the means of the blended scheme and thus the scaling limiter of~\cite{Zhang2010b} can now be used to obtain an admissibility preserving scheme. In~\cite{moe2017}, a correction has been made to the Lax-Wendroff numerical flux enforcing the admissibility in means property and then the scaling limiter~\cite{Zhang2010b} has been used to obtain an admissibility preserving Lax-Wendroff scheme. Our work differs from~\cite{moe2017} as we only target to ensure admissibility of the lower order scheme and the admissibility in means is consequently obtained. This implies that our correction requires less storage and doesn't require additional loops, minimizing memory reads.

The rest of this paper is organized as follows. In Section~\ref{sec:scl}, we review the Lax-Wendroff Flux Reconstruction scheme proposed in~\cite{babbar2022}. In Section~\ref{sec:controlling.oscillations}, we explain the blending limiter extended to Gauss-Legendre solution points including a review of the smoothness indicator used in~\cite{henemann2021} and the\correction{n } MUSCL-Hancock reconstruction performed on the subcells \correction{in Section~\ref{sec:mh}}. \correction{Maintaining} conservation requires that at the faces of FR elements, both the lower and high order schemes must use the same numerical flux (see Remark~\ref{rmk:why.same.flux}). In Section~\ref{sec:admissibility.preservation}, we show how to construct the numerical flux to ensure admissibility preservation in means. In Section~\ref{sec:alg}, we explain our implementation of the Lax-Wendroff blended scheme as an algorithm. The numerical results verifying accuracy and robustness of our scheme with 1-D and 2-D compressible Euler equations are shown in Section~\ref{sec:num.results}. Section~\ref{sec:sum} gives a summary of the proposed blended scheme.

In Appendix~\ref{sec:muscl.admissibility.proof}, we give details on extension of the  proof of admissibility of MUSCL-Hancock scheme of~\cite{Berthon2006} to non-cell centered grids and a description of the 2-D MUSCL-Hancock scheme. In Appendix~\ref{sec:2d.admissibility}, we give details on extending the choice of blended numerical flux to ensure admissibility and accuracy in 2-D.
\section{Lax-Wendroff FR scheme}\label{sec:scl}
Consider a conservation law of the form
\begin{equation}
\bw_t + \ff(u)_x = 0 \label{eq:con.law}
\end{equation}
where $u \in \re^p$ is the vector of conserved quantities, $f(u)$ is the corresponding flux, together with some initial and boundary conditions. The solution that is physically correct is assumed to belong to an \correction{admissibility set}, denoted by $\Uad$. For example in case of compressible flows, the density and pressure (or internal energy) must remain positive. In case of shallow water equations, the water depth must remain positive. In most of the models that are of interest, the \correction{admissibility set} is a convex subset of $\re^p$, and can be written as
\begin{equation} \label{eq:uad.form}
\Uad = \{ u \in \re^p : p_k(u) > 0, \ 1 \le k \le K \}
\end{equation}
where each admissibility constraint $p_k$ is concave if $p_{j} > 0$ for all $j < k$. For Euler's equations, $K=2$ and $p_1, p_2$ are density, pressure functions, respectively; if the density is positive then pressure is a concave function of the conserved variables.

For the numerical solution, we will divide the computational domain $\Omega$ into disjoint elements $\Omega_e$, with
\[
\Omega_e = [x_\emh, x_\eph] \qquad\textrm{and}\qquad \Delta x_e = x_\eph - x_\emh
\]
Let us map each element to a reference element, $\Omega_e \to [0,1]$, by
\[
x \to \xi = \frac{x - x_\emh}{\Delta x_e}
\]
Inside each element, we approximate the solution by degree $N \ge 0$ polynomials belonging to the set $\poly_N$. For this, choose $N+1$ distinct nodes
\begin{equation} \label{eq:xi0.defn}
0 \le \xi_0 < \xi_1 < \cdots < \xi_N \le 1
\end{equation}
which will be taken to be Gauss-Legendre (GL) or Gauss-Lobatto-Legendre (GLL) nodes, and will also be referred to as {\em solution points}. There are associated quadrature weights $w_j$ such that the quadrature rule is exact for polynomials of degree up to $2N+1$ for GL points and upto degree $2N-1$ for GLL points. Note that the nodes and weights we use are with respect to the interval $[0,1]$ whereas they are usually defined for the interval $[-1,1]$. The solution inside an element is given by
\[
x \in \Omega_e: \qquad u_h(\xi,t) = \sum_{j=0}^N u_j^e(t) \ell_j(\xi)
\]
where each $\ell_j$ is a Lagrange polynomial of degree $N$ given by
\[
\ell_j(\xi) = \prod_{i=0, i\ne j}^N \frac{\xi - \xi_i}{\xi_j - \xi_i} \in \poly_N, \qquad \ell_j(\xi_i) = \delta_{ij}
\]
Figure~(\ref{fig:solflux1}a) illustrates a piecewise polynomial solution at some time $t_n$ with discontinuities at the element boundaries. Note that the coefficients $u_j^e$ which are the basic unknowns or {\em degrees of freedom} (dof), are the solution values at the solution points.
\begin{figure}
\begin{center}
\begin{tabular}{cc}
\includegraphics[width=0.40\textwidth]{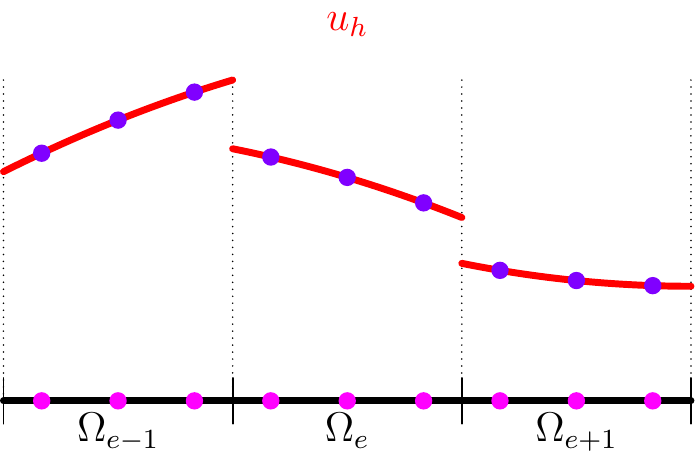} &
\includegraphics[width=0.40\textwidth]{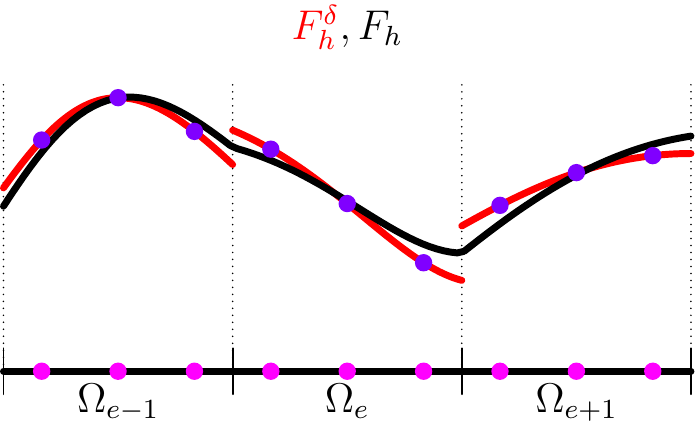} \\
(a) & (b)
\end{tabular}
\end{center}
\caption{(a) Piecewise polynomial solution at time $t_n$, and (b) discontinuous and continuous flux.}
\label{fig:solflux1}
\end{figure}
The update equation is given by
\begin{equation}
(u_j^e)^{n+1} = (u_j^e)^n - \frac{\Delta t}{\Delta x_e} \od{F_h}{\xi}(\xi_j), \qquad 0 \le j \le N
\label{eq:uplwfr}
\end{equation}
which is a single-stage scheme at all orders of accuracy. The quantity $F_h$ is a time average flux which is continuous in space and is computed using the flux reconstruction approach; it is given by
\begin{equation}
\label{eq:frcontflux}
F_h(\xi) = \left[F_\emh - F_h^\delta(0) \right] g_L(\xi) + F_h^\delta(\xi) + \left[F_\eph - F_h^\delta(1) \right] g_R(\xi)
\end{equation}
where $F_h^\delta$ is the discontinuous flux obtained by interpolation at the solution points
\[
F_h^\delta(\xi) = \sum_{j=0}^N F_j^e \ell_j(\xi)
\]
The discontinuous and continuous fluxes are illustrated in Figure~(\ref{fig:solflux1}b). The functions $g_L, g_R$ are some polynomials that are chosen in the FR technique by linear stability analysis and $F_\eph$ are some numerical flux functions. The coefficients in the discontinuous flux $F_h^\delta$ provide approximations to the time average flux,
\[
F_j^e \approx \frac{1}{\Delta t} \int_{t_n}^{t_{n+1}} f( u( \xi_j, t) ) \ud t
\]
and are computed by an approximate Lax-Wendroff procedure that uses finite differencing in time. The reader should consult~\cite{babbar2022} for more details on these aspects of the scheme.

The element mean value is given by
\[
\bar u_e = \sum_{j=0}^N u_j^e w_j
\]
where $w_j$ are the weights associated to the solution points. Then it is easy to show that the scheme is conservative in the sense that
\begin{equation}\label{eq:fravgup}
\bar u_e^{n+1} = \bar u_e^n - \frac{\Delta t}{\Delta x_e} (F_\eph - F_\emh)
\end{equation}
The admissibility preserving property, also known as convex set preservation property since $\Uad$ is convex, of the conservation law can be written as
\begin{equation} \label{eq:conv.pres.con.law}
\bw(\cdot, t_0) \in \Uad \qquad \implies \qquad \bw(\cdot, t) \in \Uad, \qquad t > t_0
\end{equation}
and thus we define an admissibility preserving scheme to be
\begin{definition}\label{defn:adm.pres}
The flux reconstruction scheme is said to be admissibility preserving if
\[
(\bw_j^e)^n \in \Uad \quad  \forall e,j \qquad \implies \qquad (\bw_j^e)^{n+1} \in \Uad \quad  \forall e,j
\]
where $\Uad$ is the admissibility set of the conservation law.
\end{definition}
To obtain an admissibility preserving scheme, we exploit the weaker admissibility preservation in means property defined as
\begin{definition}\label{defn:mean.pres}
The flux reconstruction scheme is said to be admissibility preserving in the means if
\[
(\bw_j^e)^n \in \Uad \quad  \forall e,j \qquad \implies \qquad \au_e^{n+1} \in \Uad \quad \forall e
\]
where $\Uad$ is the admissibility set of the conservation law.
\end{definition}
The focus of this work is to obtain the admissibility preservation in means property for the Lax-Wendroff Flux Reconstruction scheme. Once the scheme is admissibility preserving in means, the scaling limiter of~\cite{zhang2010c} can be used to obtain an admissibility preserving scheme.
\section{On controlling oscillations}\label{sec:controlling.oscillations}
High order methods for hyperbolic problems necessarily produce Gibbs oscillations at discontinuities as shown by Godunov. The cure is to make the schemes to be non-linear even in the case of linear equations. For one dimensional problems, total variation diminishing approach provides a framework to construct non-oscillatory schemes. This is achieved by incorporating some non-linear limiting strategy into the scheme which locally reduces the order of the scheme when a discontinuity is detected. In discontinuous Galerkin type methods, the limiting is performed by modifying the solution in each element so as to ensure a TVD property for the element means. We first recall this strategy following Cockburn and Shu~~\cite{Cockburn1991, Cockburn1989a}.

\subsection{TVD limiter} \label{sec:tvd}
The limiters developed in the context of RKDG schemes~\cite{Cockburn1991,Cockburn1989a} can be adopted in the framework of LWFR schemes. The limiter is applied in a post-processing step after the solution is updated to the new time level. The limiter is thus applied only once for each time step unlike in RKDG scheme where it has to be applied after each RK stage update. Let $u_h(x)$ denote the solution at time $t_{n+1}$ obtained from the LWFR scheme. In element $\Omega_e$, let the average solution be $\au_e$; define the backward and forward differences of the solution and element means by
\[
\Delta^- u_e = \au_e - u_h(x_\emh^+), \qquad \Delta^+ u_e = u_h(x_\eph^-) - \au_e
\]
\[
\Delta^- \au_e = \au_e - \au_{e-1}, \qquad \Delta^+ \au_e = \au_{e+1} - \au_e
\]
We limit the solution by comparing its variation within the element with the difference of the neighbouring element means through a limiter function,
\[
\Delta^- u_e^{m} = \minmod(\Delta^- u_e, \Delta^- \au_e, \Delta^+ \au_e), \qquad
\Delta^+ u_e^{m} = \minmod(\Delta^+ u_e, \Delta^- \au_e, \Delta^+ \au_e)
\]
where
\[
\minmod(a,b,c) = \begin{cases}
s \min(|a|, |b|, |c|), & \textrm{if } s = \sign(a) = \sign(b) = \sign(c) \\
0, & \textrm{otherwise}
\end{cases}
\]
If $\Delta^- u_e^m \ne \Delta^- u_e$ or $\Delta^+ u_e^m \ne \Delta^+ u_e$, then the solution is deemed to be locally oscillatory and we modify the solution inside the element by replacing it as a linear polynomial with a limited slope, which is taken to be the average limited slope. The limited solution polynomial in element $\Omega_e$ is given by
\[
u_h|_{\Omega_e} = \au_e + \frac{\Delta^- u_e^m + \Delta^+ u_e^m}{2} (2 \xi - 1), \qquad \xi \in [0,1]
\]
This limiter is known to clip smooth extrema since it cannot distinguish them from jump discontinuities. A small modification based on the idea of TVB limiters~\cite{Cockburn1991} can be used to relax the amount of limiting that is performed which leads to improved resolution of smooth extrema. The $\minmod$ function is replaced by
\[
\widetilde{\minmod}(a,b,c) = \begin{cases}
a, & |a| \le M \Delta x^2 \\
\minmod(a,b,c), & \textrm{otherwise}
\end{cases}
\]
where the parameter $M$ has to be chosen by the user, which is an estimate of the second derivative of the solution at smooth extrema. In the case of systems of equations, the limiter is applied to the characteristic variables, which is known to yield better control on the spurious numerical oscillations~\cite{Cockburn1989}. Clearly, the performance of this limiter depends on the proper choice of the parameter $M$ which is problem dependent.
\subsection{Blending scheme}\label{sec:blending.scheme}
The TVD-type limiters used in DG methods lose a lot of information when the limiter is active, since the polynomial solution of degree $N$ is replaced either by a solution of degree 1 or a constant solution if a strong discontinuity is detected in an element. This is especially problematic near smooth extrema which may be wrongly detected as a discontinuity. It would be desirable to use more information inside each element while applying some limiting process. Let us write the LWFR update equation~\eqref{eq:uplwfr} as
\[
\vu^{H,n+1}_e = \vu^n_e - \frac{\Delta t}{\Delta x_e} \vR^H_e
\]
where $\vu_e$ is the vector of nodal values in the element. Suppose we also have a lower order and non-oscillatory scheme available to us in the form
\[
\vu^{L,n+1}_e = \vu^n_e - \frac{\Delta t}{\Delta x_e} \vR^L_e
\]
Then a blended scheme is given by
\begin{equation}
\vu^{n+1}_e = (1 - \alpha_e) \vu^{H,n+1}_e + \alpha_e \vu^{L,n+1}_e = \vu^n_e - \frac{\Delta t}{\Delta x_e}[(1-\alpha_e) \vR^H_e + \alpha_e \vR^L_e] \label{eq:blended.scheme}
\end{equation}
where $\alpha_e \in [0,1]$ must be chosen based on some local smoothness indicator. If $\alpha_e = 0$ then we obtain the high order LWFR scheme, while if $\alpha_e=1$ then the scheme becomes the low order scheme that is less oscillatory. In subsequent sections, we explain the details of the lower order scheme and the design of smoothness indicators. The lower order scheme will either be a first order finite volume scheme or a high resolution scheme based on MUSCL-Hancock idea. In either case, the common structure of the low order scheme can be explained as follows.

\begin{figure}
\begin{center}
\includegraphics[width=0.7\textwidth]{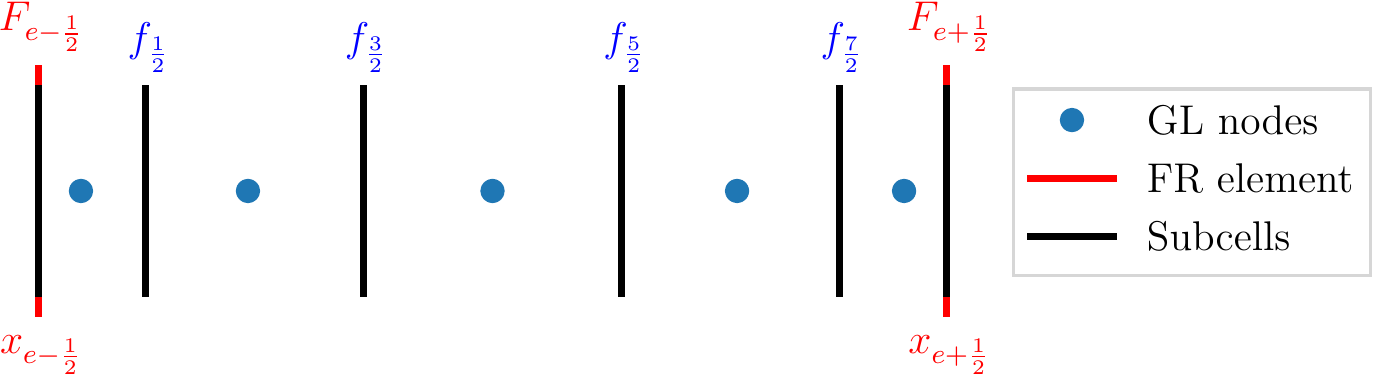}
\caption{Subcells used by lower order scheme for degree $N=4$.}
\label{fig:subcells}
\end{center}
\end{figure}

Let us subdivide each element $\Omega_e$ into $N+1$ subcells associated to the solution points $\{x^e_j, j=0, 1, \ldots, N\}$ of the LWFR scheme. Thus, we will have $N+2$ subfaces denoted by $\{x^e_\jph, j= -1, 0, \ldots, N\}$ with $x^e_{-\half} = x_\emh$ and $x^e_{\Nph} = x_\eph$. For maintaining a conservative scheme, the $j^\text{th}$ subcell is chosen so that
\begin{equation}\label{eq:subcell.defn}
x_\jph^e - x_\jmh^e = w_j \Delta x_e, \qquad 0 \le j \le N
\end{equation}
where $w_j$ is the $j^\text{th}$ quadrature weight associated with the solution points. Figure~\ref{fig:subcells} gives an illustration of the subcells for degree $N=4$ case. The low order scheme is obtained by updating the solution in each of the subcells by a finite volume scheme,
\begin{equation}
\label{eq:low.order.update}
\begin{split}
(u_0^e)^{L,n+1} &= (u_0^e)^n - \frac{\Delta t}{w_0 \Delta x_e}[f_\half^e - F_\emh] \\
(u_j^e)^{L,n+1} &= (u_j^e)^n - \frac{\Delta t}{w_j \Delta x_e}[f_{j+\frac 12}^e - f_{j-\frac 12}^e], \qquad 1 \le j \le N-1 \\
(u_N^e)^{L,n+1} &= (u_N^e)^n - \frac{\Delta t}{w_N \Delta x_e}[F_\eph - f_\Nmh^e]
\end{split}
\end{equation}
The inter-element fluxes $F_\eph$ used in the low order scheme are same as those used in the high order LWFR scheme in equation~\eqref{eq:frcontflux}. Usually, Rusanov's flux~\cite{Rusanov1962} will be used for the inter-element fluxes and in the lower order scheme. The element mean value obtained by the low order scheme satisfies
\begin{equation} \label{eq:low.order.cell.avg.update}
\bar u_e^{L,n+1} = \sum_{j=0}^N (\bw_j^e)^{L,n+1} w_j =\bar u_e^n - \frac{\Delta t}{\Delta x_e} (F_\eph - F_\emh)
\end{equation}
which is identical to the update equation by the LWFR scheme given in equation~\eqref{eq:fravgup}.  The element mean in the blended scheme evolves according to
\begin{eqnarray*}
\bar u_e^{n+1} &=& (1-\alpha_e) (\bar u_e)^{H,n+1} + \alpha_e (\bar u_e)^{L,n+1} \\
&=& (1 - \alpha_e)  \left[ \bar u_e^n - \frac{\Delta t}{\Delta x_e} ( F_\eph - F_\emh ) \right] +
         \alpha_e  \left[ \bar u_e^n - \frac{\Delta t}{\Delta x_e} ( F_\eph - F_\emh ) \right] \\
&=&  \bar u_e^n - \frac{\Delta t}{\Delta x_e} ( F_\eph - F_\emh )
\end{eqnarray*}
and hence the blended scheme is also conservative; all three schemes, i.e., lower order, LWFR and the blended scheme, predict the same mean value.

The inter-element flux $F_\eph$ is used both in the low and high order schemes. To achieve high order accuracy in smooth regions, this flux needs to be high order accurate, however it may produce numerical oscillations near discontinuities when used in the low order scheme. A natural choice to balance accuracy and oscillations is to take
\begin{equation}\label{eq:Fblend}
F_\eph = (1-\alpha_\eph) F_\eph^\text{LW} + \alpha_\eph f_\eph, \qquad \alpha_\eph \in [0,1]
\end{equation}
where $F_\eph^\text{LW}$ is the high order inter-element time-averaged numerical flux of the LWFR scheme~\eqref{eq:frcontflux} and $f_\eph$ is a lower order flux at the face $x_\eph$ shared between FR elements and subcells~(\ref{eq:fo.at.face}, \ref{eq:mh.at.face}). The blending coefficient $\alpha_\eph$ will be based on a local smoothness indicator which will bias the flux towards the lower order flux $f_\eph$ near regions of lower solution smoothness. However, to enforce admissibility in means (Definition~\ref{defn:mean.pres}), the flux has to be further corrected, as explained in Section~\ref{sec:admissibility.preservation}.
\begin{remark}\label{rmk:why.same.flux}
It is essential to use the same inter-element flux in both the low and high order schemes in order to have conservation. Suppose we use numerical fluxes $F_\eph^L, F_\eph^H$ in the low and high order schemes, respectively; then the element mean in the blended scheme will become
\[
\bar u_e^{n+1} = \bar u_e^n - \frac{\Delta t}{\Delta x_e} [ ((1-\alpha_e)F_\eph^H + \alpha_e F_\eph^L) - ((1-\alpha_e)F_\emh^H + \alpha_e F_\emh^L) ]
\]
For conservation the flux leaving element $\Omega_e$ through $x_\eph$ must enter the neighbouring element $\Omega_{e+1}$, i.e.,
\[
(1-\alpha_e)F_\eph^H + \alpha_e F_\eph^L = (1-\alpha_{e+1})F_\eph^H + \alpha_{e+1} F_\eph^L
\]
i.e., $(\alpha_e - \alpha_{e+1}) F_\eph^L = (\alpha_e - \alpha_{e+1}) F_\eph^H$ which must hold for all values of $\alpha_e, \alpha_{e+1}$ and hence we need $F_\eph^L = F_\eph^H$.
\end{remark}
\subsection{Smoothness indicator}\label{sec:smooth.ind}
The numerical approximation of the PDE solution is in the form of piecewise polynomials of degree $N$. The polynomial can be written in terms of an orthogonal basis like Legendre  polynomials. The smoothness of the solution can be assessed by analyzing the decay of the coefficients of the orthogonal expansion, a technique originally proposed by Persson and Peraire~\cite{Persson2006} and subsequently refined by Klöckner et al.~\cite{klockner2011} and Henemann et al.~\cite{henemann2021}. For a scalar problem, the solution $u$ itself can be used to design a smoothness indicator. For a system of PDE, we can use any one or all components of the solution vector. Alternatively, some derived quantity that can indicate the smoothness of all solution components can be chosen. For the Euler equations, a good choice seems to be the product of density and pressure~\cite{henemann2021}.

Let $q = q(u)$ be the quantity used to measure the solution smoothness. We first project this onto Legendre polynomials,
\[
q_h(\xi) = \sum_{j=0}^N \hat q_j L_j(2\xi-1), \quad \xi\in [0,1], \qquad \hat q_j = \int_0^1 q(u_h(\xi)) L_j(2\xi-1) \ud\xi
\]
The Legendre coefficients $\hat q_j$ are computed using the quadrature induced by the solution points,
\[
\hat q_j = \sum_{q=0}^N q(u_q^e) L_j(2\xi_q-1) w_q
\]
Then the energy contained in highest modes relative to the total energy of the polynomial is computed as follows,
\[
\en = \max \left(
\frac{\hat q_{N-1}^2}{\sum_{j=0}^{N-1}\hat q_j^2},
\frac{\hat q_N^2}{\sum_{j=0}^N \hat q_j^2}
\right)
\]
The $N^\text{th}$ Legendre coefficient $\hat q_N$ of a function which is in the Sobolev space $H^2$ decays as $O(1/N^2)$ (see Chapter 5, Section 5.4.2 of~\cite{Canuto2007}). We consider smooth functions to be those whose Legendre coefficients $\hat q_N$ decay at rate proportional to $1/N^2$ so that their squares decay proportional to $1/N^4$~\cite{Persson2006}. Thus, the following threshold for smoothness is proposed in~\cite{henemann2021}
\[
\thresh(N) = a \cdot 10^{-c(N+1)^\frac{1}{4}}
\]
where parameters $a=\half$ and $c=1.8$ are obtained through numerical experiments. To convert the highest mode energy indicator $\en$ and threshold value $\thresh$ into a value in $[0,1]$, the logistic function (Figure~\ref{fig:alpha.func}) is used
\[
\tilde{\alpha}(\en) = \frac{1}{1+\exp\left( -\frac{s}{\thresh} (\en - \thresh) \right)}
\]
\begin{figure}
\begin{center}
\includegraphics[width=0.5\textwidth]{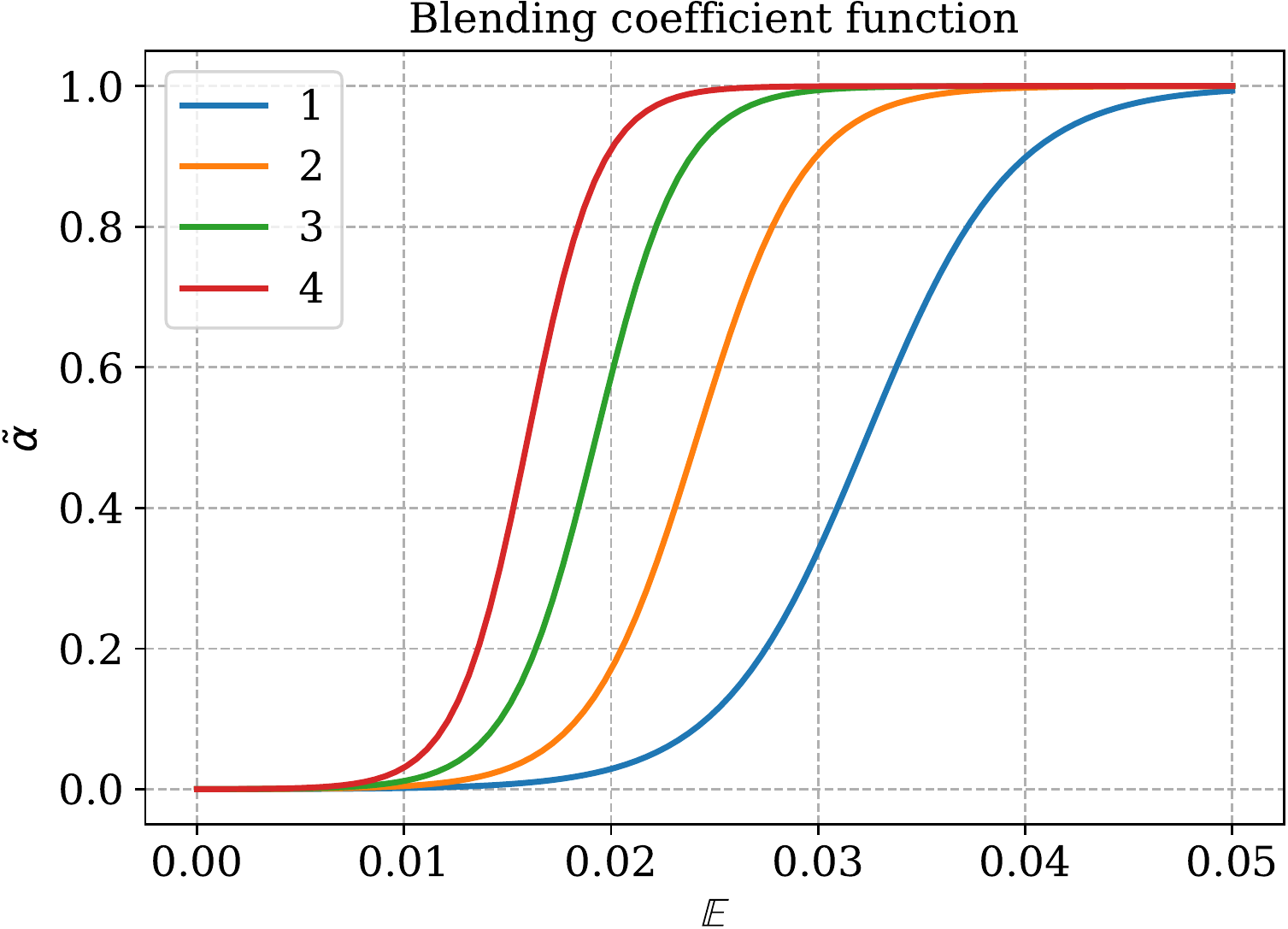}
\caption{Logistic function used to map energy to a smoothness coefficient $\alpha \in [0,1]$ shown for various solution polynomial degrees $N$.}
\label{fig:alpha.func}
\end{center}
\end{figure}\nolinebreak 
The sharpness factor $s$ was chosen to be $s = 9.21024$ so that blending coefficient equals $\alpha = 0.0001$ when highest energy indicator $\en = 0$. In regions where $\tilde\alpha = 0$ or $\tilde\alpha = 1$, computational cost can be saved by performing \correction{only the high or low order schemes}, respectively. Thus, the values of $\alpha$ are clipped as
\[
\alpha_e :=
\begin{cases}
0,\quad & \text{if } \tilde\alpha < \alpha_\text{min} \\
\tilde\alpha, & \text{if } \alpha_\text{min} \le \tilde\alpha \le 1 - \alpha_\text{min} \\
1,      & \text{if } 1 - \alpha_\text{min}< \tilde\alpha
\end{cases}
\]
with $\alpha_\text{min} = 0.001$. In ~\cite{henemann2021}, the maximum value of $\alpha$ was clipped to $\alpha_\text{max}=0.5$, but we use $\alpha_\text{max} = 1$ for the LWFR scheme. There were no significant improvements observed by decreasing $\alpha_\text{max}$ in any of the tests; in some tests like Shu-Osher (Section~\ref{sec:shuosher}), we observed a large number of oscillations when $\alpha_\text{max}=0.5$ was used. Finally, since shocks can spread to the neighbouring cells, smoothening of $\alpha$ is performed as
\begin{equation}
\alpha_e^\text{final} = \max_{\correction{e'} \in \mathcal{E}_e}\left\{ \alpha_e, \half \alpha_\correction{e'} \right\} \label{eq:smoothing}
\end{equation}
where $\mathcal{E}_e$ denotes all elements sharing a face with $e$.
\subsection{First order blending}\label{sec:fo}
The lower order scheme is taken to be a first order finite volume scheme, for which the subcell fluxes in \eqref{eq:low.order.update} are given by
\[
f_\jph^e = f(u_j^e, u_{j+1}^e)
\]
At the interfaces that are shared with FR elements, we define the lower order flux used in computing inter-element flux (Section~\ref{sec:admissibility.preservation}) as
\begin{equation}
f_\eph = f(u_{N}^e,u_0^{e+1}) \label{eq:fo.at.face}
\end{equation}
In this work, the numerical flux $f(\cdot, \cdot)$ is taken to be Rusanov's flux~\cite{Rusanov1962}, which is the same flux used by the \correction{high order} scheme at the element interfaces.
\section{Higher order blending}\label{sec:mh}
The MUSCL-Hancock scheme is a single-stage and second order accurate scheme, originally introduced in~\cite{vanleer1984}, and proven to be robust under appropriate slope restrictions~\cite{Berthon2006}. We can expect better accuracy by blending the LWFR scheme with the MUSCL-Hancock scheme. Following the slope correction procedure of Berthon~\cite{Berthon2006}, the MUSCL-Hancock scheme can mimic the \correction{admissibility set} preservation of the solutions of conservation laws~\eqref{eq:conv.pres.con.law}. The extension of Berthon's work to non-cell centered grids~\eqref{eq:non.cell.centred.defn} which arise in the blending scheme is given in Theorem~\ref{thm:muscl.admissibility.theorem} whose proof is given in Appendix~\ref{sec:muscl.admissibility.proof}. In this section, we give algorithmic details of the 1-D procedure and details of the 2-D procedure can be found in Appendix~\ref{sec:2d.mh}.

Essentially, the MUSCL-Hancock scheme provides a high order estimate of the subcell fluxes $f_\jph^e$ used in the low order scheme \eqref{eq:low.order.update} and we now explain the procedure for estimating these fluxes. To simplify the notation, let us suppress the element index $e$ and set
\[
u_{-2} = u_{N-1}^{e-1}, \qquad u_{-1} = u_N^{e-1}, \qquad \{ u_j = u_j^e, \qquad 0 \le j \le N\}, \qquad u_{N+1} = u_0^{e+1}, \qquad u_{N+2} = u_1^{e+1}
\]
Using the mid-point rule in time to integrate the conservation law~\eqref{eq:con.law} over the space-time element $[x_{\jmh},x_{\jph}] \times [t^n,t^{n+1}]$, we get
\begin{equation}
u_j^{n+1} = u_j^n - \frac{\Delta t}{\Delta x_j} (f_{\jph}^{\nph} - f_{\jmh}^{\nph}) \label{eq:riemann.problem}
\end{equation}
where
\[
f_{\jph}^{\nph}=f(\bw_{j-1}^{\nph,+},u_{j}^{\nph,-})
\]
is obtained from a numerical flux function, usually Rusanov's flux~\cite{Rusanov1962}. The $\bw_j^{\nph,\pm}$ denote the approximations of solutions in subcell $j$ at right, left faces respectively, evolved to time level $\nph$. Aiming to first approximate the solution at $t^n$ on the faces, we create a linear approximation of the solution in each subcell as
\begin{equation}\label{eq:delta.defn}
r_j^n(x) = u_j^n + (x-x_j) \delta_j\correction{,} \qquad
\slope_j = \text{minmod}\left(\beta\Delta_+ u_j,\Delta_c u_j,\beta\Delta_- u_j\right)
\end{equation}
where, for $h_1 = x_{j} - x_{j-1}$, $h_2 = x_{j+1} - x_j$,
\[
\Delta_+ u_j = \frac{u_{j+1}^n-u_j^n}{h_2}, \quad  \Delta_- u_j =  \frac{u_j^n-u_{j-1}^n}{h_1}, \quad \Delta_c u_j = -\frac{h_2}{h_1(h_1+h_2)}u^n_{j-1} + \frac{h_2 - h_1}{h_1 h_2}u^n_j + \frac{h_1}{h_2(h_1 + h_2)} u^n_{j+1}
\]
The $\Delta_{\pm} \bw_j$ are forward and backward approximations of slope respectively, and $\Delta_c \bw_j$ is the second order approximation of the slope. The value $\beta$ is chosen to lie between $1$ and $2$; for $\beta=1$, we reduce to the minmod limiter and $\beta = 2$ corresponds to the limiter of van Leer~\cite{vanleer1977}. A higher value of $\beta$ tips the slope closer to the second order approximation, gaining accuracy but also increasing the risk of spurious oscillations. For all results in this work, the choice of $\beta = 2 - \alpha_e$ is made. Thus, $\beta$ will be close to 2 in regions where smoothness indicator only detects mild irregularities in the solution, while it will be near $1$ in regions with strong discontinuities. With the linear reconstructions, we can define
\begin{equation}
\label{eq:reconstruction}
\bw_j^{n,-} = r^n_j(x_{\jmh}) = u_j^n + \slope_j(x_{\jmh}-x_j),\qquad \bw_j^{n,+} = r^n_j(x_{\jph}) = u_j^n + \slope_j(x_{\jph}-x_j)
\end{equation}
Using the conservation law, we approximate the temporal derivatives as
\[
\partial_tu^n_{j} := -\frac{f(u^{n,+}_j)-f(u^{n,-}_j)}{x_{\jph}-x_{\jmh}}
\]
and finally use Taylor's expansion to evolve the face values in time as
\begin{equation}
\label{eq:evolution}
\bw_j^{\nph,-} = \bw_j^{n,-} + \frac{\Delta t}{2}\partial_tu^n_j, \qquad
\bw_j^{\nph,+} = \bw_j^{n,+} + \frac{\Delta t}{2}\partial_tu^n_j
\end{equation}
At the interfaces shared with the FR elements, the lower order flux used in computing inter-element flux (Section~\ref{sec:admissibility.preservation}) is given by $f_\eph = f_\Nph^\nph$; the dependence on neighbouring states can be made explicit as
\begin{equation}
f_{\eph} = f(u_{N-1}^e,u_{N}^e,u_0^{e+1},u_1^{e+1}) \label{eq:mh.at.face}
\end{equation}

For admissibility of the lower order method, we rely on the following generalization of Berthon~\cite{Berthon2006}, proved in Appendix~\ref{sec:muscl.admissibility.proof}.

\begin{theorem}
\label{thm:muscl.admissibility.theorem}
Consider a conservation law of the form~\eqref{eq:con.law} which preserves the admissibility set $\Uad$~\eqref{eq:conv.pres.con.law}. Let $\left\{\bw_j^n\right\}_{j}$ be the approximate solution at time level $n$ and assume that $\bw_j^n \in \Uad$ for all $j$. Consider conservative reconstructions
\[
\bw_j^{n,+} = \bw_j^n + (x_{j+\frac{1}{2}} - x_j)\slope_j,\qquad \bw_j^{n,-} = \bw_j^n + (x_{j-\frac{1}{2}} - x_j) \slope_j
\]
Define $\bw_j^{*,\pm}$ by
\begin{equation} \label{eq:us.defn.main}
\mum \bw_j^{n,-}+\bw_j^{*,\pm}+\mup\bw_j^{n,+} = 2 \bw_j^{n,\pm}
\end{equation}
where
\[
\mum = \frac{x_{j+\frac{1}{2}}-x_j}{x_{j+\frac{1}{2}}-x_{j-\frac{1}{2}}},\qquad \mup = \frac{x_j-x_{j-\frac{1}{2}}}{x_{j+\frac{1}{2}}-x_{j-\frac{1}{2}}}
\]
Assume that the slope $\slope_j$ is chosen so that
\begin{equation}
\bw_j^{*,\pm} \in \Uad \label{eq:ustar.in.uad}
\end{equation}
Then, under appropriate time step restrictions~(\ref{eq:new.cfl1},\ref{eq:new.cfl2},\ref{eq:new.cfl3.conservative}), the updated solution $\bw_j^{n+1}$ defined by the MUSCL-Hancock procedure~\eqref{eq:riemann.problem} is in $\Uad$.
\end{theorem}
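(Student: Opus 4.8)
The strategy is the classical Perthame–Shu / Zhang–Shu convex-decomposition argument, adapted to the non-uniform subcells. The MUSCL-Hancock update \eqref{eq:riemann.problem} writes $\bw_j^{n+1}$ as a combination of exact Riemann-problem / Godunov-type fluxes built from the evolved face states $\bw_{j}^{\nph,\pm}$. The key observation is that the true solution operator of the conservation law preserves $\Uad$ over a short time, so if we can exhibit $\bw_j^{n+1}$ as a \emph{convex} combination of cell averages of exact solutions of Riemann problems whose initial data lie in $\Uad$, convexity of $\Uad$ closes the argument. The plan is therefore: (i) rewrite the update so that $\bw_j^{n+1}$ is a convex combination of $\bw_j^{\nph,-}$, $\bw_j^{\nph,+}$, and two intermediate Godunov-averaged states associated with the subfaces $x_\jmh$ and $x_\jph$; (ii) show each Godunov-averaged state lies in $\Uad$ under a CFL condition, because it is a spatial average of the entropy solution of a Riemann problem with admissible data; (iii) show the evolved face values $\bw_j^{\nph,\pm}$ lie in $\Uad$ by recognizing, via the Hancock half-step Taylor expansion \eqref{eq:evolution}, that $\bw_j^{\nph,-}$ and $\bw_j^{\nph,+}$ are themselves cell averages over $[x_\jmh,x_\jph]$ of an exact solution of the \emph{local} Cauchy problem with the piecewise-linear data $r_j^n$ — provided both endpoint reconstructions $\bw_j^{n,\pm}$ are admissible, which is exactly where the auxiliary states $\bw_j^{\ast,\pm}$ and hypothesis \eqref{eq:ustar.in.uad} enter.

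\medskip
\noindent The core algebraic identity to establish in step (iii) is that averaging the Hancock evolution over the subcell reproduces a genuine exact-solution average: integrating the conservation law on $[x_\jmh,x_\jph]\times[t^n,t^{n+1}]$ with midpoint-in-time quadrature, the half-step values $\partial_t u_j^n$ defined just before \eqref{eq:evolution} are precisely the ones making $\tfrac12(\mum \bw_j^{n,-} + \bw_j^{\ast,\pm} + \mup \bw_j^{n,+})$ — i.e. $\bw_j^{n,\pm}$ by \eqref{eq:us.defn.main} — the $t^{\nph}$ value consistent with a flux balance. Concretely, I will expand $\bw_j^{\nph,\mp}$ using \eqref{eq:reconstruction} and \eqref{eq:evolution} and show it equals a convex combination, with weights $\mum,\mup$ and the CFL-scaled weight on $\bw_j^{\ast,\mp}$, of $\bw_j^{n,-}$, $\bw_j^{n,+}$ and a one-sided Godunov average of the Riemann problem with data $(\bw_j^{n,-},\bw_j^{n,+})$ posed at $x_j$; since $\bw_j^{n,\pm},\bw_j^{\ast,\pm}\in\Uad$ and the Godunov average of an admissible Riemann problem is admissible under the CFL bound, convexity gives $\bw_j^{\nph,\pm}\in\Uad$. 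This is the step that pins down the three time-step restrictions \eqref{eq:new.cfl1}, \eqref{eq:new.cfl2}, \eqref{eq:new.cfl3.conservative}: one from the interior half-step Godunov average, one from the full-step Godunov average at the subfaces, and a third, more delicate one at the element interfaces where the subcell flux is the blended inter-element flux rather than a pure Godunov flux.

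\medskip
\noindent For step (i)–(ii): writing $\lambda_j = \Delta t/\Delta x_j$, I split $\bw_j^{n+1}$ in \eqref{eq:riemann.problem} as
\[
\bw_j^{n+1} = (1 - 2\lambda_j s)\,\bw_j^{n,\text{mid}} + 2\lambda_j s\Big[\tfrac12 \bw_{-}^{\text{God}} + \tfrac12 \bw_{+}^{\text{God}}\Big],
\]
schematically, where $s$ is a bound on the wave speeds, $\bw_{\pm}^{\text{God}}$ are the spatial averages over the appropriate half-subcells of the entropy solutions of the Riemann problems at $x_\jpmh$ with data $(\bw_{j-1}^{\nph,+},\bw_j^{\nph,-})$ and $(\bw_{j}^{\nph,+},\bw_{j+1}^{\nph,-})$ respectively, and $\bw_j^{n,\text{mid}}$ collects the remaining convex weight on the evolved face states. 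Each $\bw_\pm^{\text{God}}\in\Uad$ by the standard argument (average of an admissible exact solution), each evolved face state is in $\Uad$ by step (iii), so $\bw_j^{n+1}\in\Uad$.

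\medskip
\noindent \textbf{Main obstacle.} The delicate point is step (iii) together with the bookkeeping of the non-uniform weights $\mum,\mup$: the reconstruction $r_j^n$ is linear with slope $\slope_j$ but the \emph{node} $x_j$ is not the midpoint of $[x_\jmh,x_\jph]$, so the cell average of $r_j^n$ is not $\bw_j^n$ and the two face values $\bw_j^{n,\pm}$ are asymmetric about $\bw_j^n$; the auxiliary variable $\bw_j^{\ast,\pm}$ defined by \eqref{eq:us.defn.main} is exactly the device that restores a three-point convex decomposition with the correct (conservative) weights, and verifying that the Hancock half-step does not destroy this — i.e. that $\bw_j^{\nph,\pm}$ remains a convex combination of admissible states with nonnegative weights under a \emph{computable} CFL number — is the crux. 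A secondary obstacle is the element-interface subcells ($j=0$ and $j=N$), where $f_\eph$ in \eqref{eq:low.order.update} is the blended flux \eqref{eq:Fblend} rather than a Godunov flux built from the local reconstructions; there one must either restrict to the regime where the relevant blending guarantees a Godunov-like lower bound, or absorb the interface flux into the convex decomposition at the cost of the extra restriction \eqref{eq:new.cfl3.conservative}. I would handle the interior subcells first in full, then treat $j\in\{0,N\}$ as a modification, and finally collect the three CFL conditions; the full details are deferred to Appendix~\ref{sec:muscl.admissibility.proof}.
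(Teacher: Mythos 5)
Your overall architecture — convex decompositions into averages of exact Riemann solutions, with the auxiliary states $\bw_j^{\ast,\pm}$ restoring convexity on the non-cell-centered grid — is the same Berthon-style argument the paper uses, and your Lemma-1-type ingredient (the average of an admissible Riemann solution is admissible) is exactly the paper's Lemma~\ref{lemma:avg.riemann.problem}. However, there are two concrete gaps. First, in your half-step argument the Riemann data you name is wrong in a way that breaks convexity: if you average the exact solution of the single Riemann problem with data $(\bw_j^{n,-},\bw_j^{n,+})$ over the cell, the identity $\bw_j^{\nph,+}=\bw_j^{n,+}-\frac{\Delta t}{2\Delta x_j}(f(\bw_j^{n,+})-f(\bw_j^{n,-}))$ forces a \emph{negative} coefficient on $\bw_j^{n,-}$ (the evolved value sits at the face, not near the barycenter), so no convex combination of $\bw_j^{n,\pm}$ and a one- or two-sided Godunov average of that Riemann problem can produce it. The paper's Lemma~\ref{lemma:m.h.step.1} instead poses an auxiliary Cauchy problem with \emph{three-state piecewise-constant} data $(\bw_j^{n,-},\bw_j^{\ast,+},\bw_j^{n,+})$ on subintervals of widths $\mum\Delta x_j/2,\ \Delta x_j/2,\ \mup\Delta x_j/2$, so that two non-interacting Riemann fans form and the full cell average collapses, via \eqref{eq:us.defn.main}, exactly to $\bw_j^{\nph,+}$; this is where hypothesis \eqref{eq:ustar.in.uad} and the CFL condition \eqref{eq:new.cfl1} actually enter. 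Your text gestures at "the CFL-scaled weight on $\bw_j^{\ast,\mp}$" but does not supply this construction, and the data you do specify (also "the piecewise-linear data $r_j^n$" earlier) would not close the identity.

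Second, and more seriously, your final-step decomposition hides an unproved admissibility claim inside "$\bw_j^{n,\text{mid}}$ collects the remaining convex weight on the evolved face states." After you place weights $\frac{\mum}{2}$ and $\frac{\mup}{2}$ on updates built from $\bw_j^{\nph,-}$ and $\bw_j^{\nph,+}$, the leftover state is $\bw_j^{\nph,\ast}$ defined by $\mum\bw_j^{\nph,-}+\bw_j^{\nph,\ast}+\mup\bw_j^{\nph,+}=2\bw_j^n$ (the paper's \eqref{eq:uj.nph.s}); it is \emph{not} one of the evolved face values and its admissibility does not follow from anything you have established. The paper devotes Lemmas~\ref{lemma:muscl.step3.wss}--\ref{lemma:muscl.step3.conservative} to it, by reinterpreting $\bw_j^{\nph,\ast}$ as another half-step evolution with $\bw_j^{n,+}$ and $\bw_j^{n,-}$ swapped and a new middle state $\bw_j^{\ast,\ast}$, which for conservative reconstruction equals $\bw_j^n$; this is the true source of the third CFL condition \eqref{eq:new.cfl3.conservative}. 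Relatedly, your attribution of \eqref{eq:new.cfl3.conservative} to the blended inter-element flux at $j\in\{0,N\}$ is incorrect: Theorem~\ref{thm:muscl.admissibility.theorem} concerns the pure MUSCL-Hancock scheme with a generic first-order numerical flux on a non-uniform grid, and the blended interface flux is handled separately in Theorem~\ref{thm:lwfr.admissibility}, not here. Without the state $\bw_j^{\nph,\ast}$ and its admissibility lemma, your plan cannot be completed as written.
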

\subsection*{Slope limiting in practice}
A problem-independent procedure for slope limiting to ensure admissibility preservation is proposed, in contrast to the original procedure for Euler's equations in~\cite{Berthon2006} that was extended to the 10-moment problem in~\cite{meena2017}. For the MUSCL-Hancock scheme to be admissibility preserving, the slope $\slope_j$ given by the minmod limiter~\eqref{eq:delta.defn} has to be further limited so that $\bw_j^{\ast,\pm} = \bw_j^n + 2 (x_{\jpmh} - x_j) \slope_j \in \Uad$~\eqref{eq:us.defn.main}. We explain the procedure with a \texttt{for} loop over the admissibility constraints $\{p_k, k=1,\ldots,K\}$.

\begin{minipage}{\textwidth}  
\vspace{8pt}
\hrule
\vspace{1.5pt}
\begin{algorithmic}
\State $\slope_j \gets \text{minmod}\left(\beta\Delta_+ u_j,\Delta_c u_j,\beta\Delta_- u_j\right)$
\State $\bw_j^{\ast,\pm} \gets \bw_j^n + 2 (x_{\jpmh} - x_j) \slope_j$
\For{$k$=1:$K$}
\State $\epsilon_k = \frac{1}{10}p_k(\bw_j^n)$
\State $\theta_\pm \gets \min \left\{ \left| \frac{\epsilon_k - p_k \left( \bw_j^n \right)}{p_k \left( \bw_j^{\ast, \pm} \right) - p_k \left( \bw_j^n \right)} \right|, 1 \right\}$
\State $\theta_k \gets \min \{\theta_+, \theta_- \}$
\State $\slope_j \gets \theta_k \slope_j$
\State $\bw_j^{\ast,\pm} \gets \bw_j^n + 2 (x_{\jpmh} - x_j) \slope_j$
\EndFor
\end{algorithmic}
\hrule
\vspace{5pt}
\end{minipage}

At the $k^\text{th}$ iteration, by concavity of the admissibility constraint $p_k$, the $\bw_j^{\ast,\pm}$ computed with the corrected slope $\slope_j$ will satisfy
\begin{equation}\label{eq:pk.slope.correction}
p_k(\bw_j^{\ast,\pm}) = p_k(\theta_k (\bw_j^{\ast,\pm})^\text{prev} + (1-\theta_k)\bw_j^n)  \ge \theta_k p_k((\bw_j^{\ast,\pm})^\text{prev}) +(1-\theta_k) p_k(\bw_j^n)  \ge \epsilon_k
\end{equation}
so that the $k^\text{th}$ admissibility constraint is satisfied; here  $(\bw_j^{\ast,\pm})^\text{prev}$ denotes $\bw_j^{\ast,\pm}$ before the $k^\text{th}$ correction. The choice of $\epsilon_k = \frac{1}{10}p_k(\bw_j^n)$ was made following~\cite{ramirez2021} to allow only a certain deviation below the \textit{safe solution}, imposing a stricter requirement than positivity. Note that this limiting is performed on the slope used for reconstruction in the MUSCL-Hancock scheme, and not on the updated solution.  The previous admissibility constraints $p_l$ for $l<k$ will also continue to be satisfied by using induction argument and concavity of the constraints,
\[
p_l(\bw_j^{\ast,\pm}) \ge \theta_k p_l((\bw_j^{\ast,\pm})^\text{prev}) + (1-\theta_k) p_l(\bw_j^n)  \ge \theta_k \epsilon_l + (1-\theta_k) \epsilon_l = \epsilon_l
\]
The slope $\slope_j$ obtained at the end of $K$ iterations satisfies all admissibility constraints ensuring $\bw_j^{\ast,\pm} \in \Uad$.

\section{Flux limiter for admissibility preservation} \label{sec:admissibility.preservation}
The first step in obtaining an admissibility preserving blending scheme is to ensure that the lower order scheme preserves the admissibility set $\Uad$. This is always true if all the fluxes in the lower order method are computed with a finite volume method that is proven to be admissibility preserving. But the LWFR scheme uses a time average numerical flux and maintaining conservation requires that we use the same numerical flux at the element interfaces for both lower and higher order schemes (see Remark~\ref{rmk:why.same.flux}). To maintain accuracy and admissibility, we have to carefully choose a blended numerical flux $F_\eph$ as in~\eqref{eq:Fblend} but this choice may not ensure admissibility and further limitation is required. Our proposed procedure for choosing the blended numerical flux will give us an admissibility preserving lower order scheme. After this step, there are two possibilities for obtaining admissibility of the blending scheme. We could follow the procedure of~\cite{ramirez2021} to \textit{a posteriori} modify the blending coefficient $\alpha$ to obtain admissibility relying directly on the admissibility of the lower order scheme. The other option which we take in this work is to note that, as a result of using the same numerical flux in both high and low order schemes, element means of both schemes are the same (Theorem~\ref{thm:lwfr.admissibility}). A consequence of this is that our scheme now preserves admissibility of element means and thus we can use the scaling limiter of~\cite{Zhang2010b}. The latter approach of correcting element means to obtain a positivity preserving Lax-Wendroff scheme has been used in~\cite{moe2017}, where the numerical flux is corrected to directly make element means admissible. In comparison to~\cite{moe2017}, our procedure for ensuring admissibility of element means requires less storage and loops. 

The theoretical basis for flux limiting can be summarised in the following Theorem~\ref{thm:lwfr.admissibility}.

\begin{theorem}\label{thm:lwfr.admissibility}
Consider the LWFR blending scheme~\eqref{eq:blended.scheme} where low and high order schemes use the same numerical flux $F_\eph$ at every element interface. Then the following can be said about admissibility preserving in means property (Definition~\ref{defn:mean.pres}) of the scheme:
\begin{enumerate}
\item element means of both low and high order schemes are same and thus the blended scheme~\eqref{eq:blended.scheme} is admissibility preserving in means if and only if the lower order scheme is admissibility preserving in means;
\item if the finite volume method using the lower order flux $f_\eph$ as the interface flux is admissibility preserving, such as the first-order finite volume method or the MUSCL-Hancock scheme with CFL restrictions and slope correction from Theorem~\ref{thm:muscl.admissibility.theorem}, and the blended numerical flux $F_\eph$ is chosen to preserve the admissibility of lower-order updates at solution points adjacent to the interfaces, then the blending scheme~\eqref{eq:blended.scheme} will preserve admissibility in means.
\end{enumerate}
\end{theorem}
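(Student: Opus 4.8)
The plan is to prove the two claims in order, since the second builds on the first. For claim (1), I would start from the mean-value update of the blended scheme already derived in the excerpt: using the same interface flux $F_\eph$ in both the low and high order schemes, equations~\eqref{eq:fravgup} and~\eqref{eq:low.order.cell.avg.update} give $(\bar u_e)^{H,n+1} = (\bar u_e)^{L,n+1} = \bar u_e^n - \frac{\Delta t}{\Delta x_e}(F_\eph - F_\emh)$. Since the blended mean is the convex combination $\bar u_e^{n+1} = (1-\alpha_e)(\bar u_e)^{H,n+1} + \alpha_e (\bar u_e)^{L,n+1}$ with $\alpha_e \in [0,1]$, it equals the common value, hence $\bar u_e^{n+1} = (\bar u_e)^{L,n+1} = (\bar u_e)^{H,n+1}$ identically. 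The equivalence ``blended scheme is admissibility preserving in means $\iff$ lower order scheme is admissibility preserving in means'' is then immediate from Definition~\ref{defn:mean.pres}, because under the common hypothesis $(\bw_j^e)^n \in \Uad$ for all $e,j$, the conclusion $\bar u_e^{n+1} \in \Uad$ for the blended scheme is literally the same statement as $(\bar u_e)^{L,n+1} \in \Uad$.

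For claim (2), the goal is to verify that the lower order scheme is admissibility preserving in means, so that claim (1) applies. The key observation is that the lower order cell mean $\bar u_e^{L,n+1} = \sum_{j=0}^N (\bw_j^e)^{L,n+1} w_j$ is a convex combination (the weights $w_j$ are positive and sum to one) of the subcell updates $(\bw_j^e)^{L,n+1}$ from~\eqref{eq:low.order.update}. By convexity of $\Uad$, it therefore suffices to show each subcell update lies in $\Uad$. For the interior subcells $1 \le j \le N-1$, the update uses only the internal subcell fluxes $f_\jph^e$, which are computed either by a first-order finite volume scheme or by the MUSCL-Hancock procedure; in either case the hypothesis (admissibility of the finite volume / MUSCL-Hancock method under the stated CFL and slope restrictions, the latter supplied by Theorem~\ref{thm:muscl.admissibility.theorem}) guarantees $(\bw_j^e)^{L,n+1} \in \Uad$. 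For the two boundary subcells $j=0$ and $j=N$, the update instead involves the blended interface flux $F_\eph$ (resp.\ $F_\emh$) in place of the pure low order flux $f_\eph$; here we invoke the hypothesis that $F_\eph$ has been chosen precisely so as to preserve admissibility of the lower-order updates at the solution points adjacent to the interfaces. This gives $(\bw_0^e)^{L,n+1}, (\bw_N^e)^{L,n+1} \in \Uad$ as well. Hence every $(\bw_j^e)^{L,n+1} \in \Uad$, so $\bar u_e^{L,n+1} \in \Uad$, i.e.\ the lower order scheme is admissibility preserving in means, and claim (1) then yields the same for the blended scheme.

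The main obstacle — or rather, the point requiring care — is that claim (2) is only a conditional statement: it quite deliberately delegates the substantive work to (a) the admissibility of the MUSCL-Hancock subcell scheme under CFL and slope restrictions, which is Theorem~\ref{thm:muscl.admissibility.theorem}, and (b) the existence and construction of a blended interface flux $F_\eph$ of the form~\eqref{eq:Fblend} that keeps the boundary subcell updates $(\bw_0^e)^{L,n+1}$ and $(\bw_N^e)^{L,n+1}$ admissible, which is exactly what Section~\ref{sec:admissibility.preservation} sets out to do after this theorem. So the proof of the theorem itself is short, but I would be careful to state the dependencies cleanly: the only genuinely new content here is the convex-combination bookkeeping (mean $=$ weighted average of subcell values $=$ convex combination of low/high order means), and everything else is cited as a hypothesis or a prior result. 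I would also flag Remark~\ref{rmk:why.same.flux} as the justification for why ``same numerical flux'' is not merely convenient but necessary for the cancellation in claim (1) to be meaningful in a conservative scheme.
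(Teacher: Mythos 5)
Your proposal is correct and follows essentially the same route as the paper's proof: claim (1) from the identity of the low- and high-order element means via~(\ref{eq:fravgup}, \ref{eq:low.order.cell.avg.update}), and claim (2) by noting that the hypotheses make every subcell update $(\bw_j^e)^{L,n+1}$, $0\le j\le N$, admissible, so the mean, being a convex combination with weights $w_j$, lies in $\Uad$. The paper states this more tersely but with no difference in substance.
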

\begin{proof}
By~(\ref{eq:fravgup}, \ref{eq:low.order.cell.avg.update}), element means are the same for both low and high order schemes. Thus, admissibility in means of one implies the same for other, proving the first claim. For the second claim, note that our assumptions imply $(u_j^e)^{L,n+1}$ given by~\eqref{eq:low.order.update} is in $\Uad$ for $0\le j \le N$ implying admissibility in means property of the lower order scheme by~\eqref{eq:low.order.cell.avg.update} and thus admissibility in means for the blended scheme.
\end{proof}

We now explain the procedure of ensuring that the update obtained by the lower order scheme will be admissible. The lower order scheme is computed with a first order finite volume method or MUSCL-Hancock with slope correction from Theorem~\ref{thm:muscl.admissibility.theorem} so that admissibility is already ensured for inner solution points; i.e., we already have
\[
(\bw^e_{j})^{L, n + 1} \in \Uad, \quad  1 \le j \le N-1
\]
The remaining admissibility constraints for the first ($j=0$) and last solution points ($j=N$) will be satisfied by appropriately choosing the inter-element flux $F_{\eph}$. The first step is to choose a candidate for $F_\eph$ which is heuristically expected to give reasonable control on spurious oscillations, i.e.,
\[
F_{\eph} = (1 - \alpha_{\eph}) F^\text{LW}_{\eph} + \alpha_{\eph} f_{\eph}, \quad \alpha_{\eph} = \frac{\alpha_e + \alpha_{e+1}}{2}
\]
where $f_{\eph}$ is the lower order flux at the face $\eph$ shared between FR elements and subcells~(\ref{eq:fo.at.face}, \ref{eq:mh.at.face}), and $\alpha_e$ is the blending coefficient~\eqref{eq:blended.scheme} based on element-wise smoothness indicator (Section~\ref{sec:smooth.ind}).

The next step is to correct $F_{\eph}$ to enforce the admissibility constraints. The guiding principle of our approach is to perform the correction within the face loops, minimizing storage requirements and additional memory reads. The lower order updates in subcells neighbouring the $\eph$ face with the candidate flux are
\begin{equation}
\label{eq:low.order.tilde.update}
\begin{split}
\tilde{\bw}_{0}^{n+1} & = (\bw^{e+1}_0)^n - \frac{\Delta t}{w_0 \Delta x_{e+1}} ( f^{e+1}_{\half} - F_{\eph} ) \\
\tilde{\bw}_{N}^{n+1} &= (\bw^e_N)^n - \frac{\Delta t}{w_N \Delta x_{e}} (F_{\eph} - f^e_{\Nmh})
\end{split}
\end{equation}
To correct the interface flux, we will again use the fact that first order finite volume method and MUSCL-Hancock with slope correction from Theorem~\ref{thm:muscl.admissibility.theorem} preserve admissibility, i.e.,
\begin{align*}
\utilow_0 & = (\bw^{e+1}_{0})^n - \frac{\Delta t}{w_0 \Delta x_{e+1}} ( f^{e+1}_{\half} - f_{\eph} ) \in \Uad \\
\utilow_N &= (\bw^e_{N})^n - \frac{\Delta t}{w_N \Delta x_{e}} (f_{\eph} - f^e_{\Nmh}) \in \Uad
\end{align*}

Let $\{p_k, 1 \le 1 \le K\}$ be the admissibility constraints~\eqref{eq:uad.form} of the conservation law. The numerical flux is corrected by iterating over the admissibility constraints as follows

\begin{minipage}{\textwidth}  
\vspace{8pt}
\hrule
\vspace{1.5pt}
\begin{algorithmic}
\State $F_{\eph} \gets (1 - \alpha_{\eph}) F^\text{LW}_{\eph} + \alpha_{\eph} f_{\eph}$
\For{$k$=1:$K$}
\State $\epsilon_0, \epsilon_N \gets \frac{1}{10}p_k(\utilow_0), \frac{1}{10}p_k(\utilow_N)$
\State $\theta \gets \min \left(\min_{j=0,N} \left| \frac{\epsilon_j - p_k(\tilde{\bw}_j^{n+1})}{p_k(\tilde{\bw}_j^{\text{low},n+1}) -  p_k(\tilde{\bw}_j^{n+1})} \right |, 1 \right)$
\State $F_{\eph} \gets \theta F_{\eph} + (1-\theta) f_{\eph}$
\State $\tilde{\bw}_{0}^{n+1}  \gets (\bw^{e+1}_0)^n - \frac{\Delta t}{w_0 \Delta x_{e+1}} ( f^{e+1}_{\half} - F_\eph )$
\State $\tilde{\bw}_{N}^{n+1} \gets (\bw^e_N)^n - \frac{\Delta t}{w_N \Delta x_{e}} (F_\eph - f^e_{\Nmh})$
\EndFor
\end{algorithmic}
\hrule
\vspace{5pt}
\end{minipage}

By concavity of $p_k$, after the $k^\text{th}$ iteration, the updates computed using flux $F_\eph$ will satisfy
\begin{equation}
p_k(\tilde{\bw}_j^{n+1}) =  p_k(\theta (\tilde{\bw}_j^{n+1})^\text{prev} + (1-\theta)\utilow_j) \ge \theta p_k( (\tilde{\bw}_j^{n+1})^\text{prev}) + (1 - \theta)p_k(\utilow_j) \ge \epsilon_j, \qquad j=0,N
\end{equation}
satisfying the $k^\text{th}$ admissibility constraint;  here $(\tilde{\bw}_j^{n+1})^\text{prev}$ denotes $\tilde{\bw}_j^{n+1}$ before the $k^\text{th}$ correction and the choice of $\epsilon_j = \frac{1}{10}p_k(\utilow_j)$ is made following~\cite{ramirez2021}. After the $K$ iterations, all admissibility constraints will be satisfied and the resulting flux $F_{\eph}$ will be used as the interface flux keeping the lower order updates and thus the element means admissible. Thus, by Theorem~\ref{thm:lwfr.admissibility}, the choice of blended numerical flux gives us admissibility preservation in means. We now use the scaling limiter of~\cite{Zhang2010b} to obtain an admissibility preserving scheme as defined in Definition~\ref{defn:adm.pres}, an overview of the complete scheme can be found in Algorithm~\ref{alg:high.level.lw}. The above procedure is for 1-D conservation laws; the extension to 2-D is performed by breaking the update into convex combinations of 1-D updates and adding additional time step restrictions; the details are given in Appendix~\ref{sec:2d.admissibility}.

\section{Some implementation details} \label{sec:alg}
In Section~\ref{sec:admissibility.preservation}, the procedure for computing the blended numerical flux to achieve admissibility preservation in means for LWFR
(Definition~\ref{defn:mean.pres}) was presented. In this section, we present an overview of the complete LWFR blended scheme which employs the computed blended flux and the scaling limiter of~\cite{Zhang2010b} to obtain an admissibility preserving scheme (Definition~\ref{defn:adm.pres}) in Algorithm~\ref{alg:high.level.lw}.

The residual in~\eqref{eq:blended.scheme} is computed by performing an element loop and a face loop, incorporating blending within each of these loops. Within the element loop, we compute lower order fluxes on the subcell faces not shared by the FR elements. The fluxes for the faces shared by FR elements are computed within the face loop, and subsequently blended with the LW flux. This approach enables direct computation and use of each quantity, without the need for intermediate storage. However, to compute~\eqref{eq:low.order.tilde.update}, admissibility preservation requires storage of lower order fluxes $f^e_{\half}$ and $f^e_{\Nmh}$, which are computed during the element loop.

In Algorithm~\ref{alg:high.level.lw}, we give a high level overview of the LWFR with blending scheme. In practice, some operations could be reduced by computing only high or low order residuals in the cases where $\alpha_e = 0$ or $\alpha_e = 1$, but we did not include this optimization in Algorithm~\ref{alg:high.level.lw} to maintain simplicity in our explanation. The correction procedure of numerical flux for admissibility preservation (Section~\ref{sec:admissibility.preservation}) is performed within the interface iteration. The contribution of numerical flux to the residual is added in a different element loop to avoid race conditions in a multi-threaded loop; only one loop would be needed in a serial code.
After the solution update in Algorithm~\ref{alg:high.level.lw}, the blended flux will ensure that our purely low order update and the element means are admissible. However, the updates at solution points need not be admissible at this stage and must be corrected. The correction at solution points could now be performed as an \textit{a posteriori} modification of the blending coefficients~\cite{ramirez2021} or using the scaling limiter of~\cite{Zhang2010b}; we use the scaling limiter for all results in this work.

\begin{algorithm}
\caption{High-level overview of the Lax-Wendroff with blending scheme}
\label{alg:high.level.lw}
\begin{algorithmic} 
\While{$t < T$}
\State Compute $\{\alpha_e\}$ (Section~\ref{sec:smooth.ind})
\For{$e$ in \texttt{eachelement(mesh)}} \Comment{Assemble element residual}
\State Add LW element residual to rhs scaled with $1-\alpha_e$
\State Add FV subcell residual to rhs scaled with $\alpha_e$
\State Store $f^e_{\half}, f^e_{\Nmh}$~\eqref{eq:low.order.tilde.update}
\EndFor
\For{$\eph$ in \texttt{eachinterface(mesh)}} \Comment{Compute interface flux}
\State Compute $F_\eph^\text{LW}$, $f_\eph$ and blend them into $F_\eph$ (Section~\ref{sec:admissibility.preservation})
\EndFor
\For{$e$ in \texttt{eachelement(mesh)}} \Comment{Assemble face residual}
\State Add contribution of $F_{e \pm \half}$ to high, low order residual scaled with $1-\alpha_e, \alpha_e$ respectively
\EndFor
\State Update solution
\State Apply positivity correction at solution points using \cite{Zhang2010b} or \cite{ramirez2021}
\State $t \gets t + \Delta t$
\EndWhile
\end{algorithmic}
\end{algorithm}


\section{Numerical results} \label{sec:num.results}
We perform various tests to show the robustness and accuracy of the proposed blending scheme. The LWFR results are always obtained with D2 dissipation and EA flux~\cite{babbar2022} with Rusanov's numerical flux using Gauss-Legendre solutions point and Radau correction functions. All numerical simulations were run with the first order blending (Section~\ref{sec:fo}), MUSCL-Hancock blending (Section~\ref{sec:mh}) and TVB limiter with fine-tuned parameter $M$ plotted with legends FO, MH and TVB-M. We also made comparison with the results of first order blending scheme using Gauss-Legendre-Lobatto points of~\cite{henemann2021} implemented in {\tt Trixi.jl}~\cite{Ranocha2022, schlottkelakemper2021purely}. Our code is publicly available at~\cite{tenkai}, and the scripts for reproducing results in this work are available at~\cite{paperrepo}. The user only needs to install \correction{{\tt Julia}~\cite{Bezanson2017}} and the remaining dependencies are automatically handled by {\tt Julia} environments and its package manager.


\subsection{1-D Euler equations}\label{sec:res1dsys}
As an example of system of non-linear hyperbolic equations, consider the one-dimensional Euler equations of gas dynamics given by
\begin{equation}\label{eq:1deuler}
\pd{}{t} \begin{pmatrix}
\rho \\
\rho v \\
E
\end{pmatrix} +
\pd{}{x} \begin{pmatrix}
\rho v \\
p + \rho v^2 \\
(E+p)v
\end{pmatrix} = 0
\end{equation}
where $\rho, v, p$ and $E$ denote the  density, velocity, pressure and total energy of the gas, respectively. For a polytropic gas, an equation of state $E=E(\rho, v, p)$ which leads to a closed system is given by
\begin{equation}\label{eq:state}
E = E(\rho, v, p) = \frac{p}{\gamma -1}+\frac{1}{2} \rho v^2
\end{equation}
where $\gamma > 1$ is the adiabatic constant. Unless \correction{otherwise specified,} it will be taken as $1.4$ which is the value for air. The time step size for polynomial degree $N$ is computed as
\begin{equation}
\Delta t = C_s \min_e\left ( \frac{\Delta x_e}{|\overline{v}_e| + \overline{c}_e} \right) \text{CFL}(N) \label{eq:dt.lw}
\end{equation}
where $e$ is the element index, $\overline{v}_e, \overline{c}_e$ are velocity and sound speed of element mean in element $e$, $\text{CFL}(N)$ is the optimal CFL number obtained by Fourier stability analysis (Table 1 of~\cite{babbar2022}) and $C_s \le 1$ is a safety factor. \correction{Most of the numerical results presented in this work use degree $N=4$ for which $\text{CFL}(N) = 0.069$.} The admissibility preservation of subcell based MUSCL-Hancock imposes a time restriction (Theorem~\ref{thm:muscl.admissibility.theorem}) which depends on several quantities other than \correction{element means}, including some evolved quantities, see equations~(\ref{eq:new.cfl1}, \ref{eq:new.cfl2},
\ref{eq:new.cfl3.conservative}). The CFL coefficient of MUSCL-Hancock admissibility is also smaller than $\text{CFL}(N)$ in~\eqref{eq:dt.lw}, see Remark~\ref{rmk:mh.restriction.for.fr}. However, we have found the time step given by~\eqref{eq:dt.lw} with $C_s=0.98$ to be sufficient for admissibility preservation in all the simulations we have performed. Thus, we do not explicitly impose the CFL restrictions in Theorem~\ref{thm:muscl.admissibility.theorem} as they are more severe and expensive to compute. If the admissibility is violated in any cell, then the time update can be repeated in those cells by lowering the time step by some fraction.
\subsubsection{Shu-Osher problem}\label{sec:shuosher}
This problem was developed in~\cite{Shu1989} to test the numerical scheme's capability to accurately capture a shock wave and its interaction with a smooth density field, which then propagates downstream of the shock. Here, we compute the solution of~\eqref{eq:1deuler} up to time $t=1.8$ with initial condition
\begin{equation}\label{eq:shuosher}
(\rho,v,p)=\begin{cases}
(3.857143, 2.629369, 10.333333), & \mbox{ if } x<-4\\
(1+0.2\sin(5x),0,1), & \mbox{ if }x\geq -4
\end{cases}
\end{equation}
prescribed in the domain $[-5,5]$ with transmissive boundary conditions. The smooth density profile passes through the shock and appears on the other side, and its accurate computation is challenging due to numerical dissipation. Due to presence of both spurious oscillations and smooth extremums, this becomes a good test for testing robustness and accuracy of limiters. We discretize the spatial domain with 400 cells using polynomial degree $N=4$ and compare blending schemes and TVB limiter with parameter $M=300$~\cite{Qiu2005b}. The density component of the approximate solutions computed for the compared limiters are plotted against a reference solution obtained using a very fine mesh, and are given in Figure~(\ref{fig:ShuOsher}). The three limiters show similar performance in Figure~(\ref{fig:ShuOsher}a) on the large scale. The enlarged plot in Figure~(\ref{fig:ShuOsher}b) shows that the MUSCL-Hancock blending scheme is able to capture smooth extrema better than the first order blending and the TVB scheme.
\begin{figure}
\centering
\begin{tabular}{cc}
\includegraphics[width=0.45\textwidth]{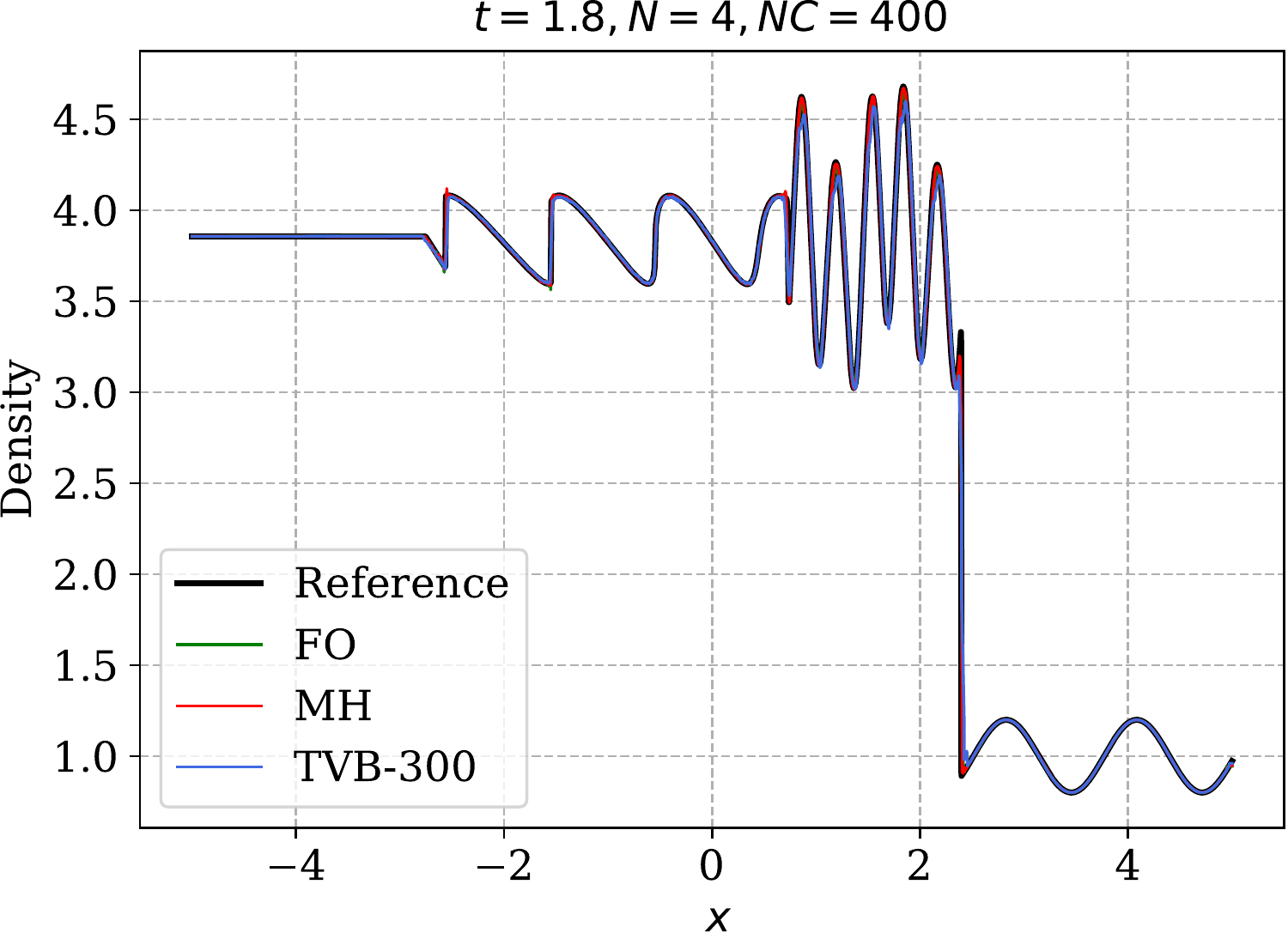} &
\includegraphics[width=0.45\textwidth]{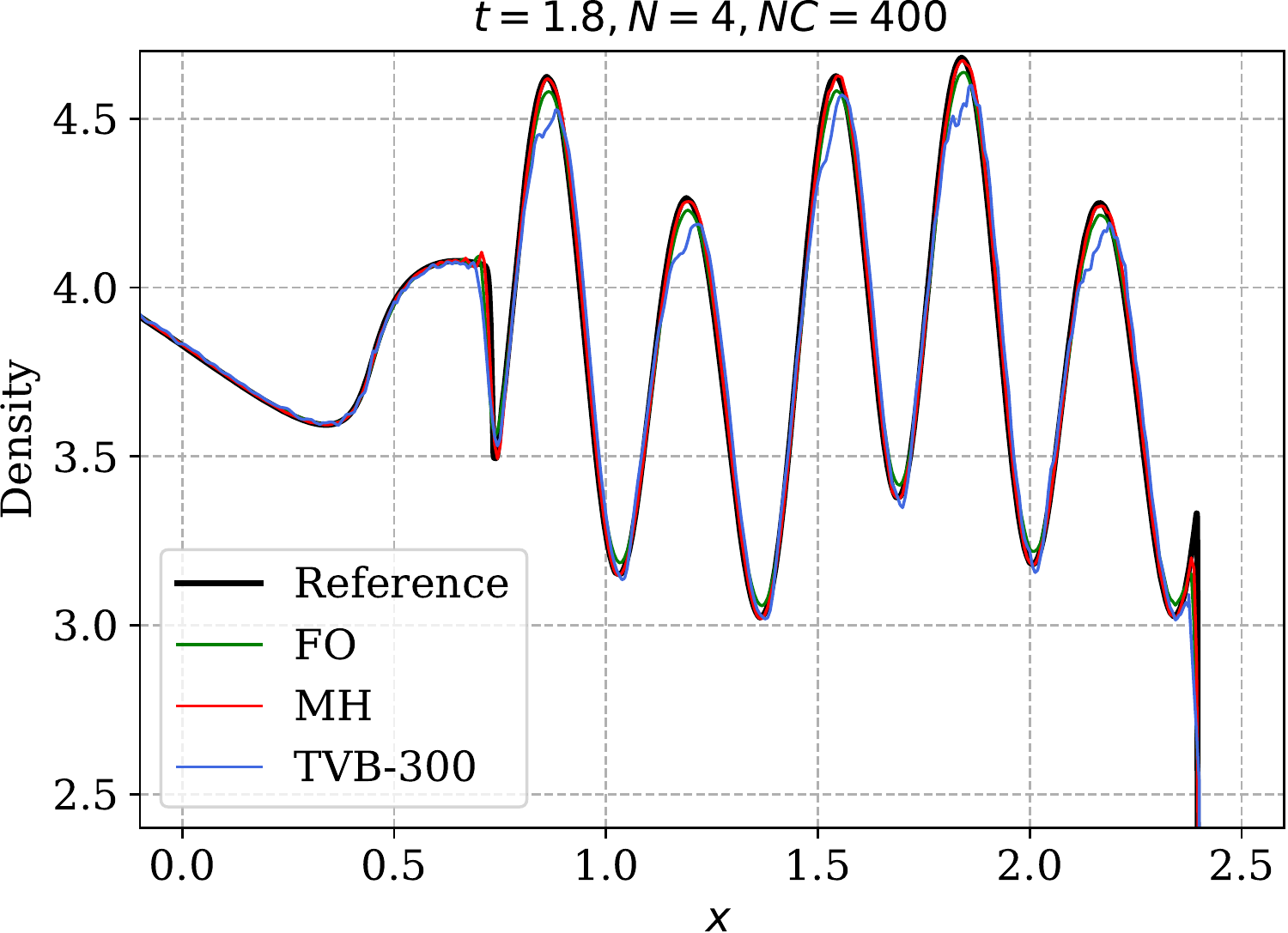} \\
(a) & (b)
\end{tabular}
\caption{Shu-Osher problem, numerical solution with degree $N=4$ using first order (FO) and MUSCL-Hancock (MH) blending schemes, and TVB limited scheme (TVB-300) with parameter $M=300$. (a) Full and (b) zoomed density profiles of numerical solutions are shown up to time $t=1.8$ on a mesh of 400 cells.}
\label{fig:ShuOsher}
\end{figure}

\subsubsection{Blast wave}\label{sec:blast}
The Euler equations \eqref{eq:1deuler} are solved with the initial condition
\begin{equation*}
(\rho,v,p)=\begin{cases}
(1,0, 1000), & \mbox{ if } x<0.1\\
(1,0,0.01), & \mbox{ if }  0.1 < x <0.9\\
(1,0, 100), & \mbox{ if } x> 0.9
\end{cases}
\end{equation*}
in the domain $[0,1]$. This test was originally introduced in~\cite{Woodward1984} to check the capability of the numerical scheme to accurately capture the shock-shock interaction scenario. The boundaries are set as solid walls by imposing the reflecting boundary conditions at $x=0$ and $x=1$. The solution consists of reflection of shocks and expansion waves off the boundary wall and several wave interactions inside the domain. The numerical solutions are inadmissible if the positivity correction is not applied. With a grid of 400 cells using polynomial degree $N=4$, we run the simulation till the time $t=0.038$ where a high density peak profile is produced. As in the previous test, we compare first order (FO) and MUSCL-Hancock (MH) blending schemes, and TVB limiter with parameter $M=300$~\cite{Qiu2005b} (TVB-300). We compare the performance of limiters in Figure~(\ref{fig:blast}) where the approximated density and pressure profiles are compared with a reference solution computed using a very fine mesh. Looking at the peak amplitude and contact discontinuity of the density profile which is also shown in the zoomed inset, it is clear that MUSCL-Hancock blending scheme gives the best resolution, especially when compared with the TVB limiter.
\begin{figure}
\centering
\begin{tabular}{cc}
\includegraphics[width=0.45\textwidth]{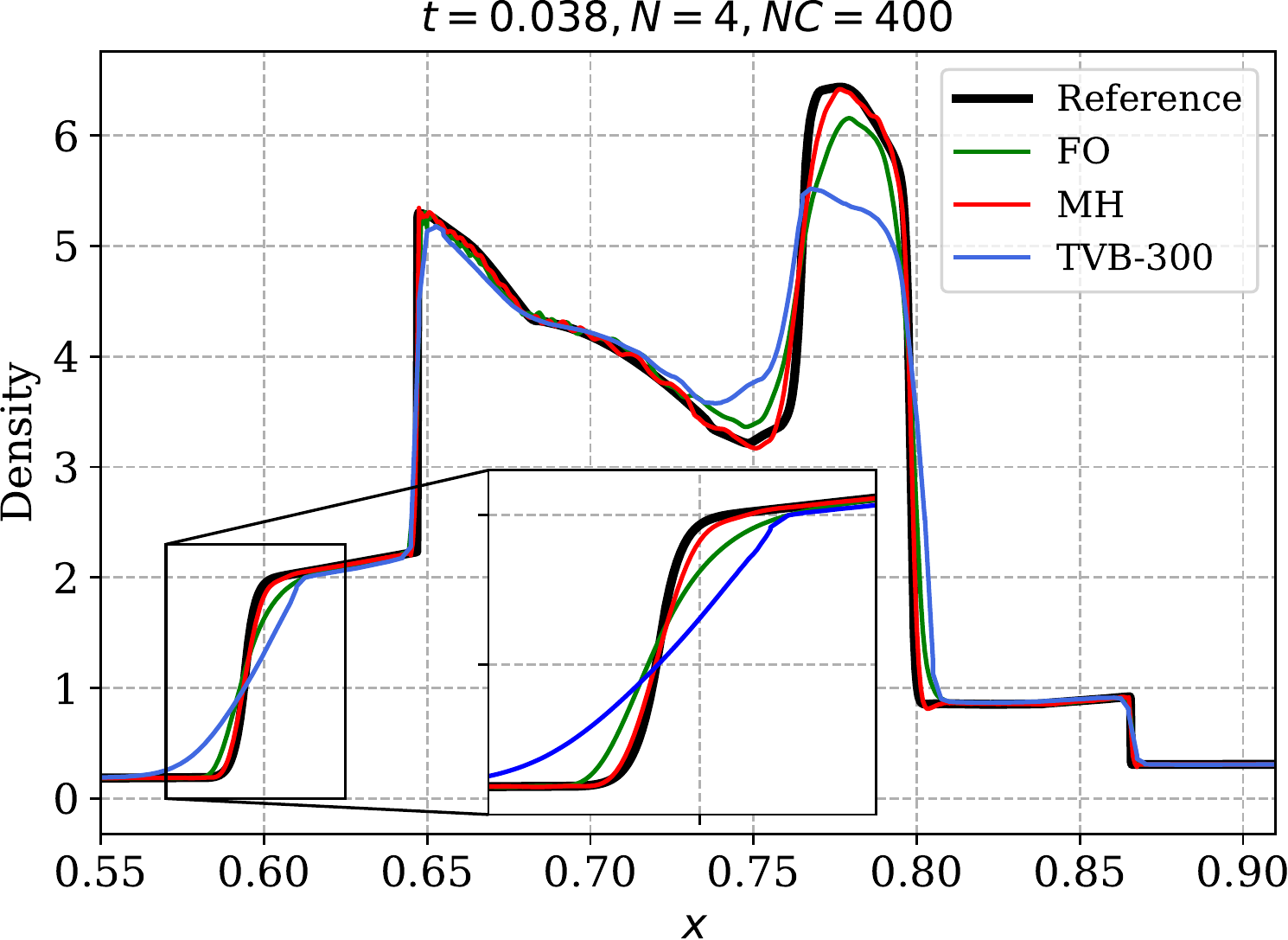} &
\includegraphics[width=0.45\textwidth]{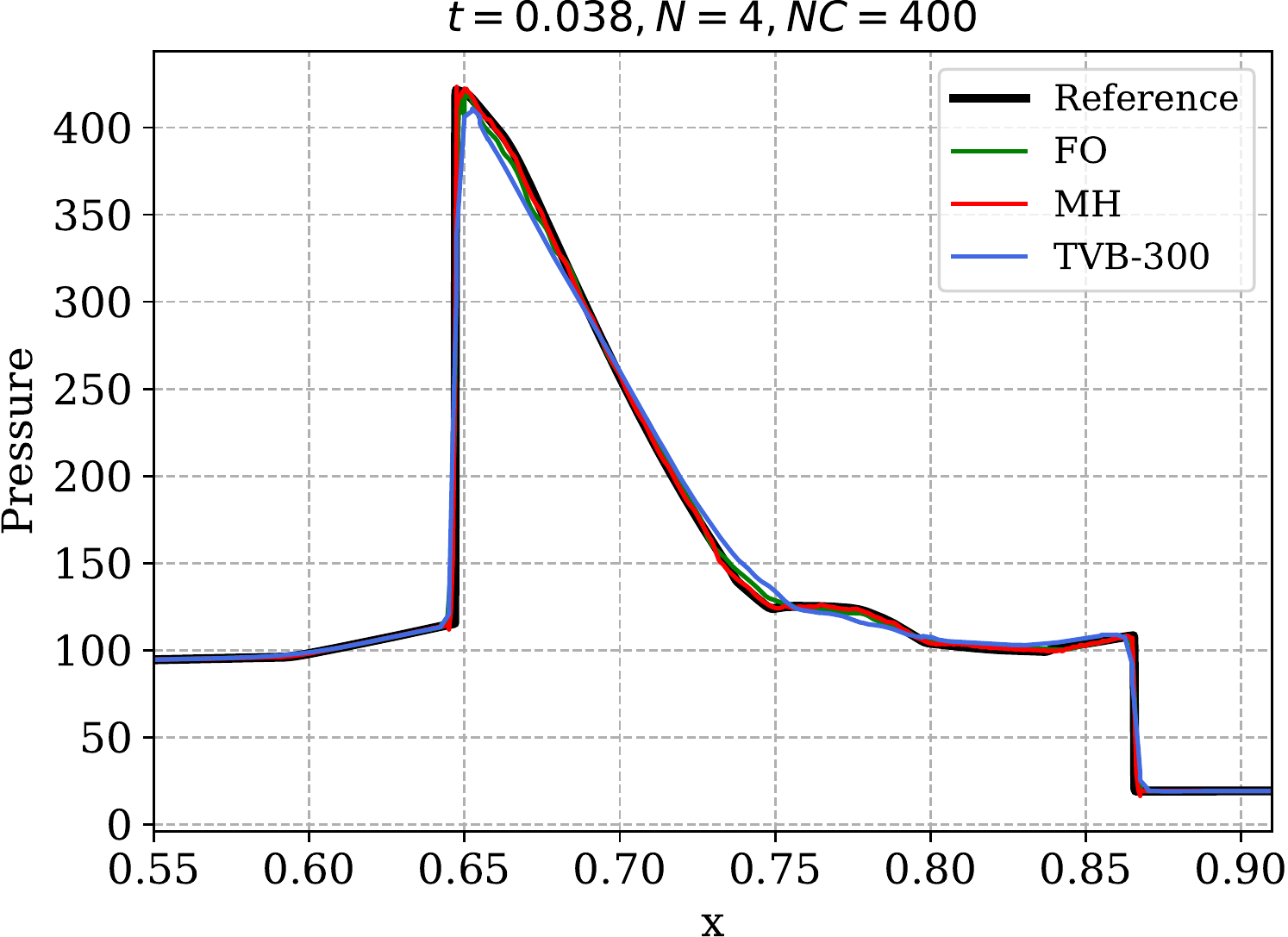} \\
(a) & (b)
\end{tabular}
\caption{Blast wave problem, numerical solution with degree $N=4$ using first order (FO) and MUSCL-Hancock (MH) blending schemes, and TVB limited scheme (TVB-300) with parameter $M=300$. (a) Density, (b) pressure profiles are shown at time $t=0.038$ on a mesh of 400 cells.}
\label{fig:blast}
\end{figure}

\subsubsection{Sedov's blast wave}\label{sec:sedov.blast.1d}
To demonstrate the admissibility preserving property of our scheme, we simulate Sedov's blast wave~\cite{sedov1959}; the test describes the explosion of a point-like source of energy in a gas. The explosion generates a spherical shock wave that propagates outwards, compressing the gas and reaching extreme temperatures and pressures. The problem can be formulated in one dimension as a special case, where the explosion occurs at $x=0$ and the gas is confined to the interval $[-1,1]$ by solid walls. For the simulation, on a grid of 201 cells with solid wall boundary conditions, we use the following initial data~\cite{Zhang2012},
\[
\rho = 1, \qquad
v = 0, \qquad
E(x) = \begin{cases}
\frac{3.2 \times 10^6}{\Delta x}, \qquad & |x| \le \frac{\Delta x}{2} \\
10^{-12}, \qquad & \text{otherwise}
\end{cases}
\]
where $\Delta x$ is the element width. This is a difficult test for positivity preservation because of the high pressure ratios. Nonphysical solutions are obtained if the proposed admissibility preservation corrections are not applied. The density and pressure profiles at $t=0.001$ are obtained using blending schemes are shown in Figure~\ref{fig:sedov.blast}. Results of TVD limiter are not shown as it fails to preserve positivity in this test because the admissibility correction of Lax-Wendroff scheme depends on the blended numerical flux (Section~\ref{sec:admissibility.preservation}).
\begin{figure}
\centering
\begin{tabular}{cc}
\includegraphics[width=0.45\textwidth]{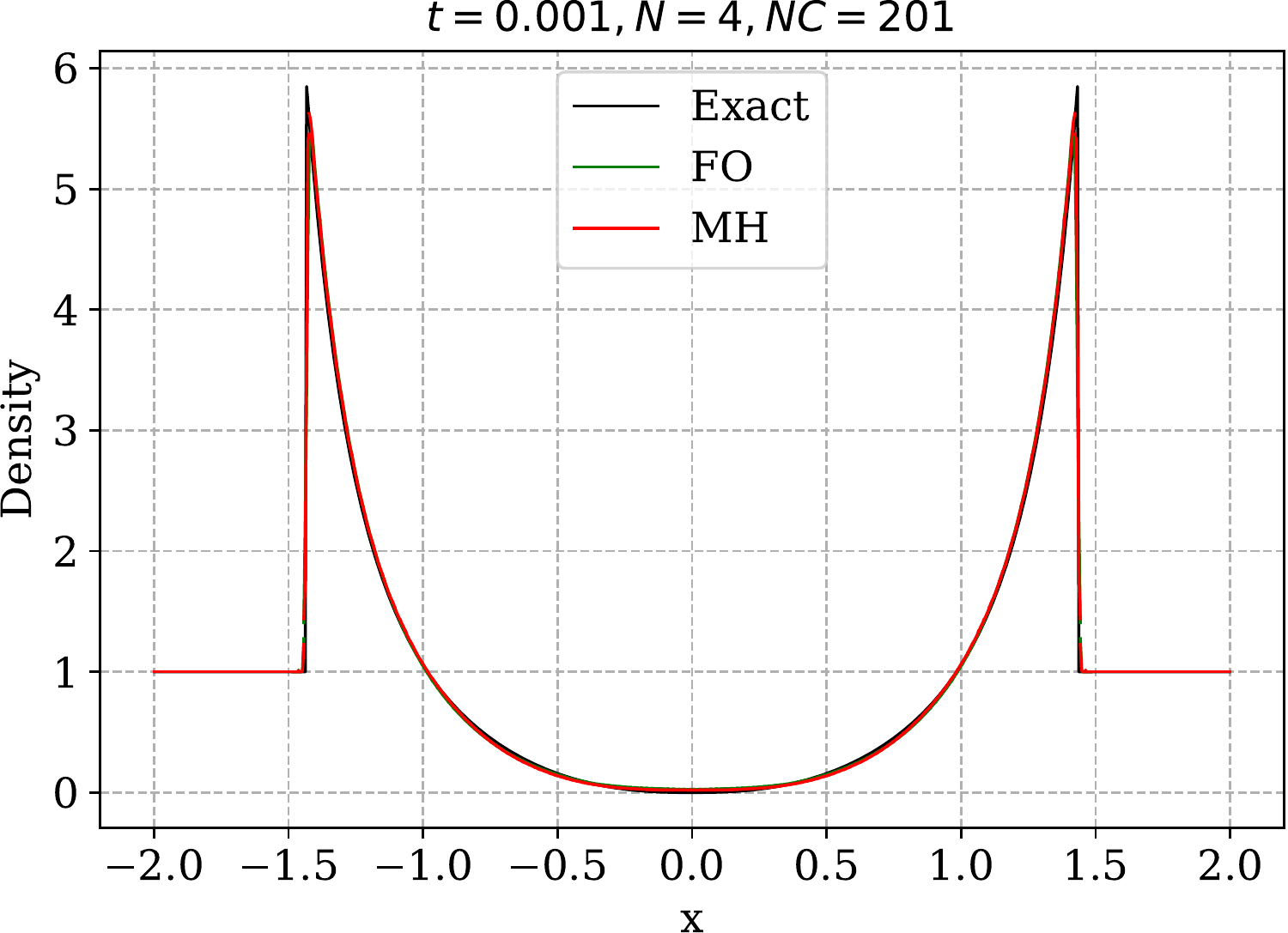} &
\includegraphics[width=0.45\textwidth]{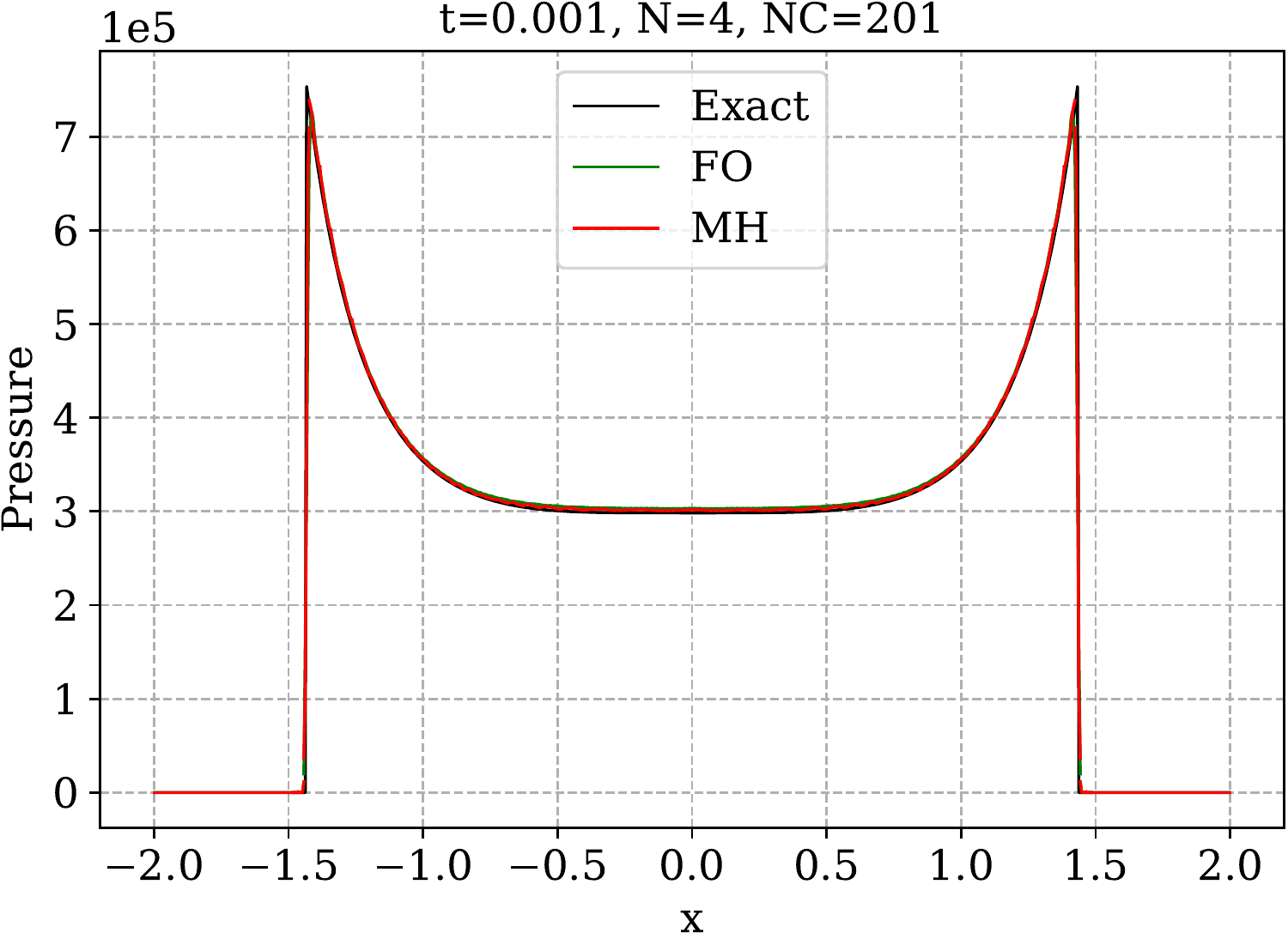} \\
(a) & (b)
\end{tabular}
\caption{Sedov's blast wave problem, numerical solution with degree $N=4$ using first order (FO) and MUSCL-Hancock blending schemes. (a) Density and (b) pressure profiles of numerical solutions are plotted at time $t=0.001$ on a mesh of $201$ cells.}
\label{fig:sedov.blast}
\end{figure}

\subsubsection{Riemann problems}\label{sec:rp1d}
We test two extreme Riemann problems from~\cite{Zhang2010b} to demonstrate admissibility preservation of our scheme. The first is a Riemann problem with no shocks and two rarefactions, which move away from each other leading to a near vacuum state in the exact solution. The low densities make it a challenging test, as the oscillations can easily cause negative density values. We run the simulation on the domain $[-1,1]$ with initial data
\[
(\rho, v, p) = \begin{cases}
(7.0, -1.0, 0.2), \qquad & -1 \le x \le 0 \\
(7.0,  1.0, 0.2), & \text{otherwise}
\end{cases}
\]
The results obtained using blending schemes are shown in Figure~\ref{fig:double.rarefaction} on a mesh of $200$ cells with transmissive boundary conditions at time $t=0.6$.

The second test is a 1D Leblanc shock tube problem with initial data
\[
(\rho, v, p) = \begin{cases}
(2, 0, 10^9), \qquad & -1 \le x \le 0 \\
(0.001, 0, 1), & \text{otherwise}
\end{cases}
\]
The solution has extremely high density and pressure ratios across the shock and the numerical solutions give negative pressure if \correction{the proposed admissibility preservation techniques are} not applied. The log-scaled results obtained using blending schemes are shown in Figure~\ref{fig:leblanc} at time $t=0.001$ on a mesh of $800$ cells with transmissive boundary conditions.
\begin{figure}
\centering
\begin{tabular}{cc}
\includegraphics[width=0.45\textwidth]{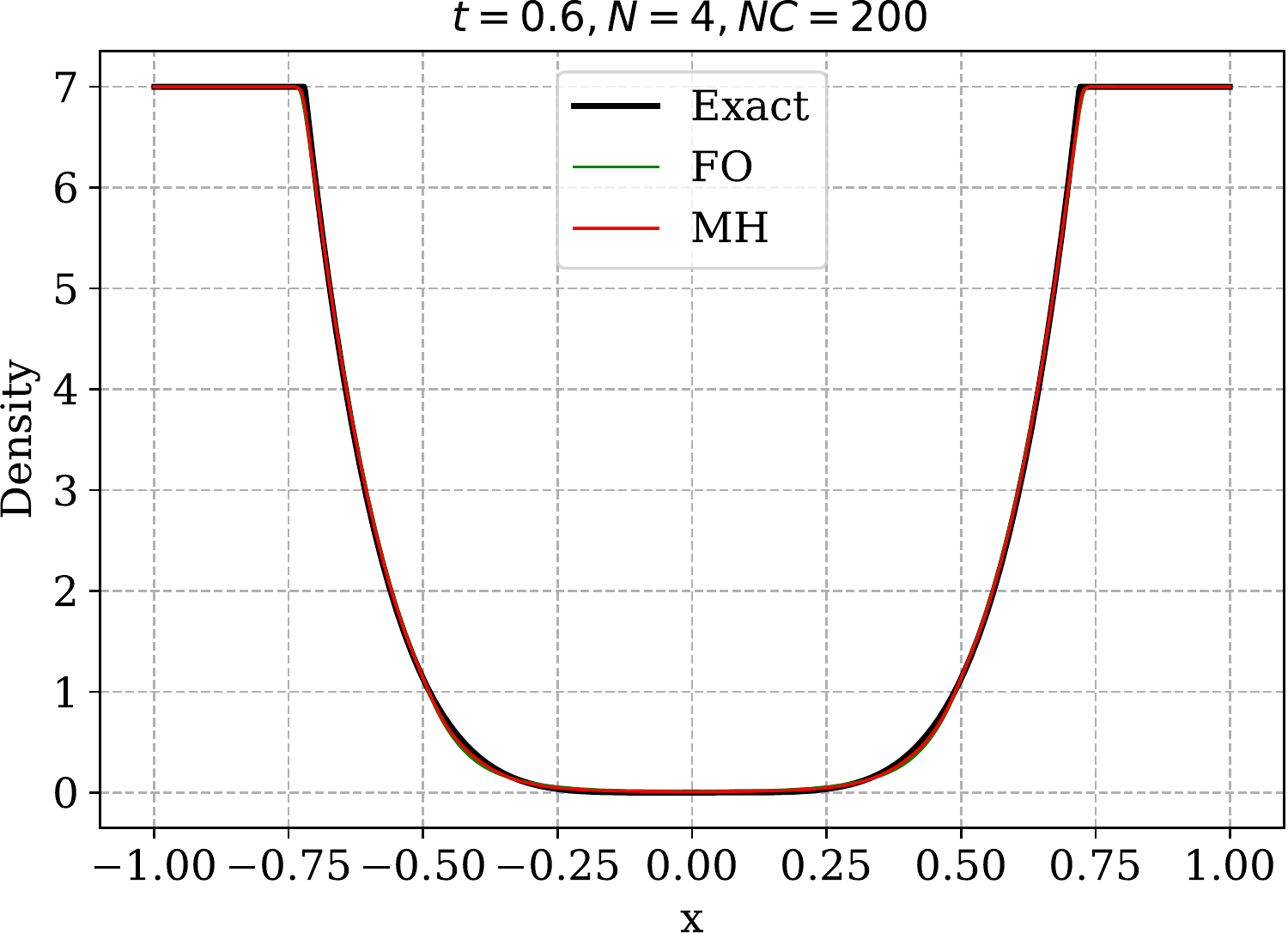} &
\includegraphics[width=0.45\textwidth]{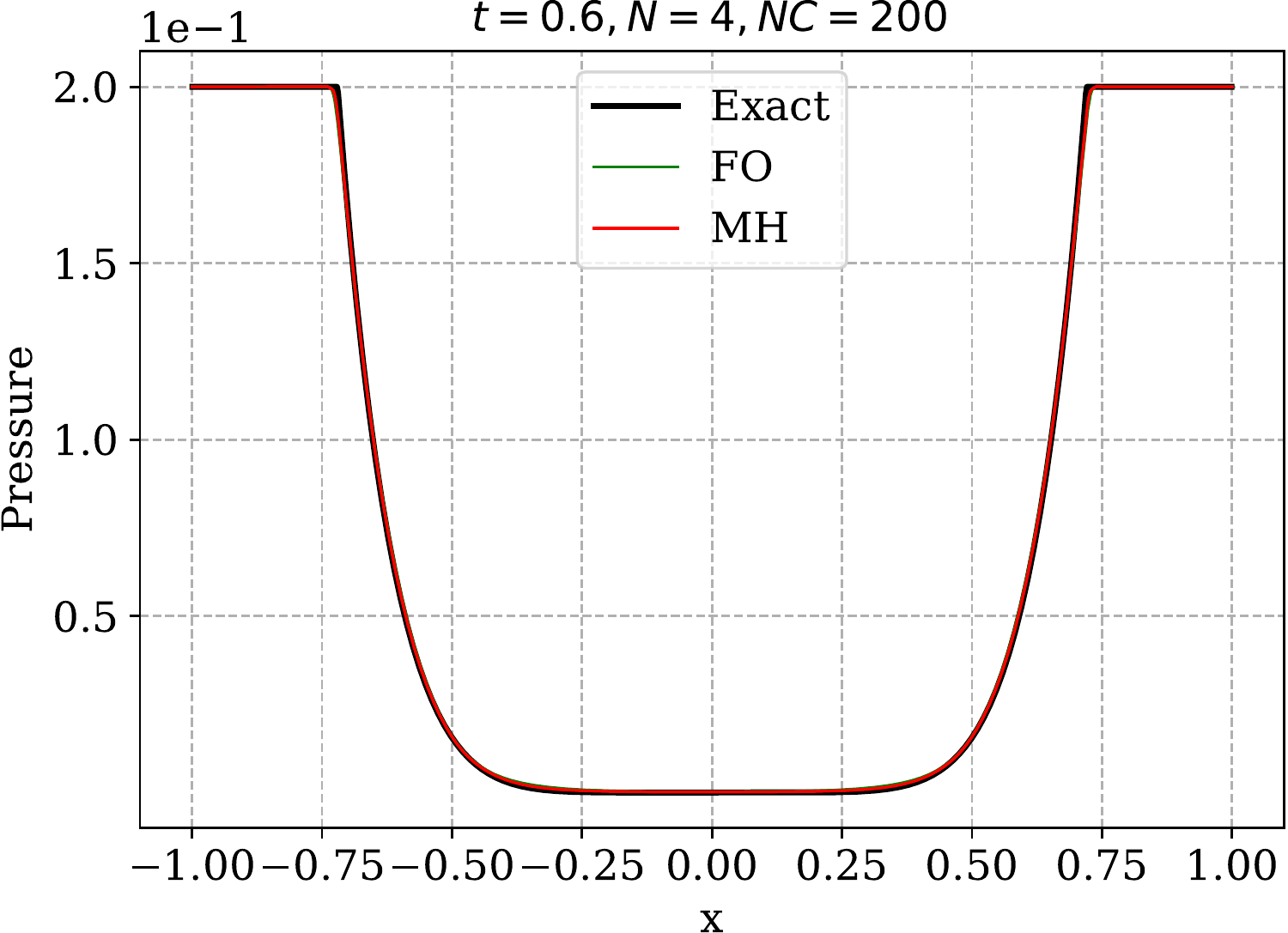} \\
(a) & (b)
\end{tabular}
\caption{Double rarefaction problem, numerical solution with degree $N=4$ using first order (FO) and MUSCL-Hancock (MH) blending. (a) Density and (b) pressure profiles of numerical solutions are plotted at $t=0.6$ on a mesh of $200$ cells.}
\label{fig:double.rarefaction}
\end{figure}

\begin{figure}
\centering
\begin{tabular}{cc}
\includegraphics[width=0.45\textwidth]{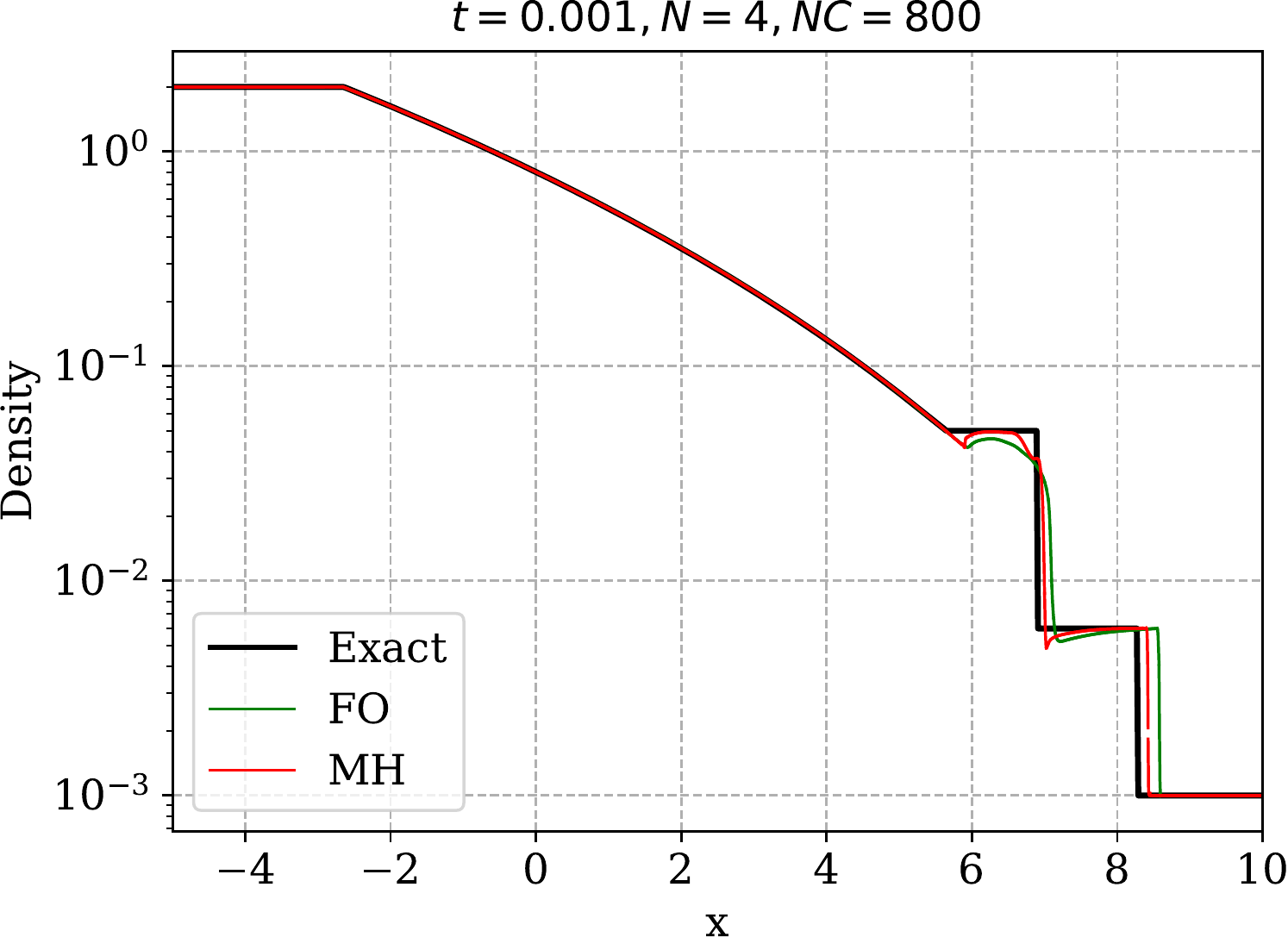} &
\includegraphics[width=0.45\textwidth]{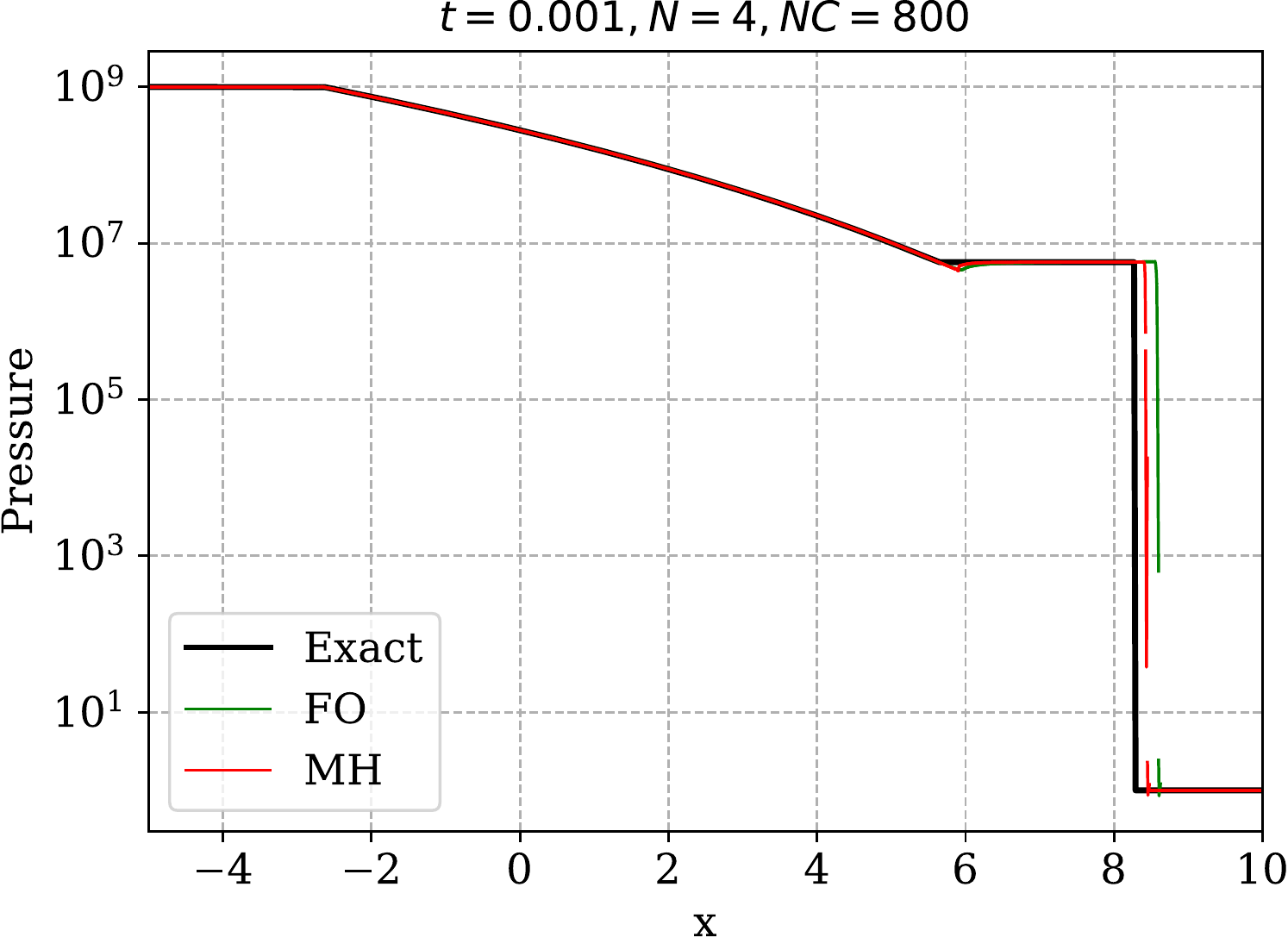} \\
(a) & (b)
\end{tabular}
\caption{Leblanc's test, numerical solution with polynomial degree $N=4$ using first order (FO) and MUSCL-Hancock (MH) blending. (a) Density and (b) pressure profiles of numerical solutions with log-scales are plotted at $t=0.001$ on a mesh of $800$ cells.}
\label{fig:leblanc}
\end{figure}

\subsection{2-D advection equation}
We consider the advection equation in two dimensions
\begin{equation}\label{eq:2dvaradv}
u_t+\nabla \cdot [a(x,y)u] =0
\end{equation}
with a test case from~\cite{LeVeque1996} where the equation~\eqref{eq:2dvaradv} is solved with a divergence free velocity field, $a=(\frac{1}{2}-y,x-\frac{1}{2})$, and an initial condition which consists of a slotted disc, a cone and a smooth hump, given as follows
\begin{align*}
u(x,y,0) &= u_1(x,y)+u_2(x,y)+u_3( x,y),\quad  (x,y)\in [0,1]\times[0,1]\\
u_1(x,y) &= \frac{1}{4}(1+\cos (\pi q( x,y))),\quad q(x,y) = \min ( \sqrt{(x-\bar x)^2+(y-\bar y)^2},r_0 )/r_0,{(\bar x,\bar y)} = (0.25,0.5), r_0=0.15\\
u_2(x,y) &= \begin{cases}
1-\dfrac{1}{r_0} \sqrt{(x-\bar x)^2+(y-\bar y)^2} & \mbox{ if } (x-\bar x)^2+(y-\bar y)^2\le r_0^2\\
0 & \mbox{otherwise}
\end{cases}, \quad {(\bar x,\bar y)} =(0.5,0.25), r_0=0.15\\
u_3(x,y) &= \begin{cases}
1 & \mbox{ if } (x,y) \in \mathrm{C}\\
0 & \mbox{otherwise}
\end{cases}
\end{align*}
where $\mathrm{C}$ is a slotted disc with center at $(0.5,0.75)$ and radius of $0.15.$

The characteristics of the PDE are circles and the solution returns to its initial state after a period of time $t=2 \pi$. Figure~(\ref{fig:composite.signal.2d}) compares contour plots of polynomial solutions obtained using the LWFR method of degree $N=4$ with TVB limiter \correction{using} a fine-tuned parameter $M=100$, and with blending limiter \correction{using first order} and MUSCL-Hancock reconstruction\correction{s}, after one time period. The blending limiter with MUSCL-Hancock reconstruction is shown to produce more accurate solutions \correction{among the} three profiles \correction{especially when} compared to the TVB limiter, as the TVB limiter results in greater smearing of the profile. The sharp features of slotted disc profile show the most notable improvement.


\begin{figure}
\centering
\begin{tabular}{cccc}
\newcommand{\spacingcom}{\kern-2.2em}
\kern-2.2em \includegraphics[width=0.27\textwidth] {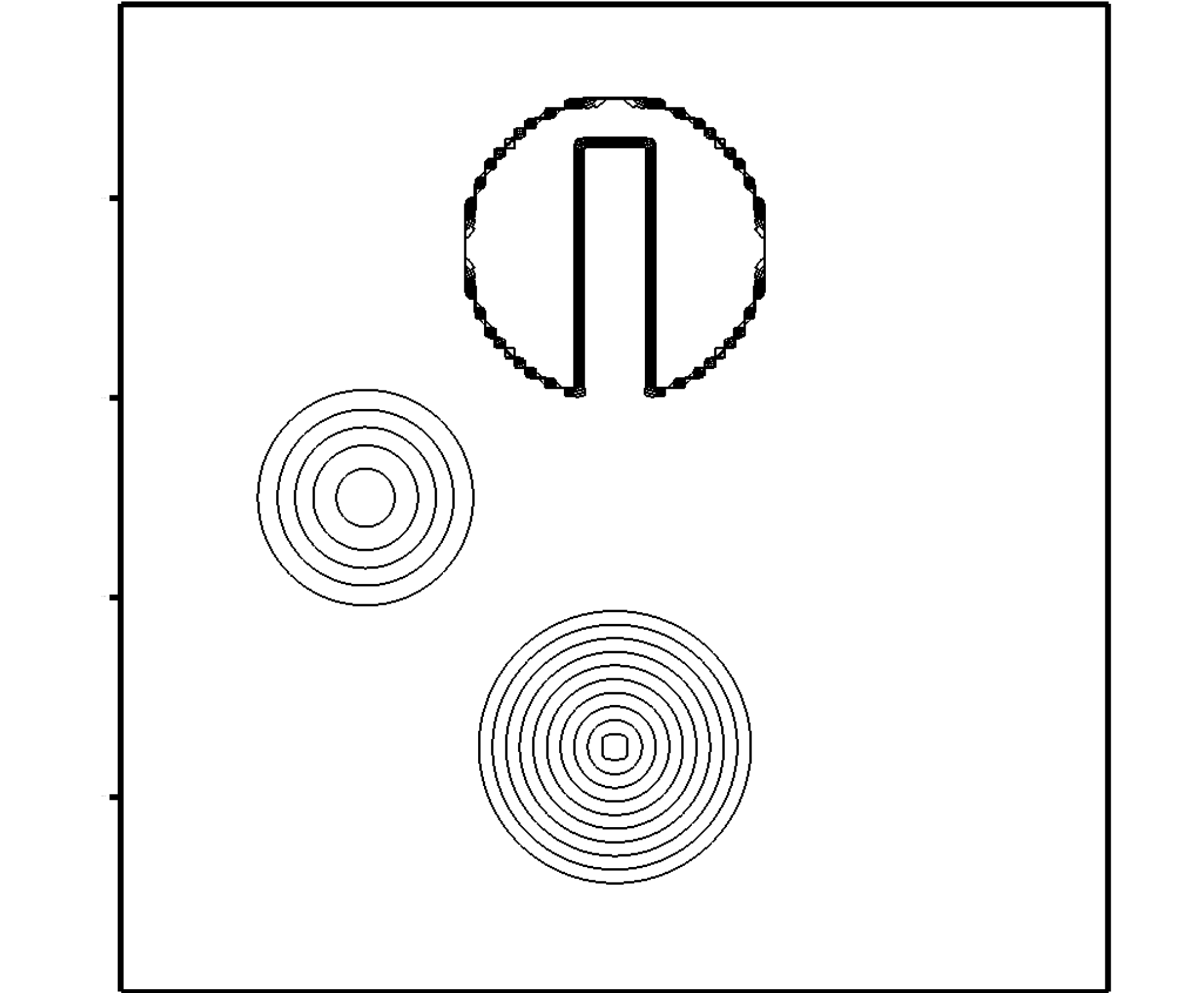} \kern-2.2em &
\includegraphics[width=0.27\textwidth]{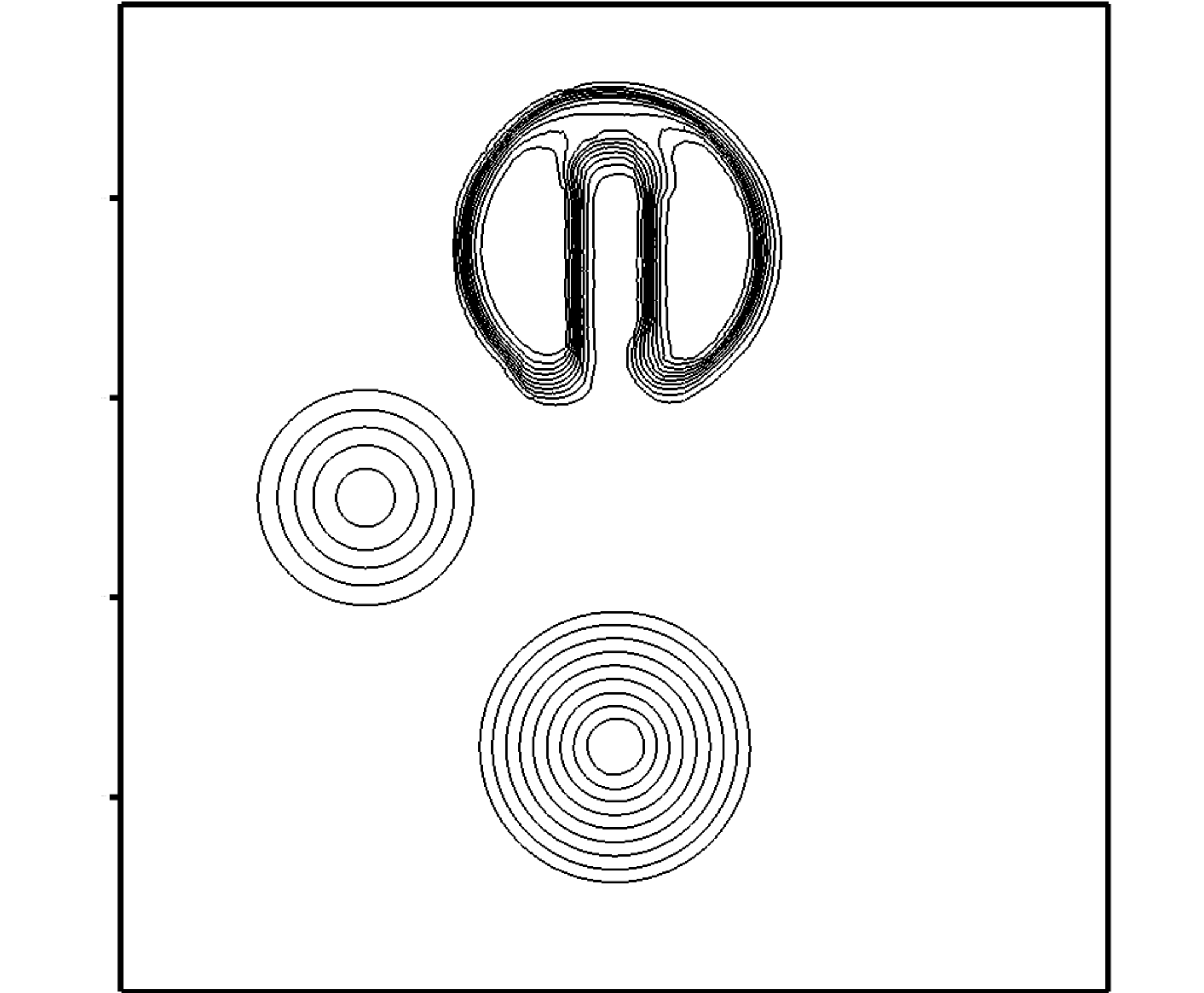} \kern-2.2em &
\includegraphics[width=0.27\textwidth]{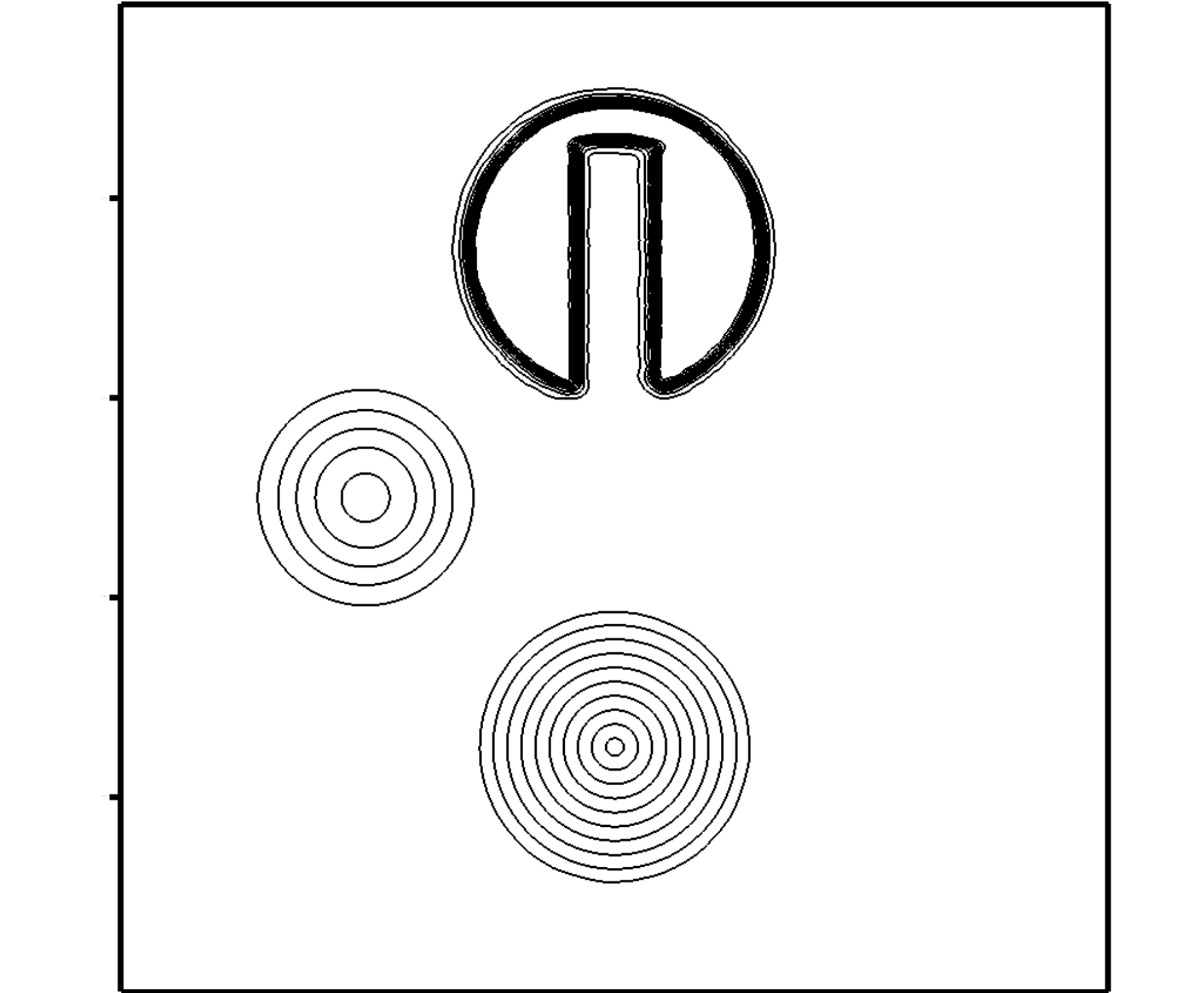} \kern-2.2em &
\includegraphics[width=0.27\textwidth]{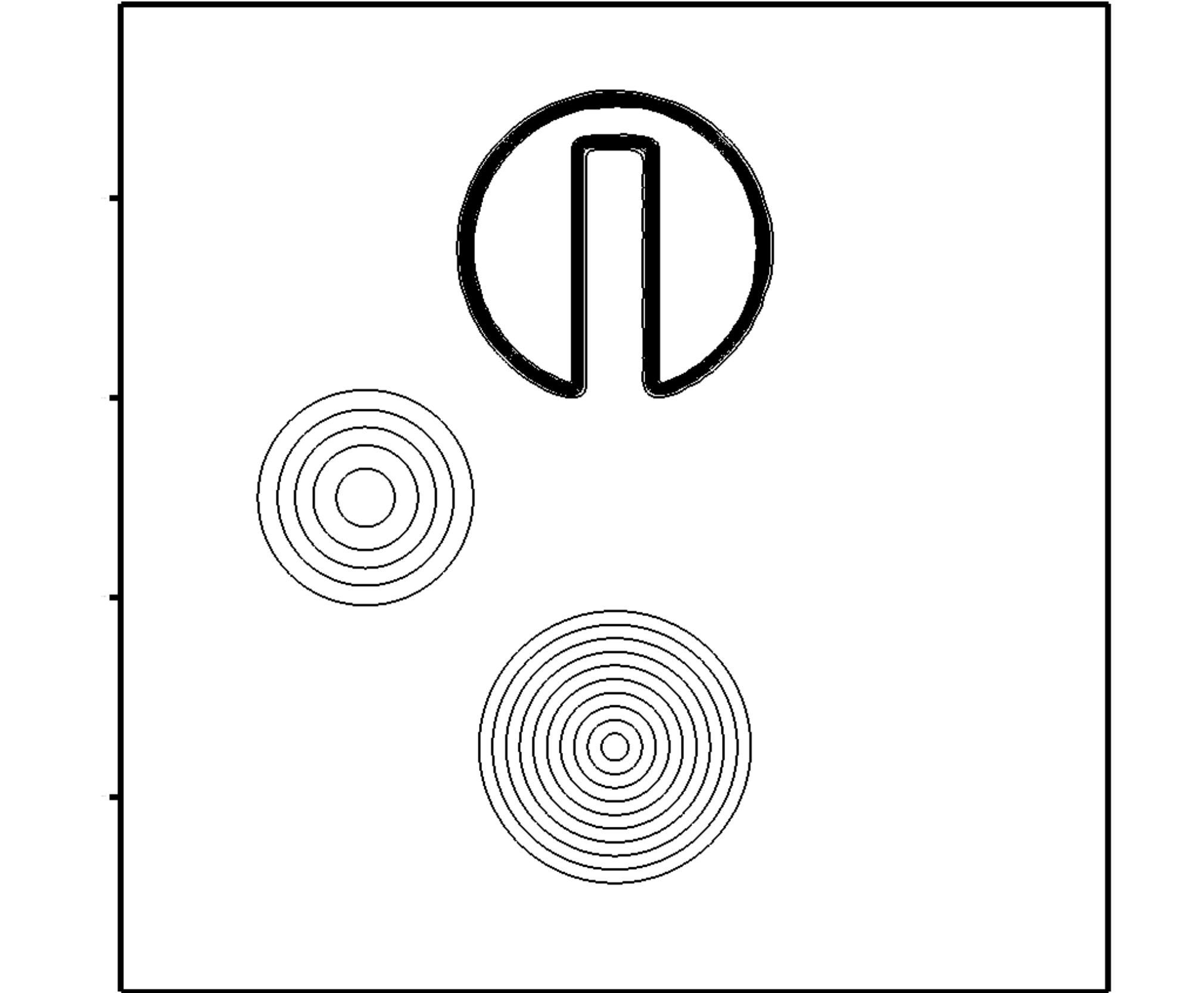} \kern-2.2em \\
\kern-2.2em (a) Exact \kern-2.2em & (b) TVB with $M=100$ \kern-2.2em & (c) FO blending \kern-2.2em & (d) MH blending \kern-2.2em
\end{tabular}
\caption{Rotation of a composite signal with velocity $\mathbf{a} = (\frac{1}{2} - y, x - \frac{1}{2})$,
numerical solution with polynomial degree $N=4$ on a mesh of $100^2$ elements.}
\label{fig:composite.signal.2d}
\end{figure}

\subsection{2-D Euler equations}

We consider the two-dimensional Euler equations of gas dynamics given by
\begin{equation}\label{eq:2deuler}
\pd{}{t} \begin{pmatrix}
\rho \\
\rho u \\
\rho v \\
E
\end{pmatrix} +
\pd{}{x} \begin{pmatrix}
\rho u \\
p + \rho u^2 \\
\rho u v \\
(E+p)u
   \end{pmatrix} +
\pd{}{y} \begin{pmatrix}
\rho v \\
\rho u v \\
p + \rho v^2 \\
(E+p)v
\end{pmatrix}= 0
\end{equation}
where $\rho, p$ and $E$ denote the density, pressure and total energy of the gas, respectively and $(u, v)$ are Cartesian components of the fluid velocity. For a polytropic gas, an equation of state $E=E(\rho, u, v, p)$ which leads to a closed system is given by
\begin{equation}\label{eq:2dstate}
E = E(\rho, u, v, p) = \frac{p}{\gamma -1}+\frac{1}{2} \rho (u^2 + v^2)
\end{equation}
where $\gamma > 1$ is the adiabatic constant. Unless \correction{otherwise specified}, the adiabatic constant will be taken as $1.4$ in the numerical tests, which is the typical value for air.

The time step size for polynomial degree $N$ is computed as
\begin{align}
\Delta t = C_s \min_e\left ( {\frac{|\overline{u}_e| + \overline{c}_e}{\Delta x_e} + \frac{|\overline{v}_e| + \overline{c}_e}{\Delta y_e}} \right)^{-1} \text{CFL}(N)
\label{eq:time.step.2d}
\end{align}
where $e$ is the element index, $(\overline{u}_e,\overline{v}_e), \overline{c}_e$ are velocity and sound speed of element mean in element $e$, $\text{CFL}(N)$ is the optimal CFL number obtained by Fourier stability analysis (Table 1 of~\cite{babbar2022}) and $C_s \le 1$ is a safety factor. \correction{Most of the numerical results presented in this work use degree $N=4$ for which $\text{CFL}(N) = 0.069$.} As in the 1-D case,~\eqref{eq:time.step.2d} will not guarantee that the time step restriction for admissibility of MUSCL-Hancock scheme on the subcells is satisfied. However, we have found all tests to work with~\eqref{eq:time.step.2d} using $C_s=0.98$ and the results are shown with that safety factor unless otherwise specified.

For verification of numerical results and to demonstrate the accuracy gain of our proposed Lax-Wendroff blending scheme with MUSCL-Hancock reconstruction using Gauss-Legendre points, we will compare our results with the first order blending scheme using Gauss-Legendre-Lobatto (GLL) points of~\cite{henemann2021} available in {\tt Trixi.jl}~\cite{Ranocha2022}. Both solvers use the same time step sizes in all results. We have also performed experiments using LWFR with first order blending scheme and Gauss-Legendre (GL) points, and observed lower accuracy than the MUSCL-Hancock blending scheme, but higher accuracy than the first order blending scheme implementation of {\tt Trixi.jl} using GLL points. These results are expected since GL points and quadrature are more accurate than GLL points, and MUSCL-Hancock is also more accurate than first order finite volume method. However, to save space, we have not presented the results of LWFR with first order blending.

\subsubsection{Isentropic vortex convergence test}
This problem~\cite{Yee1999,Spiegel2016} consists of a vortex that advects at a constant velocity while the entropy is constant in both space and time. The initial condition is given by
\[
\rho = \left[ 1 - \frac{\beta^2 (\gamma - 1)}{8 \gamma \pi^2} \exp (1 -
r^2) \right]^{\frac{1}{\gamma - 1}}, \qquad u = M_\infty \cos \alpha -
\frac{\beta (y - y_c)}{2 \pi} \exp \left( \frac{1 - r^2}{2} \right)
\]
\[
v = M_\infty \sin \alpha + \frac{\beta (x - x_c)}{2 \pi} \exp \left( \frac{1 -
r^2}{2} \right), \qquad r^2 = (x - x_c)^2 + (y - y_c)^2
\]
and the pressure is given by $p = \rho^{\gamma}$. We choose the parameters $\beta = 5$, $M_\infty = 0.5$, $\alpha = 45^o$, $(x_c, y_c) = (0, 0)$ and the domain is taken to be $[- 10, 10] \times [- 10, 10]$ with periodic boundary conditions.  For this configuration, the vortex returns to its initial position after a time interval of $T=20\sqrt{2}/M_\infty$  units. We run the computations up to a  time  $t = T$ when the vortex has crossed the domain once in the diagonal direction. Figure~(\ref{fig:isentropic.convergence}a) compares the $L^2$ error of density sampled at $N+3$ equispaced points against grid resolution when using the blending limiter. It can be seen that the limiter does not activate for adequately high resolution, yielding the same optimal convergence rates as those achieved without the limiter, as shown in Figure~(\ref{fig:isentropic.convergence}b).

\begin{figure}
\begin{center}
\begin{tabular}{cc}
\includegraphics[width=0.45\textwidth]{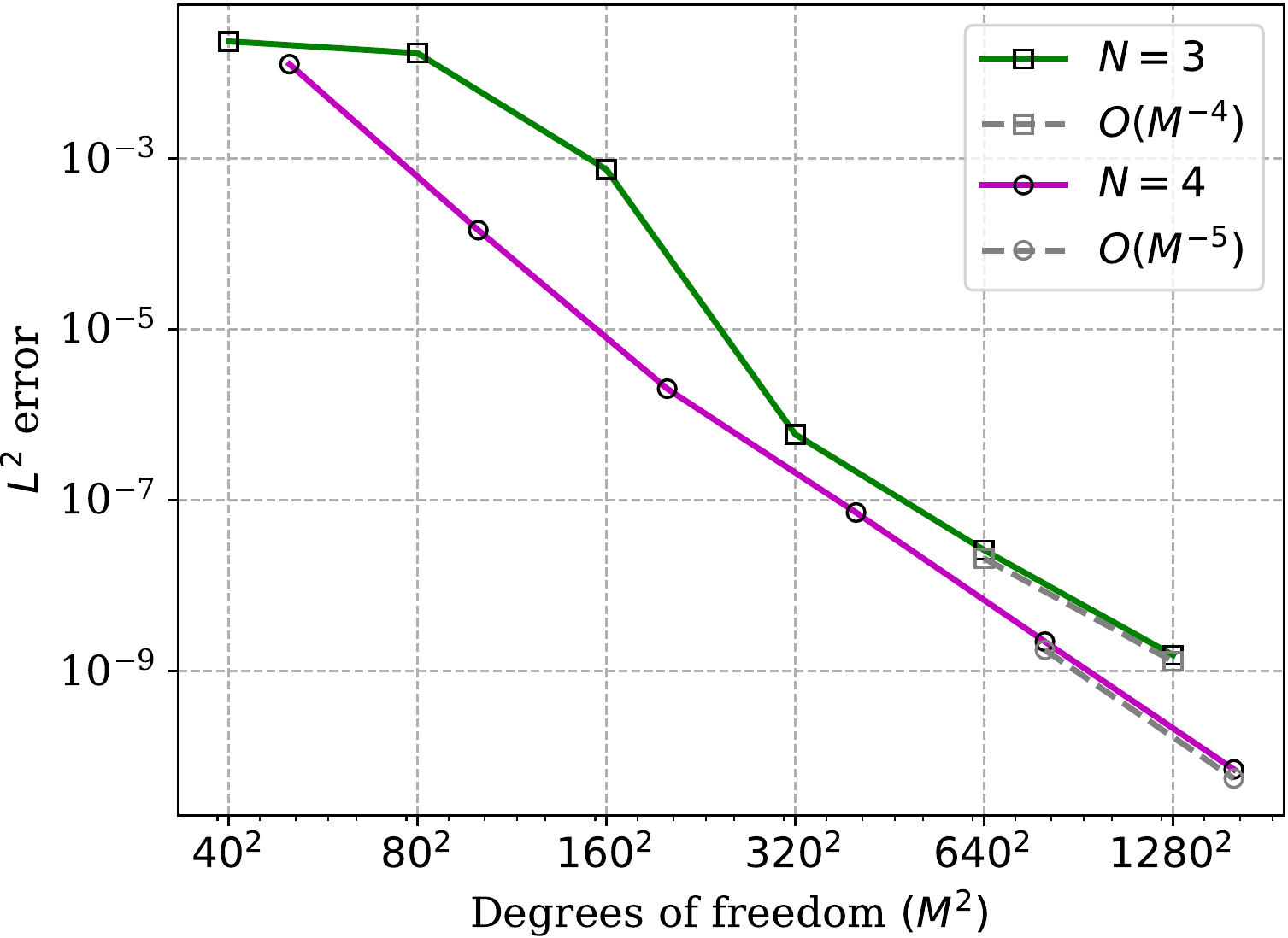} & \includegraphics[width=0.45\textwidth]{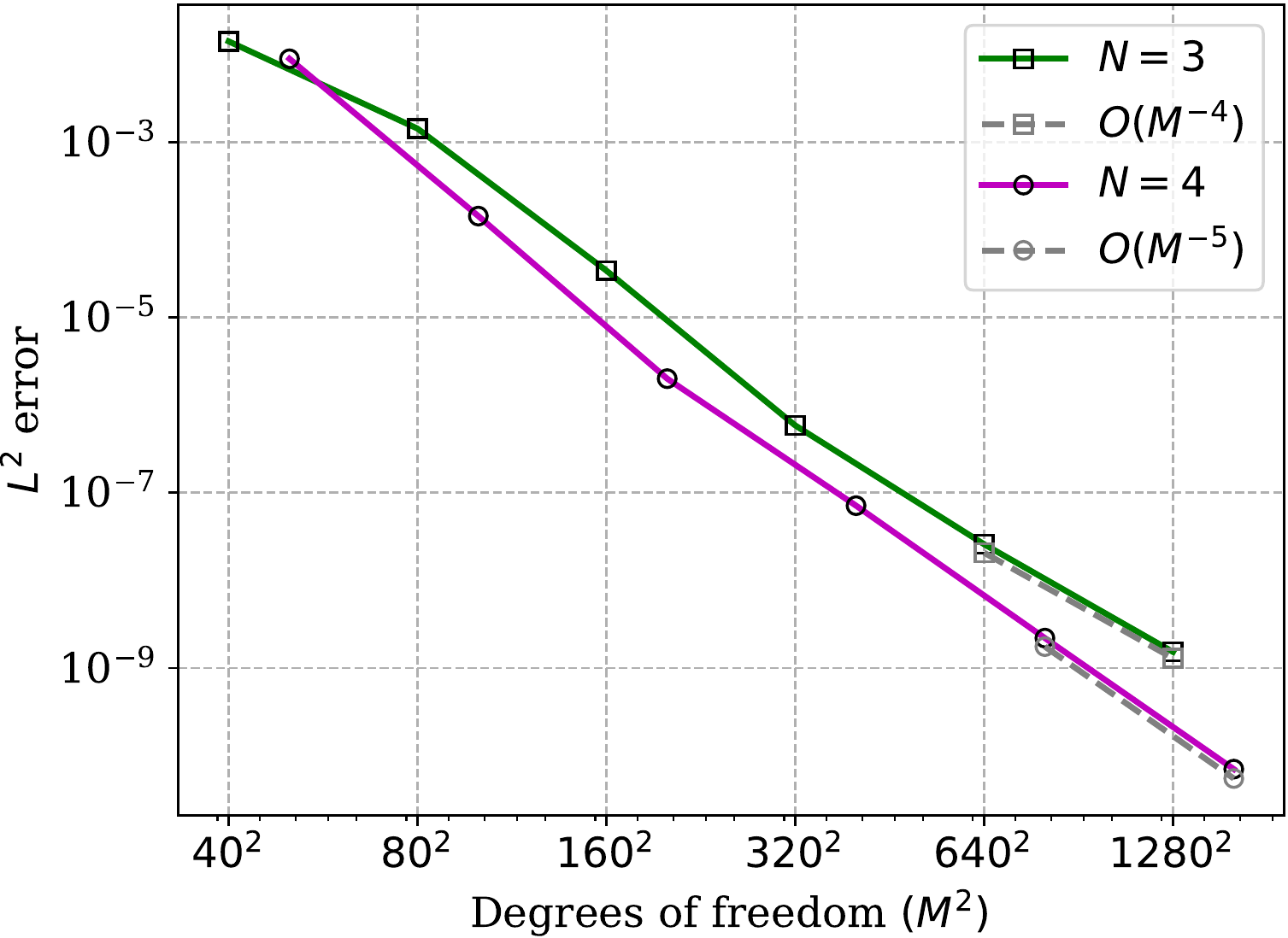} \\
(a) & (b)
\end{tabular}
\end{center}
\caption{Convergence analysis of isentropic vortex test for polynomial degrees $N=3,4$ when (a) the blending limiter is active (b) no limiter is active.}
\label{fig:isentropic.convergence}
\end{figure}


\subsubsection{2-D Riemann problem}
2-D Riemann problems consist of four constant states and have been studied theoretically and numerically for gas dynamics in~\cite{Glimm1985}. We consider this problem in the square domain $[0,1]^2$ where each of the four quadrants has one constant initial state and every jump in initial condition leads to an elementary planar wave, i.e., a shock, rarefaction or contact discontinuity. There are only 19 such genuinely different configurations possible~\cite{Zhang1990,Lax1998}. As studied in~\cite{Zhang1990}, a bounded region of subsonic flows is formed by interaction of different planar waves leading to appearance of many complex structures depending on the elementary planar flow. We consider configuration 12 of~\cite{Lax1998} consisting of 2 positive slip lines and two forward shocks, with initial condition
\begin{align*}
(\rho, u, v, p) = \begin{cases}
(0.5313, 0, 0, 0.4)  \qquad & \text{if } x \ge 0.5, y \ge 0.5 \\
(1, 0.7276, 0, 1)  & \text{if } x < 0.5, y \ge 0.5 \\
(0.8, 0, 0, 1) & \text{if } x < 0.5, y < 0.5 \\
(1, 0, 0.7276, 1) & \text{if } x \ge 0.5, y < 0.5
\end{cases}
\end{align*}
The simulations are performed with transmissive boundary conditions on an enlarged domain \correction{up to time $t=0.25$}. The density profiles obtained from the MUSCL-Hancock blending scheme and {\tt Trixi.jl} are shown in Figure~(\ref{fig:rp2d}). We see that both schemes give similar resolution in most regions. The MUSCL-Hancock blending scheme gives better resolution of the small scale structures arising across the slip lines.

\correction{A plot of the blending coefficients computed by the smoothness indicator is shown in Figure~(\ref{fig:rp2d.alpha}) at an early time $t=0.025$~(\ref{fig:rp2d.alpha}a) and the final time $t=0.25$~(\ref{fig:rp2d.alpha}b). The blending coefficient takes values close to $\alpha = 1$ in the vicinity of shocks while smaller values are seen near the stationary contact discontinuities. Figure~\eqref{fig:rp2d.alpha.stats} shows the percentage of cells in which the indicator function $\alpha>0$ as a function of time. From these figures we see that limiting is only performed in a small subset of the elements in the grid and the indicator is able to track the sharp features and ignore the smooth regions.}

\begin{figure}
\centering
\begin{tabular}{cc}
\includegraphics[width=0.45\textwidth]{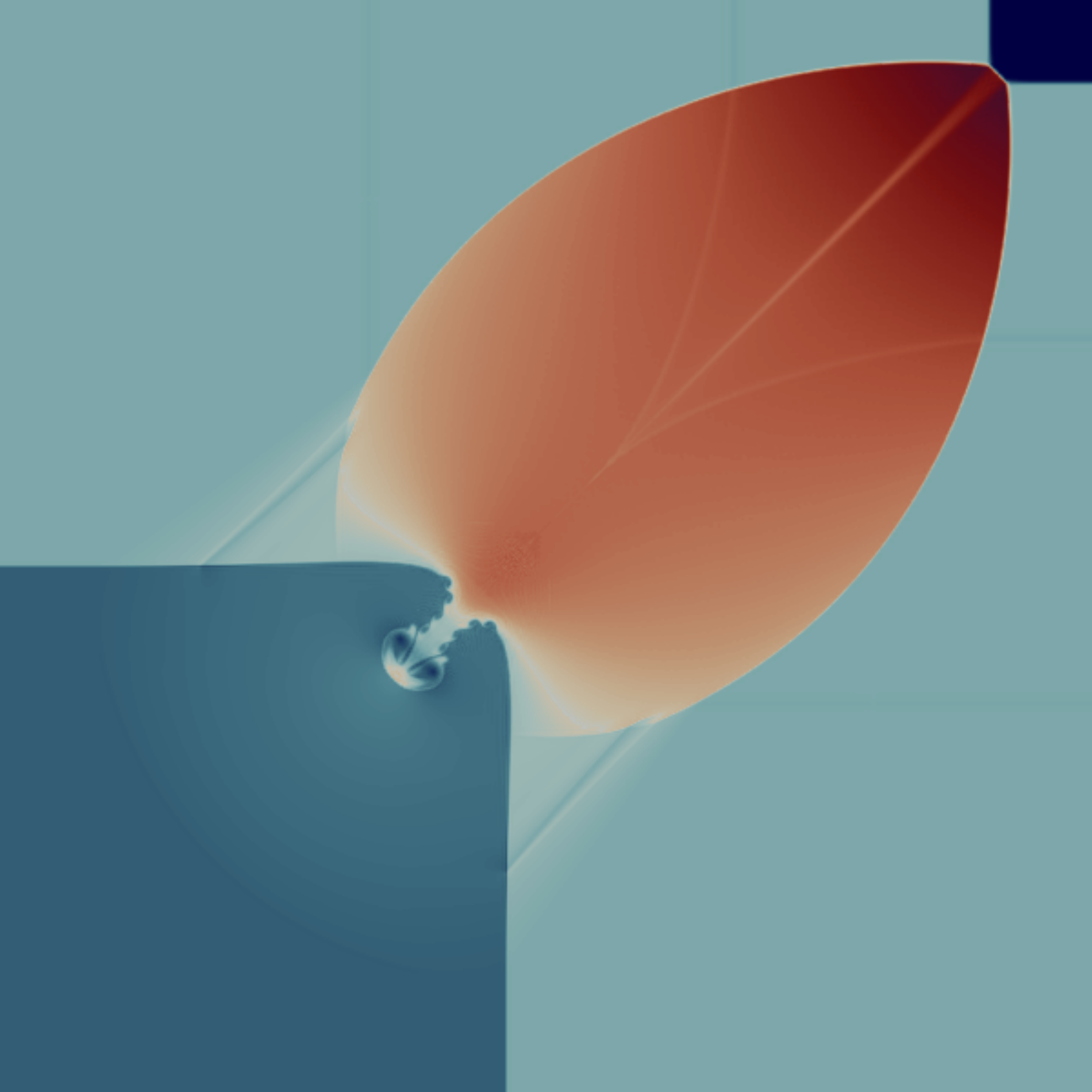} &
\includegraphics[width=0.45\textwidth]{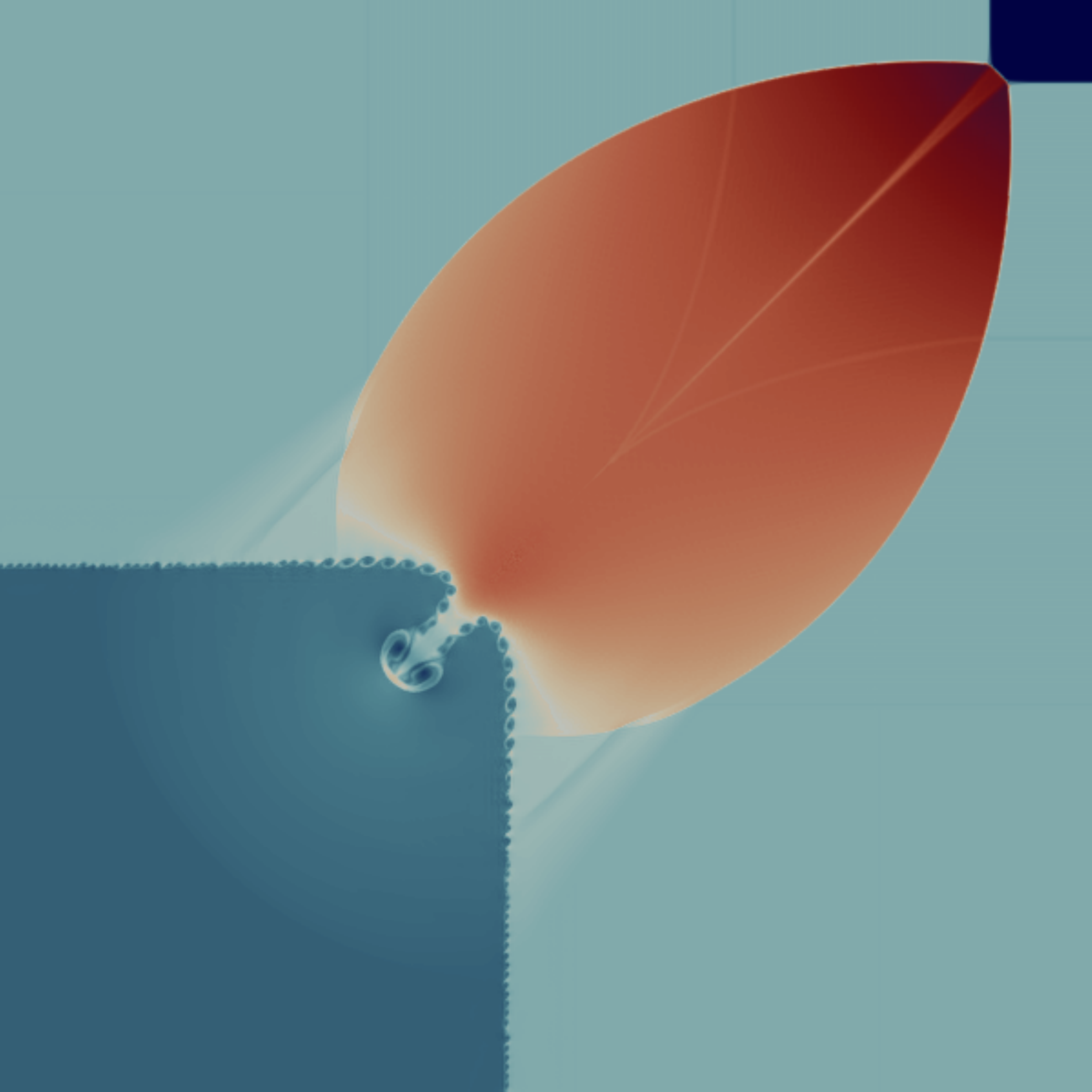} \\
(a) {\tt Trixi.jl} & (b) LW-MH
\end{tabular}
\caption{2-D Riemann problem, density plots of numerical solution at $t=0.25$ for degree $N=4$ on a $256 \times 256$ mesh.}
\label{fig:rp2d}
\end{figure}

\begin{figure}
\centering
\includegraphics[width=0.9\textwidth]{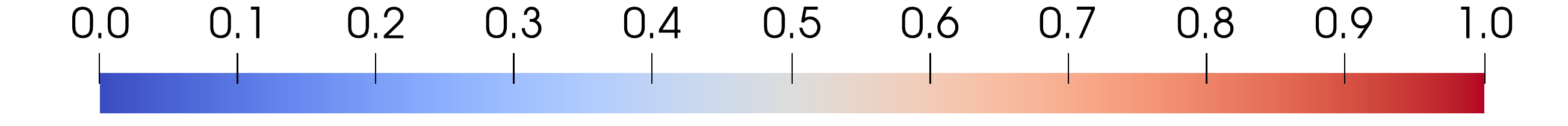}
\begin{tabular}{cc}
\includegraphics[width=0.45\textwidth]{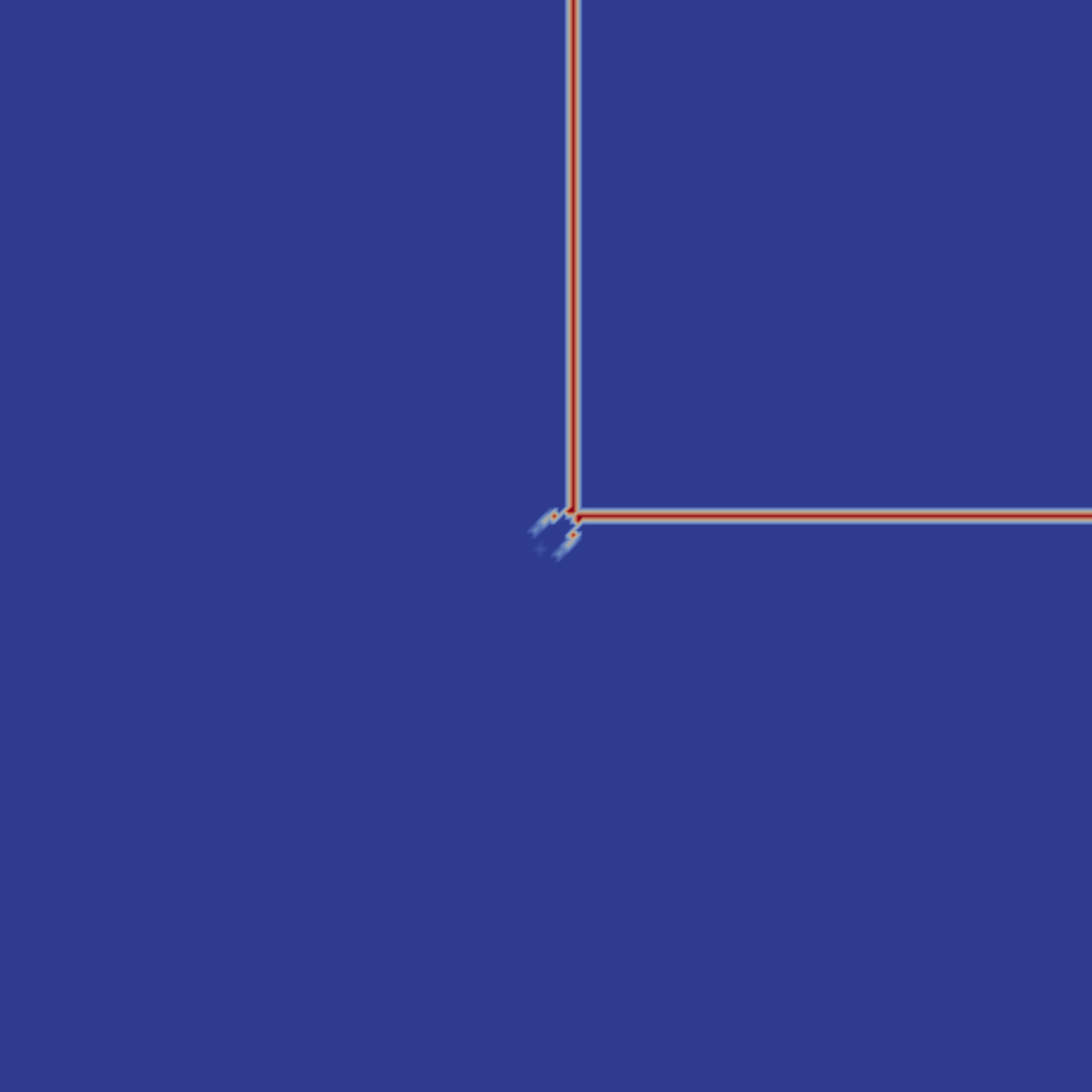} &
\includegraphics[width=0.45\textwidth]{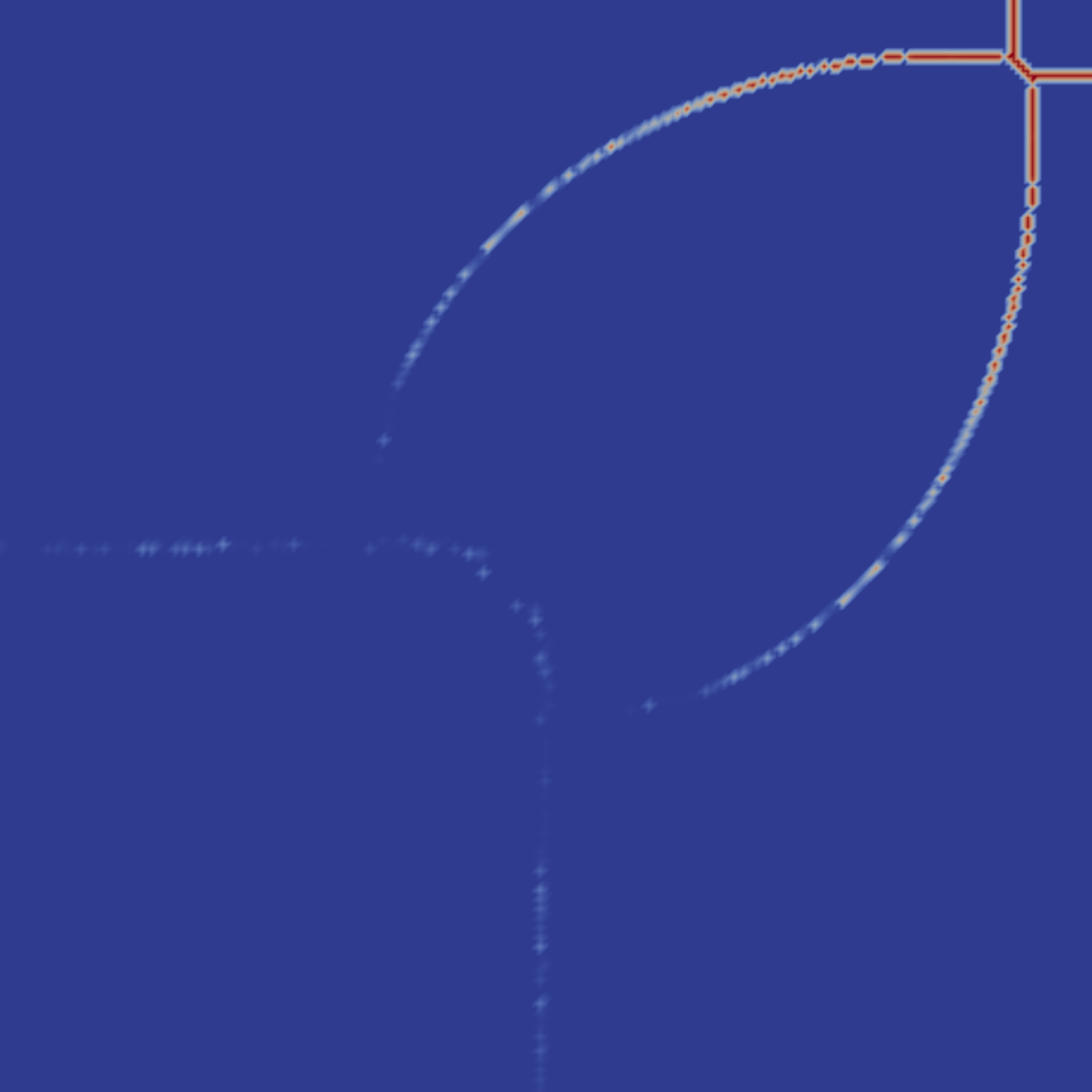} \\
(a) $t=0.025$  & (b) $t=0.25$
\end{tabular}
\caption{\correction{2-D Riemann problem, blending coefficient $\alpha$ for degree $N=4$ on a $256 \times 256$ mesh.}}
\label{fig:rp2d.alpha}
\end{figure}

\begin{figure}
\centering
\includegraphics[width=0.6\textwidth]{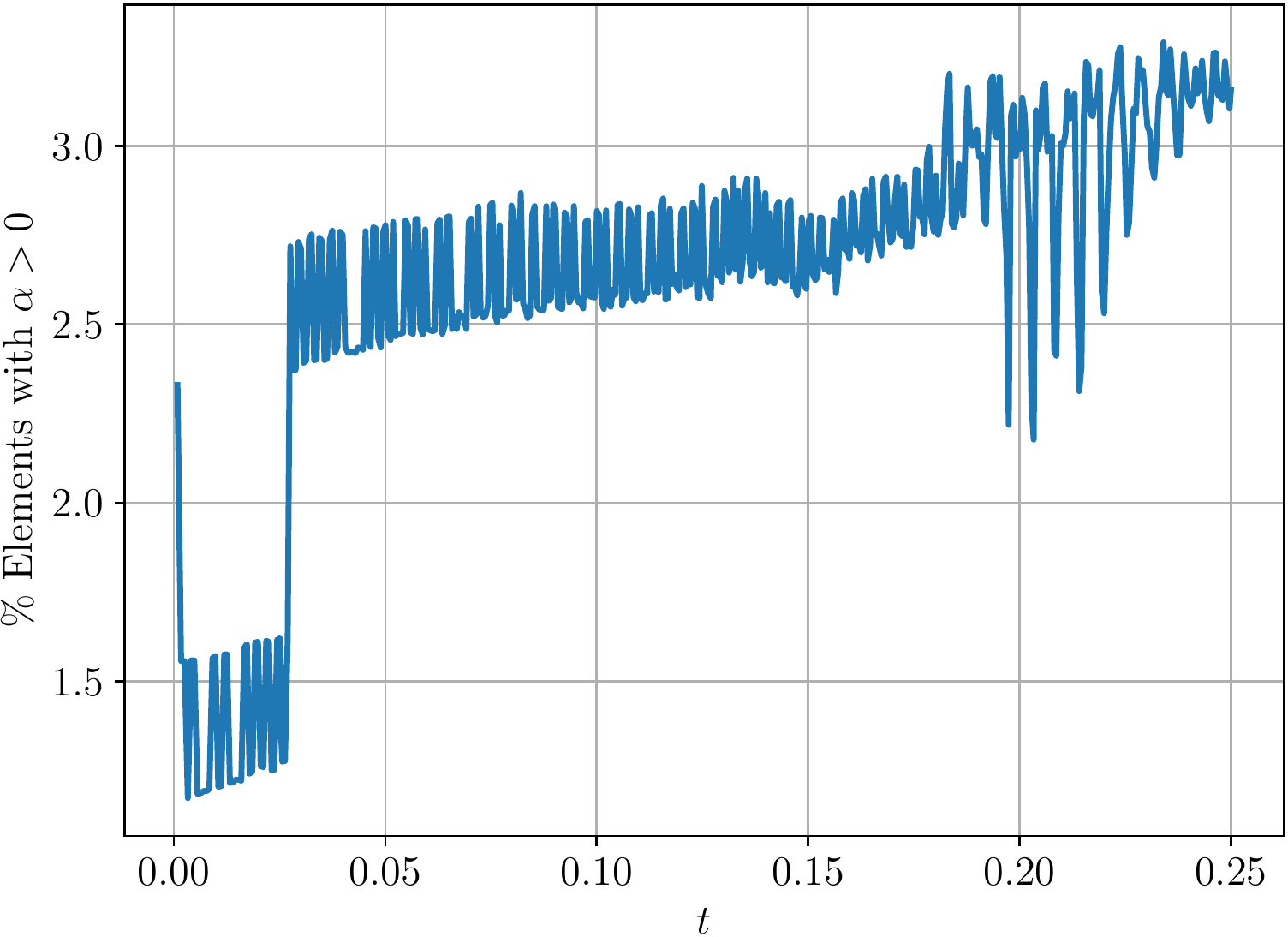}
\caption{\correction{2-D Riemann problem, percentage of elements where the smoothness coefficient $\alpha>0$ vs time $t$, for approximate solution with polynomial degree $N=4$ on a $256 \times 256$ mesh.}}
\label{fig:rp2d.alpha.stats}
\end{figure}

\subsubsection{Double Mach reflection} \label{sec:dmr}

This test case was originally proposed by Woodward and Colella~\cite{Woodward1984} and consists of a shock impinging on a wedge/ramp which is inclined by 30 degrees. The solution consists of a self similar shock structure with two triple points. By a change of coordinates, the situation is simulated in the rectangular domain $\Omega = [0,4] \times [0,1],$ where the wedge/ramp is positioned at $x=1/6, y=0.$ Defining $\mathbf{u}_b = \mathbf{u}_b(x,y,t)$ with primitive variables given by
\begin{equation*}
(\rho,u,v,p)=\begin{cases}
(8, 8.25 \cos\left( \frac{\pi}{6} \right), -8.25 \sin\left( \frac{\pi}{6} \right), 116.5), & \mbox{ if } x < \frac{1}{6} + \frac{y + 20 t}{\sqrt{3}}\\
(1.4, 0, 0, 1), & \mbox{ if } x > \frac{1}{6} + \frac{y + 20 t}{\sqrt{3}}
\end{cases}
\end{equation*}
we define the initial condition to be $\mathbf{u}_0(x,y) = \mathbf{u}_b(x,y,0)$. With $\mathbf{u}_b$, we impose inflow boundary conditions at the left side $\{0\} \times [0,1]$, outflow boundary conditions both at $[0,1/6] \times \{0\}$ and $\{4\} \times [0,1]$, reflecting boundary conditions at $[1/6, 4] \times \{0\}$ and inflow boundary conditions at the upper side $[0,4] \times \{1\}$.

The simulation is run on a mesh of $600 \times 150$ elements using degree $N=4$ polynomials upto time $t=0.2$. In Figure~(\ref{fig:dmr}), we compare the results of {\tt Trixi.jl} with the MUSCL-Hancock blended scheme zoomed near the primary triple point. As expected, the small scale structures are captured better by the MUSCL-Hancock blended scheme.


\begin{figure}
\centering
\begin{tabular}{cc}
\includegraphics[width=0.4\textwidth]{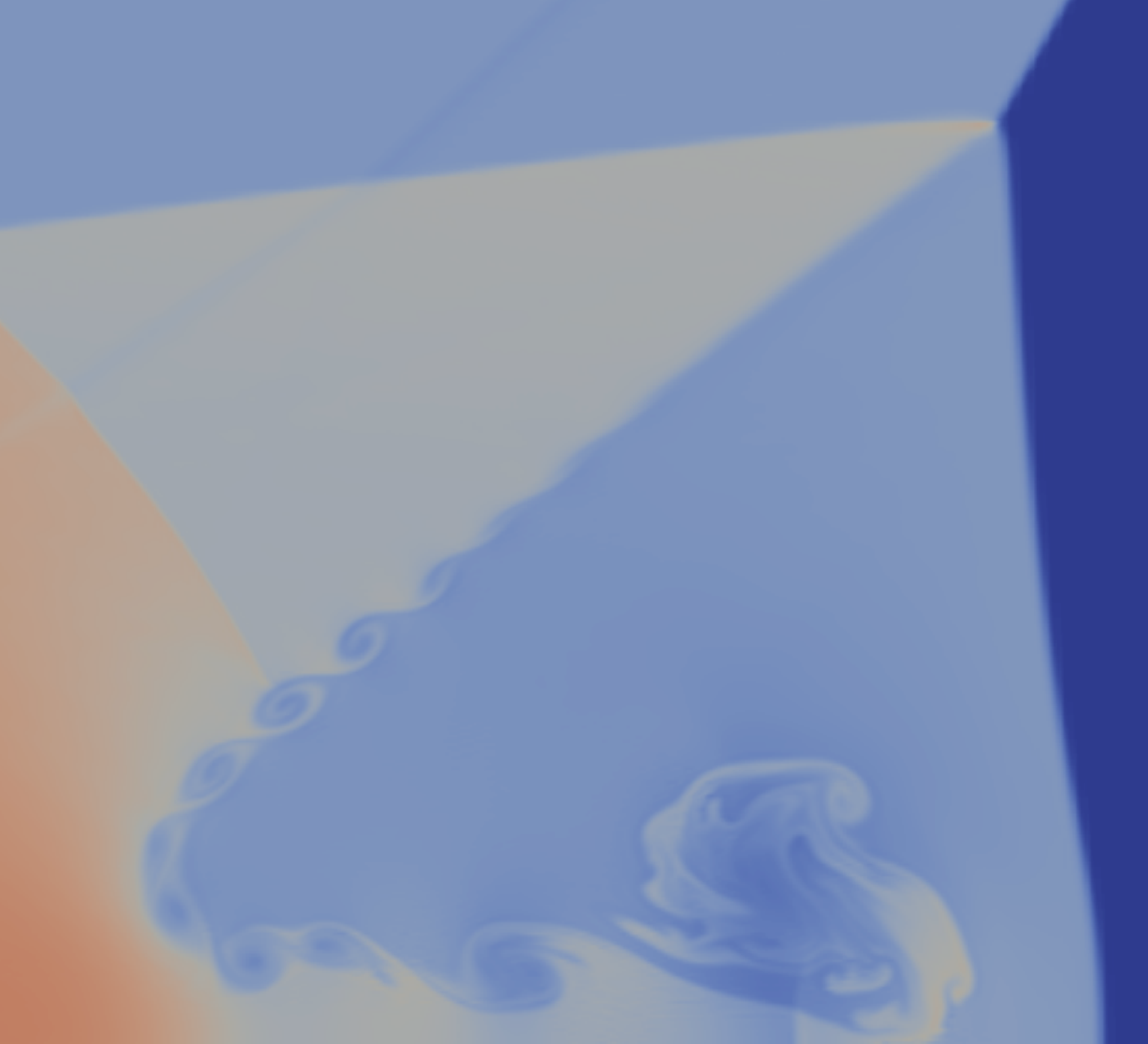} &
\includegraphics[width=0.4\textwidth]{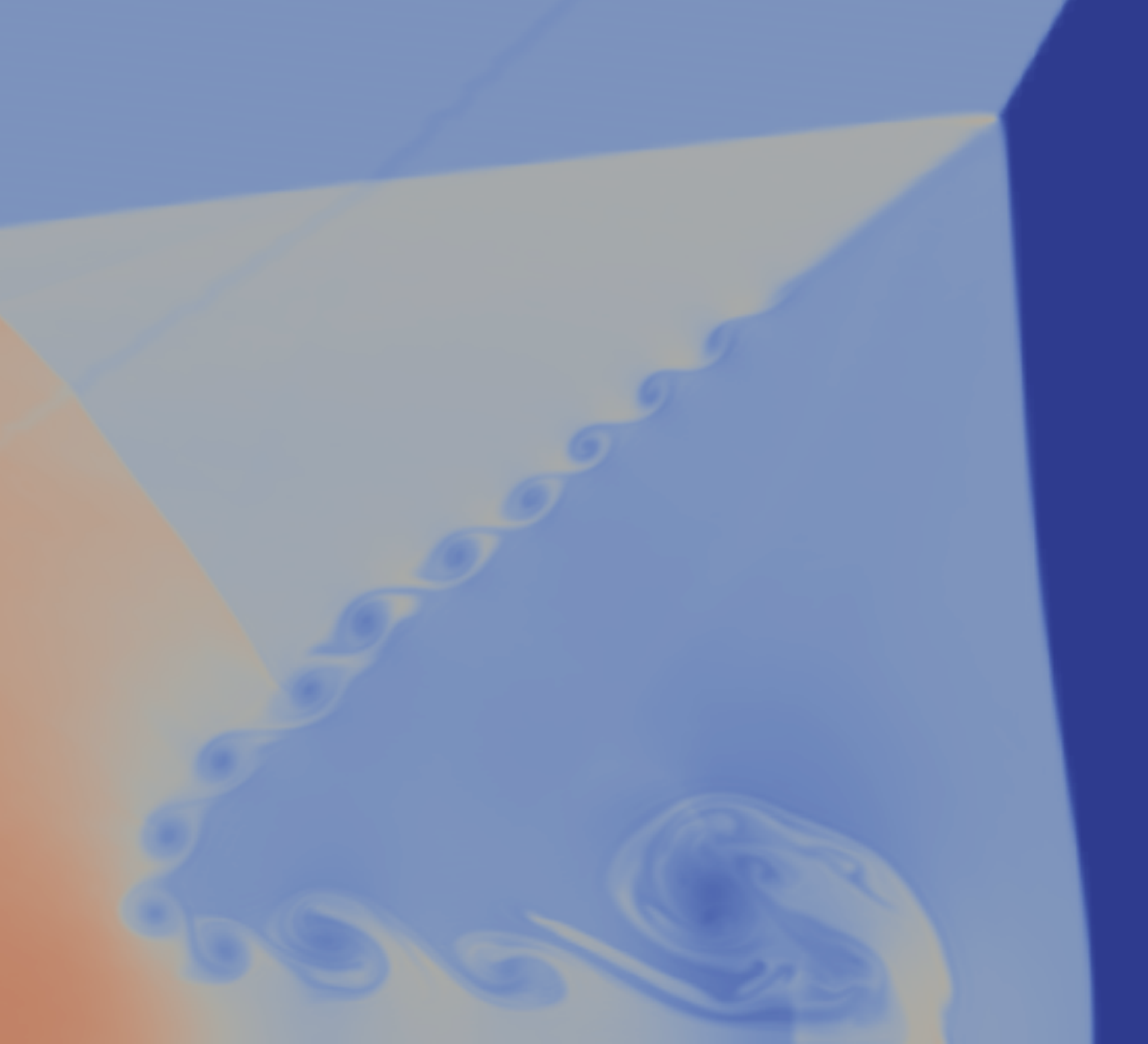} \\
(a) {\tt Trixi.jl} & (b) LW-MH
\end{tabular}
\caption{Double Mach reflection problem, density plots of numerical solution at $t=0.2$ using polynomial degree $N=4$ on a $600 \times 150$ mesh zoomed near the primary triple point. }
\label{fig:dmr}
\end{figure}

\subsubsection{Kelvin-Helmholtz instability}
Fluid instabilities are essential for mixing processes and turbulence production, and play a significant role in many astrophysical phenomena. They are crucial for accurately modeling stripping of gas from satellite galaxies, as well as calculating the expected levels of turbulence and entropy in the intracluster gas of galaxy clusters~\cite{Volker2010}. The Kelvin-Helmholtz instability is a common fluid instability that occurs across contact discontinuities in the presence of a tangential shear flow. This instability leads to the formation of vortices that grow in amplitude and can eventually lead to the onset of turbulence. We adopt the initial conditions for this instability from~\cite{Volker2010} \correction{over the domain $[0,1]^2$},
\begin{eqnarray*}
\rho(x,y) &=& \begin{cases}
2, \quad &\text{if} \quad 0.25<y<0.75\\
1, \quad &\text{otherwise}
\end{cases} \\
u(x,y) &=& \begin{cases}
0.5, \quad &\text{if} \quad 0.25<y<0.75  \\
-0.5, \quad &\text{otherwise},
\end{cases} \\
v(x,y) &=& w_0 \sin(4\pi x)\left \{\exp\left[ -\frac{(y-0.25)^2}{2\sigma^2} \right ] + \exp\left[ -\frac{(y-0.75)^2}{2\sigma^2} \right ]\right \} \\
p(x,y) &=& 2.5
\end{eqnarray*}
with $w_0 = 0.1$, $\sigma = 0.05/\sqrt{2}$ and the adiabatic index $\gamma = 7/5$ corresponding to diatomic gases. The initial conditions consist of a single strongly excited mode in the $y$ velocity concentrated near the interfaces. The wavelength is chosen to be equal to half the domain size so that the single mode dominates the linear growth of instability. This instability leads to shearing and small scale, self-similar vortex structures. We run this test using solution polynomial degree $N=4$ on a mesh of \correction{$512^2$} elements and periodic boundary conditions. We compare the density profiles of {\tt Trixi.jl} and our MUSCL-Hancock blending scheme in Figure~(\ref{fig:kh}). The presence of more vortex structures with the MUSCL-Hancock scheme suggests that the scheme has lesser dissipation errors and is capable of capturing small scale features.

\begin{figure}
\centering
\begin{tabular}{cc}
\includegraphics[width=0.44\textwidth]{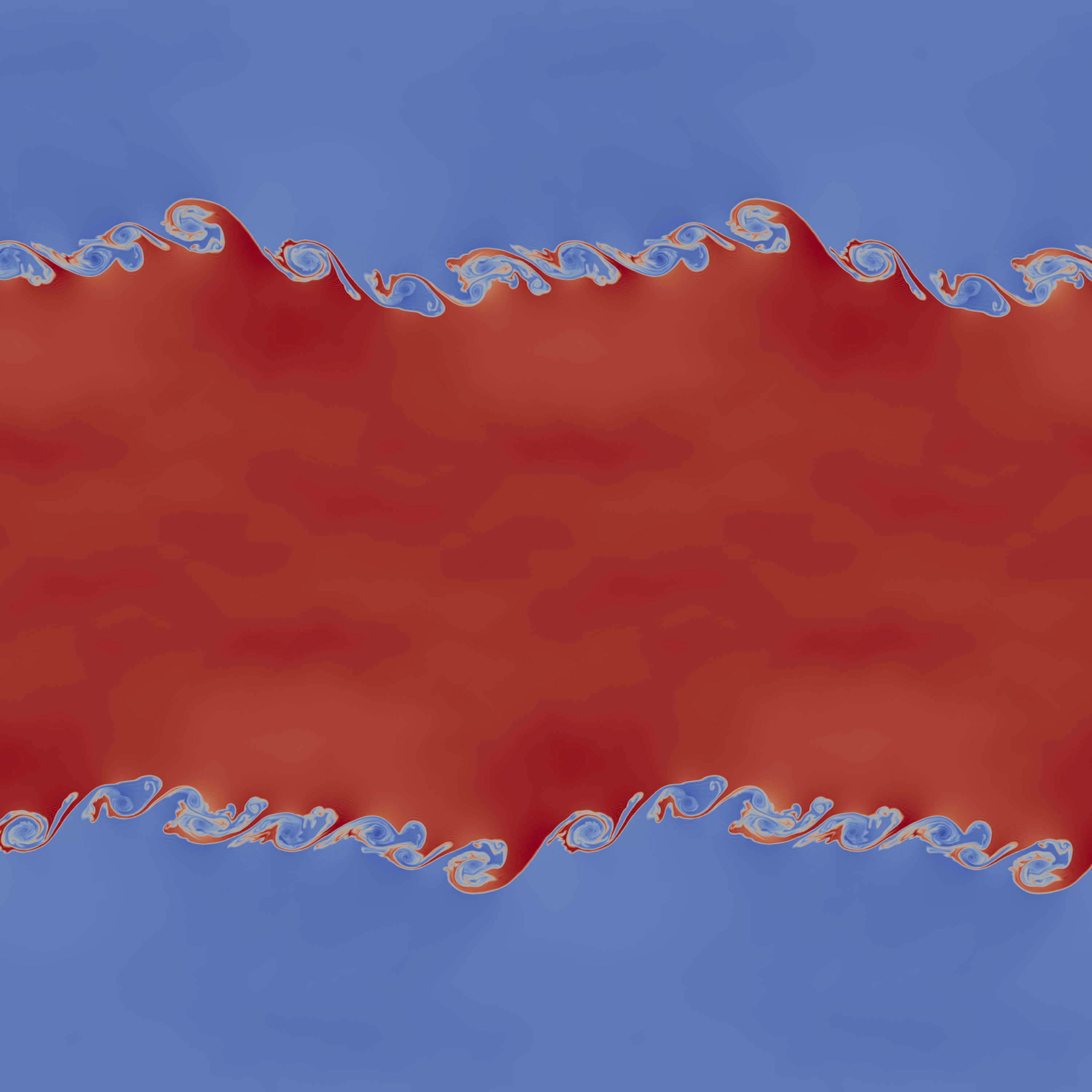} &
\includegraphics[width=0.44\textwidth]{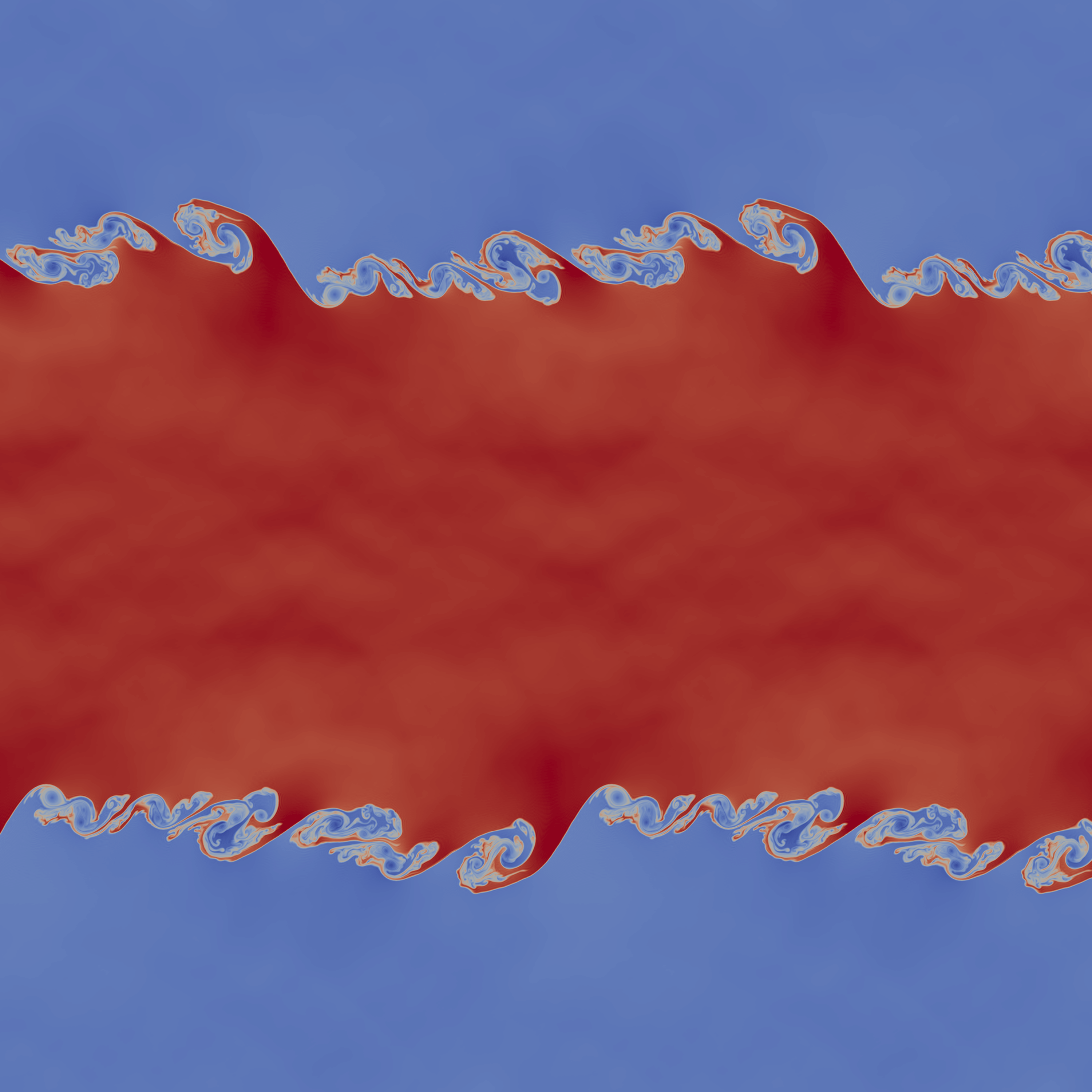} \\
(a) {\tt Trixi.jl} & (b) LW-MH \\
\includegraphics[width=0.44\textwidth]{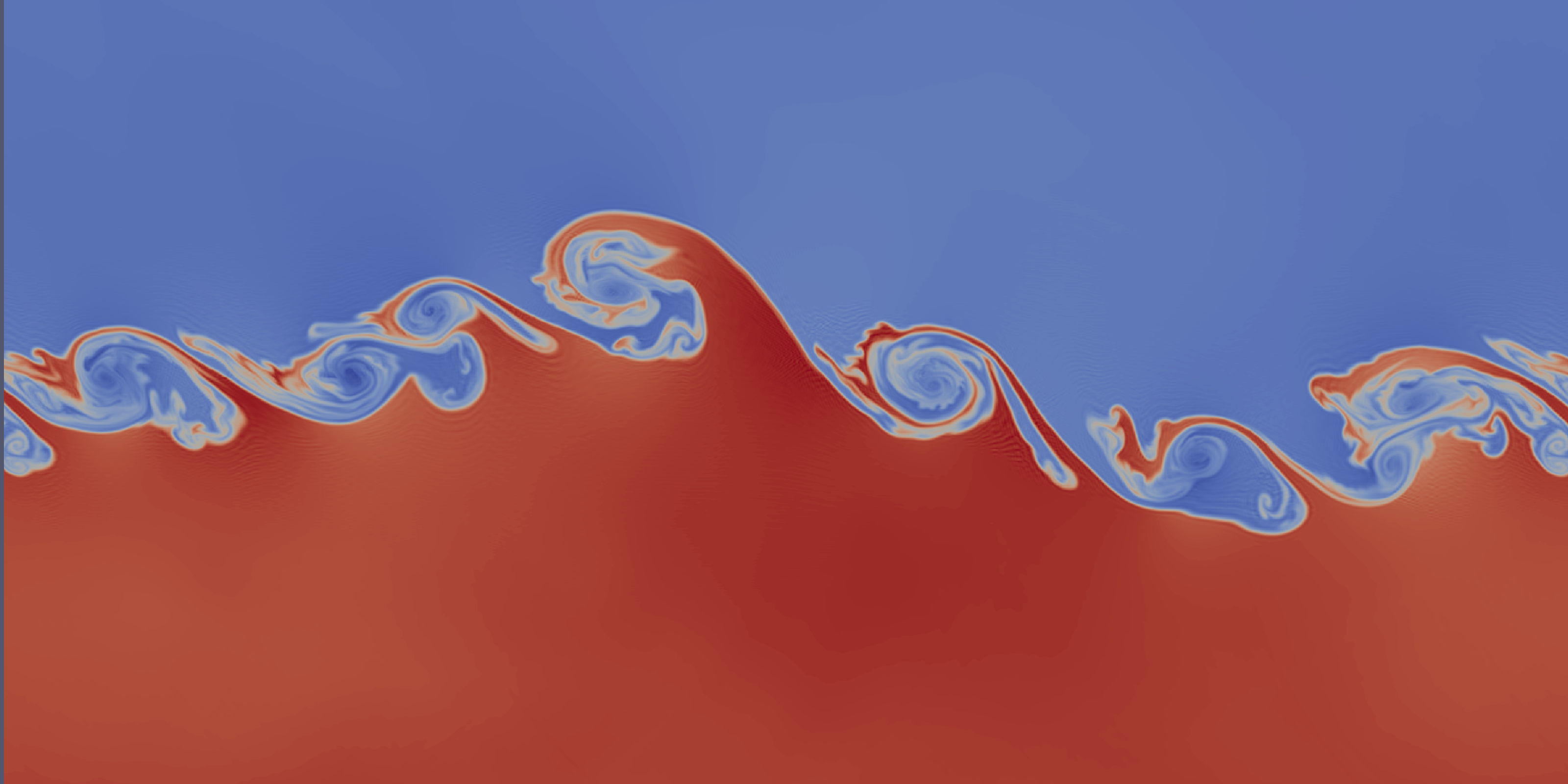} &
\includegraphics[width=0.44\textwidth]{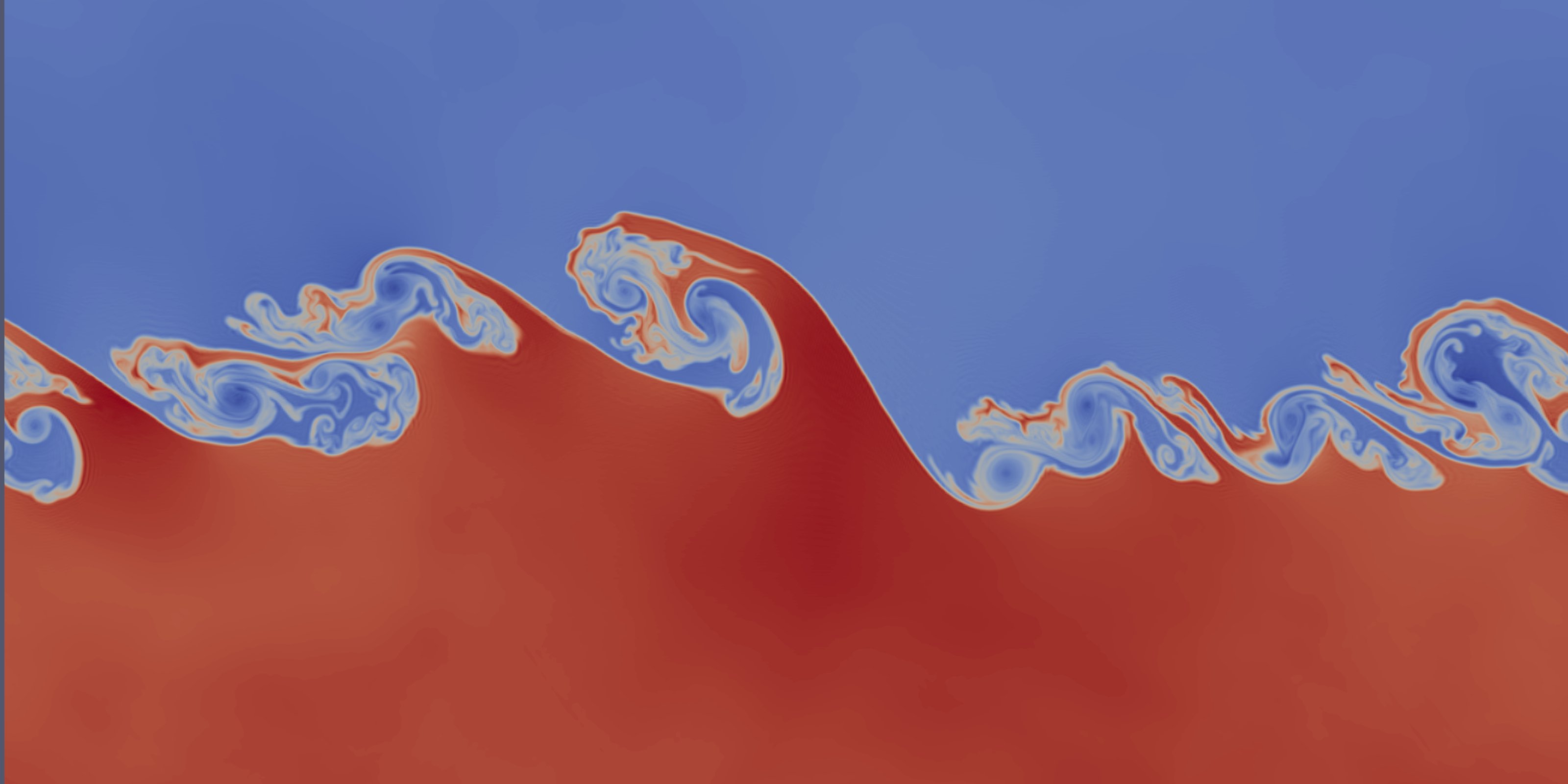} \\
(c) {\tt Trixi.jl} zoomed around top left & (d) LW-MH zoomed around top left
\end{tabular}
\caption{\correction{Kelvin-Helmholtz instability, density plots of numerical solution at $t=0.4$ using polynomial degree $N=4$ with Rusanov flux on a $512^2$ element mesh.}}
\label{fig:kh}
\end{figure}

\subsubsection{Astrophysical jet}

In this test, a hypersonic jet is injected into a uniform medium with a Mach number of 2000 relative to the sound speed in the medium. Following~\cite{ha2005, zhang2010c}, the domain is taken to be $[0,1]\times[-0.5,0.5]$, the ambient gas in the interior has state $\boldsymbol u_{a}$ defined in primitive variables as
\[
(\rho,u, v, p)_a = (0.5, 0, 0, 0.4127)
\]
and inflow state $\boldsymbol u_j$ is defined in primitive variables as
\[
(\rho, u, v, p)_j = (5, 800, 0, 0.4127)
\]
On the left boundary, we impose the boundary conditions
\[
\boldsymbol u_b =
\begin{cases}
\boldsymbol u_a, & \quad \text{if} \quad y \in [-0.05, 0.05]  \\
\boldsymbol u_j, & \quad \text{otherwise}
\end{cases}
\]
and outflow conditions on the right, top and bottom. The HLLC numerical flux was used in the left most cells to distinguish between characteristics entering and exiting the domain. To get better resolution of vortices, we used a smaller time step with $C_s=0.5$ in~\eqref{eq:time.step.2d} and included ghost elements in time step computation to handle the cold start. The high velocity makes the kinetic energy much higher than internal energy. Thus, it is very likely for numerical solvers to give negative pressures. At the same time, a Kelvin-Helmholtz instability arises before the bow shock. Thus, it is a good test both for admissibility preservation and capturing small scale structures. The simulation gives negative pressures if used without the proposed admissibility preservation techniques. While the large scale structures are captured similarly by both the schemes as seen in Figure~\correction{(\ref{fig:astrophysical.jet})}, the LWFR with MH blending scheme shows more small scales near the front of the jet.

\begin{figure}
\centering
\begin{tabular}{cc}
\includegraphics[width=0.45\textwidth]{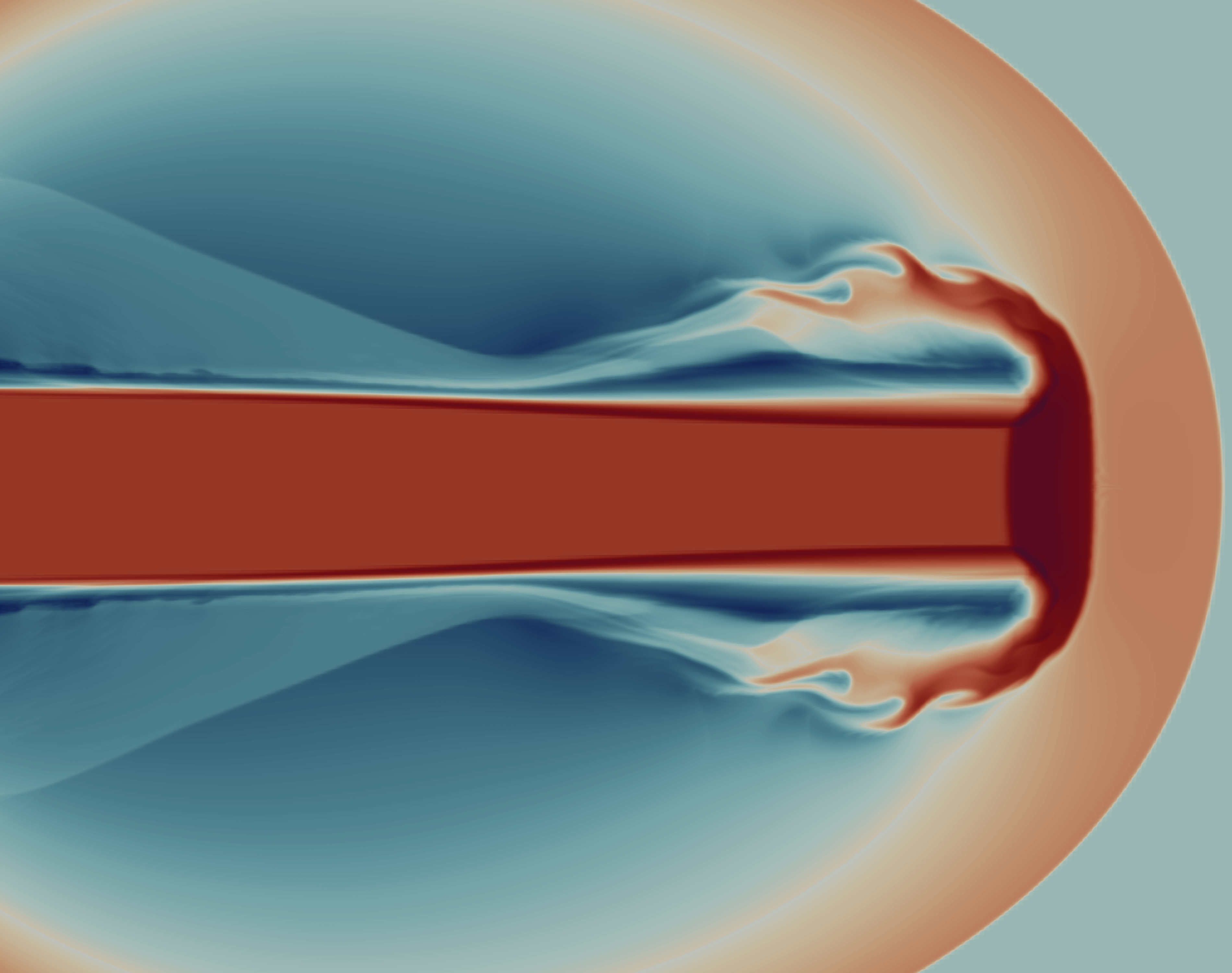} & \qquad \includegraphics[width=0.45\textwidth]{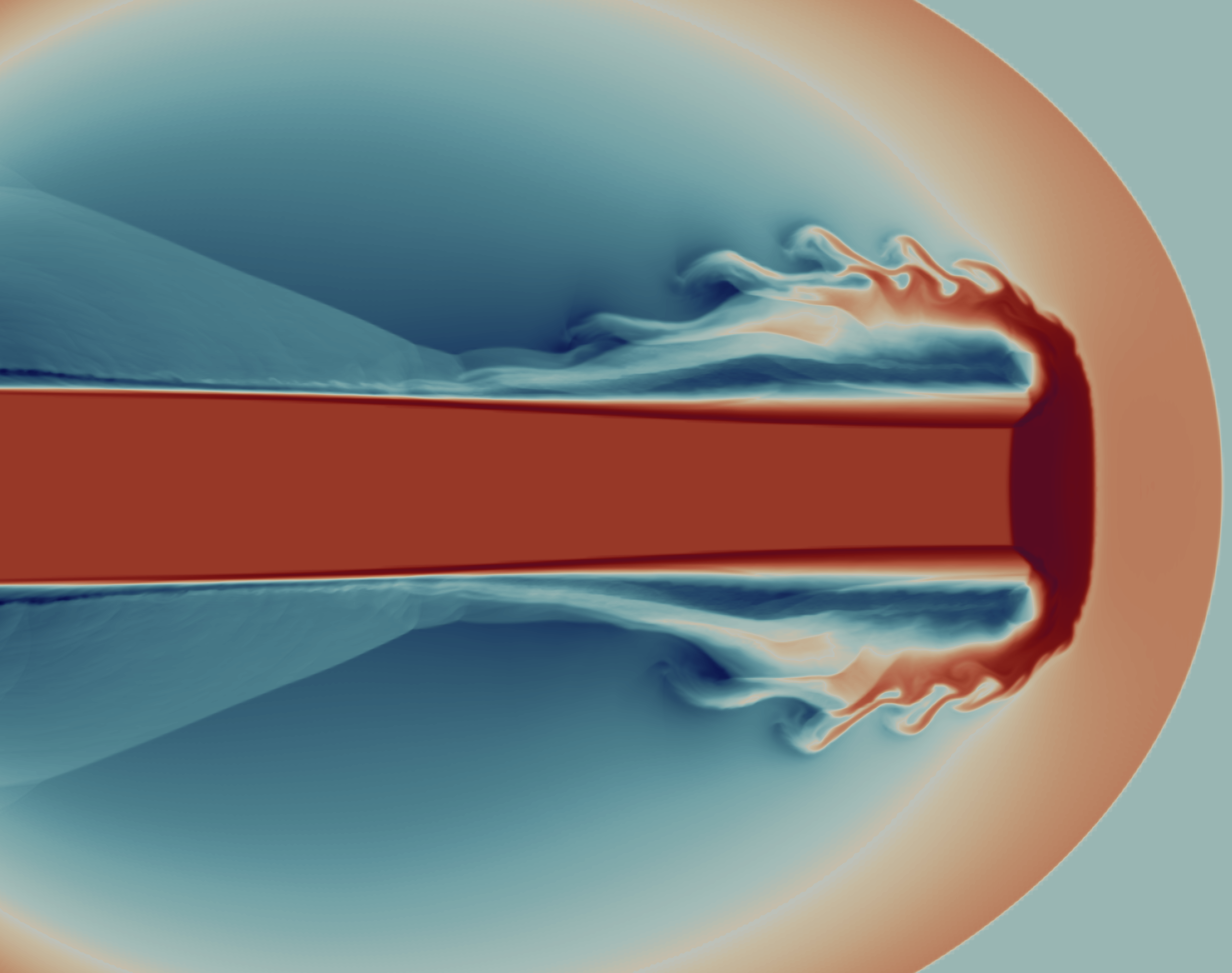} \\
(a) {\tt Trixi.jl} & (b) LW-MH
\end{tabular}
\caption{Mach 2000 astrophysical jet, density plot of numerical solution in log scales on $400 \times 400$ mesh at time $t=0.001$.}
\label{fig:astrophysical.jet}
\end{figure}

\subsubsection{Sedov's blast case with periodic boundary conditions}

Similar to Sedov's blast test in Sections~\ref{sec:sedov.blast.1d}
this test from~\cite{ramirez2021} on domain $[-1.5,1.5]^2$ has energy concentrated at the origin. More precisely, for the initial condition, we assume that the gas is at rest ($u = v = 0$) with Gaussian distribution of density and pressure
\begin{equation}
\rho(x,y) = \rho_0 + \frac{1}{4\pi\sigma_\rho^2} \exp \left( -\frac{r^2}{2\sigma_\rho^2} \right), \qquad p(x,y) = p_0  + \frac{\gamma - 1}{4 \pi \sigma_p^2} \exp\left( -\frac{r^2}{2\sigma_p^2} \right), \qquad r^2 = x^2 + y^2
\end{equation}
where $\sigma_\rho = 0.25$ and $\sigma_p = 0.15$. The ambient density and ambient pressure are set to $\rho_0 = 1$, $p_0 = 10^{-5}$. There are two differences in this Sedov's test compared to the previous - the energy concentrated at the origin is lesser, and domain is assumed to be periodic. When shocks collide at the periodic boundaries, the resulting interaction leads to the formation of small scale structures. In Figure~(\ref{fig:blast.periodic}), we compare the density profiles of the numerical solutions of polynomial degree $N=4$ on a mesh of $64^2$ elements using {\tt Trixi.jl} and the proposed MUSCL-Hancock blending scheme in log scales. Looking at the reference solution on a finer $128^2$ element mesh (Figure~\correction{(\ref{fig:blast.periodic.reference})}), we see that the MUSCL-Hancock scheme resolves the small scale structures better.

\begin{figure}
\centering
\begin{tabular}{cc}
\includegraphics[width=0.45\textwidth]{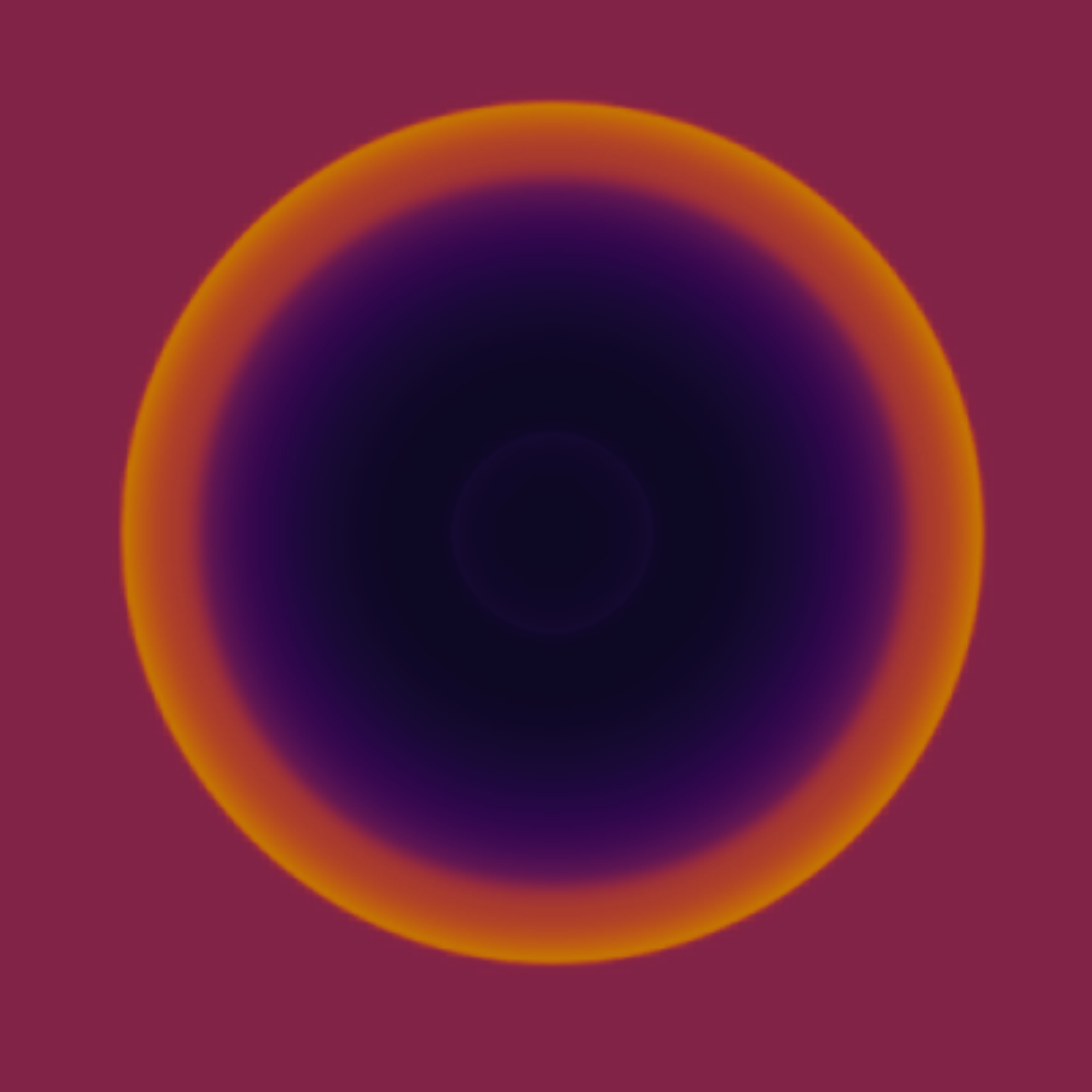} & \qquad \includegraphics[width=0.45\textwidth]{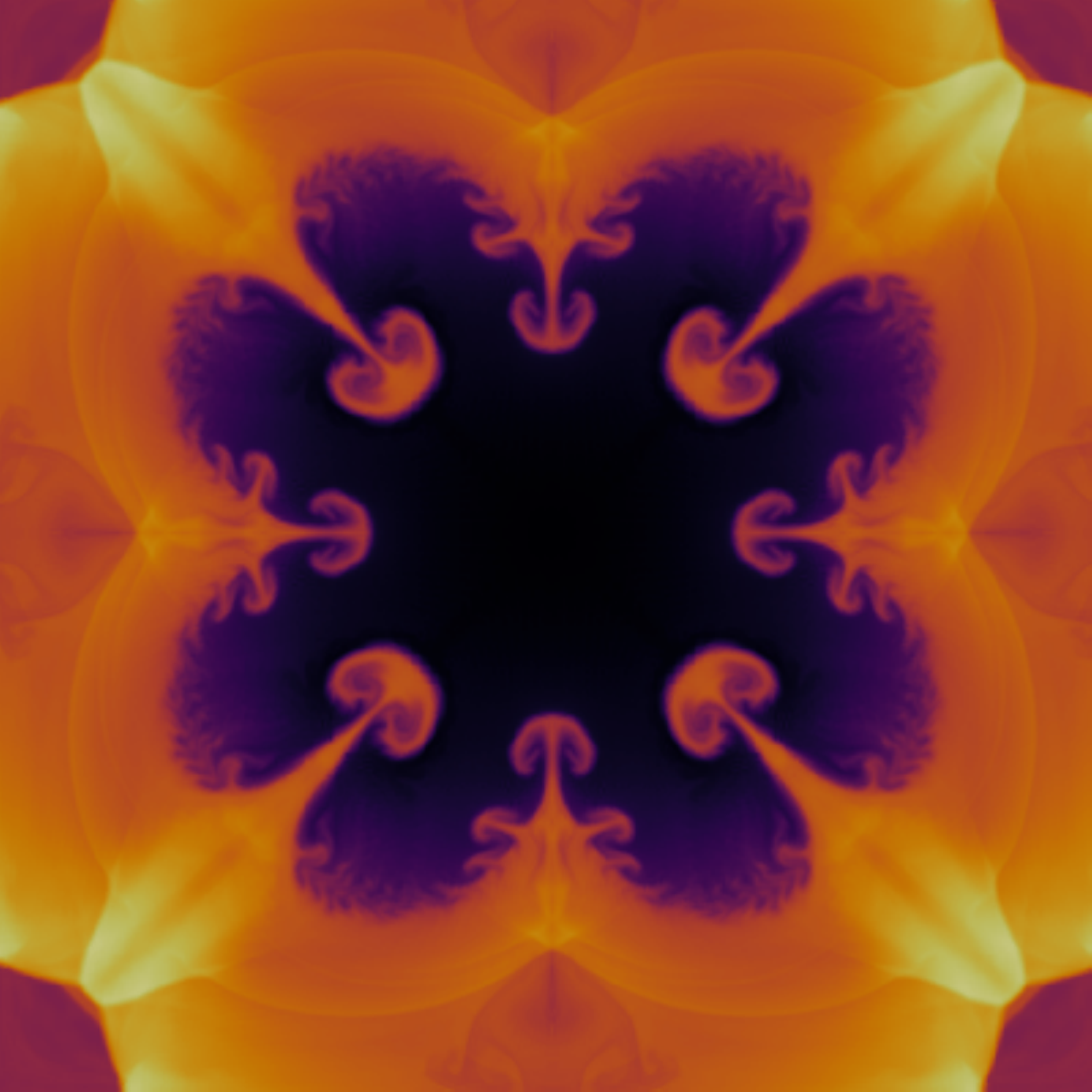} \\
(a) $t=2$ & (b) $t=20$
\end{tabular}
\caption{Sedov's blast test with periodic domain, density plot of numerical solution on $128 \times 128$ mesh in log scales with degree $N=4$ at (a) $t=2$ and (b) $t=20$ with polynomial degree $N=4$ computed using {\tt Trixi.jl}.}
\label{fig:blast.periodic.reference}
\end{figure}

\begin{figure}
\centering
\begin{tabular}{cc}
\includegraphics[width=0.45\textwidth]{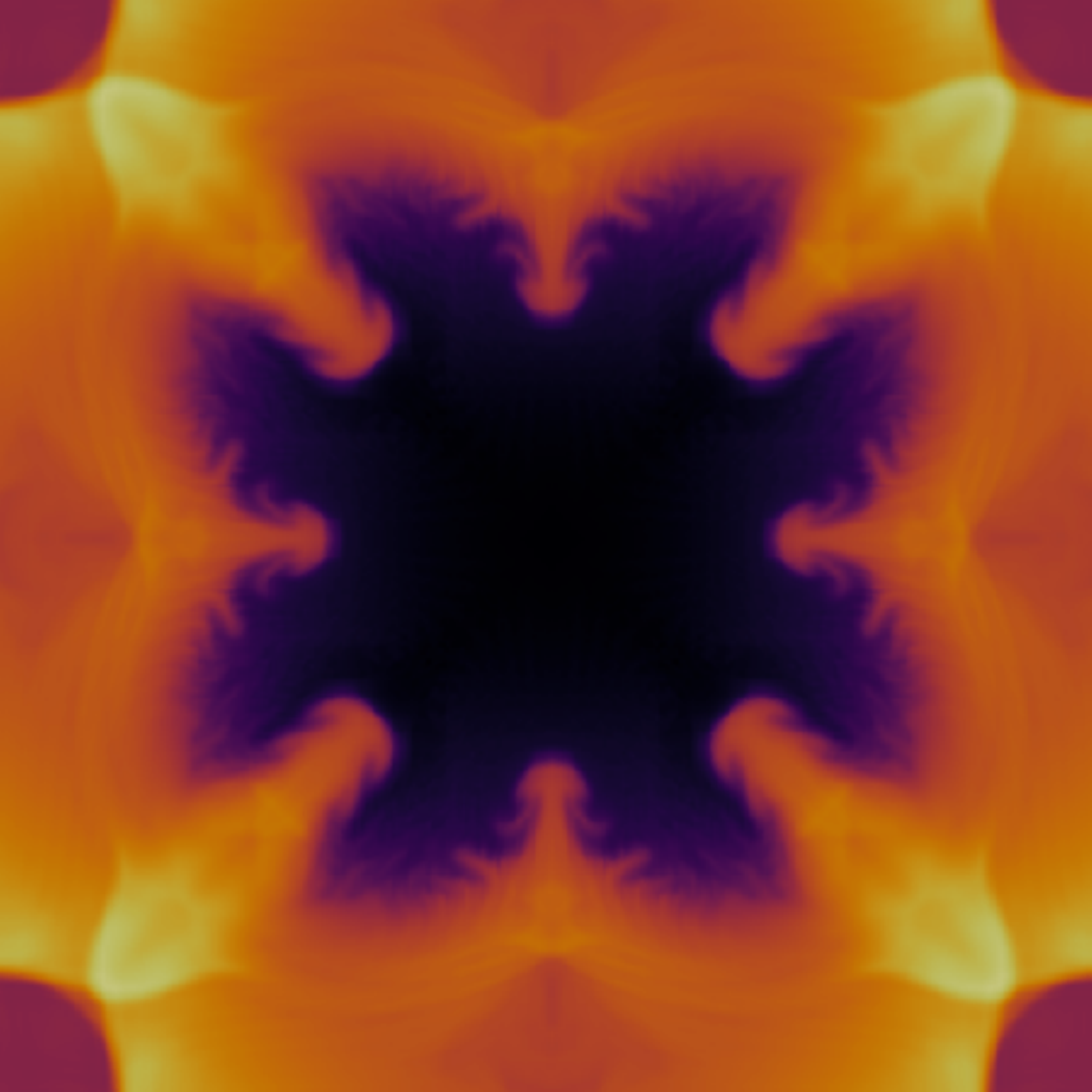} & \qquad \includegraphics[width=0.45\textwidth]{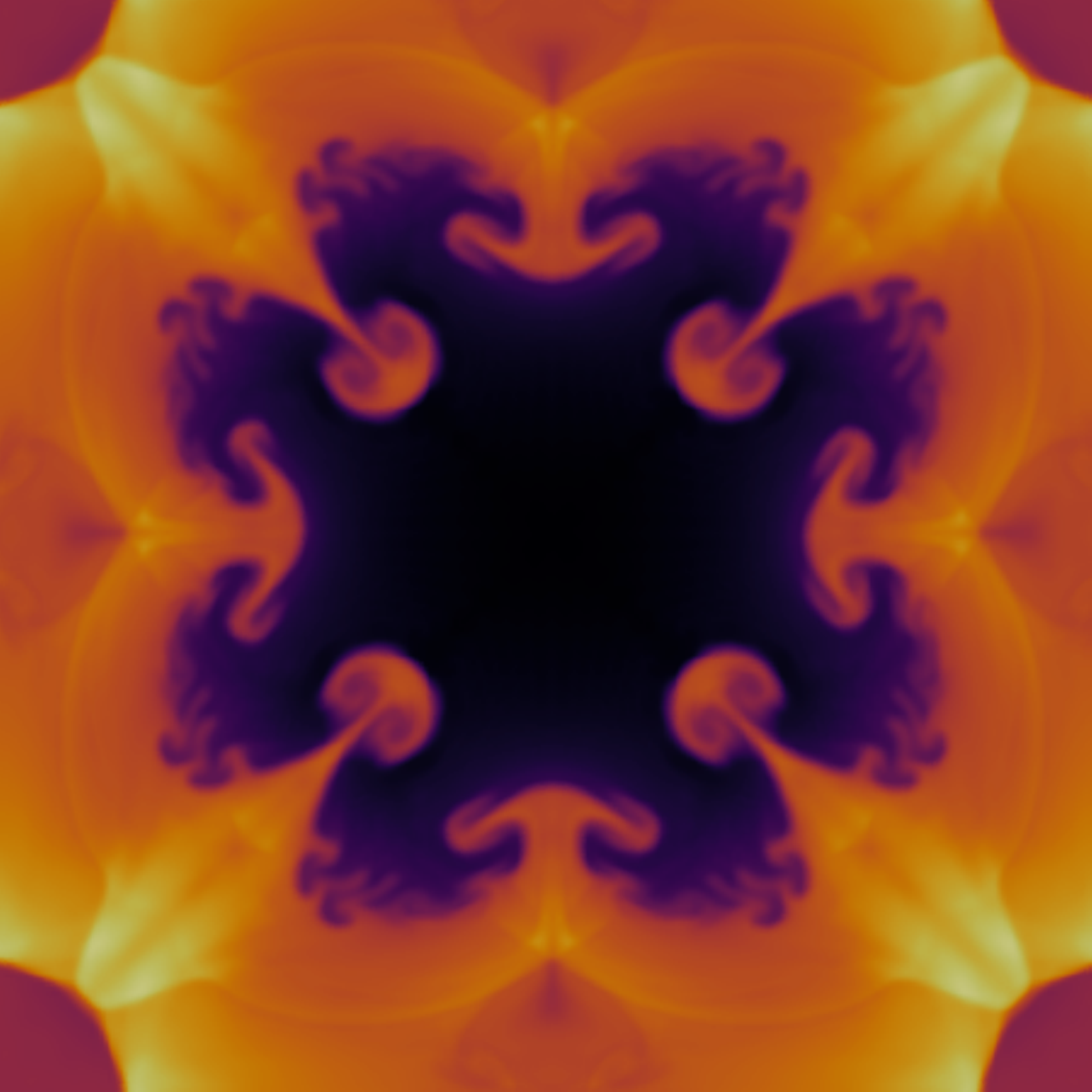} \\
(a) {\tt Trixi.jl} & (b) LW-MH
\end{tabular}
\caption{Sedov's blast test with periodic domain, density plot of numerical solution on $64 \times 64$ mesh in log scales at $t=20$ with degree $N=4$.}
\label{fig:blast.periodic}
\end{figure}

\subsubsection{Detonation shock diffraction}

This test~\cite{Takayama1991} involves a planar detonation wave that interacts with a wedge-shaped corner and diffracts around it, resulting in a complicated wave pattern comprising of transmitted and reflected shocks, as well as rarefaction waves. The computational domain is  $\Omega = [0,2]^2 \backslash ([0,0.5] \times [0,1])$ and following~\cite{henemann2021}, the simulation is performed by taking the initial condition to be a pure right-moving shock with Mach number of 100 initially located at $x=0.5$ and travelling through a channel of resting gas. The post shock states are computed by normal relations~\cite{naca1951}, so that the initial data is
\begin{alignat*}{2}
\rho(x,y) &= \begin{cases}
5.9970, \qquad & \text{if } x \le 0.5\\
1, & \text{if } x > 0.5
\end{cases}, \qquad
&& u(x,y) = \begin{cases}
98.5914, \qquad & \text{if } x \le 0.5 \\
0, & \text{if } x > 0.5
\end{cases} \\
v(x,y) &= 0, \qquad
&& p(x,y) = \begin{cases}
11666.5, \qquad & \text{if } x \le 0.5 \\
1, & \text{if } x > 0.5
\end{cases}
\end{alignat*}

The left boundary is set as inflow and right boundary is set as outflow, all other boundaries are solid walls. The numerical results at $t=0.01$ with polynomial degree $N=4$ on a  Cartesian grid consisting of uniformly sized squares with $\Delta x = \Delta y = 1/200$ are shown in Figure~(\ref{fig:shock.diffraction}). The results look similar to~\cite{henemann2021}; the strong shock makes this a tough test for the admissibility preservation and negative pressure values are obtained if the proposed admissibility correction is not applied.

\begin{figure}
\centering
\begin{tabular}{cc}
\includegraphics[width=0.4\textwidth]{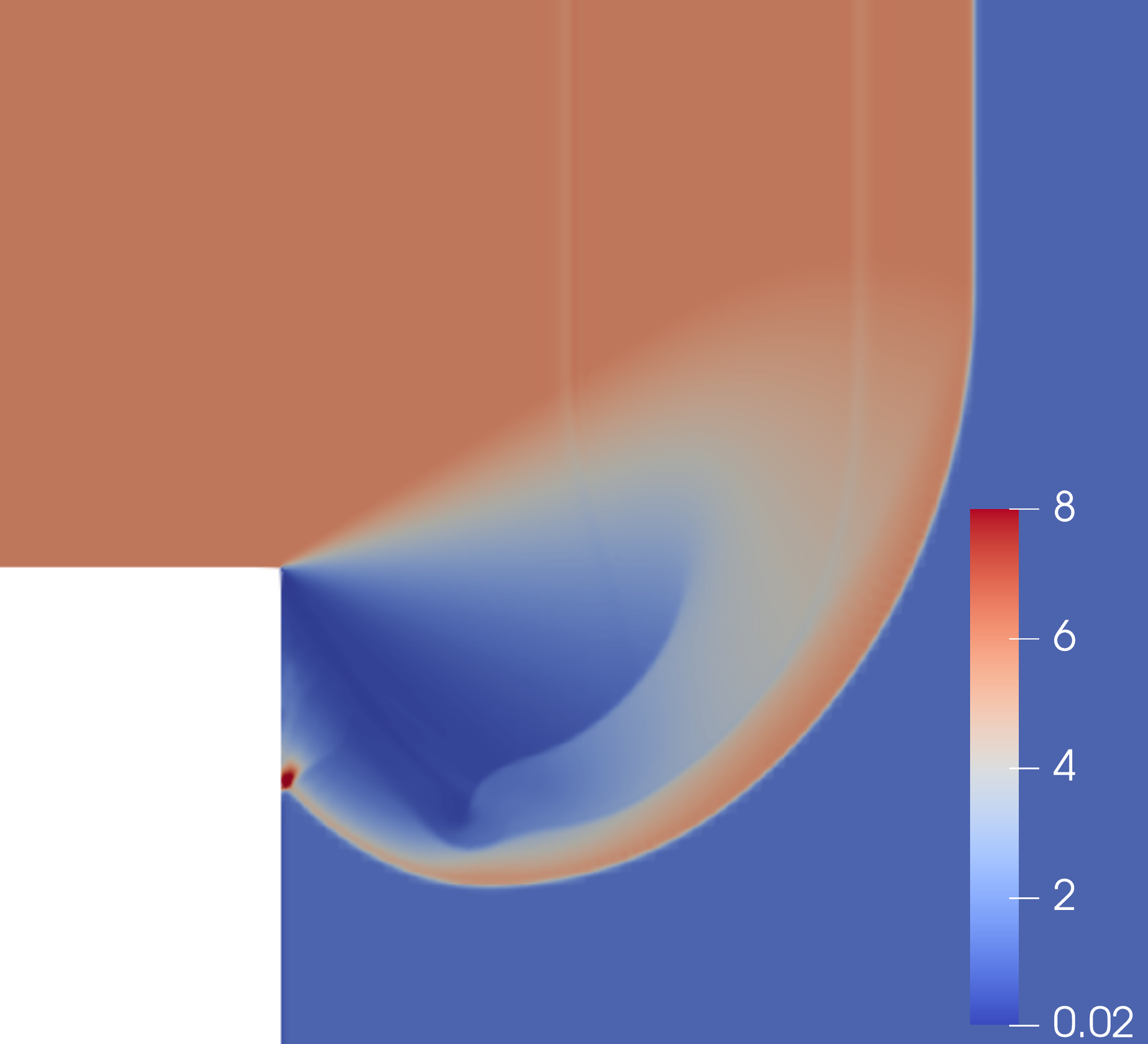} & \qquad \includegraphics[width=0.4\textwidth]{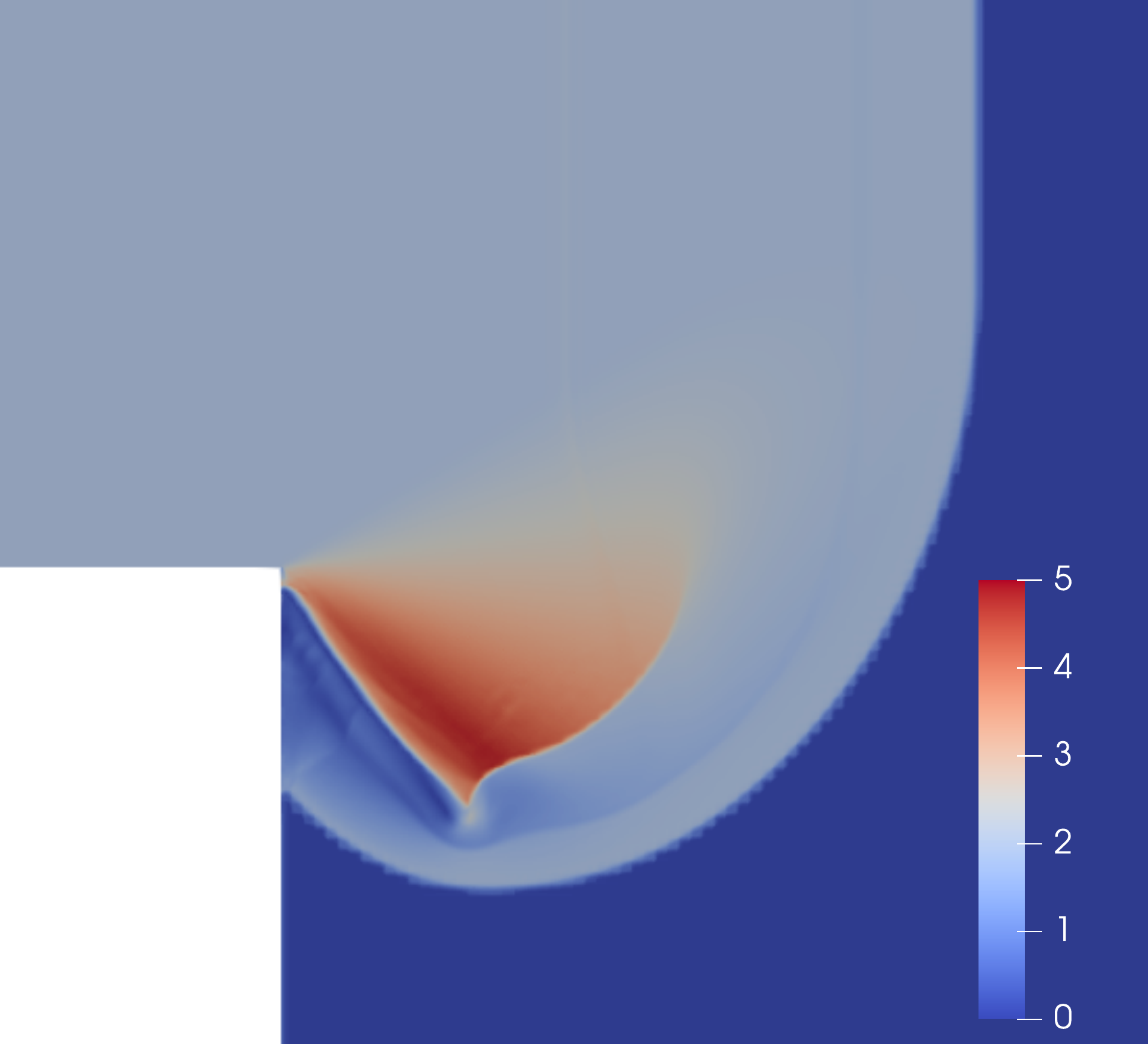} \\
(a) Density & (b) Mach number
\end{tabular}
\caption{Shock diffraction test, numerical solution at time $t=0.01$ with degree $N=4$ \correction{and grid spacing $\Delta x = \Delta y = 1/200$.}}
\label{fig:shock.diffraction}
\end{figure}

\subsubsection{Forward facing step}
Forward facing step is a classical test case from~\cite{emery1968, Woodward1984} where a uniform supersonic flow passes through a channel with a forward facing step generating several phenomena like a strong bow shock, shock reflections and a Kelvin-Helmholtz instability. It is a good test for demonstrating a shock capturing scheme's capability of capturing small scale vortex structures while suppressing spurious oscillations arising from shocks. The step is simulated by taking the domain to be $\Omega = ([0,3] \times [0,1]) \backslash ([0.6,3] \times [0,0.2])$ and the initial conditions are taken to be
\[
(\rho, u, v, p) = (1.4, 3, 0, 1)
\]
The initial conditions are taken to be constant over the whole domain $\Omega$. The left boundary condition is taken as an inflow and the right one is an outflow, the rest are solid walls. The corner $(0.6,0.2)$ of the step is the center of a rarefaction fan and is thus a singular point leading to formation of a spurious boundary layer. The modern treatment of this issue is to use a more refined mesh near the corner point. Since we only have a Cartesian mesh, we obtain the same outcome by forming 1-D meshes in $[0,1], [0,3]$ with the same grid spacing $\Delta x_\text{max}$ away from the singularity and the smaller grid spacing $\Delta x_\text{min} = \frac 14 \Delta x_\text{max} $ in $[0.15, 0.25], [0.45, 0.75]$. Then, the 2-D mesh is formed by taking a tensor product of the two 1-D meshes with cells from $[0.6,3] \times \correction{[0, 0.2]}$ removed. We show the density profile of numerical solutions in \correction{Figure~(\ref{fig:forward.step}a, b, c) for solution polynomial degrees $N=2,3,4$ with $\Delta x_\text{max} = 1/160$}. The scheme captures both the shock and the small scale vortices\correction{, with better resolution of shear structures from the triple shock point near the top wall as the overall resolution is increased}. The corner point singularity causes an artificial boundary layer and Mach stem to occur but these numerical artifacts decrease as we \correction{increase mesh resolution by increasing the polynomial degree}. \correction{Figure~\eqref{fig:ffs.alpha.stats} shows the time evolution of the percentage of cells in the grid where the blending coefficient $\alpha>0$ and Figure~(\ref{fig:forward.step}d) plots the blending coefficient for degree $N=4$ solution at the final time; these figures show that the blending limiter is activated in a small fraction of the cells and only in the vicinity of shocks.}

\begin{figure}
\centering
\begin{tabular}{c}
\includegraphics[width=0.5\textwidth]{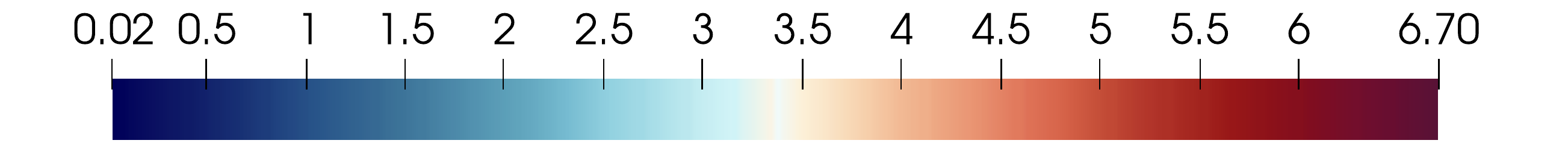} \\
\includegraphics[width=0.6\textwidth]{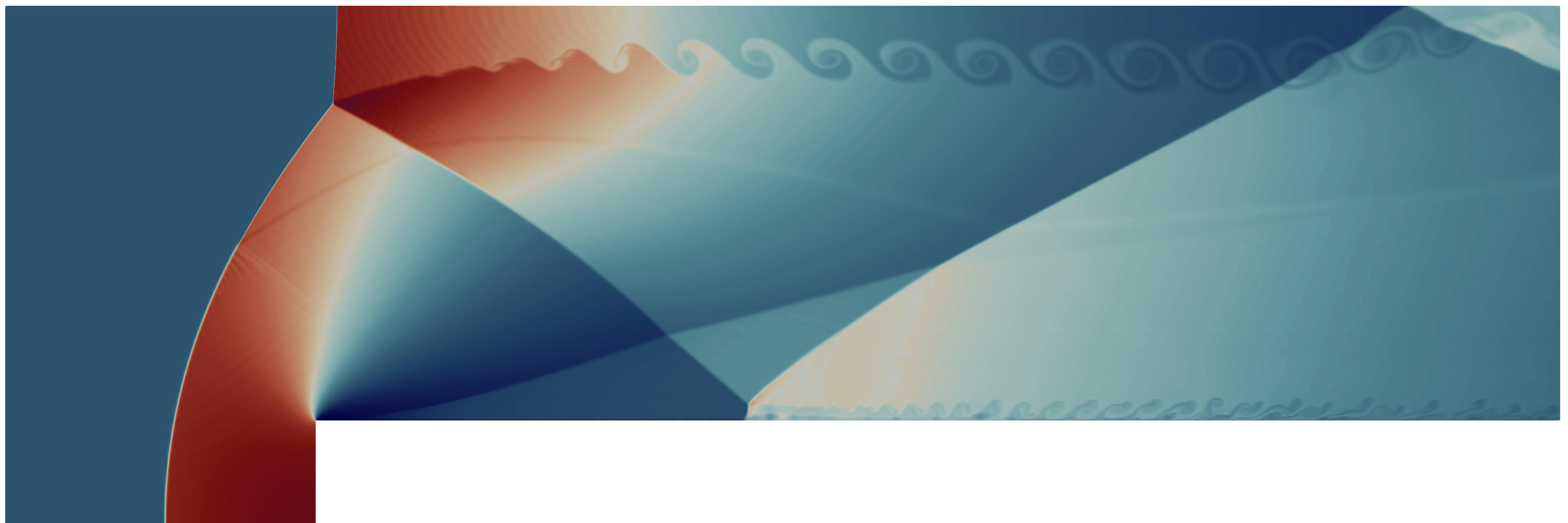} \\
(a) \correction{$N = 2$} \\
\includegraphics[width=0.6\textwidth]{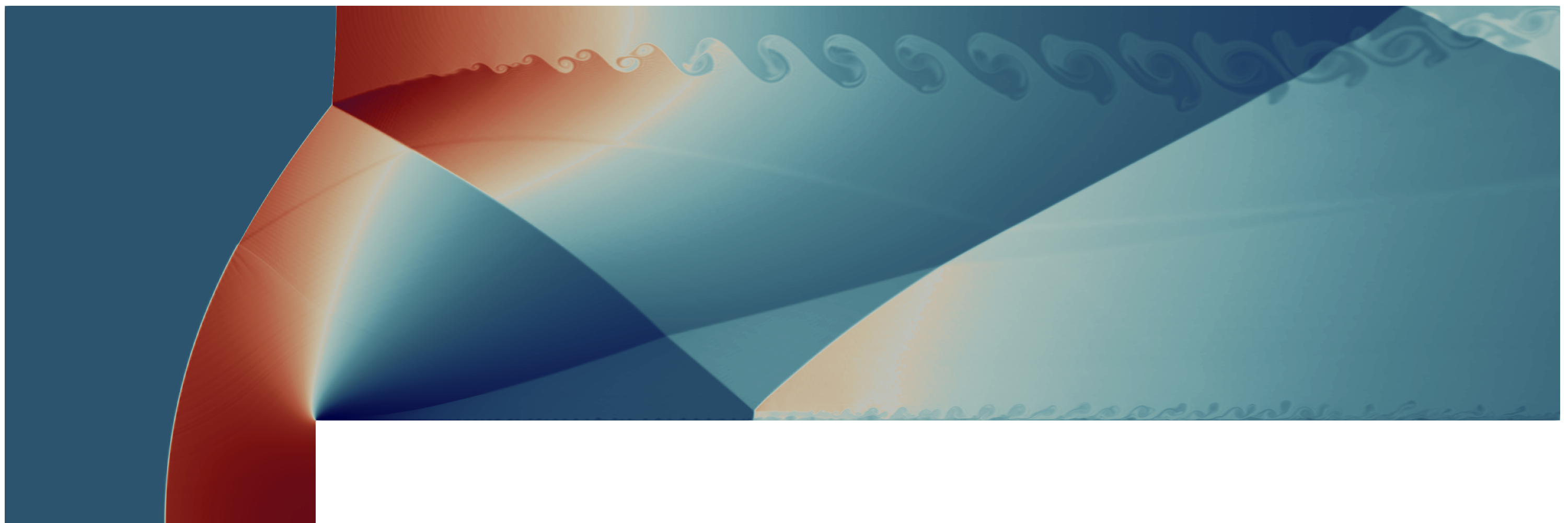} \\
(b) \correction{$N = 3$} \\
\includegraphics[width=0.6\textwidth]{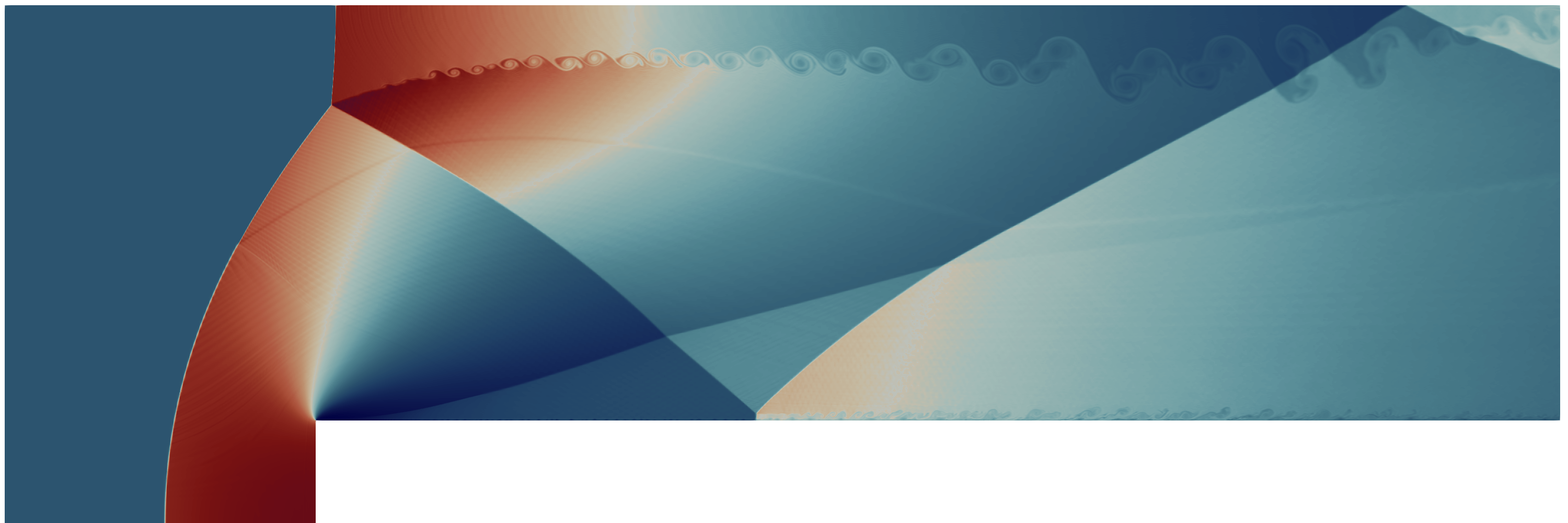} \\
(c) \correction{$N = 4$} \\
\includegraphics[width=0.5\textwidth]{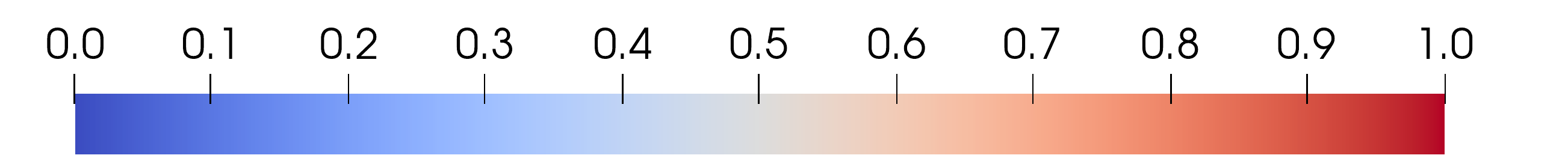} \\
\includegraphics[width=0.6\textwidth]{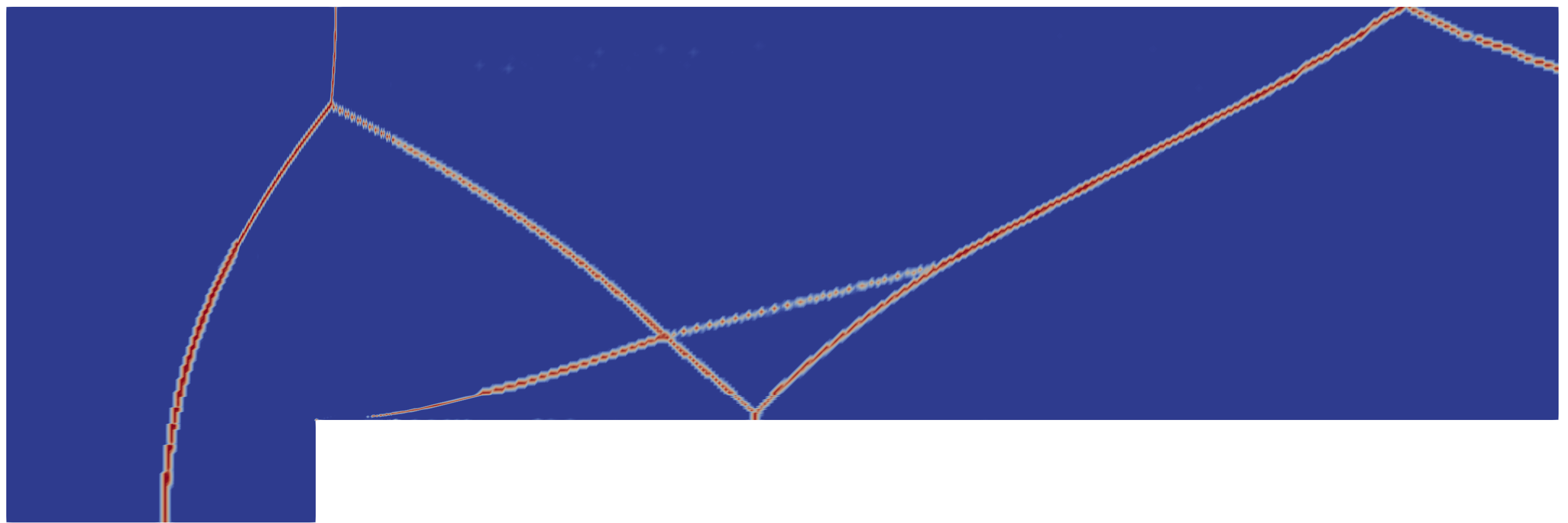} \\
(d) \correction{Blending coefficient $\alpha$ for $N=4$}
\end{tabular}
\caption{Forward facing step, density plots of numerical solution at time $t=3.0$ \correction{with solution polynomial degrees $N=2,3,4$ (a, b, c) and blending coefficient plot for degree $N=4$ (d)}. The meshes are formed by taking grid spacing $\Delta x_\text{max} = \Delta y_\text{max}\correction{ = 1/160}$ away from the corner and smaller grid spacing $\Delta x_\text{min} = \Delta y_\text{min} = \frac{1}{4}\Delta x_\text{max}$ near the corner.}
\label{fig:forward.step}
\end{figure}
\begin{figure}
\centering
\includegraphics[width=0.6\textwidth]{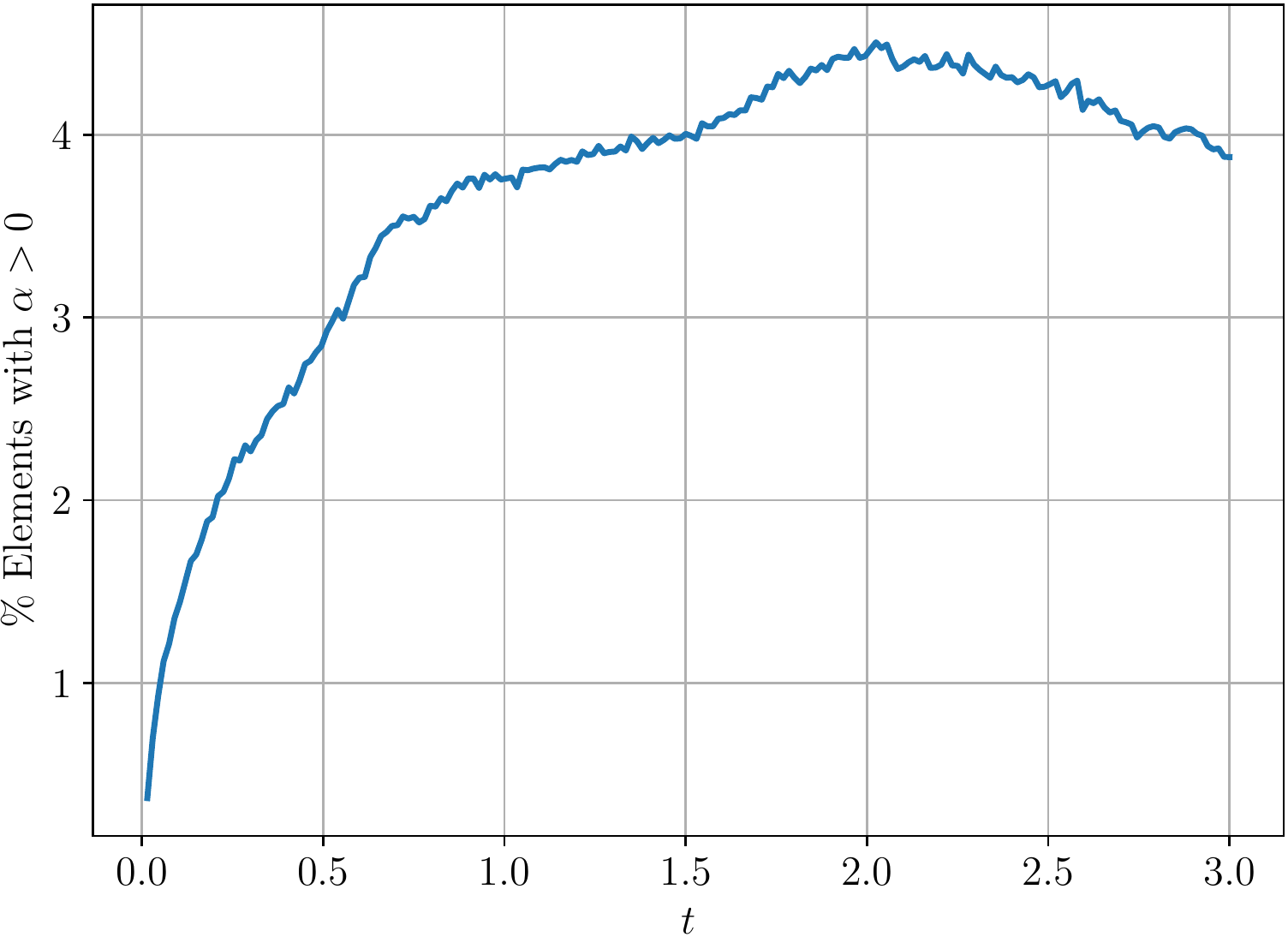}
\caption{\correction{Forward facing step test case, percentage of elements where the smoothness coefficient $\alpha$ is non-zero versus time $t$ for approximate solution with polynomial degree $N=4$ on a mesh with $\Delta x_\text{max} = 1/160$.}}
\label{fig:ffs.alpha.stats}
\end{figure}
\section{Summary and conclusions}\label{sec:sum}
An admissibility preserving subcell-based blending limiter for the \correction{high order} Lax-Wendroff Flux Reconstruction (LWFR) scheme has been constructed by extending the LWFR scheme proposed in~\cite{babbar2022} using the blending limiter of~\cite{henemann2021}. The scheme uses a smoothness indicator to blend two single-stage solvers on the FR grid, one based on the \correction{high order} LWFR method and the other based on a finite volume update on the subcells. At the FR element interfaces, a \textit{blended numerical flux} is constructed using the Lax-Wendroff time averaged flux and lower order numerical flux. The same blended numerical flux is used by both schemes at the element interfaces to maintain conservation. The crucial observation used for obtaining admissibility preservation was that admissibility preservation in means is a consequence of admissibility of the lower order updates. A simple and efficient procedure to obtain admissibility preservation in means was thus proposed, where lower-order updates are made admissible by appropriately constructing the blending numerical flux within the face loop. This approach eliminates the need for additional element or interface loops, minimizing storage requirements. The user only needs to provide the admissibility constraints $\{p_k, k=1,\ldots,K\}$ which are concave functions of the conservative variables and whose positivity implies that the solution is in the admissibility set $\Uad$, making the correction problem-independent. Once admissibility preservation in means is obtained, we use the scaling limiter of~\cite{zhang2010c} to enforce admissibility of the polynomial values. To enhance accuracy, we modified the blending scheme of~\cite{henemann2021} to use Gauss-Legendre solution points and used the second-order MUSCL-Hancock scheme to compute the lower-order residual. We extended the slope restriction criterion of~\cite{Berthon2006} for admissibility of the MUSCL-Hancock scheme to non-cell-centered grids that arise in the blending scheme to maintain the conservation property. We also proposed a problem-independent procedure to enforce the slope restriction. The scheme is robust and the higher resolution of MUSCL-Hancock is more superior in capturing small scale structures, as was demonstrated by numerical experiments on compressible Euler equations.
\section*{Acknowledgments}
The work of Arpit Babbar and Praveen Chandrashekar is supported by the Department of Atomic Energy,  Government of India, under project no.~12-R\&D-TFR-5.01-0520. The work of Sudarshan Kumar Kenettinkara is supported by the  Science and Engineering Research Board, Government of India, under MATRICS project no.~MTR/2017/000649.

\section*{Data availability}
The code and data used to produce the results in this paper will be made publicly available at~\cite{paperrepo,tenkai} after publication of the paper.
\appendix
\section{Admissibility of MUSCL-Hancock scheme for general grids}\label{sec:muscl.admissibility.proof}

For the conservation law~\eqref{eq:con.law}, define $\sigma \left( \bw_1, \bw_2 \right)$ as
\[
\sigma \left( \bw_1, \bw_2 \right) = \max \{ \rho(\ff'( \bw_\lambda)) : \bw_\lambda = \lambda \bw_1 + (1-\lambda) \bw_2, \quad 0 \le \lambda \le 1\}
\]
where $\rho(A)$ denotes the spectral radius of matrix $A$. For the 2-D hyperbolic conservation law
\begin{equation}\label{eq:2d.hyp.con.law}
\bw_t +  \ff_x +  \fg_y = 0
\end{equation}
where $(f,g)$ are Cartesian components of the flux vector; the wave speed estimates in $x,y$ directions are defined as follows
\begin{align*}
\sigma_x \left( \bw_1, \bw_2 \right) = \max \{ \rho(\ff'( \bw_\lambda)) : \bw_\lambda = \lambda \bw_1 + (1-\lambda) \bw_2, \quad 0 \le \lambda \le 1\} \\
\sigma_y \left( \bw_1, \bw_2 \right) = \max \{ \rho(\fg'( \bw_\lambda)) : \bw_\lambda = \lambda \bw_1 + (1-\lambda) \bw_2, \quad 0 \le \lambda \le 1\}
\end{align*}
We assume that the \correction{admissibility set} $\Uad$ 
of the conservation law is a convex subset of $\re^d$ which can be written as~\eqref{eq:uad.form}. The following assumption is made concerning the admissibility of first order finite volume scheme.

\paragraph{Admissibility of first order finite volume scheme.} Under the time step restriction
\begin{equation} \label{eq:numflux.admissibility.cfl}
\correction{\max_{j} \frac{\Delta t}{\Delta x_j} \sigma(\bw_j, \bw_{j+1})} \le 1
\end{equation}
the first order finite volume method
\[
\bw_j^{n+1} = \bw_j^n -\frac{\Delta t}{\Delta x_j}\left ( \ff(\bw_j^n,\bw_{j+1}^n) - \ff(\bw_{j-1}^n, \bw_j^n) \right )
\]
is admissibility preserving, i.e., $\bw_j^n \in \Uad$ for all $j$ implies that $\bw_j^{n+1} \in \Uad$ for all $j$.

\subsection{Review of MUSCL-Hancock scheme}
Here we review the MUSCL-Hancock scheme for general uniform grids that need not be cell-centered (Figure~(\ref{fig:general.grid})) in the sense that
\begin{equation} \label{eq:non.cell.centred.defn}
x_\jph - x_j \ne x_j - x_\jmh,
\end{equation}
for some $j$ where $x_j$ is the solution point in finite volume element $(x_\jmh, x_\jph)$. The grid used in the blending limiter (Figure~\ref{fig:subcells}) is a special case of~\eqref{eq:non.cell.centred.defn}.
\begin{figure}
\begin{center}
\includegraphics[width=0.7\textwidth]{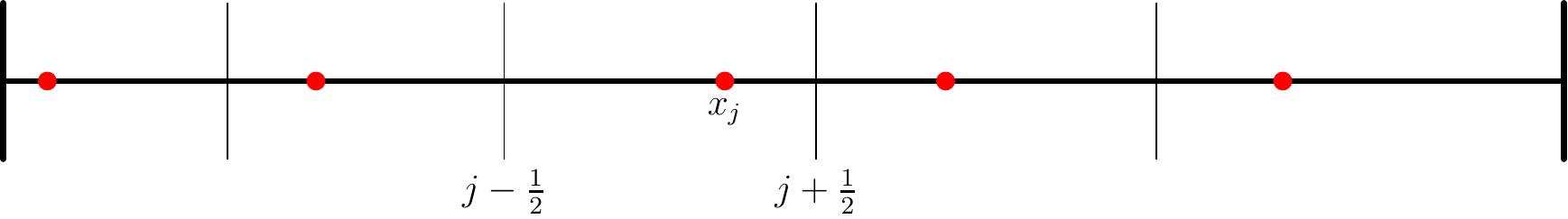}
\caption{Non-uniform, non-cell-centered finite volume grid}
\label{fig:general.grid}
\end{center}
\end{figure}

For the $j^{\text{th}}$ finite volume element $(x_{j - \frac{1}{2}}, x_{j + \frac{1}{2}})$, the constant state is denoted $\bw_j^n$ and the linear approximation will be denoted $r^n_j (x)$. For conservative reconstruction, the linear reconstruction is given by
\[
r^n (x) = \bw_j^n + (x - x_j) \slope_j,
\qquad
x \in \left( x_{j - \frac{1}{2}}, x_{j + \frac{1}{2}} \right)
\]
The values on left and right faces will be computed as
\begin{equation}
\label{eq:reconstruction.general}
\bw^{n, -}_j = \bw_j^n + (x_{\jmh} - x_j)
\slope_j, \qquad \bw_j^{n, +} = \bw_j + (x_{j +
\half} - x_j) \slope_j
\end{equation}
We use Taylor's expansion to evolve the solution to $t_n + \half \Delta t$
\begin{equation}
\label{eq:evolution.general}
\begin{split}
\bw_j^{\nph,-} &= \bw_j^{n,-}-\frac{\Delta t}{2 \Delta x_j}(f(\bw_j^{n, +})-f(\bw_j^{n, -})) \\
\bw_j^{\nph,+} &= \bw_j^{n,+}-\frac{\Delta t}{2 \Delta x_j}(f(\bw_j^{n, +})-f(\bw_j^{n, -}))
\end{split}
\end{equation}
where $\Delta x_j = x_{\jph} - x_{\jmh}$. The final update is performed by using an approximate Riemann solver on the evolved quantities
\begin{equation}
\bw_j^{n + 1} = \bw_j^n - \frac{\Delta t}{\Delta x_j}  \left( \ff_{j + \frac{1}{2}}^{n + \frac{1}{2}} - \ff^{n + \frac{1}{2}}_{j - \frac{1}{2}} \right) \label{eq:muscl.final.general}
\end{equation}
where
\[
\ff_{j + \frac{1}{2}}^{n + \frac{1}{2}} = \ff \left( \bw_j^{n + \frac{1}{2}, +}, \bw_{j + 1}^{n + \frac{1}{2}, -} \right)
\]
is some numerical flux function. The key idea of the proof is to write the evolution $\bw_{j}^{\nph, \pm}$ from~\eqref{eq:evolution.general} as a convex combination of exact solution of some Riemann problem and the final update $\bw_j^{n+1}$ from~\eqref{eq:muscl.final.general} as a convex combination of first order finite volume updates on appropriately chosen subcells.
\subsection{Primary generalization for proof}
For the uniform, cell-centered case, Berthon~\cite{Berthon2006} defined $\bw_j^{*,\pm}$ to satisfy
\[
\frac{1}{2} \bw_j^{n, -} + \bw_j^{\ast, \pm} + \frac{1}{2} \bw_j^{n, +} = 2 \bw_j^{n, \pm}
\]
We generalize it for non-cell centred grids~\eqref{eq:non.cell.centred.defn}
\[
\mum \bw_j^{n, -} + \bw_j^{\ast, \pm} + \mup \bw_j^{n, +} = 2 \bw_j^{n, \pm}
\]
where
\begin{equation}\label{eq:mu.pm}
\mum = \frac{x_{\jph} - x_j}{x_{\jph} - x_{\jmh}}, \qquad
\mup = \frac{x_j - x_{\jmh}}{x_{\jph} - x_{\jmh}}
\end{equation}
This choice was made to keep the natural extension of $\bw_j^{*,\pm}$ in the conservative reconstruction case:
\[
\bw_j^{*,\pm} = \bw_j^n + 2(x_{j\pm \frac{1}{2}} - x_j)\slope_j
\]
noting that $\bw_j^{n, \pm}$ are given by~\eqref{eq:reconstruction.general}.
\subsection{Proving admissibility}\label{sec:mh.adm}
The following lemma about conservation laws will be crucial in the proof.
\begin{lemma}
Consider the 1-D Riemann problem
\begin{align*}
\bw_t + \ff(\bw)_x & = 0\\
\bw (x, 0) & = \left\{\begin{array}{lll}
\bw_l, & \quad & x < 0\\
\bw_r, &  & x > 0
\end{array}\right.
\end{align*}
in $[-h,h] \times [0,\Delta t]$ where
\begin{equation} \label{eq:con.law.dt}
\frac{\Delta t}{h} \sigma (\bw_l, \bw_r) \leq 1
\end{equation}
Then, for all $t \le \Delta t$,
\begin{align}
\label{lemma:avg.riemann.problem}
\int_{- h}^h \bw(x,t) \ud x = h (\bw_l + \bw_r) - t (f (\bw_r) - f (\bw_l))
\end{align}
\end{lemma}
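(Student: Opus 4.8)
The plan is to combine the self-similar structure of the Riemann solution with finite speed of propagation, and then to integrate the conservation law over the space--time rectangle $[-h,h]\times[0,t]$. Recall first that the Riemann problem admits a self-similar weak (entropy) solution $u(x,t)=W(x/t)$ whose wave fan --- the rarefactions, shocks and contacts connecting $u_l$ to $u_r$ --- emanates from $(0,0)$, and all of whose wave speeds are bounded in absolute value by
\[
\sigma(u_l,u_r)=\max\{\rho(f'(u_\lambda)):u_\lambda=\lambda u_l+(1-\lambda)u_r,\ 0\le\lambda\le1\}.
\]
Hence, for each $s\in(0,\Delta t]$, one has $u(x,s)=u_l$ whenever $x<-\sigma(u_l,u_r)\,s$ and $u(x,s)=u_r$ whenever $x>\sigma(u_l,u_r)\,s$. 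Using the hypothesis~\eqref{eq:con.law.dt}, i.e.\ $\sigma(u_l,u_r)\,\Delta t\le h$, we get $\sigma(u_l,u_r)\,s\le h$ for all $s\le\Delta t$, so that $u(-h,s)=u_l$ and $u(h,s)=u_r$ for all $s<\Delta t$, hence for almost every $s\in(0,t)$ with $t\le\Delta t$ (the only possible exception, $\sigma(u_l,u_r)\,s=h$, occurs for a single value of $s$ and is irrelevant for the time integrals below).

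Next, fix $t\le\Delta t$ and integrate the weak form of $u_t+f(u)_x=0$ over the rectangle $R=[-h,h]\times[0,t]$. The divergence theorem applied to the space--time divergence-free field $(u,f(u))$ gives
\[
\int_{-h}^{h}u(x,t)\,\ud x-\int_{-h}^{h}u(x,0)\,\ud x+\int_{0}^{t}f(u(h,s))\,\ud s-\int_{0}^{t}f(u(-h,s))\,\ud s=0.
\]
Substituting the initial data, $\int_{-h}^{h}u(x,0)\,\ud x=h\,u_l+h\,u_r$ (the left half contributes $h\,u_l$ and the right half $h\,u_r$), and the boundary values from the first step, $\int_{0}^{t}f(u(h,s))\,\ud s=t\,f(u_r)$ and $\int_{0}^{t}f(u(-h,s))\,\ud s=t\,f(u_l)$, and rearranging yields $\int_{-h}^{h}u(x,t)\,\ud x=h(u_l+u_r)-t\,(f(u_r)-f(u_l))$, which is~\eqref{lemma:avg.riemann.problem}.

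The only delicate point is the finite-propagation-speed claim in the first step: that every wave in the Riemann fan travels with speed at most $\sigma(u_l,u_r)$ in absolute value, so that the fan does not reach $x=\pm h$ within $[0,\Delta t]$. For scalar equations this is immediate (every rarefaction speed is $f'$ at an intermediate value, and every shock speed is a difference quotient of $f$ which by the mean value theorem is also $f'$ at an intermediate value). For systems it is a standard structural property of the Riemann solution, and it is exactly the assumption underlying the CFL-type conditions~\eqref{eq:numflux.admissibility.cfl} and~\eqref{eq:con.law.dt} used throughout; I would state it as a hypothesis and take it as given. Everything else is a routine application of the integral (balance-law) form of the conservation law, requiring no further computation.
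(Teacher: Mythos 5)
Your proof is correct and rests on the same two ingredients as the paper's: the space--time integral (balance-law) form of the conservation law over a region whose lateral boundaries the wave fan cannot reach, thanks to the CFL-type hypothesis $\Delta t\,\sigma(\bw_l,\bw_r)\le h$. The one mechanical difference is that you integrate over the full rectangle $[-h,h]\times[0,t]$ in a single step, whereas the paper integrates separately over $(-h,0)\times(0,t)$ and $(0,h)\times(0,t)$ and then must argue, via self-similarity and the Rankine--Hugoniot condition at the stationary ray $x/t=0$, that $\ff(\tilde\bw(0^+))=\ff(\tilde\bw(0^-))$ so the interior flux terms cancel upon summing. Your single-rectangle version sidesteps that flux-continuity step entirely, at the (mild) cost of invoking the weak form across the whole fan rather than only on the two half-strips; for a piecewise-smooth self-similar Riemann solution both are equally legitimate. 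You are also more candid than the paper about the underlying structural assumption --- that all wave speeds in the Riemann fan are bounded by $\sigma(\bw_l,\bw_r)$, which for systems need not follow from the segment-based definition of $\sigma$ --- an assumption the paper uses implicitly when it asserts that characteristics from $[0,h]$ do not reach $x=-h$. No gap; the argument is complete at the same level of rigour as the original.
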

\begin{proof}
Integrate the conservation law over $(- h, 0) \times (0, t)$
\begin{align*}
0 & = \int_{- h}^0 \bw(x, t) \ud x - h \bw_l + \int_0^t (\ff (\bw (0^-,t)) - \ff (\bw (-h, t))) \ud t\\
& = \int_{- h}^0 \bw(x,t) \ud x - h \bw_l + t (\ff (\tilde{\bw} (0^-)) - \ff(\bw_l))
\end{align*}
where, by self-similarity of solution of Riemann problem, $\tilde{\bw}$ is defined so that $\bw(x,t) = \tilde{\bw}(x/t)$ and $\ff(\bw(-h,t)) = \ff(\bw_l)$ is obtained as characteristics from $[0,h]$ do not reach $x=-h$ due to the time restriction~\eqref{eq:con.law.dt}. Rewriting gives
\begin{align*}
\int_{- h}^0 \bw(x,t) \ud x = h \bw_l - t (\nobracket \ff(\tilde{\bw}
(0^-)) - \ff(\bw_l))
\end{align*}
Similarly,
\[
\int_0^h \bw(x,t) \ud x = h \bw_r - t (\ff(\bw_r) - \ff(\tilde{\bw}(0^+)))
\]
If $\tilde{\bw}$ is discontinuous at $x=0$, by Rankine-Hugoniot conditions, we will have a stationary jump at $x/t=0$ and obtain $\ff(\tilde{\bw}(0^+)) = \ff(\tilde{\bw}(0^-))$. The same trivially holds if $\tilde{\bw}$ is continuous at $x/t=0$. Thus, we can sum the previous two identities to get \eqref{lemma:avg.riemann.problem}.
\end{proof}

We will now give a criterion under which we can prove $\bw_j^{\nph,\pm} \in \Uad$, i.e., the evolution step~\eqref{eq:evolution.general} preserves $\Uad$.
\begin{lemma}
\label{lemma:m.h.step.1}Define $\mu_\pm$ by \eqref{eq:mu.pm}  and pick $\bw_j^{\ast, \pm}$ to satisfy
\begin{equation} \label{eq:ustar.defn}
\frac{\mum}{2} \bw_j^{n, -} + \frac{1}{2} \bw_j^{\ast, \pm} +
\frac{\mup}{2} \bw_j^{n, +} = \bw_j^{n, \pm}
\end{equation}
Assume $\bw_j^{n, \pm}, \bw_j^{\ast, \pm} \in \Uad$  and the CFL restrictions
\begin{align}
\max_{j} \frac{\Delta t}{\mum \Delta x_j} \sigma
\left( \bw_j^{n, -}, \bw_j^{\ast, \pm} \right) \leq
1, \qquad
\max_{j} \frac{\Delta t}{\mup \Delta x_j} \sigma
\left( \bw_j^{\ast, \pm}, \bw_j^{n, +} \right) \leq
1
\label{eq:new.cfl1}
\end{align}
are satisfied. Then,  $\bw_j^{n + \half, \pm}$ given by the first step~\eqref{eq:evolution.general} of the MUSCL-Hancock scheme is in $\Uad$.
\end{lemma}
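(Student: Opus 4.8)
The plan is to follow Berthon's strategy (and the proof of the preceding lemma): I would realize $\bw_j^{\nph,\pm}$ as the spatial average, over a suitably partitioned interval $D$, of the exact entropy solution of \eqref{eq:con.law} run for time $\Delta t/2$ from a piecewise-constant datum all of whose values lie in $\Uad$; since \eqref{eq:con.law} preserves $\Uad$ by \eqref{eq:conv.pres.con.law} and $\Uad$ is convex, such an average is automatically in $\Uad$, so the whole task reduces to producing this representation and checking its ingredients are legitimate.

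For the ``$+$'' case I would first rewrite \eqref{eq:ustar.defn} as $\mum\bw_j^{n,-}+\bw_j^{\ast,+}+\mup\bw_j^{n,+}=2\bw_j^{n,+}$, which exhibits $\bw_j^{n,+}$ as the length-weighted average over an interval $D$ of width $\Delta x_j$ of the function equal to $\bw_j^{n,-},\bw_j^{\ast,+},\bw_j^{n,+}$ on three consecutive subintervals of widths $\tfrac12\mum\Delta x_j$, $\tfrac12\Delta x_j$, $\tfrac12\mup\Delta x_j$ (these sum to $\Delta x_j$ since $\mum+\mup=1$). Let $x_a,x_b$ be the two interior jump points and $\bw(x,t)$ the exact solution with this datum. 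The interior Riemann fans issue from $x_a$ and $x_b$; the bounds \eqref{eq:new.cfl1} are precisely what keeps the $x_a$-fan from reaching the left end of $D$ (distance $\tfrac12\mum\Delta x_j$, speeds bounded by $\sigma(\bw_j^{n,-},\bw_j^{\ast,+})$) and the $x_b$-fan from reaching the right end (distance $\tfrac12\mup\Delta x_j$, speeds bounded by $\sigma(\bw_j^{\ast,+},\bw_j^{n,+})$), and a short check (using $\mum,\mup\le1$) shows neither fan can traverse the middle subinterval and hit the far end; hence the traces of $\bw$ on $\partial D$ remain $\bw_j^{n,-}$ and $\bw_j^{n,+}$ throughout $[0,\Delta t/2]$. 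Integrating \eqref{eq:con.law} over $D\times(0,\Delta t/2)$ with these boundary traces — exactly as in the derivation of \eqref{lemma:avg.riemann.problem} — gives
\[
\frac{1}{\Delta x_j}\int_D \bw\!\left(x,\tfrac{\Delta t}{2}\right)\ud x
= \bw_j^{n,+} - \frac{\Delta t}{2\Delta x_j}\bigl(f(\bw_j^{n,+})-f(\bw_j^{n,-})\bigr)
= \bw_j^{\nph,+},
\]
the first equality because the initial average is $\bw_j^{n,+}$. Since $\bw_j^{n,-},\bw_j^{\ast,+},\bw_j^{n,+}\in\Uad$, the datum is $\Uad$-valued, so $\bw(\cdot,\Delta t/2)$ is $\Uad$-valued by \eqref{eq:conv.pres.con.law} and its $D$-average is in $\Uad$ by convexity, proving $\bw_j^{\nph,+}\in\Uad$. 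The ``$-$'' case is identical, with $\bw_j^{\ast,+}$ replaced by $\bw_j^{\ast,-}$ and the initial average equal to $\bw_j^{n,-}$, so the outcome is $\bw_j^{\nph,-}$.

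I expect the main obstacle to be the finite-speed-of-propagation bookkeeping: pinning down the boundary traces of $\bw$ forces the precise subinterval widths $\tfrac12\mum\Delta x_j$, $\tfrac12\mup\Delta x_j$ and uses \eqref{eq:new.cfl1} in exactly those two forms, and one must rule out a fan crossing the middle cell to reach the far endpoint. The only other delicate point is the passage from ``solution is $\Uad$-valued'' to ``average is $\Uad$-valued'', where convexity of $\Uad$ replaces the scalar invariant-interval argument of the preceding lemma, together with the standing assumption that \eqref{eq:con.law} admits a $\Uad$-preserving entropy solution.
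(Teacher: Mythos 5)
Your proposal is correct and follows essentially the same route as the paper's proof: the same three-state piecewise-constant datum with subinterval widths $\tfrac12\mum\Delta x_j$, $\tfrac12\Delta x_j$, $\tfrac12\mup\Delta x_j$, the same use of \eqref{eq:new.cfl1} to confine the two Riemann fans, and the same identification of the spatial average at time $\Delta t/2$ with $\bw_j^{\nph,\pm}$ via \eqref{eq:ustar.defn} and \eqref{eq:evolution.general}. The only cosmetic difference is that the paper splits the integral at $\tilde{x}_j$ and invokes Lemma~\ref{lemma:avg.riemann.problem} on each non-interacting Riemann problem, whereas you integrate the conservation law over the whole cell using the constant boundary traces directly; the computation and conclusion are identical.
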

\begin{proof}
We will prove that $\bw_j^{n + \frac{1}{2}, +} \in \Uad$, and the proof for $\bw_j^{n + \frac{1}{2}, -}$ shall follow similarly. The key idea is to write 
$\bw_j^{n + \frac{1}{2}, \correction{+}}$ as the exact solution of some Riemann problems. Define $\bw^h (x, t) : (x_\jmh, x_\jph) \times (0, \Delta t/2) \rightarrow \Uad$ to be the weak solution of the Cauchy problem with initial data
\[
\bw^h (x, 0) = \begin{cases}
\bw_j^{n, -}, \quad & \tmop{if} x \in (x_{\jmh}, x_{j - 1 /
4})\\
\bw_j^{\ast, +}, & \tmop{if} x \in (x_{j - 1 / 4}, x_{j + 1 /
4})\\
\bw_j^{n, +}, & \tmop{if} x \in (x_{j + 1 / 4}, x_{\jph})
\end{cases}
\]
where
\[
x_{j - \frac{1}{4}} = \frac{1}{2} (x_{\jmh} + \tilde{x}_j), \qquad x_{j +
\frac{1}{4}} = \frac{1}{2} (\tilde{x}_j + x_{\jph}), \qquad \tilde{x}_j = x_{j - \frac{1}{2}} + \mum \Delta x_j
\]

Under our time step restrictions~\eqref{eq:new.cfl1}, the solution $\bw^h$ at time $\frac{\Delta t}{2}$
is made up of non-interacting Riemann problems centered at
$x_{j \pm \frac{1}{4}}$, see Figure~(\ref{fig:non.interacting.rp1}). We take the projection of
$\bw^h (x, \Delta t / 2)$ on piecewise-constant functions
\[ \tilde{\bw}_j^{n + \frac{1}{2}, +} := \frac{1}{\Delta x_j} \int_{x_\jmh}
^{x_\jph} \bw^h \left( x, \frac{\Delta t}{2} \right) \ud x \]
Since we assumed that the conservation law preserves $\Uad$, we get $\tilde{\bw}_j^{n + \frac{1}{2}, +} \in
\Uad$. If we prove $\tilde{\bw}_j^{n + \frac{1}{2}, +} = \bw_j^{n +
\half, +}$, we will have our claim. Applying Lemma \ref{lemma:avg.riemann.problem} to the two non-interacting Riemann problems, we get
\begin{figure}
\begin{center}
\includegraphics[width=0.9\textwidth]{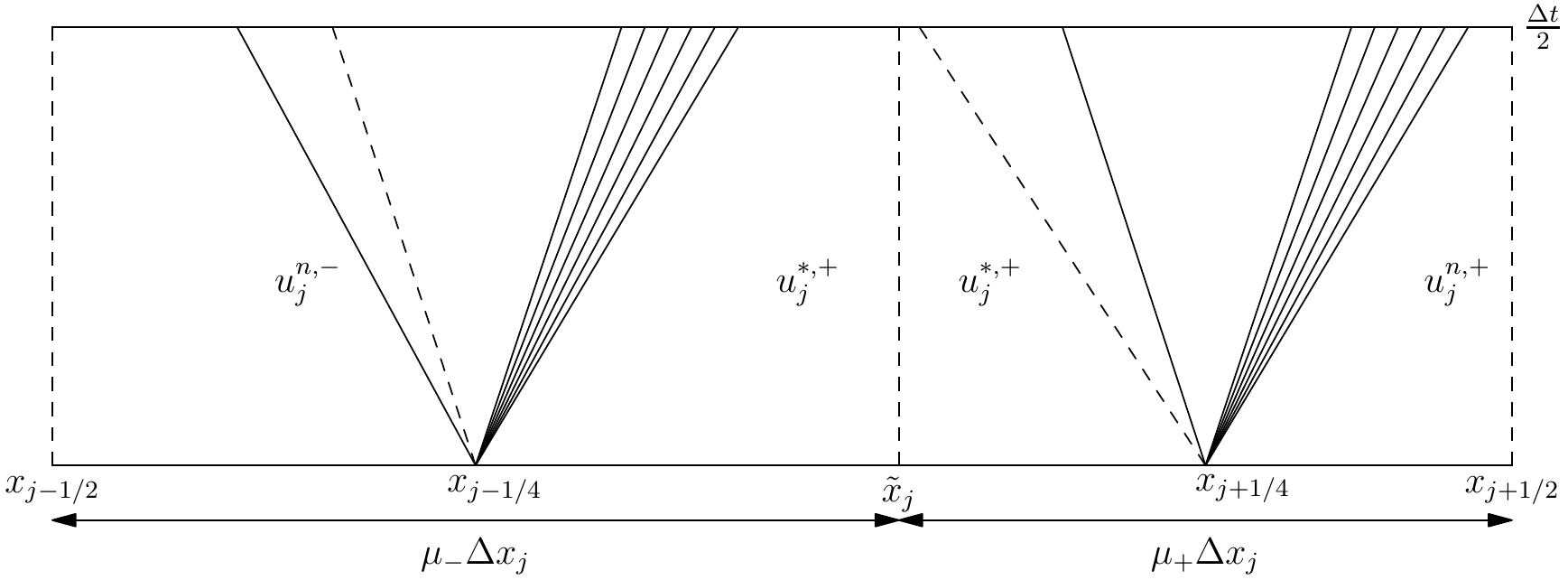}
\caption{Two non-interacting Riemann problems}
\label{fig:non.interacting.rp1}
\end{center}
\end{figure}
\begin{align*}
\widetilde{\bw}_j^{n + \frac{1}{2}, +} & =\frac{1}{\Delta x_j}
\left (\int_{x_{\jmh}}^{\tilde{x}_j} \bw^h \left( x, \frac{\Delta t}{2}
\right) \ud x
+
\int_{\tilde{x}_j}^{x_{\jph}} \bw^h \left( x, \frac{\Delta t}{2}
\right)  \ud x \right)\\
& = \frac{1}{\Delta x_j} \left[ \frac{\tilde{x}_j - x_{\jmh}}{2} \bw_j^{n, -} +
\frac{\Delta x_j}{2} \bw_j^{\ast, +} + \frac{x_{\jph} - \tilde{x}_j}{2}
\bw_j^{n, +}- \frac{\Delta t}{2} \left( f \left( \bw_j^{n, +} \right) - f \left(
\bw_j^{n, -} \right) \right) \right ]\\
& = \frac{1}{2} \left( \mum \bw_j^{n, -} + \bw_j^{\ast, +}
+ \mup \bw_j^{n, +} \right) - \frac{\Delta t / 2}{\Delta x_j} \left( f \left(
\bw_j^{n, +} \right) - f \left( \bw_j^{n, -} \right) \right)\\
& = \bw_j^{n, +} - \frac{\Delta t / 2}{\Delta x_j} \left( f \left( \bw_j^{n, +}
\right) - f \left( \bw_j^{n, -} \right) \right), \quad \text{using~\eqref{eq:ustar.defn}}\\
& = \bw_j^{n + \frac{1}{2}, +}, \quad \text{by~\eqref{eq:evolution.general}}
\end{align*}
This proves our claim.
\end{proof}

Now, we introduce a new variable $\bw_j^{n + \frac{1}{2}, \ast}$ defined as follows:
\begin{equation}\label{eq:uj.nph.s}
 \mum \bw_j^{n + \frac{1}{2}, -} + \bw_j^{n + \frac{1}{2},
\ast} + \mup \bw_j^{n + \frac{1}{2}, +} = 2 \bw_j^n
\end{equation}
\begin{figure}
\begin{center}
\includegraphics[width=0.9\textwidth]{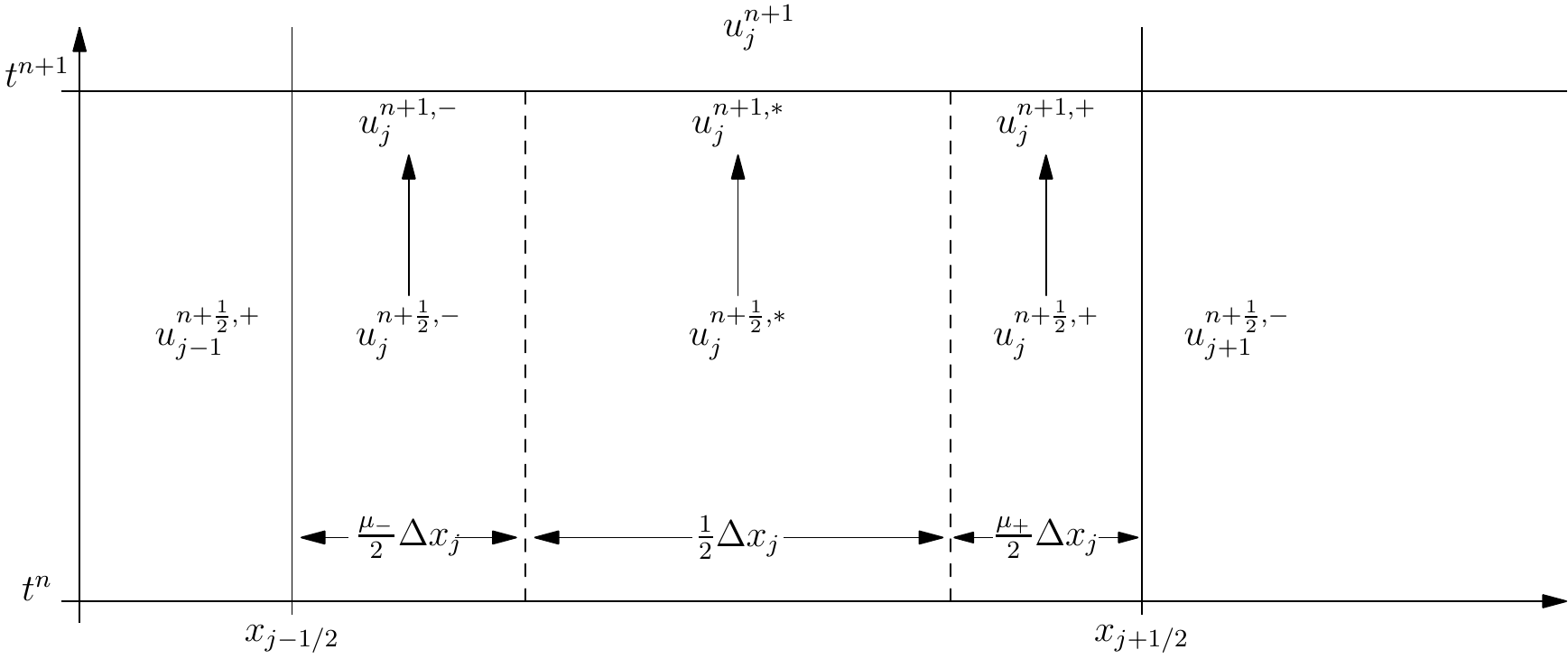}
\caption{Finite volume evolution}
\label{fig:fv.evolution}
\end{center}
\end{figure}
As illustrated in Figure~(\ref{fig:fv.evolution}), we evolve each state according to the associated first order scheme to define the following
\begin{equation}
\label{eq:all.fvm.updates}
\begin{split}
\bw_j^{n + 1, -} & = \bw_j^{n + \frac{1}{2}, -} -
\cfrac{\Delta t}{\mum \Delta x_j / 2} \left( f \left( \bw_j^{n + \frac{1}{2}, -}, \bw_j^{n + \frac{1}{2}, \ast} \right) - f \left( \bw_{j
- 1}^{n + \frac{1}{2}, +}, \bw_j^{n + \frac{1}{2}, -} \right) \right)\\
\bw_j^{n + 1, \ast} & = \bw_j^{n + \frac{1}{2}, \ast} - \cfrac{\Delta t}{\Delta x_j /
2} \left( f \left( \bw_j^{n + \frac{1}{2}, \ast}, \bw_j^{n + \frac{1}{2},
+} \right) - f \left( \bw_j^{n + \frac{1}{2}, -}, \bw_j^{n + \frac{1}{2},
\ast} \right) \right)\\
\bw_j^{n + 1, +} & = \bw_j^{n + \frac{1}{2}, +} -
\cfrac{\Delta t}{\mup \Delta x_j / 2} \left( f \left( \bw_j^{n +
\frac{1}{2}, +}, \bw_{j + 1}^{n + \frac{1}{2}, -} \right) - f \left(
\bw_j^{n + \frac{1}{2}, \ast}, \bw_j^{n + \frac{1}{2}, +} \right) \right)
\end{split}
\end{equation}
Recall that~\eqref{eq:muscl.final.general} is
\[
\bw_j^{n + 1} = \bw_j^n - \frac{\Delta t}{\Delta x_j} \left(  \ff \left( \bw_j^{n +
\frac{1}{2}, +}, \bw_{j + 1}^{n + \frac{1}{2}, -} \right) -  \ff
\left( \bw_{j - 1}^{n + \frac{1}{2}, +}, \bw_j^{n + \frac{1}{2}, -} \right)
\right)
\]
Using~\eqref{eq:uj.nph.s} and~\eqref{eq:all.fvm.updates}, we get
\[
\frac{\mum}{2} \bw_j^{n + 1, -} + \frac{1}{2} \bw_j^{n + 1, \ast} + \frac{\mup}{2} \bw_j^{n + 1, +} = \bw_j^{n + 1}
\]
Thus, assuming $\bw_j^{n + \frac{1}{2}\pm}, \bw_j^{n + \frac{1}{2}, \ast} \in \Uad$ for all $j$, and since $\half \mum + \half \mup = 1$, we get $\bw_j^{n+1} \in \Uad$ under the following time step restrictions arising from the assumed time step requirement~\eqref{eq:numflux.admissibility.cfl} for admissibility of the first order finite volume method
\begin{equation}
\begin{gathered}
\begin{aligned}
&\max_j \cfrac{\Delta t}{\mum \Delta x_j / 2} \sigma \left( \bw_j^{n + \frac{1}{2}, -}, \bw_j^{n + \frac{1}{2}, \ast} \right) \leq 1,\\
& \max_j \cfrac{\Delta t}{\mum \Delta x_j / 2} \sigma \left( \bw_{j - 1}^{n + \frac{1}{2}, +}, \bw_j^{n + \frac{1}{2}, -} \right)\leq 1,\\
& \max_j \cfrac{\Delta t}{\Delta x_j / 2} \sigma \left(\bw_j^{n + \frac{1}{2}, \ast}, \bw_j^{n + \frac{1}{2}, +}\right) \leq 1,
\end{aligned} \qquad
\begin{aligned}
& \max_j  \cfrac{\Delta t}{\Delta x_j / 2} \sigma \left(\bw_j^{n + \frac{1}{2}, -}, \bw_j^{n + \frac{1}{2}, \ast}\right) \leq 1 \\
& \max_j \cfrac{\Delta t}{\mup \Delta x_j / 2} \sigma \left( \bw_j^{n + \frac{1}{2}, +}, \bw_{j + 1}^{n + \frac{1}{2}, -} \right) \leq 1\\
& \max_j \cfrac{\Delta t}{\mup \Delta x_j / 2} \sigma \left( \bw_j^{n + \frac{1}{2}, \ast}, \bw_j^{n + \frac{1}{2}, +} \right) \leq 1
\end{aligned}
\label{eq:new.cfl2}
\end{gathered}
\end{equation}
This can be summarised in the following Lemma.
\begin{lemma}
\label{lemma:muscl.step2.general}Assume that the states $\left\{ \bw_j^{n + \frac{1}{2}, \pm} \right\}_j$, 
$\left\{ \bw_j^{n + \frac{1}{2}, \ast} \right\}_j$ belong to $\Uad$, where
$\bw_j^{n + \frac{1}{2}, \ast}$ is defined as in~\eqref{eq:uj.nph.s}.
Then, the updated solution $\bw_j^{n+1}$ of MUSCL-Hancock scheme (\ref{eq:reconstruction.general}-\ref{eq:muscl.final.general}) is in $\Uad$ under the
CFL conditions {\eqref{eq:new.cfl2}}.
\end{lemma}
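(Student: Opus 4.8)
The plan is to realise the MUSCL-Hancock update $\bw_j^{n+1}$ of~\eqref{eq:muscl.final.general} as a convex combination of three first-order finite volume updates carried out on a subdivision of the cell $(x_\jmh,x_\jph)$; then convexity of $\Uad$ together with the assumed admissibility of the first-order finite volume scheme will give the result. This is the second-step companion to Lemma~\ref{lemma:m.h.step.1}, which handled the evolution half-step and produced the states $\bw_j^{n+\frac12,\pm}$ that are assumed admissible here.

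First I would split $(x_\jmh,x_\jph)$ into three subcells of widths $\tfrac{\mum}{2}\Delta x_j$, $\tfrac12\Delta x_j$, $\tfrac{\mup}{2}\Delta x_j$; these sum to $\Delta x_j$ because $\mum+\mup=1$ by~\eqref{eq:mu.pm}. I assign to them the states $\bw_j^{n+\frac12,-}$, $\bw_j^{n+\frac12,\ast}$, $\bw_j^{n+\frac12,+}$ respectively. Then the defining relation~\eqref{eq:uj.nph.s} says exactly that $\bw_j^n$ is the cell average of this piecewise-constant datum, i.e. $\bw_j^n=\tfrac{\mum}{2}\bw_j^{n+\frac12,-}+\tfrac12\bw_j^{n+\frac12,\ast}+\tfrac{\mup}{2}\bw_j^{n+\frac12,+}$, which is a convex combination since $\tfrac{\mum}{2}+\tfrac12+\tfrac{\mup}{2}=1$.

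Next I would apply a first-order finite volume step to each subcell using the numerical flux $\ff$ and the states of the neighbouring subcells; at the cell interfaces $x_{j\pm\frac12}$ the neighbours are the one-sided reconstructions of the adjacent cells, so these fluxes are precisely the MUSCL-Hancock interface fluxes $\ff^{n+\frac12}_{j\pm\frac12}$. This produces exactly the updates $\bw_j^{n+1,-},\bw_j^{n+1,\ast},\bw_j^{n+1,+}$ of~\eqref{eq:all.fvm.updates}. Each subcell update involves two local CFL numbers of the form $\tfrac{\Delta t}{(\text{subcell width})}\,\sigma(\cdot,\cdot)$, and the six such quantities (ranging over $j$) are bounded by $1$ precisely under~\eqref{eq:new.cfl2}; hence the assumed admissibility of the first-order finite volume method gives $\bw_j^{n+1,-},\bw_j^{n+1,\ast},\bw_j^{n+1,+}\in\Uad$ for all $j$.

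Finally I would form the weighted sum $\tfrac{\mum}{2}\bw_j^{n+1,-}+\tfrac12\bw_j^{n+1,\ast}+\tfrac{\mup}{2}\bw_j^{n+1,+}$: with these weights the two fluxes at the \emph{internal} subcell faces cancel in pairs, the fluxes at $x_{j\pm\frac12}$ survive, and using~\eqref{eq:uj.nph.s} the sum collapses to the right-hand side of~\eqref{eq:muscl.final.general}, i.e. to $\bw_j^{n+1}$; convexity of $\Uad$ then finishes the proof. The step that needs the most care is this last collapse: one must check that the chosen weights are exactly the ones that annihilate the internal fluxes and leave the MUSCL-Hancock interface fluxes, and — correspondingly — that~\eqref{eq:new.cfl2} supplies precisely the CFL bound required for every sub-update (each cell-interface flux enters the finite volume updates of two subcells, of widths $\tfrac{\mup}{2}\Delta x_{j-1}$ and $\tfrac{\mum}{2}\Delta x_j$, which is why~\eqref{eq:new.cfl2} lists several conditions rather than one per interface).
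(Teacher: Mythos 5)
Your proposal is correct and is essentially the paper's own argument: the paper likewise partitions the cell into three subcells of widths $\tfrac{\mum}{2}\Delta x_j$, $\tfrac12\Delta x_j$, $\tfrac{\mup}{2}\Delta x_j$ carrying the states $\bw_j^{n+\frac12,-},\bw_j^{n+\frac12,\ast},\bw_j^{n+\frac12,+}$, performs the first-order updates~\eqref{eq:all.fvm.updates}, and recovers $\bw_j^{n+1}$ as the convex combination $\tfrac{\mum}{2}\bw_j^{n+1,-}+\tfrac12\bw_j^{n+1,\ast}+\tfrac{\mup}{2}\bw_j^{n+1,+}$ via~\eqref{eq:uj.nph.s} and the telescoping of the internal fluxes, with the six conditions of~\eqref{eq:new.cfl2} being exactly the local first-order CFL bounds. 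Your closing remark about each interface flux appearing in two sub-updates of different widths is also the correct accounting for why~\eqref{eq:new.cfl2} has the form it does.
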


Since Lemma \ref{lemma:m.h.step.1} states that $\bw_j^{n + \frac{1}{2}, \pm} \in
\Uad$ if $\bw_j^{\ast, \pm} \in \Uad$, the only new condition pertains to $\bw_j^{n + \frac{1}{2}, \ast}$. Our goal now is to understand this condition, and ultimately prove that it follows from the requirement that $\bw_j^{\ast,\pm} \in \Uad$ in case of conservative reconstruction.

Recall that $\bw_j^{n + \frac{1}{2}, \ast}$ was defined by~\eqref{eq:uj.nph.s}; expanding the definition of $\bw_j^{n + \frac{1}{2}, \pm}$ given by~\eqref{eq:evolution.general} yields
\begin{equation}
\label{eq:uj.nph.s.explicit}
\bw_j^{n + \frac{1}{2}, \ast} = 2 \bw_j^n - \left(
   \mum \bw_j^{n, -} + \mup \bw_j^{n, +}
   \right) - \frac{\Delta t}{2 \Delta x_j} (f
   (\bw_j^{n, -}) - f (\bw_j^{n, +}))
\end{equation}
This identity~\eqref{eq:uj.nph.s.explicit} will be seen as an evolution update similar to~\eqref{eq:evolution.general} with $\bw_j^{n, +}$ and $\bw_j^{n, -}$ being swapped and $\bw_j^n$ replaced with $2 \bw_j^n - \left(\mum \bw_j^{n, -} + \mup \bw_j^{n, +} \right)$. The admissibility of $\bw_j^{n + \frac{1}{2}, \ast}$ will be studied by adapting the proof of admissibility for~\eqref{eq:evolution.general}, accounting for the differences in the case of~\eqref{eq:uj.nph.s.explicit}. Define $\bw_j^{*,*}$ so that
\begin{equation} \label{eq:uss.defn}
\frac{\mum}{2} \bw_j^{n,-} + \frac{1}{2}\bw_j^{*,*} + \frac{\mup}{2} \bw_j^{n,+} = 2\bw_j^n-(\mum\bw_j^{n,-}+\mup\bw_j^{n,+})
\end{equation}
i.e.,
\begin{align}
\bw_j^{*,*} = 4 \bw_j^n - 3(\mum\bw_j^{n,-}+\mup\bw_j^{n,+}) \label{eq:wss.simplified}
\end{align}
The following Lemma extends the proof of Lemma \ref{lemma:m.h.step.1} to obtain conditions for $\bw_j^{n + \frac{1}{2}, \ast} \in \Uad$.
\begin{lemma}
\label{lemma:muscl.step3.wss}Assume that $\bw_j^n \in \Uad$ for all $j$. Consider the reconstructions $\bw_j^{n,\pm}$ and the $\bw_j^{\ast, \ast}$ defined in~\eqref{eq:uss.defn}.
Assume $\bw_j^{n, \pm}, \bw_j^{\ast, \ast} \in \Uad$ and the time step restrictions
\begin{equation}
\max_{j} \frac{\Delta t}{\mum \Delta x_j} \sigma \left( \bw_j^{\ast, \ast}, \bw_j^{n, -} \right) \leq 1, \qquad
\max_{j} \frac{\Delta t}{\mup \Delta x_j} \sigma \left( \bw_j^{n, +}, \bw_j^{\ast, \ast} \right) \leq 1
\label{eq:new.cfl3}
\end{equation}
Then $\bw_j^{n + \frac{1}{2}, \ast} \in \Uad$.
\end{lemma}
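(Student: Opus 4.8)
The plan is to mirror, almost verbatim, the proof of Lemma~\ref{lemma:m.h.step.1}, exploiting the observation made just after~\eqref{eq:uj.nph.s.explicit}: the identity~\eqref{eq:uj.nph.s.explicit} has exactly the algebraic form of the evolution step~\eqref{eq:evolution.general}, but with $\bw_j^{n,+}$ and $\bw_j^{n,-}$ interchanged, with $\bw_j^{\ast,+}$ replaced by $\bw_j^{\ast,\ast}$, and with $\bw_j^n$ replaced by $2\bw_j^n-(\mum\bw_j^{n,-}+\mup\bw_j^{n,+})$. Using the defining relation~\eqref{eq:uss.defn} to substitute for the last quantity, I would first rewrite~\eqref{eq:uj.nph.s.explicit} as
\[
\bw_j^{\nph,\ast}=\frac{\mum}{2}\bw_j^{n,-}+\frac12\bw_j^{\ast,\ast}+\frac{\mup}{2}\bw_j^{n,+}-\frac{\Delta t}{2\Delta x_j}\bigl(f(\bw_j^{n,-})-f(\bw_j^{n,+})\bigr),
\]
which exhibits $\bw_j^{\nph,\ast}$ as a convex combination of the three admissible states $\bw_j^{n,+},\bw_j^{\ast,\ast},\bw_j^{n,-}$ with weights $\tfrac{\mup}{2},\tfrac12,\tfrac{\mum}{2}$, corrected by a flux difference in which $\bw_j^{n,-}$ is the right state and $\bw_j^{n,+}$ the left state.

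Next I would build the auxiliary Cauchy problem on $(x_{\jmh},x_{\jph})\times(0,\Delta t/2)$ whose cell average at $t=\Delta t/2$ reproduces the right-hand side above. Set $\hat x_j:=x_{\jmh}+\mup\Delta x_j$ and take the piecewise-constant initial data equal to $\bw_j^{n,+}$ on $(x_{\jmh},\tfrac12(x_{\jmh}+\hat x_j))$, to $\bw_j^{\ast,\ast}$ on $(\tfrac12(x_{\jmh}+\hat x_j),\tfrac12(\hat x_j+x_{\jph}))$, and to $\bw_j^{n,-}$ on $(\tfrac12(\hat x_j+x_{\jph}),x_{\jph})$; the three pieces have widths $\tfrac{\mup}{2}\Delta x_j$, $\tfrac12\Delta x_j$, $\tfrac{\mum}{2}\Delta x_j$ (summing to $\Delta x_j$ since $\mum+\mup=1$), and the two jumps sit at the midpoints of the subintervals $(x_{\jmh},\hat x_j)$ and $(\hat x_j,x_{\jph})$. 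The CFL restrictions~\eqref{eq:new.cfl3} are precisely the inequalities $\tfrac{\Delta t}{2}\sigma(\bw_j^{n,+},\bw_j^{\ast,\ast})\le\tfrac{\mup}{2}\Delta x_j$ and $\tfrac{\Delta t}{2}\sigma(\bw_j^{\ast,\ast},\bw_j^{n,-})\le\tfrac{\mum}{2}\Delta x_j$, which guarantee that the two Riemann fans do not interact, i.e. neither reaches $x_{\jmh}$, $x_{\jph}$, nor crosses $\hat x_j$.

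Then I would apply Lemma~\ref{lemma:avg.riemann.problem} to each fan separately: integrating the weak solution over $(x_{\jmh},\hat x_j)$ yields $\tfrac{\mup\Delta x_j}{2}(\bw_j^{n,+}+\bw_j^{\ast,\ast})-\tfrac{\Delta t}{2}(f(\bw_j^{\ast,\ast})-f(\bw_j^{n,+}))$, and over $(\hat x_j,x_{\jph})$ yields $\tfrac{\mum\Delta x_j}{2}(\bw_j^{\ast,\ast}+\bw_j^{n,-})-\tfrac{\Delta t}{2}(f(\bw_j^{n,-})-f(\bw_j^{\ast,\ast}))$. The interior flux $f(\bw_j^{\ast,\ast})$ cancels, so after dividing the sum by $\Delta x_j$ and invoking~\eqref{eq:uss.defn} one recovers exactly the displayed expression for $\bw_j^{\nph,\ast}$, i.e. the cell average of the exact weak solution at $t=\Delta t/2$ equals $\bw_j^{\nph,\ast}$. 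Since the initial data lies in $\Uad$ (we assume $\bw_j^{n,\pm},\bw_j^{\ast,\ast}\in\Uad$), the exact solution remains in $\Uad$ by the convex-set-preservation property~\eqref{eq:conv.pres.con.law}, and convexity of $\Uad$ forces its average to be admissible; hence $\bw_j^{\nph,\ast}\in\Uad$.

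I expect the only delicate part to be the bookkeeping: getting the spatial orientation of the two fans right so that the telescoped flux term is $f(\bw_j^{n,-})-f(\bw_j^{n,+})$ with the correct sign, checking that the subinterval widths match the weights $\tfrac{\mum}{2},\tfrac12,\tfrac{\mup}{2}$, and verifying that the $\mum$-factor in~\eqref{eq:new.cfl3} pairs with the $(\bw_j^{\ast,\ast},\bw_j^{n,-})$ fan and the $\mup$-factor with the $(\bw_j^{n,+},\bw_j^{\ast,\ast})$ fan; once this is set up, the rest is a routine transcription of the argument for Lemma~\ref{lemma:m.h.step.1}. It is worth noting in passing the degenerate situation $\mum=0$ or $\mup=0$ (the solution point sitting on a subcell face), in which one fan collapses and the claim is either trivial or vacuous.
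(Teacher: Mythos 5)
Your proof is correct and follows essentially the same route as the paper: the same identity rewriting $\bw_j^{\nph,\ast}$ via~\eqref{eq:uss.defn}, the same three-state piecewise-constant Cauchy problem with breakpoints at $\tfrac12(x_\jmh+x_j)$ and $\tfrac12(x_j+x_\jph)$ (your $\hat x_j=x_\jmh+\mup\Delta x_j$ is just $x_j$), and the same application of Lemma~\ref{lemma:avg.riemann.problem} to the two non-interacting fans followed by convexity of $\Uad$. The bookkeeping of subinterval widths, fan orientation, and the pairing of the $\mum$- and $\mup$-factors in~\eqref{eq:new.cfl3} all check out against the paper's version.
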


\begin{proof}
We will use the identity which follows from~(\ref{eq:uj.nph.s.explicit},\ref{eq:uss.defn})
\begin{equation} \label{eq:ujph.s.identity}
\bw_j^{n + \frac{1}{2}, \ast} = \frac{\mum \bw_j^{n,-} + \bw_j^{*,*} + \mup \bw_j^{n,+}}{2} - \frac{\Delta t}{2 \Delta x_j}
(f (\bw_j^{n, -}) - f (\bw_j^{n, +}))
\end{equation}
to fall back to previous case of Lemma \ref{lemma:m.h.step.1}.

Define $\bw^h (x, t) : (x_\jmh, x_\jph) \times (0, \Delta t/2) \rightarrow \Uad$ to be the weak solution of the Cauchy problem with initial data
\begin{align*}
\bw^h (x, 0) = \begin{cases}
\bw_j^{n, +}, \quad & \tmop{if} x \in (x_\jmh, x_{j - 1 /
4})\\
\bw_j^{\ast, \ast}, & \tmop{if} x \in (x_{j - \frac 14}, x_{j + 1 /
4})\\
\bw_j^{n, -}, & \tmop{if} x \in (x_{j + \frac 14}, x_\jph)
\end{cases}
\end{align*}
where
\[
x_{j - \frac{1}{4}} = \frac{1}{2} (x_\jmh + x_j), \qquad x_{j + \frac{1}{4}} = \frac{1}{2} (x_j + x_\jph)
\]

Note that we have already accounted for the swapped $\bw_j^{n,-}$ and $\bw_j^{n,+}$ while defining this initial condition, see Figure~(\ref{fig:non.interacting.rp2}).

Under the assumed CFL conditions~\eqref{eq:new.cfl3}, the solution $\bw^h$ at time $\frac{\Delta t}{2}$ is made up of non-interacting Riemann problems centered at $x_{j \pm \frac{1}{4}}$. Take the projection of $\bw^h (x, t / 2)$ on piecewise-constant functions
\[ \widetilde{\bw}_j^{n + \frac{1}{2}, \ast} := \frac{1}{\Delta x_j} \int_{x_{j - \frac 12}}^{x_{\jph}} \bw^h \left( x, \frac{\Delta t}{2} \right) \ud x \in \Uad \]
\begin{figure}
\begin{center}
\includegraphics[width=0.9\textwidth]{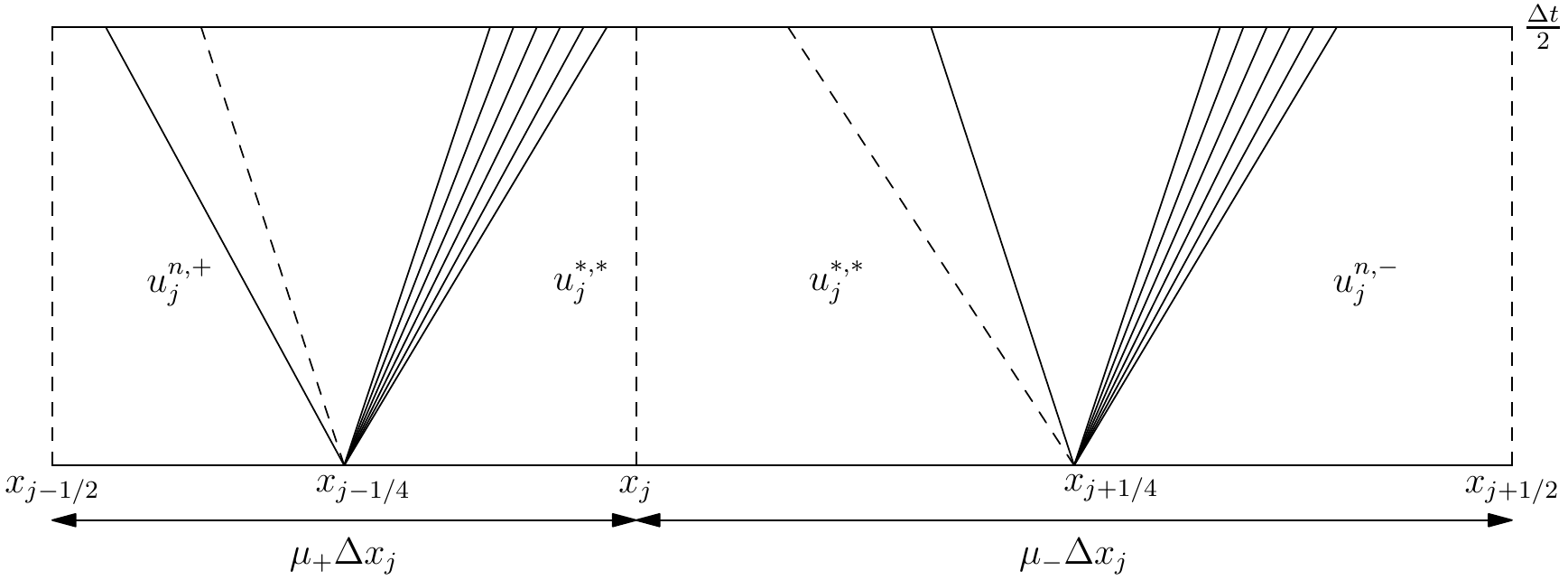}
\caption{Two non-interacting Riemann problems}
\label{fig:non.interacting.rp2}
\end{center}
\end{figure}
As in Lemma~\ref{lemma:m.h.step.1}, we will show $\bw_j^{n + \frac{1}{2}, \ast} \in \Uad$ by showing $\bw_j^{n + \frac{1}{2}, \ast} = \tilde{\bw}_j^{n + \frac{1}{2}, \ast}$. Applying Lemma~\ref{lemma:avg.riemann.problem} to the two non-interacting Riemann problems, we get
\begin{align*}
\widetilde{\bw}_j^{n + \frac{1}{2}, \ast} & = \frac{1}{\Delta x_j}
\left (\int_{x_{\jmh}}^{x_j} \bw^h \left( x, \frac{\Delta t}{2} \right) \ud x
+
\int_{x_j}^{x_{\jph}} \bw^h \left( x, \frac{\Delta t}{2}
\right)  \ud x \right)\\
& = \frac{1}{\Delta x_j} \left( \frac{x_j - x_{\jmh}}{2} \bw_j^{n, +} +
 \frac{\Delta x_j}{2} \bw_j^{\ast, \ast} + \frac{x_{\jph} - x_j}{2}
 \bw_j^{n, -} - \frac{\Delta t}{2} \left( f \left( \bw_j^{n, -} \right) - f \left(
 \bw_j^{n, +} \right) \right)\right)\\
& = \frac{1}{2} \left( \mup \bw_j^{n, +} + \bw_j^{\ast,
\ast} + \mum \bw_j^{n, -} \right) - \frac{\Delta t / 2}{\Delta x_j} \left( f \left(
\bw_j^{n, -} \right) - f \left( \bw_j^{n, +} \right) \right)\\
& =\bw_j^{n + \frac{1}{2}, \ast},\quad \text{by~\eqref{eq:ujph.s.identity}}
\end{align*}
This proves our claim.
\end{proof}

For conservative reconstruction,
\[
\mum \bw_j^{n, -} + \mup \bw_j^{n, +} = \bw_j^n
\]
and thus by {\eqref{eq:wss.simplified}}, $\bw_j^{\ast, \ast} = \bw_j^n$. The previous lemma can thus be specialized as follows.
\begin{lemma}
\label{lemma:muscl.step3.conservative}Assume that $\bw_j^n \in \Uad$ and $\bw_j^{n, \pm} \in \Uad$ for all $j$ with conservative reconstruction. Also assume the CFL restrictions
\begin{equation}
\max_{j} \frac{\Delta t}{\mum \Delta x_j} \sigma \left( \bw_j^n, \bw_j^{n, -} \right) \leq 1, \qquad
\max_{j} \frac{\Delta t}{\mup \Delta x_j} \sigma \left( \bw_j^{n, +}, \bw_j^n \right) \leq 1
\label{eq:new.cfl3.conservative}
\end{equation}
where $\mu^{\pm}$ are defined in~\eqref{eq:mu.pm}. Then, $\bw_j^{n + \frac{1}{2}, \ast}$ defined in~\eqref{eq:uj.nph.s} is in $\Uad$.
\end{lemma}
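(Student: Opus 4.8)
The plan is to obtain this lemma as an immediate specialization of Lemma~\ref{lemma:muscl.step3.wss}, so the only real content is an algebraic identity specific to conservative reconstruction.

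First I would verify that conservative reconstruction forces $\mum \bw_j^{n,-} + \mup \bw_j^{n,+} = \bw_j^n$. Substituting the reconstructed face values $\bw_j^{n,\pm} = \bw_j^n + (x_{\jpmh} - x_j)\slope_j$ from~\eqref{eq:reconstruction.general}, the left side becomes $(\mum + \mup)\bw_j^n + \big[\mum(x_{\jmh} - x_j) + \mup(x_{\jph} - x_j)\big]\slope_j$. With $\mu_\pm$ as in~\eqref{eq:mu.pm}, one checks directly that $\mum + \mup = 1$ and that the bracketed coefficient of $\slope_j$ vanishes, since $(x_{\jph}-x_j)(x_{\jmh}-x_j) + (x_j - x_{\jmh})(x_{\jph}-x_j) = 0$. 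Hence the slope term drops out and the identity holds; this is precisely the reason $\mu_\pm$ were chosen as in~\eqref{eq:mu.pm}.

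Next I would plug this identity into the closed-form expression~\eqref{eq:wss.simplified} for $\bw_j^{\ast,\ast}$, which gives $\bw_j^{\ast,\ast} = 4\bw_j^n - 3\bw_j^n = \bw_j^n$, so the requirement $\bw_j^{\ast,\ast} \in \Uad$ is automatic from the hypothesis $\bw_j^n \in \Uad$. At this point all the hypotheses of Lemma~\ref{lemma:muscl.step3.wss} are in place: $\bw_j^{n,\pm} \in \Uad$ by assumption, $\bw_j^{\ast,\ast} = \bw_j^n \in \Uad$, and the CFL conditions~\eqref{eq:new.cfl3} reduce, upon substituting $\bw_j^{\ast,\ast} = \bw_j^n$ into the wave-speed arguments, to exactly~\eqref{eq:new.cfl3.conservative}. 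Lemma~\ref{lemma:muscl.step3.wss} then yields $\bw_j^{\nph,\ast} \in \Uad$, which is the claim.

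Since the whole argument is a substitution, there is no substantial obstacle; the only point requiring care is confirming that the $\slope_j$-dependent term in $\mum \bw_j^{n,-} + \mup \bw_j^{n,+}$ genuinely cancels for the non-cell-centered spacing~\eqref{eq:non.cell.centred.defn}, which is a short computation using the definitions~\eqref{eq:mu.pm}.
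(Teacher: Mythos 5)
Your proposal is correct and follows exactly the paper's route: the paper likewise notes that conservative reconstruction gives $\mum \bw_j^{n,-} + \mup \bw_j^{n,+} = \bw_j^n$, deduces $\bw_j^{\ast,\ast} = \bw_j^n$ from~\eqref{eq:wss.simplified}, and then specializes Lemma~\ref{lemma:muscl.step3.wss}. The only difference is that you spell out the cancellation of the $\slope_j$ term, which the paper leaves implicit; that verification is correct.
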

Combining Lemmas~\ref{lemma:m.h.step.1},~\ref{lemma:muscl.step2.general},~\ref{lemma:muscl.step3.conservative}, we obtain the final criterion for admissibility preservation of MUSCL-Hancock with conservative reconstruction in the following Theorem~\ref{thm:final.condn.conservative}.
\begin{theorem}
\label{thm:final.condn.conservative}Let $\bw_j^n \in \Uad$ for all $j$ and $\bw_j^{n, \pm}$ be the conservative reconstructions defined as
\[
\bw_j^{n, +} = \correction{\bw_j^n} + (x_{\jph} - x_j) \slope_j, \qquad \bw_j^{n, -} = \correction{\bw_j^n} + (x_{\jmh} - x_j) \slope_j \]
so that $\bw_j^{\ast, \pm}$ defined in~\eqref{eq:ustar.defn} is also given by
\begin{equation} \label{eq:us.conservative}
\bw_j^{\ast, \pm} = \bw_j^n + 2 ( x_{j \pm \frac{1}{2}} - x_j ) \slope_j
\end{equation}
Assume the slope $\slope_j$ \correction{is} chosen such that $\bw_j^{\ast, \pm} \in \Uad$ and the CFL restrictions~(\ref{eq:new.cfl1}, \ref{eq:new.cfl2}, \ref{eq:new.cfl3.conservative}) hold. Then, the updated solution $\bw_j^{n + 1}$, defined by MUSCL-Hancock scheme~\eqref{eq:muscl.final.general} is in $\Uad$.
\end{theorem}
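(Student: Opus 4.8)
The plan is to obtain Theorem~\ref{thm:final.condn.conservative} by chaining together the three Lemmas~\ref{lemma:m.h.step.1},~\ref{lemma:muscl.step2.general} and~\ref{lemma:muscl.step3.conservative}, which between them cover the two stages of the MUSCL-Hancock procedure: the mid-point evolution~\eqref{eq:evolution.general} of the face values and the final finite-volume update~\eqref{eq:muscl.final.general}. Essentially no new analysis is needed beyond verifying that the hypotheses of each lemma are met, in the right order.

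First I would extract the elementary consequences of conservative reconstruction. From~\eqref{eq:us.conservative} and the definitions of $\bw_j^{n,\pm}$ one reads off $\bw_j^{n,-}=\tfrac12(\bw_j^n+\bw_j^{\ast,-})$ and $\bw_j^{n,+}=\tfrac12(\bw_j^n+\bw_j^{\ast,+})$; since $\bw_j^n\in\Uad$ and the slope-limiting step guarantees $\bw_j^{\ast,\pm}\in\Uad$, convexity of $\Uad$ immediately gives $\bw_j^{n,\pm}\in\Uad$. Moreover conservative reconstruction means $\mum\bw_j^{n,-}+\mup\bw_j^{n,+}=\bw_j^n$, so by~\eqref{eq:wss.simplified} the auxiliary state $\bw_j^{\ast,\ast}$ collapses to $\bw_j^n$ and is therefore admissible as well.

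With these facts in hand the assembly proceeds in three steps. Step one: apply Lemma~\ref{lemma:m.h.step.1}; since $\bw_j^{n,\pm},\bw_j^{\ast,\pm}\in\Uad$ and the CFL bounds~\eqref{eq:new.cfl1} hold, the evolved face states $\bw_j^{n+\frac12,\pm}$ lie in $\Uad$. Step two: apply Lemma~\ref{lemma:muscl.step3.conservative} (the specialization of Lemma~\ref{lemma:muscl.step3.wss} obtained by using $\bw_j^{\ast,\ast}=\bw_j^n$); with $\bw_j^n,\bw_j^{n,\pm}\in\Uad$ and the CFL bounds~\eqref{eq:new.cfl3.conservative}, the auxiliary evolved state $\bw_j^{n+\frac12,\ast}$ defined by~\eqref{eq:uj.nph.s} also lies in $\Uad$. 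Step three: feed both families into Lemma~\ref{lemma:muscl.step2.general}; since $\{\bw_j^{n+\frac12,\pm}\}_j$ and $\{\bw_j^{n+\frac12,\ast}\}_j$ are all admissible and the CFL bounds~\eqref{eq:new.cfl2} hold, the updated solution $\bw_j^{n+1}$ from~\eqref{eq:muscl.final.general} lies in $\Uad$, which is the claim.

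The substantive content lives in the individual lemmas — rewriting $\bw_j^{n+\frac12,+}$ as the cell-average of a weak solution built from two non-interacting Riemann problems (Lemma~\ref{lemma:m.h.step.1}, via Lemma~\ref{lemma:avg.riemann.problem}), handling the argument-swapped evolution identity~\eqref{eq:uj.nph.s.explicit} the same way (Lemma~\ref{lemma:muscl.step3.wss}), and decomposing $\bw_j^{n+1}$ as a convex combination of admissibility-preserving first-order finite-volume updates on the subcells (Lemma~\ref{lemma:muscl.step2.general}). At the level of the present theorem the only real obstacle is bookkeeping: checking that the three families of CFL restrictions quoted in the statement are exactly the hypotheses demanded by the three lemmas — in particular that~\eqref{eq:uj.nph.s.explicit} is absorbed by Lemma~\ref{lemma:muscl.step3.conservative} so that no fourth condition appears — and that the chain of admissibility deductions is non-circular, which is precisely why the convexity observation $\bw_j^{n,\pm}=\tfrac12(\bw_j^n+\bw_j^{\ast,\pm})$ must be established \emph{before} Lemma~\ref{lemma:m.h.step.1} is invoked.
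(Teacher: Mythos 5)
Your proposal is correct and follows essentially the same route as the paper: establish $\bw_j^{n,\pm}\in\Uad$ from the convex-combination identity $\bw_j^{n,\pm}=\frac{1}{2}\bw_j^{\ast,\pm}+\frac{1}{2}\bw_j^n$, then chain Lemmas~\ref{lemma:m.h.step.1}, \ref{lemma:muscl.step3.conservative} and \ref{lemma:muscl.step2.general}. The paper's proof is simply a terser version of the same argument (it states the convexity observation and then cites the lemmas without spelling out the order of application as you do).
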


\begin{proof}
Once we obtain $\bw_j^{n, \pm} \in \Uad$, the claim follows from Lemmas~\ref{lemma:m.h.step.1}-\ref{lemma:muscl.step3.conservative}. To prove that $\bw_j^{n, \pm}$ is indeed in $\Uad$, we make the straight forward observation that
\[
\bw_j^{n, \pm} = \frac{1}{2} \bw_j^{\ast, \pm} + \frac{1}{2} \bw_j^n
\]
Since $\bw_j^{\ast, \pm}$ and $\bw_j^n$ are in $\Uad$, the proof is completed by the convex property of $\Uad$.
\end{proof}

\begin{remark}\label{rmk:mh.restriction.for.fr}
The strictest time step restriction for admissibility of the MUSCL-Hancock scheme is imposed by~\eqref{eq:new.cfl2}. Thus, we can find the CFL coefficient for grid used by subcell-based blending scheme~\eqref{eq:low.order.update} by minimizing the denominator in~\eqref{eq:new.cfl2} which is given by
\[
\frac{1}{2} \min_{j=0,\dots,N}\left(\xi_j - \sum_{k=0}^{j-1} w_k\right )w_j = \frac{1}{2} \xi_0w_0
\]
where $\xi_0,w_0$ are the first Gauss-Legendre quadrature point~\eqref{eq:xi0.defn} and weight in $[0,1]$. This coefficient is less than half of the optimal CFL coefficient that arises from Fourier stability analysis of the LWFR scheme with D2 dissipation, see Table 1 of~\cite{babbar2022}.
\end{remark}

\subsection{Non-conservative reconstruction}
To maintain the simple admissibility criterion (Theorem~\ref{thm:final.condn.conservative}), we have restricted ourselves to conservative reconstruction in this work. In this section, we explain the complexities that will arise in enforcing admissibility if we perform reconstruction with non-conservative variables $\bv$ defined by the change of variables formula
\[
\bv = \kappa \left( \bw \right)
\]
The linear approximation is given by
\[
r^n (x) = \bv^n_j + (x - x_j)\slope_j, \quad x \in [x_{\jmh}, x_{\jph}]
\]
and thus the trace values are
\[
\bv_j^{n, \pm} = \bv_j^n + (x_{\jpmh} - x_j)\slope_j
\]
Since the arguments of proof of admissibility depend on constraints on the conservative variables, we have to take the inverse map on our reconstructions. For example, conservative variables at the face are obtained as
\begin{equation}\label{eq:non.con.face.defn1}
\bw_j^{n, \pm} = \kappa^{- 1} (\bv_j^{n, \pm})
\end{equation}
Due to the non-linearity of the map $\kappa$, unlike the conservative case, we have
\[
\mum \bw_j^{n, -} + \mup \bw_j^{n, +} \neq \bw_j^n
\]
which is why several reductions of admissibility constraints will fail. The admissibility criteria for non-conservative reconstruction is stated in Theorem~\ref{thm:non.conservative.mh}.
\begin{theorem}\label{thm:non.conservative.mh}
Assume that $\bw_j^n \in \Uad$ for all $j$. Consider $\bw_j^{n,\pm}$ defined in \eqref{eq:non.con.face.defn1}, $\bw_j^{*,\pm}$ defined in~\eqref{eq:ustar.defn} and
 $\bw_j^{*,*}$ defined so that
\[
\frac{\mum}{2}\bw_j^{n,-}
+ \frac 12 \bw_j^{*,*}
+ \frac{\mup}{2} \bw_j^{n,+} = 2\bw_j^n-(\mum\bw_j^{n,-} + \mup\bw_j^{n,+})
\]
Assume that the slope $\slope_j$ is chosen so that $\bw_j^{n,\pm}, \bw_j^{*,\pm}, \bw_j^{*,*} \in \Uad$ and that the CFL restrictions~(\ref{eq:new.cfl1}, \ref{eq:new.cfl2}, \ref{eq:new.cfl3}) are satisfied. Then the updated solution $\bw_j^{n+1}$ of MUSCL-Hancock scheme~\eqref{eq:muscl.final.general} is in $\Uad$.
\end{theorem}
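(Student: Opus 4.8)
The plan is to mirror the three-step decomposition used for the conservative case in Theorem~\ref{thm:final.condn.conservative}, but to invoke the general Lemma~\ref{lemma:muscl.step3.wss} in place of its specialization Lemma~\ref{lemma:muscl.step3.conservative}. The reason is precisely the observation made just before the theorem: in the non-conservative setting $\mum\bw_j^{n,-}+\mup\bw_j^{n,+}\neq\bw_j^n$, so the simplification $\bw_j^{\ast,\ast}=\bw_j^n$ is no longer available, and one must carry the variable $\bw_j^{\ast,\ast}$ defined by~\eqref{eq:uss.defn} through the argument. First I would note that the intermediate identities~\eqref{eq:uj.nph.s.explicit} and~\eqref{eq:ujph.s.identity}, as well as the decomposition $\frac{\mum}{2}\bw_j^{n+1,-}+\frac{1}{2}\bw_j^{n+1,\ast}+\frac{\mup}{2}\bw_j^{n+1,+}=\bw_j^{n+1}$ that precedes Lemma~\ref{lemma:muscl.step2.general}, were obtained purely from the evolution formulas~\eqref{eq:evolution.general}, the final update~\eqref{eq:muscl.final.general}, and the definition~\eqref{eq:uj.nph.s}; none of these used conservative reconstruction. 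Hence all three lemmas apply verbatim regardless of the reconstruction variable.

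The argument then proceeds in three steps. Step 1: since $\bw_j^{n,\pm}$ and $\bw_j^{\ast,\pm}$ lie in $\Uad$ by hypothesis and~\eqref{eq:new.cfl1} holds, Lemma~\ref{lemma:m.h.step.1} gives $\bw_j^{\nph,\pm}\in\Uad$ for all $j$. Step 2: since $\bw_j^n\in\Uad$, $\bw_j^{n,\pm}\in\Uad$, $\bw_j^{\ast,\ast}\in\Uad$ (this last being exactly the variable defined in the statement), and~\eqref{eq:new.cfl3} holds, Lemma~\ref{lemma:muscl.step3.wss} yields $\bw_j^{\nph,\ast}\in\Uad$. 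Step 3: feeding the collections $\{\bw_j^{\nph,\pm}\}_j$ and $\{\bw_j^{\nph,\ast}\}_j$, now known to be in $\Uad$, into Lemma~\ref{lemma:muscl.step2.general} together with the CFL conditions~\eqref{eq:new.cfl2}, we conclude $\bw_j^{n+1}\in\Uad$, which is the claim. Thus the proof is just the concatenation of Lemmas~\ref{lemma:m.h.step.1}, \ref{lemma:muscl.step3.wss}, and~\ref{lemma:muscl.step2.general}, in that order.

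The one genuinely new point relative to Theorem~\ref{thm:final.condn.conservative}, and the step I would be most careful about, is that $\bw_j^{n,\pm}\in\Uad$ can no longer be deduced as a consequence of the other constraints. In the conservative case one has $\bw_j^{n,\pm}=\frac{1}{2}\bw_j^{\ast,\pm}+\frac{1}{2}\bw_j^n$, so convexity of $\Uad$ makes $\bw_j^{n,\pm}$ admissible the moment $\bw_j^{\ast,\pm}$ is; here the reconstruction is affine in the non-conservative variable $\bv=\kappa(\bw)$ and $\bw_j^{n,\pm}=\kappa^{-1}(\bv_j^{n,\pm})$, so the nonlinearity of $\kappa^{-1}$ destroys that convex identity and admissibility of $\bw_j^{n,\pm}$ must be imposed as an independent requirement on $\slope_j$. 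Consequently the slope-limiting procedure would have to enforce three families of constraints, namely $\bw_j^{n,\pm}$, $\bw_j^{\ast,\pm}$, and $\bw_j^{\ast,\ast}$ all in $\Uad$, instead of the single family $\bw_j^{\ast,\pm}\in\Uad$ of the conservative case; this is exactly why the paper restricts to conservative reconstruction in practice. Apart from this extra bookkeeping, no inequality used inside Lemmas~\ref{lemma:m.h.step.1}, \ref{lemma:muscl.step2.general}, \ref{lemma:muscl.step3.wss} is sensitive to the choice of reconstruction variable, so I would close the proof by simply citing those three lemmas in sequence.
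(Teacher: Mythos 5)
Your proof is correct and follows exactly the route the paper intends: the paper states Theorem~\ref{thm:non.conservative.mh} without a written proof, the implicit argument being precisely the concatenation of Lemmas~\ref{lemma:m.h.step.1}, \ref{lemma:muscl.step3.wss} and~\ref{lemma:muscl.step2.general} under the CFL conditions~(\ref{eq:new.cfl1}, \ref{eq:new.cfl3}, \ref{eq:new.cfl2}), with Lemma~\ref{lemma:muscl.step3.wss} replacing its conservative specialization. Your observation that $\bw_j^{n,\pm}\in\Uad$ must now be assumed independently (since the convex identity $\bw_j^{n,\pm}=\frac{1}{2}\bw_j^{\ast,\pm}+\frac{1}{2}\bw_j^n$ is lost under the nonlinear map $\kappa^{-1}$) correctly identifies the only substantive difference from Theorem~\ref{thm:final.condn.conservative}.
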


\subsection{MUSCL-Hancock scheme in 2-D}\label{sec:2d.mh}
Consider the 2-D hyperbolic conservation law~\eqref{eq:2d.hyp.con.law} with fluxes $\ff, \fg$. For simplicity, assume that the reconstruction is performed on conservative variables. Thus, the linear reconstructions are given by
\[
r^n_{ij} (x, y)
= \bw_{ij}^n + (x - x_i) \slope_i^x
+ (y - y_j) \slope_j^y,
\]
and the approximations at the face $\unpx, \unmx, \unpy, \unmy$ are
\begin{equation}\label{eq:2DMH1}
\begin{split}
\unpmx_{ij} & = r^n_{ij} (x_{\ipmh}, y_j) =
{\bw}_{ij}^n + (x_{\ipmh} - x_i)\slope_i^x\\
\unpmy_{ij} & = r^n_{ij} (x_i, y_{\jpmh}) =
{\bw}_{ij}^n + (y_{\jpmh} - y_j) \slope_j^y
\end{split}
\end{equation}
and the derivative approximations are given by
\[
\partial_x   \ff_{i  j} := \frac{1}{\Delta x_i} \left(  \ff \left( \unpx_{\correction{ij}} \right) -  \ff \left(\unmx_{ij} \right) \right), \qquad
\partial_y   \fg_{i  j} := \frac{1}{\Delta y_j} \left(  \fg \left( \unpy_{\correction{ij}} \right) -  \fg \left(\unmy_{ij} \right) \right)
\]
\[
\partial_t  \bw_{i  j}^n :=
- \partial_x   \ff_{i, j}
- \partial_y   \fg_{i, j}
\]
The evolutions to time level $\nph$ are given by
\begin{equation}\label{eq:2DMH2}
\unphpmx_{i  j} = \unpmx_{i  j} +
\frac{\Delta t}{2} \partial_t  \bw_{i  j}^n,\qquad \unphpmy_{i  j} = \unpmy_{i  j} +
\frac{\Delta t}{2} \partial_t  \bw_{i  j}^n
\end{equation}

and then the final update is performed as
\begin{equation}\label{eq:2DMH3}
\bw_{i  j}^{n + 1} =
\bw_{i  j}^n
- \frac{\Delta t}{\Delta x_i}  ( \ff_{\iph, j}^\nph - \ff_{\imh, j}^{\nph})
- \frac{\Delta t}{\Delta y_j}  ( \fg_{i, \jph}^\nph -  \fg_{i,\jmh}^{\nph})
\end{equation}
where the numerical fluxes are computed as
\[
\ff_{\iph, j}^\nph
= \ff \left( \unphpx_{i  j}, \unphmx_{i+1,  j} \right), \qquad \fg_{i, \jph}^\nph
= \fg \left( \unphpy_{i  j}, \unphmy_{i,  j+1} \right)
\]

\subsubsection{First evolution step}
As in 1-D, define $\uspmx_{i  j}, \uspmy_{ij}$ so that
\begin{equation}\label{eq:ustar.2d}
\begin{split}
\mupx  \unpx_{ij}
+ \uspmx
+ \mumx  \unmx_{ij}
& = 2 \unpmx_{ij} \\
\mupy  \unpy_{ij}
+ \uspmy
+ \mumy  \unmy_{ij}
& = 2 \unpmy_{ij}
\end{split}
\end{equation}
where
\begin{equation} \label{eq:muxy.defn}
\begin{split}
\mupx &= \frac{x_i - x_{\imh}}{x_{\iph} - x_{\imh}},
\qquad \mumx = \frac{x_{\iph} - x_i}{x_{\iph} - x_{i - 1 /2}}\\
\mupy &= \frac{y_j - y_{\jmh}}{y_{\jph} - y_{\jmh}}, \qquad
\mumy = \frac{y_{\jph} - y_j}{y_{\jph} - y_{\jmh}}
\end{split}
\end{equation}
Since we assume conservative reconstruction
\[
\mupx  \unpx_{i  j} + \mumx  \unmx_{i
j} = \mupy  \unpy_{i  j} + \mumy  \unmy_{i  j} = \bw_{i  j}^n
\]
Thus, we have
\[
\uspmx_{i  j}
= \bw_{i  j}
+ 2 (x_\ipmh - x_i)  \slope_i^x, \qquad
\uspmy_{i  j}
= \bw_{i  j} + 2 (y_{\jpmh} - x_j)  \slope_j^y
\]
We will particularly discuss admissibility of the updates
\begin{align}\label{eq:2Dupdates}
\unphpx_{i  j}
= \unpx_{i  j}
- \frac{\Delta t / 2}{\Delta x_i}  \left(  \ff \left( \unpx_{i  j} \right) -  \ff \left( \unmx_{ij} \right) \right)
- \frac{\Delta t / 2}{\Delta y_j} \left(  \fg \left( \unpy_{i  j} \right) - \fg \left( \unmy_{i  j} \right) \right)
\end{align}
Admissibility of the other three updates $\unphmx_{i  j}, \unphpmy_{i  j}$ will follow similarly. For some $k_x, k_y$ chosen such that $k_x + k_y = 1,$ we write \eqref{eq:2Dupdates} as
\begin{align*}
\unphpx_{i  j} = k_x \thetapx + k_y \thetapy
\end{align*}
where
\begin{equation}
\thetapx :=\unpx_{i  j} - \frac{\Delta t / 2}{k_x \Delta
x_i}  \left(  \ff \left( \unpx_{i  j} \right) -
 \ff \left( \unmx_{i  j} \right) \right)
\label{eq:update.x.combination}
\end{equation}
and
\begin{equation}
\thetapy
:=
\unpx_{i  j}
- \frac{\Delta t / 2}{k_y \Delta y_j}  \left(  \fg \left( \unpy_{i  j} \right) - \fg \left( \unmy_{i  j} \right) \right)
\label{eq:update.x.combination2}
\end{equation}
We will choose the slopes $\slope^x_i, \slope^y_j$ and time step $\Delta t$ so that $\thetapx,  \thetapy \in \Uad$. Then, we can take convex combinations of the two terms to obtain admissibility of $\unphpx_{i  j}$. The choice of $k_x, k_y$ will not influence the slope restriction, but only the time step restriction required to obtain admissibility. In this work, we only use the Fourier CFL restriction imposed by the Lax-Wendroff scheme~\eqref{eq:time.step.2d} and \correction{observe admissibility preservation in all our numerical experiments} and thus do not study the choice of $k_x,k_y$. However, in a similar context,~\cite{Zhang2010b} proposed the choice of
\begin{equation}\label{eq:kx.defn}
k_x = \frac{a_x / \Delta x_i}{a_x / \Delta x_i + a_y / \Delta y_j}, \qquad k_y = \frac{a_y / \Delta y_j }{a_x / \Delta x_i + a_y / \Delta y_j}
\end{equation}
where
\begin{equation}
a_x = \sigma_x(\unmx_{i j}, \unpx_{i j}), \qquad a_y = \sigma_y(\unmy_{i j}, \unpy_{i,j})
\end{equation}
In~\cite{Cui2023}, it was shown that the time step restriction imposed by the above decomposition is suboptimal and optimal decompositions were proposed. After choosing $k_x, k_y$, following the 1-D procedures from Section~\ref{sec:mh.adm}, the slopes $\slope^x_i, \slope^y_j$ will be limited to enforce admissibility of $\thetapx, \thetapy$ (\ref{eq:update.x.combination}, \ref{eq:update.x.combination2}). The admissibility preservation of $\thetapx$~\eqref{eq:update.x.combination} follows directly from the arguments used in Lemma \ref{lemma:m.h.step.1}, enforcing slope restriction so that $\unpmx_{i j}$ and $\uspx_{i  j}$ are admissible, and appropriate time step restrictions.
For admissibility of $\thetapy$~\eqref{eq:update.x.combination2}, we define $\uspxy_{ij}$ so that
\[
\mupy  \unpy_{i  j} + \uspxy_{ij} + \mumy  \unmy_{i  j} = 2 \unpx_{i j}
\]
Thus, the proof of Lemma \ref{lemma:m.h.step.1} shall apply as in 1-D under the assumption of admissibility of $\unpmy_{i  j}, \uspxy_{ij}$ and some CFL conditions. Thus, we will have admissibility of $\thetapy \in \Uad$. We obtain further simplifications because of conservative reconstructions
\begin{equation}
\uspxy_{ij} = \uspx_{ij}
\end{equation}
and thus the slope limiting for enforcing admissibility of $\uspx_{ij}$ will suffice. We note the precise slope and CFL restrictions are in Lemma~\ref{lemma:m.h.step.1.2d}.
\begin{lemma}
\label{lemma:m.h.step.1.2d}
For $\mu_{\pm x}, \mu_{\pm y}$ defined in~\eqref{eq:muxy.defn}, $\unpmx_{ij}, \unpmy_{ij}$ reconstructed in~\eqref{eq:2DMH1}, $\uspmx_{ij}, \uspmy_{ij}$ picked as in~\eqref{eq:ustar.2d}, assume
\[
\unpmx_{ij}, \unpmy_{ij}, \uspmx_{ij}, \uspmy_{ij} \in \Uad
\]
and the CFL restrictions
\begin{equation}
\begin{split}
\max_{i,j}  \frac{\lambda_{x_i}}{\mumx} \sigma_x \left(\unmx_{ij}, \uspmx_{ij} \right) \leq 1, \qquad
\max_{i,j}  \frac{\lambda_{x_i}}{\mupx} \sigma_x\left( \uspmx_{ij}, \unpx_{ij} \right)  \leq 1 \\
\max_{i,j}  \frac{\lambda_{y_j}}{\mumy} \sigma_y \left( \unmy_{ij}, \uspmx_{ij} \right) \leq 1, \qquad
\max_{i,j}  \frac{\lambda_{y_j}}{\mupy} \sigma_y \left( \uspmx_{ij}, \unpy_{ij} \right)  \leq 1 \\
\max_{i,j}  \frac{\lambda_{y_j}}{\mumy} \sigma_y  \left( \unmy_{ij}, \uspmy_{ij} \right) \leq 1, \qquad
\max_{i,j}  \frac{\lambda_{y_j}}{\mupy} \sigma_y \left( \uspmy_{ij}, \unpy_{ij} \right) \leq 1 \\
\max_{i,j}  \frac{\lambda_{x_i}}{\mumx} \sigma_x \left( \unmx_{ij}, \uspmy_{ij} \right) \leq 1, \qquad
\max_{i,j}  \frac{\lambda_{x_i}}{\mupx} \sigma_x \left( \uspmy_{ij}, \unpx_{ij} \right)  \leq 1
\label{eq:new.cfl1.2d}
\end{split}
\end{equation}
where $\lambda_{x_i} = \frac{\Delta t}{k_x \Delta x_i}, \lambda_{y_j} = \frac{\Delta t}{k_y \Delta y_j}$ for all $i,j$ and $k_x + k_y = 1$. Then, the updates $\unphpmx_{i  j}, \unphpmy_{i  j}$~\eqref{eq:2Dupdates} of the first step of 2-D MUSCL-Hancock scheme are admissible.
\end{lemma}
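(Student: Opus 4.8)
The plan is to reduce the two–dimensional statement to the already–proved one–dimensional Lemma~\ref{lemma:m.h.step.1}. It suffices to establish $\unphpx_{ij}\in\Uad$ in detail; the three remaining traces $\unphmx_{ij}$, $\unphpy_{ij}$, $\unphmy_{ij}$ are handled by the identical argument after interchanging the two coordinate directions and/or the $+$ and $-$ reconstructions. The starting point is the convex splitting $\unphpx_{ij}=k_x\,\thetapx+k_y\,\thetapy$ coming from \eqref{eq:update.x.combination}–\eqref{eq:update.x.combination2}, which is valid because $k_x+k_y=1$; since $\Uad$ is convex, it is enough to prove that each of $\thetapx$ and $\thetapy$ lies in $\Uad$.

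For $\thetapx$ I would observe that \eqref{eq:update.x.combination} has exactly the algebraic shape of the one–dimensional evolved trace \eqref{eq:evolution.general}: it equals $\unpx_{ij}$ minus $\tfrac{\Delta t/(2k_x)}{\Delta x_i}$ times the $x$–flux difference $\ff(\unpx_{ij})-\ff(\unmx_{ij})$, with left/right reconstructions $\unmx_{ij},\unpx_{ij}$ and star state $\uspx_{ij}$ that already satisfies the defining relation \eqref{eq:ustar.2d}. Hence Lemma~\ref{lemma:m.h.step.1}, applied in the $x$–direction with effective time step $\Delta t/k_x$ and $\lambda_{x_i}=\Delta t/(k_x\Delta x_i)$, gives $\thetapx\in\Uad$ directly from the hypotheses $\unpmx_{ij},\uspmx_{ij}\in\Uad$ and the first two CFL inequalities of \eqref{eq:new.cfl1.2d}.

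The term $\thetapy$ is the delicate one and I expect it to be the main obstacle: by \eqref{eq:update.x.combination2} it is a $y$–direction evolved trace, but it is centered at the ``wrong'' base state $\unpx_{ij}$ instead of $\unpy_{ij}$. The fix is to introduce an auxiliary state $\uspxy_{ij}$ defined by $\mupy\unpy_{ij}+\uspxy_{ij}+\mumy\unmy_{ij}=2\unpx_{ij}$, so that the triple $(\unmy_{ij},\uspxy_{ij},\unpy_{ij})$ together with base state $\unpx_{ij}$ fits the hypotheses of Lemma~\ref{lemma:m.h.step.1}; its proof then applies verbatim in the $y$–direction with effective time step $\Delta t/k_y$ and $\lambda_{y_j}=\Delta t/(k_y\Delta y_j)$, yielding $\thetapy\in\Uad$ provided $\unpmy_{ij},\uspxy_{ij}\in\Uad$ and the third and fourth CFL inequalities of \eqref{eq:new.cfl1.2d} hold. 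I would then use conservativity of the reconstruction, $\mupx\unpx_{ij}+\mumx\unmx_{ij}=\mupy\unpy_{ij}+\mumy\unmy_{ij}=\bw_{ij}^n$, to compute $\uspxy_{ij}=2\unpx_{ij}-\bw_{ij}^n=\bw_{ij}^n+2(x_{\iph}-x_i)\slope_i^x=\uspx_{ij}$; thus the assumed admissibility of $\uspmx_{ij}$ already covers $\uspxy_{ij}$ and no slope restriction beyond the one ensuring $\uspx_{ij}\in\Uad$ is needed, while the two CFL bounds collapse to exactly the third and fourth lines of \eqref{eq:new.cfl1.2d}. Collecting the two cases and invoking convexity of $\Uad$ gives $\unphpx_{ij}=k_x\thetapx+k_y\thetapy\in\Uad$; the $\pm x$ updates altogether require only the first four inequalities of \eqref{eq:new.cfl1.2d}, and the symmetric treatment of the $\pm y$ updates (with auxiliary states $\uspy_{ij},\usmy_{ij}$) requires the last four, completing the proof.
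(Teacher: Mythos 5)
Your proof is correct and follows essentially the same route as the paper: the convex splitting $\unphpx_{ij}=k_x\thetapx+k_y\thetapy$, the direct application of Lemma~\ref{lemma:m.h.step.1} to the $x$-piece, and the auxiliary state $\uspxy_{ij}$ for the $y$-piece which collapses to $\uspx_{ij}$ by conservativity of the reconstruction are exactly the steps laid out in Section~\ref{sec:2d.mh} preceding the lemma. Your accounting of which inequalities of~\eqref{eq:new.cfl1.2d} serve which of the four evolved traces also matches the paper's intent.
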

\subsubsection{Finite volume step}

The final update is given by
\begin{equation} \label{eq:2d.mh.final.subs}
\bw_{i  j}^{n + 1} =
\bw_{i  j}^n - \frac{\Delta t}{\Delta x_i}  ( \ff_{\iph, j}^{n + \half} -  \ff_{\imh, j}^{n + \half}) -
\frac{\Delta t}{\Delta y_j}  ( \fg_{i, \jph}^{n + \half} -  \fg_{i, \jmh}^{n + \half})
\end{equation}
where the numerical fluxes are computed as
\[
\ff_{\iph, j}^{n + \half} =  \ff \left( \unphpx_{ij}, \unphmx_{i+1,j} \right), \qquad  \fg_{i, \jph}^{n + \half} =  \fg \left( \unphpy_{i,j},\unphmy_{i,j+1} \right)
\]
As in the previous step, the expression~\eqref{eq:2d.mh.final.subs} is split \correction{into} a convex combination
\begin{align*}
\bw_{i  j}^{n + 1} = k_x \zeta_{ij}^{x} + k_y  \zeta_{ij}^{y}
\end{align*} where
\[
\zeta_{ij}^{x} := \bw_{ij}^n - \frac{\Delta t}{k_x \Delta x_i}  ( \ff_{\iph, j}^{n + \half} -  \ff_{\imh, j}^{n + \half}), \qquad
\zeta_{ij}^{y} := \bw_{ij}^n - \frac{\Delta t}{k_y \Delta y_j} ( \fg_{i, \jph}^{n + \half} -  \fg_{i, \jmh}^{n + \half})
\]
for some $k_x,k_y \ge 0$ with $k_x+k_y=1.$ The admissibility of \correction{$\zeta_{ij}^{x}$  and $ \zeta_{ij}^{y}$} will imply the admissibility of $\bw_{i  j}^{n + 1}.$ The admissibility of $\zeta_{ij}^{x}, \zeta_{ij}^{y}$ will follow exactly as from the procedure in 1-D (Lemma~\ref{lemma:muscl.step2.general}) with appropriate time step restrictions and assumption of admissibility of terms $\unphpmx_{ij}, \unphpmy_{ij}, \unphsx_{ij}, \unphsy_{ij}$ for $\unphsx_{ij}, \unphsy_{ij}$
defined as
\begin{equation} \label{eq:unphsxy}
\begin{split}
\mumx  \unphmx_{ij} + \bw_{ij}^{n+\frac{1}{2},\ast x} + \mupx  \unphpx_{ij}
& = 2 \bw_{ij}^n \\
\mumy  \unphmy_{ij}
+ \bw_{ij}^{n  + \frac{1}{2}, \ast y}
+ \mupy  \unphpy_{ij}
& = 2 \bw_{ij}^n
\end{split}
\end{equation}
The precise CFL restrictions and admissibility constraints are in the following Lemma~\ref{lemma:2d.muscl.step2.general}.
\begin{lemma}
\label{lemma:2d.muscl.step2.general}
Assume that the states $\left \{ \unphpmx_{ij}, \unphpmy_{ij}, \unphsx_{ij}, \unphsy_{ij}  \right \}_{i,j}$ belong to $\Uad$, where $ \unphsx_{ij}, \unphsy_{ij}$ are defined as in~\eqref{eq:unphsxy}. Then, the updated solution $\bw_{ij}^{n+1}$ of MUSCL-Hancock scheme is in $\Uad$ under the
CFL conditions
\begin{equation}
\label{eq:2d.new.cfl2}
\begin{gathered}
\begin{aligned}
\frac{2\lambda_{x_i}}{\mumx} \sigma_x \left( \unphmx_{ij}, \unphsx_{ij} \right)   \leq 1, \quad
2\lambda_{x_i}  \sigma_x \left(\unphsx_{ij}, \unphpx_{ij}\right) \leq 1, \quad
\frac{2\lambda_{x_i}}{\mupx} \sigma_x \left( \unphpx_{ij}, \unphmx_{i+1,j} \right) \leq 1 \\
\frac{2\lambda_{x_i}}{\mumx} \sigma_x \left( \unphpx_{i-1,j}, \unphmx_{ij} \right)\leq 1, \quad
2\lambda_{x_i}  \sigma_x \left(\unphmx_{ij}, \unphsx_{ij}\right) \leq 1, \quad
\frac{2\lambda_{x_i}}{\mupx} \sigma_x \left( \unphsx_{ij}, \unphpx_{ij} \right) \leq 1
\end{aligned} \\
\begin{aligned}
\frac{2 \lambda_{y_j}}{\mumy} \sigma_y \left( \unphmy_{ij}, \unphsy_{ij} \right)   \leq 1, \quad
2 \lambda_{y_j}  \sigma_y \left(\unphsy_{ij}, \unphpy_{ij}\right) \leq 1,  \quad
\frac{2 \lambda_{y_j}}{\mupy} \sigma_y \left( \unphpy_{ij}, \unphmy_{i,j+1} \right) \leq 1 \\
\frac{2 \lambda_{y_j}}{\mumy} \sigma_y \left( \unphpy_{i,j-1}, \unphmy_{ij} \right)\leq 1, \quad
2 \lambda_{y_j}  \sigma_y \left(\unphmy_{ij}, \unphsy_{ij}\right) \leq 1, \quad
\frac{2 \lambda_{y_j}}{\mupy} \sigma_y \left( \unphsy_{ij}, \unphpy_{ij} \right) \leq 1
\end{aligned}
\end{gathered}
\end{equation}
where $\lambda_{x_i} = \frac{\Delta t}{k_x \Delta x_i}, \lambda_{y_j} = \frac{\Delta t}{k_y \Delta y_j}$ for all $i,j$.
\end{lemma}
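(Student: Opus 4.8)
The plan is to reduce the two-dimensional finite volume step to two one-dimensional finite volume steps, one in each coordinate direction, and then apply the argument of Lemma~\ref{lemma:muscl.step2.general} to each. Writing the final update~\eqref{eq:2d.mh.final.subs} as the convex combination $\bw_{ij}^{n+1} = k_x\,\zeta_{ij}^{x} + k_y\,\zeta_{ij}^{y}$ with $k_x + k_y = 1$ and $k_x, k_y \ge 0$, convexity of $\Uad$ reduces the claim to showing $\zeta_{ij}^{x}\in\Uad$ and $\zeta_{ij}^{y}\in\Uad$; by the symmetry between the two coordinates it suffices to treat $\zeta_{ij}^{x}$. The key structural observation is that $\zeta_{ij}^{x}$ only sees the $x$-direction numerical fluxes $\ff_{\iph,j}^{\nph} = \ff(\unphpx_{ij}, \unphmx_{i+1,j})$, which depend solely on the $x$-evolved traces $\unphpmx$; hence, for each fixed $j$, the row $\{\bw_{ij}^n\}_i$ evolves under $\zeta^{x}$ exactly like a one-dimensional MUSCL-Hancock finite volume step~\eqref{eq:all.fvm.updates} in which $\Delta t$ is replaced by $\Delta t/k_x$.

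I would then transcribe the proof of Lemma~\ref{lemma:muscl.step2.general} onto this slice. Introduce $\unphsx_{ij}$ via~\eqref{eq:unphsxy}, tile each cell $(x_{\imh}, x_{\iph})$ by three subcells of widths $\mumx\Delta x_i/2$, $\Delta x_i/2$, $\mupx\Delta x_i/2$ carrying the evolved states $\unphmx_{ij}$, $\unphsx_{ij}$, $\unphpx_{ij}$, and define the first order finite volume updates $\bw_{ij}^{n+1,-x}$, $\bw_{ij}^{n+1,\ast x}$, $\bw_{ij}^{n+1,+x}$ on these subcells (as in~\eqref{eq:all.fvm.updates}, with $1/\Delta x_j$ replaced by $1/(k_x\Delta x_i)$). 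The crucial algebraic step is to check, using~\eqref{eq:unphsxy} together with the pairwise cancellation of the internal subcell fluxes, the identity
\[
\frac{\mumx}{2}\,\bw_{ij}^{n+1,-x} + \frac12\,\bw_{ij}^{n+1,\ast x} + \frac{\mupx}{2}\,\bw_{ij}^{n+1,+x} = \zeta_{ij}^{x},
\]
which is a genuine convex combination because $\tfrac{\mumx}{2} + \tfrac12 + \tfrac{\mupx}{2} = 1$ (using $\mumx + \mupx = 1$ from~\eqref{eq:muxy.defn}, and the subcell widths sum to $\Delta x_i$). Since $\unphmx_{ij}$, $\unphsx_{ij}$, $\unphpx_{ij}$ and the neighbouring traces $\unphpx_{i-1,j}$, $\unphmx_{i+1,j}$ all lie in $\Uad$ by hypothesis, applying the first order finite volume admissibility assumption~\eqref{eq:numflux.admissibility.cfl} to each subcell — whose two per-face CFL numbers are exactly the quantities $\tfrac{2\lambda_{x_i}}{\mumx}\sigma_x(\cdot,\cdot)$, $2\lambda_{x_i}\sigma_x(\cdot,\cdot)$, $\tfrac{2\lambda_{x_i}}{\mupx}\sigma_x(\cdot,\cdot)$ appearing in the first block of~\eqref{eq:2d.new.cfl2} — yields $\bw_{ij}^{n+1,-x}, \bw_{ij}^{n+1,\ast x}, \bw_{ij}^{n+1,+x}\in\Uad$, hence $\zeta_{ij}^{x}\in\Uad$. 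The identical argument in the $y$-direction, using the second block of~\eqref{eq:2d.new.cfl2} and $\unphsy$, gives $\zeta_{ij}^{y}\in\Uad$, and convexity of $\Uad$ finishes the proof.

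The step I expect to demand the most care is verifying that displayed identity: one has to match the subcell weights $\mumx/2, 1/2, \mupx/2$ against the reciprocal subcell widths $2/(k_x\mumx\Delta x_i), 2/(k_x\Delta x_i), 2/(k_x\mupx\Delta x_i)$ so that every flux contribution acquires the common factor $\Delta t/(k_x\Delta x_i)$, confirm that the states collapse as $\tfrac{\mumx}{2}\unphmx_{ij} + \tfrac12\unphsx_{ij} + \tfrac{\mupx}{2}\unphpx_{ij} = \bw_{ij}^n$ by~\eqref{eq:unphsxy}, and that the contributions at the two interior subfaces cancel pairwise so that only $\ff_{\imh,j}^{\nph}$ and $\ff_{\iph,j}^{\nph}$ remain. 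This is the same bookkeeping as in the one-dimensional derivation leading to~\eqref{eq:new.cfl2}, carried over verbatim with the extra $1/k_x$ factor, so no new idea beyond the directional splitting is required.
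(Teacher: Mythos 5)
Your proposal is correct and follows essentially the same route as the paper: the update is split as $\bw_{ij}^{n+1} = k_x\zeta_{ij}^{x} + k_y\zeta_{ij}^{y}$ and each directional piece is handled by transcribing the one-dimensional argument of Lemma~\ref{lemma:muscl.step2.general} (the three-subcell convex combination of first-order finite volume updates built from $\unphmx_{ij}, \unphsx_{ij}, \unphpx_{ij}$ via~\eqref{eq:unphsxy}) with $\Delta x_i$ replaced by $k_x\Delta x_i$, which is exactly how the stated CFL conditions~\eqref{eq:2d.new.cfl2} arise. The paper leaves these details implicit ("will follow exactly as from the procedure in 1-D"); you have filled them in faithfully, including the weight--width bookkeeping and the interior flux cancellation.
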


As in 1-D, we now show that admissibility of $\bw_{i  j}^{n + \frac{1}{2}, \ast x}, \bw_{i  j}^{n + \frac{1}{2}, \ast y}$ can also be reduced to admissibility of $\uspmx_{i  j}, \uspmy_{ij}$, similar to Lemma~\ref{lemma:muscl.step3.conservative}.
Expanding the definition of $\unphsy_{ij}$ gives us
\begin{equation}
\bw_{ij}^{\nph, \ast y} = 2 \bw_{ij}^n -(\mumy  \unmy_{i  j} + \mupy  \unpy_{ij}) - \frac{\Delta t}{\Delta x_i}  \left(\ff (\unmx_{i  j}) -  \ff (\unpx_{ij}) \right) - \frac{\Delta t}{\Delta y_j}  \left(\fg (\unmy_{i  j}) -  \fg (\unpy_{ij}) \right)
\end{equation}
If we obtain the admissibility of

\begin{equation}
\mathcal \eta_{ij}^{* {yx}}:= 2 \bw_{i  j}^n - (\mumy  \unmy_{i  j} + \mupy
\unpy_{i  j}) - \frac{\Delta t}{k_x \Delta
x_i}  \left(  \ff \left( \unmx_{i  j} \right) -
\ff \left( \unpx_{i  j} \right) \right)
\label{eq:adm2.coeff1}
\end{equation}
and
\begin{equation}
\mathcal \eta_{ij}^{* {yy}}:= 2 \bw_{i  j}^n - (\mumy  \unmy_{i  j} + \mupy
\unpy_{i  j}) - \frac{\Delta t}{k_y \Delta
y_j}  \left(  \fg \left( \unmy_{i  j} \right) -
\fg \left( \unpy_{i  j} \right) \right)
\label{eq:adm2.coeff2}
\end{equation}
for some $k_x,k_y \in [0,1]$ with $k_x+k_y=1,$ then the admissibility of $\bw_{i
j}^{\nph, \ast y}$ follows as we can write it  as  a convex combination
\begin{align*}
 \bw_{i  j}^{\nph, \ast y} = k_x \eta_{ij}^{\ast {yx}}+  k_y\eta_{ij}^{\ast {yx}}
\end{align*}
and obtain the admissibility of $\bw_{ij}^{\nph, \ast y}$. Thus, we need to limit the slope so that~(\ref{eq:adm2.coeff1},
\ref{eq:adm2.coeff2}) are admissibile. To that end, define $\bw_{i  j}^{\ast \ast {y  x}}, \bw_{ij}^{\ast \ast {y  y}}$ to satisfy
\begin{align*}
\mumx  \unmx_{i  j} + \bw_{i  j}^{\ast \ast {y
x}} + \mupx  \unpx_{i  j} = & 2 \left( 2
\bw_{i  j}^n - (\mumy  \unmy_{i  j} + \mupy
\unpy_{i  j}) \right)\\
\mumy  \unmy_{i  j} + \bw_{i  j}^{\ast \ast {y
y}} + \mupy  \unpy_{i  j} = & 2 \left( 2
\bw_{i  j}^n - (\mumy  \unmy_{i  j} + \mupy \unpy_{i  j}) \right)
\end{align*}
respectively.
Consequently,
\begin{align*}
\eta_{ij}^{* {yx}} &= \frac{1}{2} ( \mumx  \unmx_{i  j} + \bw_{i  j}^{\ast \ast {y
x}} + \mupx  \unpx_{i  j})-\frac{\Delta t}{ k_x\Delta x_i}\left(  \ff \left( \unmx_{i  j} \right) -
\ff \left( \unpx_{i  j} \right) \right)\\
\eta_{ij}^{* {yy}} & = \frac{1}{2}( \mumy  \unmy_{i  j} + \bw_{i  j}^{\ast \ast {y
y}} + \mupy  \unpy_{i  j})-\frac{\Delta t}{k_y \Delta
y_j}  \left(  \fg \left( \unmy_{i  j} \right) -
\fg \left( \unpy_{i  j} \right) \right)
\end{align*}
Then,  assuming the admissibility of $\bw_{ij}^{\ast \ast {y  x}}, \bw_{i  j}^{\ast \ast {y  y}}$ and proceeding as in the proof of Lemma \ref{lemma:muscl.step3.wss}, we can ensure that $\eta_{ij}^{* {yx}}, \eta_{ij}^{* {yy}} \in \Uad$ and thus $\bw_{i  j}^{\nph, \ast y} \in \Uad$. Furthermore, since the reconstruction is conservative
\[
\mumy  \unmy_{i  j} + \mupy  \unpy_{ij} = \mumx  \unmx_{i  j} + \mupx  \unpx_{i  j} = \bw_{i  j}^n
\]
Thus, admissibility of $\bw_{ij}^{\ast \ast {yx}}, \bw_{ij}^{\ast\ast {yy}}$ is obtained as
\[
\bw_{i  j}^{\ast \ast {yx}} = \bw_{ij}^{\ast\ast {yy}} = \bw_{i  j}^n
\]
The arguments for admissibility of $\bw_{i  j}^{\nph, \ast x}$ are similar. The admissibility criteria of $\bw_{i  j}^{\nph, \ast x}, \bw_{i  j}^{\nph, \ast y}$ are summarised in the following lemma.
\begin{lemma}
\label{lemma:2d.muscl.step3.conservative}Assume that $\correction{\bw_{ij}^n} \in \Uad$ and $\unpmx_{ij}, \unpmy_{ij} \in \Uad$ for all $i,j$ with conservative reconstruction. Also assume the CFL restrictions
\begin{equation}
\label{eq:2d.new.cfl3.conservative}
\begin{split}
\max_{i,j} \frac{\lambda_{x_i}}{\mumx} \sigma_x \left(\bw_{ij}^n, \unmx_{ij} \right) \leq 1, \qquad
\max_{i,j} \frac{\lambda_{x_i}}{\mupx} \sigma_x \left(\unpx_{ij}, \bw_{ij}^n \right) \leq 1 \\
\max_{i,j} \frac{\lambda_{y_j}}{\mumy} \sigma_y \left(\bw_{ij}^n, \unmy_{ij} \right) \leq 1, \qquad
\max_{i,j} \frac{\lambda_{y_j}}{\mupy} \sigma_y \left(\unpy_{ij}, \bw_{ij}^n \right)\leq 1
\end{split}
\end{equation}
where $\lambda_{x_i} = \frac{\Delta t}{k_x \Delta x_i}, \lambda_{y_j} = \frac{\Delta t}{k_y \Delta y_j}$ and $\mupmx, \mupmy$ are defined in~\eqref{eq:muxy.defn}. Then, $\bw_{i  j}^{\nph, \ast x}, \bw_{i  j}^{\nph, \ast y}$ defined in~\eqref{eq:unphsxy} are in $\Uad$.
\end{lemma}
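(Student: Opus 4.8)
The plan is to reduce this two-dimensional statement to the one-dimensional arguments already established in Lemma~\ref{lemma:muscl.step3.wss} and its conservative specialization Lemma~\ref{lemma:muscl.step3.conservative}, applied separately in the $x$- and $y$-directions and then recombined by the convex combination with weights $k_x, k_y$. I would carry out the argument for $\bw_{ij}^{\nph,\ast y}$ in detail only; the admissibility of $\bw_{ij}^{\nph,\ast x}$ follows from the same reasoning with the roles of $x$ and $y$ interchanged.

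First I would expand the defining relation~\eqref{eq:unphsxy} for $\bw_{ij}^{\nph,\ast y}$ using the explicit evolution formulas~\eqref{eq:2DMH2} for $\unphpy_{ij}$ and $\unphmy_{ij}$, obtaining the explicit expression for $\bw_{ij}^{\nph,\ast y}$ displayed just above the lemma. I would then split it as $\bw_{ij}^{\nph,\ast y} = k_x\,\eta_{ij}^{\ast yx} + k_y\,\eta_{ij}^{\ast yy}$ with $k_x+k_y=1$ and $k_x,k_y\ge 0$, where $\eta_{ij}^{\ast yx}$ and $\eta_{ij}^{\ast yy}$ are the quantities~\eqref{eq:adm2.coeff1} and~\eqref{eq:adm2.coeff2}. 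By convexity of $\Uad$ it then suffices to prove $\eta_{ij}^{\ast yx}\in\Uad$ and $\eta_{ij}^{\ast yy}\in\Uad$.

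Next I would observe that $\eta_{ij}^{\ast yx}$ has exactly the structure of the right-hand side of the identity~\eqref{eq:ujph.s.identity} from the proof of Lemma~\ref{lemma:muscl.step3.wss}: it is an evolution, by the $x$-flux difference of the swapped trace values $\unpx_{ij}$ and $\unmx_{ij}$, of the averaged cell value $2\bw_{ij}^n-(\mumy\unmy_{ij}+\mupy\unpy_{ij})$, with the auxiliary state $\bw_{ij}^{\ast\ast yx}$ inserted between them. I would therefore recast $\eta_{ij}^{\ast yx}$ as the cell average over $(x_{\imh},x_{\iph})$ of the exact solution at time $\frac{\Delta t}{2}$ of a Cauchy problem whose initial data takes the three values $\unpx_{ij}$, $\bw_{ij}^{\ast\ast yx}$, $\unmx_{ij}$ on subintervals split at the midpoints $x_{i-\frac14}, x_{i+\frac14}$; under the $\sigma_x$-type CFL bounds the two resulting Riemann problems remain non-interacting up to time $\frac{\Delta t}{2}$, and applying Lemma~\ref{lemma:avg.riemann.problem} to each together with the defining relation for $\bw_{ij}^{\ast\ast yx}$ recovers $\eta_{ij}^{\ast yx}$ exactly, so $\eta_{ij}^{\ast yx}\in\Uad$ provided $\bw_{ij}^{\ast\ast yx}$ and $\unpmx_{ij}$ are admissible. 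The same argument with $\sigma_y$ and $\unpmy_{ij}$ gives $\eta_{ij}^{\ast yy}\in\Uad$. Finally, for conservative reconstruction $\mumx\unmx_{ij}+\mupx\unpx_{ij}=\mumy\unmy_{ij}+\mupy\unpy_{ij}=\bw_{ij}^n$, so the definitions of $\bw_{ij}^{\ast\ast yx}$ and $\bw_{ij}^{\ast\ast yy}$ collapse to $\bw_{ij}^n\in\Uad$, eliminating the extra admissibility hypotheses, and the CFL bounds needed reduce to precisely the four inequalities in~\eqref{eq:2d.new.cfl3.conservative} with the $\mu_{\pm x}, \mu_{\pm y}$ weights and the $\lambda_{x_i},\lambda_{y_j}$ scalings. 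Combining, $\eta_{ij}^{\ast yx},\eta_{ij}^{\ast yy}\in\Uad$, hence $\bw_{ij}^{\nph,\ast y}\in\Uad$, and $\bw_{ij}^{\nph,\ast x}\in\Uad$ follows symmetrically.

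I expect the main obstacle to be bookkeeping rather than anything conceptual: one must keep track of which directional wave-speed inequality keeps the two Riemann fans in each auxiliary Cauchy problem from interacting before time $\frac{\Delta t}{2}$, and verify that after the conservative simplification $\bw_{ij}^{\ast\ast\,\cdot}=\bw_{ij}^n$ these coincide exactly with~\eqref{eq:2d.new.cfl3.conservative}. It is also worth checking explicitly that the $k_x,k_y$ decomposition introduces no coupling between the $x$- and $y$-direction estimates, which it does not since $\sigma_x$ depends only on $\ff'$ and $\sigma_y$ only on $\fg'$, so the two convex pieces can be treated completely independently.
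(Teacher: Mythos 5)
Your proposal follows essentially the same route as the paper: the same expansion of $\bw_{ij}^{\nph,\ast y}$, the same convex splitting into $k_x\eta_{ij}^{\ast yx}+k_y\eta_{ij}^{\ast yy}$, the same reduction of each piece to the non-interacting Riemann-problem argument of Lemma~\ref{lemma:muscl.step3.wss} via the auxiliary states $\bw_{ij}^{\ast\ast yx},\bw_{ij}^{\ast\ast yy}$, and the same conservative simplification $\bw_{ij}^{\ast\ast\,\cdot}=\bw_{ij}^n$ that collapses the hypotheses to~\eqref{eq:2d.new.cfl3.conservative}. The argument is correct and matches the paper's proof.
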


Combining Lemmas~\ref{lemma:m.h.step.1.2d},~\ref{lemma:2d.muscl.step2.general},~\ref{lemma:2d.muscl.step3.conservative}, we will have the 2-D result corresponding to Theorem \ref{thm:final.condn.conservative} with the same proof.
\begin{theorem}
\label{thm:final.condn.conservative.2d}Let $\bw_{ij}^n \in \Uad$ for all $i,j$ and $\unpmx_{ij}, \unpmy_{ij}$ be the conservative reconstructions defined as
\[
\unpmx_{ij} = \bw_{ij}^n + (x_\correction{\ipmh} - x_i) \slope^x_i, \qquad
\unpmy_{ij} = \bw_{ij}^n + (y_\jpmh - y_j) \slope^y_j
\]
so that $\uspmx_{ij}, \uspmy_{ij}$~\eqref{eq:ustar.2d} are given by
\[
\uspmx_{ij} = \bw_{ij}^n + 2(x_\correction{\ipmh} - x_i) \slope^x_i,\qquad \uspmy_{ij} = \bw_{ij}^n + 2(y_\jpmh - y_j) \slope^y_j
\]
Assume that the slopes $\slope^x_i, \slope^y_j$ are chosen to satisfy $\uspmx_{ij}, \uspmy_{ij} \in \Uad$  for all $i,j$ and that the CFL restrictions~(\ref{eq:new.cfl1.2d}, \ref{eq:2d.new.cfl2}, \ref{eq:2d.new.cfl3.conservative}) are satisfied. Then the updated solution $\bw_{ij}^{n+1}$ of MUSCL-Hancock procedure is in $\Uad$.
\end{theorem}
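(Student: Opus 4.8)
The plan is to assemble the three Lemmas~\ref{lemma:m.h.step.1.2d},~\ref{lemma:2d.muscl.step2.general} and~\ref{lemma:2d.muscl.step3.conservative} in exactly the same way as in the 1-D Theorem~\ref{thm:final.condn.conservative}. The only structurally new element over 1-D is the dimensional splitting: every 2-D update (the half-step evolutions~\eqref{eq:2Dupdates} and the final finite-volume update~\eqref{eq:2d.mh.final.subs}) is written as a convex combination $k_x(\cdot)^x+k_y(\cdot)^y$ with $k_x,k_y\ge 0$ and $k_x+k_y=1$ of an $x$-directional and a $y$-directional term, each of which is a 1-D-type update with $\Delta x_i,\Delta y_j$ replaced by $k_x\Delta x_i,k_y\Delta y_j$. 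Convexity of $\Uad$ then reduces admissibility of the 2-D update to admissibility of the two directional pieces, and the 1-D lemmas apply to each piece once the correct auxiliary states ($\uspxy_{ij}$, $\unphsx_{ij},\unphsy_{ij}$, $\bw_{ij}^{\ast\ast yx},\bw_{ij}^{\ast\ast yy}$ and their $x$-analogues) are identified; conservative reconstruction collapses these to the given $\uspmx_{ij},\uspmy_{ij}$ or to $\bw_{ij}^n$.

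First I would check that the face reconstructions are admissible. From $\uspmx_{ij}=\bw_{ij}^n+2(x_{\ipmh}-x_i)\slope^x_i$ and $\unpmx_{ij}=\bw_{ij}^n+(x_{\ipmh}-x_i)\slope^x_i$ one reads off the midpoint identity $\unpmx_{ij}=\frac12\uspmx_{ij}+\frac12\bw_{ij}^n$, and similarly $\unpmy_{ij}=\frac12\uspmy_{ij}+\frac12\bw_{ij}^n$; since $\uspmx_{ij},\uspmy_{ij},\bw_{ij}^n\in\Uad$ and $\Uad$ is convex, we get $\unpmx_{ij},\unpmy_{ij}\in\Uad$. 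Next, Lemma~\ref{lemma:m.h.step.1.2d} with these admissibilities and the CFL restrictions~\eqref{eq:new.cfl1.2d} yields $\unphpmx_{ij},\unphpmy_{ij}\in\Uad$; and Lemma~\ref{lemma:2d.muscl.step3.conservative}, which uses that conservative reconstruction forces $\bw_{ij}^{\ast\ast yx}=\bw_{ij}^{\ast\ast yy}=\bw_{ij}^n\in\Uad$, together with~\eqref{eq:2d.new.cfl3.conservative} yields $\unphsx_{ij},\unphsy_{ij}\in\Uad$. Finally, with all of $\unphpmx_{ij},\unphpmy_{ij},\unphsx_{ij},\unphsy_{ij}$ in $\Uad$, Lemma~\ref{lemma:2d.muscl.step2.general} under~\eqref{eq:2d.new.cfl2} gives $\bw_{ij}^{n+1}\in\Uad$.

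The part demanding the most care is the bookkeeping of the directional splittings — in particular verifying that the convex-combination device used to reduce the half-step evolution to 1-D (via $\uspxy_{ij}=\uspx_{ij}$ in the conservative case) is compatible with the one used for the final finite-volume step, and that each of the many CFL inequalities in~\eqref{eq:new.cfl1.2d},~\eqref{eq:2d.new.cfl2},~\eqref{eq:2d.new.cfl3.conservative} is precisely the corresponding 1-D condition transported through the splitting. Once the auxiliary states are matched to their 1-D counterparts no new estimates are required; the proof is an organizational assembly of the lemmas, with convexity of $\Uad$ doing the work at each splitting step.
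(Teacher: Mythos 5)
Your proposal is correct and follows essentially the same route as the paper, which proves Theorem~\ref{thm:final.condn.conservative.2d} by combining Lemmas~\ref{lemma:m.h.step.1.2d}, \ref{lemma:2d.muscl.step2.general} and \ref{lemma:2d.muscl.step3.conservative} ``with the same proof'' as the 1-D Theorem~\ref{thm:final.condn.conservative}, i.e., first obtaining $\unpmx_{ij},\unpmy_{ij}\in\Uad$ from the convex-combination identity $\unpmx_{ij}=\frac12\uspmx_{ij}+\frac12\bw_{ij}^n$ and then chaining the lemmas. Your additional remarks on the dimensional splitting and the collapse of the auxiliary states under conservative reconstruction are exactly the content the paper delegates to the preceding lemmas, so nothing is missing.
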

\section{Limiting numerical flux in 2-D}
\label{sec:2d.admissibility}
Consider the 2-D hyperbolic conservation law~\eqref{eq:2d.hyp.con.law}. The Lax-Wendroff update is
\[
(\bw_{ij}^e)^{n+1} = (\bw_{ij}^e)^n
- \Delta t \left[ \frac{1}{\Delta x_e} \pd{F_h^\ee}{\xi} (\xi_i, \xi_j) + \frac{1}{\Delta y_e} \pd{G_h^\ee}{\eta}(\xi_i, \xi_j) \right ], \qquad 0 \le i,j \le N
\]
where $F^\ee_h, G^\ee_h$ are continuous time-averaged fluxes~\eqref{eq:frcontflux} in the $x,y$ directions for the grid element $\ee = (\ex, \ey)$. The reader is referred to Section 10 of~\cite{babbar2022} for more details of the 2-D Lax-Wendroff Flux Reconstruction (LWFR) scheme. Since the 2-D scheme is formed by taking a tensor product of the 1-D scheme, Theorem~\ref{thm:lwfr.admissibility} applies, i.e., the scheme will be admissibility preserving in means (Definition~\ref{defn:mean.pres}) if we choose the blended numerical flux such that the lower order updates are admissible at solution points adjacent to the interfaces. Thus, we now explain the process of constructing the numerical flux where, to minimize storage requirements and memory reads, we will perform the correction within the interface loop where only one of $x$ or $y$ flux will be available in one iteration. Thus theoretical justification for the algorithm comes from breaking 2-D lower order updates into 1-D convex combinations. The general structure of the LWFR Algorithm~\ref{alg:high.level.lw} will remain the same. Here, we justify Algorithm~\ref{alg:blended.flux.2d} for construction of blended $x$ flux with knowledge of only the $x$ flux. The algorithm for blended $y$ fluxes will be analogous.

We consider the calculation of the blended numerical flux for a corner solution point of the element, as this situation differs from 1-D, due to the fact that a corner solution point is adjacent to both $x$ and $y$ interfaces. In particular, we consider the bottom-left corner point $\bo = (0,0)$ and show that the procedure in Algorithm~\ref{alg:blended.flux.2d} ensures admissibility at such points. The same justification applies to other corner and non-corner points. For the element $\ee = (\ex, \ey)$, denoting interfaces along $x,y$ directions as $\eexpmh$, $\eeypmh$, we consider the update at the bottom left corner $\bo = (0,0)$, suppressing the local solution point index $i=0$ or $j=0$ when considering the FR interface fluxes. The lower order update is given by
\begin{align*}
\tilde{\bw}_{\bo}^{n + 1}
& =
(u_\bo^{\mathbf{e}})^n
- \frac{\Delta t}{\Delta x_e w_0} (f_{(\half, 0)}^\ee - \tilde{F}_\eexmh)
- \frac{\Delta t}{\Delta y_e w_0} (g_{(0, \half)}^\ee - \tilde{G}_\eeymh)
\end{align*}
where $\tilde{F}_\eexmh, \tilde{G}_\eeymh$ are heuristically \correction{guessed} candidates for the blended numerical flux~\eqref{eq:Fblend}. Pick $k_x, k_y > 0$ such that $k_x + k_y = 1$ and
\begin{equation}
\label{eq:low.update.2d}
\begin{split}
\tilde{u}_x^{\text{low}, n + 1} & = (u_\bo^{\mathbf{e}})^n -
\frac{\Delta t}{k_x \Delta x_e w_0} (\ff^\mathbf{e}_{(\half, 0)} - \ff_{(\exmh, \ey)} )\\
\tilde{u}_y^{\text{low}, n + 1} & = (u_\bo^{\mathbf{e}})^n -
\frac{\Delta t}{k_y \Delta y_e w_0} (\fg^\mathbf{e}_{(0, \half)} - \fg_{(\ex, \eymh)})
\end{split}
\end{equation}
satisfy
\begin{equation}\label{eq:2d.low.update.admissibility.condn}
\tilde{\bw}_x^{\text{low}, n + 1}, \tilde{\bw}_y^{\text{low}, n + 1} \in  \Uad
\end{equation}
Such $k_x, k_y$ exist because the lower order scheme with lower order flux at element interfaces is admissibility preserving. The choice of $k_x, k_y$ should be made so that~\eqref{eq:2d.low.update.admissibility.condn} is satisfied with the least time step restriction, but we have found the Fourier stability restriction imposed by~\eqref{eq:time.step.2d} to be sufficient even with the most trivial choice of $k_x = k_y = \half$. The discussion of literature for the optimal choice of $k_x,k_y$ is the same as the one made for the 2-D MUSCL Hancock scheme~\eqref{eq:kx.defn} and is not repeated here. After the choice of $k_x, k_y$ is made, if we repeat the same procedure as in the 1-D case, we can perform slope limiting to find $F_{e_x - \half, e_y}$, $F_{e_x, e_y - \half}$ such that
\begin{align}
\tilde{\bw}_x^{n + 1} & = (u_\bo^{\mathbf{e}})^n -
\frac{\Delta t}{k_x \Delta x_e w_0} (\ff_{(\half,0)}^\mathbf{e}
 - F_{(e_x - \half, e_y)}) \label{eq:2d.adm.numflux.desired.x} \\
\tilde{u}_y^{n + 1} & = (u_\bo^{\mathbf{e}})^n -
\frac{\Delta t}{k_y \Delta y_e w_0} (\fg^\mathbf{e}_{(0, \half)}
- G_{(e_x, e_y - \half)}) \label{eq:2d.adm.numflux.desired.y}
\end{align}
are also in the admissible region. Then, we will get
\begin{equation}
\label{eq:2d.xy.implies.admissibility}
k_x \tilde{\bw}_x^{n+1}
+ k_y \tilde{\bw}_y^{n+1}
= \tilde{\bw}_{\bo}^{n+1}
\end{equation}
We now justify Algorithm~\ref{alg:blended.flux.2d} as follows. Algorithm~\ref{alg:blended.flux.2d} corrects the numerical fluxes during the loop over $x$ interfaces to enforce admissibility of $\tilde{\bw}_x^{n + 1}$~\eqref{eq:2d.adm.numflux.desired.x} at all solution points neighbouring $x$ interfaces including the corner solution points, and the analogous algorithm for $y$ interfaces will ensure admissibility of $\tilde{u}_y^{n + 1}$~\eqref{eq:2d.adm.numflux.desired.y} at all solution points neighbouring $y$ interfaces including the corner points. At the end of the loop over interfaces, \eqref{eq:2d.xy.implies.admissibility} will ensure that lower order updates at all solutions points neighbouring interfaces
are admissible and Algorithm~\ref{alg:high.level.lw} will be an admissibility preserving Lax-Wendroff scheme for 2-D if we compute the blended numerical fluxes $F_{(e_x+\half, e_y)}, F_{(e_x, e_y+\half)}$ using Algorithm~\ref{alg:blended.flux.2d} and its counterpart in the $y$ direction.
\begin{algorithm}
\caption{Computation of blended flux $F_{e_x+\half, e_y, j}$ where $(e_x+\half, e_y)$ are the interface indices and $j \in \{0,\dots, N\}$ is the solution point index on the interface}
\textbf{Input:} $F_{e_x + \half, e_y, j}^\text{LW}, f_{e_x+\half, e_y, j}, f^{\ex + 1,e_y}_{\half, j}, f^\ee_{\Nmh, j}, u_{(0,j)}^{\ex + 1, \ey}, u_{(0,j)}^\ee, \alpha_\ee, \alpha_{e_x + 1, e_y}, k^{e_x,e_y}_x, k^{e_x+1,e_y}_x$ \\
\textbf{Output:} $F_{e_x+\half, e_y, j}$
\label{alg:blended.flux.2d}
\begin{algorithmic} 
\State $\overline{\alpha} = \frac{\alpha_{\ex, \ey} + \alpha_{\ex+1, \ey}}{2}$

\State $k_x^0, k_x^N = k_x^{\ex, \ey}, k_x^{\ex+1, \ey}$ \Comment{For ease of writing}

\State $F_{e_x + \half, e_y, j} \gets (1 - \overline{\alpha}) F_{e_x + \half, e_y, j}^\text{LW} + \overline{\alpha}f_{e_x+\half, e_y, j}$ \Comment{Heuristic guess to control oscillations}

\State $\tilde{\bw}_{0}^{n+1} \gets \correction{(\bw^{e_x+1, e_y}_{0,j})^n}
- \frac{\Delta t}{k_x^0 w_0 \Delta x_{e+1}} (f^{\ex+1,e_y}_{\half, j} - F_{e_x + \half, e_y, j})$ \Comment{FV inner updates with guessed $F_{e_x + \half, e_y, j}$}

\State $\tilde{\bw}_{N}^{n+1} \gets \correction{(\bw^{e_x,e_y}_{N,j})^n} - \frac{\Delta t}{k_x^N w_N \Delta x_{e}} (F_{e_x + \half, e_y, j} - f^\ee_{(\Nmh,j)})$

\State $\utilow_{0} = \correction{(\bw^{e_x+1, e_y}_{0,j})^n}
- \frac{\Delta t}{k_x^0 w_0 \Delta x_{e+1}} ( f^{\ex+1,\ey}_{\half,j} - f_{e_x+\half, e_y, j} )$ \Comment{FV inner updates with $f_{e_x+\half, e_y, j}$}

\State $\utilow_{N} = \correction{(\bw^{e_x,e_y}_{N,j})^n} - \frac{\Delta t}{k_x^N w_N \Delta x_{e}} (f_{e_x+\half, e_y, j} - f^\ee_{\Nmh,j})$

\For{$k$=1:$K$} \Comment{Correct $F_{\exph, e_y, j}$ for $K$ admissibility constraints}

\State $\theta \gets \min \left(\min_{l=0,N} \left| \frac{\epsilon_l - p_k(\tilde{\bw}_l^{n+1})}{p_k(\tilde{\bw}_l^{\text{low},n+1}) -  p_k(\tilde{\bw}_l^{n+1})} \right |, 1 \right)$

\State $F_{e_x+\half, e_y, j} \gets \theta F_{e_x+\half, e_y, j} + (1-\theta) f_{e_x+\half, e_y, j}$

\State $\tilde{\bw}_{0}^{n+1} \gets \correction{(\bw^{e_x+1, e_y}_{0,j})^n}
- \frac{\Delta t}{k_x^0 w_0 \Delta x_{e+1}} (f^{\ex+1,\ey}_{\half,j} - F_{e_x + \half, e_y, j})$ \Comment{FV inner updates with guessed $F_{e_x + \half, e_y, j}$}

\State $\tilde{\bw}_{N}^{n+1} \gets \correction{(\bw^{e_x,e_y}_{N,j})^n} - \frac{\Delta t}{k_x^N w_N \Delta x_{e}} (F_{e_x + \half, e_y, j} - f^\ee_{(\Nmh,j)})$
\EndFor
\end{algorithmic}
\end{algorithm}

\color{black}
\bibliographystyle{siam}
\bibliography{references}

\begin{thebibliography}{10}

\bibitem{naca1951}
{\sc {Ames Resarch Staff - National Advisory Committee for Aeronautics}}, {\em
  Report 1135 - equations, tables and charts for compressible flow}, 1951.

\bibitem{attig2011}
{\sc N.~Attig, P.~Gibbon, and T.~Lippert}, {\em Trends in supercomputing: The
  {European} path to exascale}, Computer Physics Communications, 182 (2011),
  pp.~2041--2046.

\bibitem{paperrepo}
{\sc A.~Babbar, P.~Chandrashekar, and S.~K. Kenettinkara}, {\em Admissibility
  preserving subcell based blending limiter with {T}enkai.jl}.
\newblock \url{https://github.com/Arpit-Babbar/jsc2023}, 2023.

\bibitem{tenkai}
\leavevmode\vrule height 2pt depth -1.6pt width 23pt, {\em {T}enkai.jl:
  {T}emporal discretizations of high-order {PDE} solvers}.
\newblock \url{https://github.com/Arpit-Babbar/Tenkai.jl}, 2023.

\bibitem{babbar2022}
{\sc A.~Babbar, S.~K. Kenettinkara, and P.~Chandrashekar}, {\em Lax-wendroff
  flux reconstruction method for hyperbolic conservation laws}, Journal of
  Computational Physics, 467 (2022), p.~111423.

\bibitem{Balsara2007}
{\sc D.~Balsara, C.~Altmann, C.~Munz, and M.~Dumbser}, {\em {A sub-cell based
  indicator for troubled zones in RKDG schemes and a novel class of hybrid
  RKDG+HWENO schemes}}, J. Comp. Phys., 226 (2007), pp.~586--620.

\bibitem{Berthon2006}
{\sc C.~Berthon}, {\em Why the {MUSCL}{\textendash}hancock scheme is
  l1-stable}, Numerische Mathematik, 104 (2006), pp.~27--46.

\bibitem{Bezanson2017}
{\sc J.~Bezanson, A.~Edelman, S.~Karpinski, and V.~B. Shah}, {\em Julia: {A}
  {Fresh} {Approach} to {Numerical} {Computing}}, SIAM Review, 59 (2017),
  pp.~65--98.
\newblock bibtex: Bezanson2017.

\bibitem{Biswas1994}
{\sc R.~Biswas, K.~D. Devine, and J.~E. Flaherty}, {\em Parallel, adaptive
  finite element methods for conservation laws}, Applied Numerical Mathematics,
  14 (1994), pp.~255--283.

\bibitem{Burbeau2001}
{\sc A.~Burbeau, P.~Sagaut, and C.-H. Bruneau}, {\em A problem-independent
  limiter for high-order runge–kutta discontinuous galerkin methods}, Journal
  of Computational Physics, 169 (2001), pp.~111--150.

\bibitem{Burger2017}
{\sc R.~Bürger, S.~K. Kenettinkara, and D.~Zorío}, {\em Approximate
  {Lax}–{Wendroff} discontinuous {Galerkin} methods for hyperbolic
  conservation laws}, Computers \& Mathematics with Applications, 74 (2017),
  pp.~1288--1310.

\bibitem{Canuto2007}
{\sc C.~Canuto, M.~Hussaini, A.~Quarteroni, and T.~Zang}, {\em Spectral
  Methods: Fundamentals in Single Domains}, Scientific Computation, Springer
  Berlin Heidelberg, 2007.

\bibitem{Carrillo2021}
{\sc H.~Carrillo, E.~Macca, C.~Parés, G.~Russo, and D.~Zorío}, {\em An
  order-adaptive compact approximation {Taylor} method for systems of
  conservation laws}, Journal of Computational Physics, 438 (2021), p.~110358.
\newblock bibtex: Carrillo2021.

\bibitem{Carrillo2021a}
{\sc H.~Carrillo, C.~Parés, and D.~Zorío}, {\em Lax-{Wendroff} {Approximate}
  {Taylor} {Methods} with {Fast} and {Optimized} {Weighted} {Essentially}
  {Non}-oscillatory {Reconstructions}}, Journal of Scientific Computing, 86
  (2021), p.~15.
\newblock bibtex: Carrillo2021a.

\bibitem{Peraire2013}
{\sc E.~Casoni, J.~Peraire, and A.~Huerta}, {\em {One-dimensional
  shock-capturing for high-order discontinuous Galerkin methods}}, Int. J.
  Numer. Meth. Fluids, 71 (2013), pp.~737--755.

\bibitem{Chen2016}
{\sc Z.~Chen, H.~Huang, and J.~Yan}, {\em Third order
  maximum-principle-satisfying direct discontinuous galerkin methods for time
  dependent convection diffusion equations on unstructured triangular meshes},
  Journal of Computational Physics, 308 (2016), pp.~198--217.

\bibitem{Cicchino2022}
{\sc A.~Cicchino, S.~Nadarajah, and D.~C. {Del Rey Fernández}}, {\em
  Nonlinearly stable flux reconstruction high-order methods in split form},
  Journal of Computational Physics, 458 (2022), p.~111094.

\bibitem{Clain2011}
{\sc S.~Clain, S.~Diot, and R.~Loub\`{e}re}, {\em {A high-order finite volume
  method for hyperbolic systems: Multi-dimensional Optimal Order Detection
  (MOOD)}}, J. Comp. Phys., 230 (2011), pp.~4028--4050.

\bibitem{cockburn2000}
{\sc B.~Cockburn, G.~E. Karniadakis, C.-W. Shu, M.~Griebel, D.~E. Keyes, R.~M.
  Nieminen, D.~Roose, and T.~Schlick}, eds., {\em Discontinuous {Galerkin}
  {Methods}: {Theory}, {Computation} and {Applications}}, vol.~11 of Lecture
  {Notes} in {Computational} {Science} and {Engineering}, Springer Berlin
  Heidelberg, Berlin, Heidelberg, 2000.
\newblock bibtex: Cockburn2000.

\bibitem{Cockburn1989}
{\sc B.~Cockburn, S.-Y. Lin, and C.-W. Shu}, {\em {TVB} {Runge}-{Kutta} local
  projection discontinuous {Galerkin} finite element method for conservation
  laws {III}: {One}-dimensional systems}, Journal of Computational Physics, 84
  (1989), pp.~90--113.

\bibitem{Cockburn1989a}
{\sc B.~Cockburn and C.-W. Shu}, {\em {TVB} {Runge}-{Kutta} local projection
  discontinuous {Galerkin} finite element method for conservation laws. {II}.
  {General} framework}, Mathematics of Computation, 52 (1989), pp.~411--411.

\bibitem{Cockburn1991}
\leavevmode\vrule height 2pt depth -1.6pt width 23pt, {\em The runge-kutta
  local projection $p^1$-discontinuous-galerkin finite element method for
  scalar conservation laws}, ESAIM: Mathematical Modelling and Numerical
  Analysis - Mod\'elisation Math\'ematique et Analyse Num\'erique, 25 (1991),
  pp.~337--361.

\bibitem{Cui2023}
{\sc S.~Cui, S.~Ding, and K.~Wu}, {\em Is the classic convex decomposition
  optimal for bound-preserving schemes in multiple dimensions?}, Journal of
  Computational Physics, 476 (2023), p.~111882.

\bibitem{Rosa2018}
{\sc J.~N. de~la Rosa and C.~D. Munz}, {\em {Hybrid DG/FV schemes for
  magnetohydrodynamics and relativistichydrodynamics}}, Comp. Phys. Commun.,
  222 (2018), pp.~113--135.

\bibitem{Diot2012}
{\sc S.~Diot, S.~Clain, and R.~Loub\`{e}re}, {\em {Improved detection criteria
  for the multi-dimensional optimal order detection (MOOD) on unstructured
  meshes with very high-order polynomials}}, Computers and Fluids, 64 (2012),
  pp.~43--63.

\bibitem{Diot2013}
{\sc S.~Diot, R.~Loub\`{e}re, and S.~Clain}, {\em {The MOOD method in the
  three-dimensional case: very-high-order finite volume method for hyperbolic
  systems}}, Int. J. Numer. Meth. Fluids, 73 (2013), pp.~362--392.

\bibitem{Dumbser2008}
{\sc M.~Dumbser, D.~S. Balsara, E.~F. Toro, and C.-D. Munz}, {\em A unified
  framework for the construction of one-step finite volume and discontinuous
  {Galerkin} schemes on unstructured meshes}, Journal of Computational Physics,
  227 (2008), pp.~8209--8253.

\bibitem{Dumbser2019}
{\sc M.~Dumbser and R.~Loub\`{e}re}, {\em {A simple robust and accurate a
  posteriorisub-cell finite volume limiter for the discontinuousGalerkin method
  on unstructured meshes}}, J. Comp. Phys., 319 (2016), pp.~163--199.

\bibitem{Dumbser2014}
{\sc M.~Dumbser, O.~Zanotti, R.~Loub\`{e}re, and S.~Diot}, {\em {A posteriori
  subcell limiting of the discontinuous Galerkin finite element method for
  hyperbolic conservation laws}}, J. Comp. Phys., 278 (2014), pp.~47--75.

\bibitem{dzanic2022}
{\sc T.~Dzanic and F.~Witherden}, {\em Positivity-preserving entropy-based
  adaptive filtering for discontinuous spectral element methods}, Journal of
  Computational Physics, 468 (2022), p.~111501.

\bibitem{emery1968}
{\sc A.~F. Emery}, {\em An evaluation of several differencing methods for
  inviscid fluid flow problems}, Journal of Computational Physics, 2 (1968),
  pp.~306--331.

\bibitem{Glimm1985}
{\sc J.~Glimm, C.~Klingenberg, O.~McBryan, B.~Plohr, D.~Sharp, and S.~Yaniv},
  {\em Front tracking and two-dimensional riemann problems}, Advances in
  Applied Mathematics, 6 (1985), pp.~259--290.

\bibitem{ha2005}
{\sc Y.~Ha, C.~Gardner, A.~Gelb, and C.~Shu}, {\em Numerical simulation of high
  mach number astrophysical jets with radiative cooling}, Journal of Scientific
  Computing, 24 (2005), pp.~597--612.

\bibitem{henemann2021}
{\sc S.~Hennemann, A.~M. Rueda-Ramírez, F.~J. Hindenlang, and G.~J. Gassner},
  {\em A provably entropy stable subcell shock capturing approach for high
  order split form dg for the compressible euler equations}, Journal of
  Computational Physics, 426 (2021), p.~109935.

\bibitem{Peraire2012}
{\sc A.~Huerta, E.~Casoni, and J.~Peraire}, {\em {A simple shock-capturing
  technique for high-order discontinuous Galerkin methods}}, Int. J. Numer.
  Meth. Fluids, 69 (2012), pp.~1614--1632.

\bibitem{Huynh2007}
{\sc H.~T. Huynh}, {\em A {Flux} {Reconstruction} {Approach} to {High}-{Order}
  {Schemes} {Including} {Discontinuous} {Galerkin} {Methods}}, Miami, FL, June
  2007, AIAA.

\bibitem{Jameson2012}
{\sc A.~Jameson, P.~E. Vincent, and P.~Castonguay}, {\em On the {Non}-linear
  {Stability} of {Flux} {Reconstruction} {Schemes}}, Journal of Scientific
  Computing, 50 (2012), pp.~434--445.
\newblock bibtex: Jameson2012.

\bibitem{klockner2011}
{\sc A.~Klöckner, T.~Warburton, and J.~Hesthaven}, {\em Viscous shock
  capturing in a time-explicit discontinuous galerkin method}, Mathematical
  Modelling of Natural Phenomena, 6 (2011).

\bibitem{Krivodonova2007}
{\sc L.~Krivodonova}, {\em Limiters for high-order discontinuous galerkin
  methods}, Journal of Computational Physics, 226 (2007), pp.~879--896.

\bibitem{Lax1998}
{\sc P.~D. Lax and X.-D. Liu}, {\em Solution of two-dimensional riemann
  problems of gas dynamics by positive schemes}, SIAM Journal on Scientific
  Computing, 19 (1998), pp.~319--340.

\bibitem{Lee2021}
{\sc Y.~Lee and D.~Lee}, {\em A single-step third-order temporal discretization
  with {Jacobian}-free and {Hessian}-free formulations for finite difference
  methods}, Journal of Computational Physics, 427 (2021), p.~110063.

\bibitem{LeVeque1996}
{\sc R.~J. LeVeque}, {\em High-{Resolution} {Conservative} {Algorithms} for
  {Advection} in {Incompressible} {Flow}}, SIAM Journal on Numerical Analysis,
  33 (1996), pp.~627--665.
\newblock bibtex: LeVeque1996.

\bibitem{Lu2021}
{\sc J.~Lu, Y.~Liu, and C.-W. Shu}, {\em An oscillation-free discontinuous
  galerkin method for scalar hyperbolic conservation laws}, SIAM Journal on
  Numerical Analysis, 59 (2021), pp.~1299--1324.

\bibitem{meena2017}
{\sc A.~K. Meena and H.~Kumar}, {\em {Robust MUSCL Schemes for Ten-Moment
  Gaussian Closure Equations with Source Terms}}, {International Journal on
  Finite Volumes},  (2017).

\bibitem{moe2017}
{\sc S.~A. Moe, J.~A. Rossmanith, and D.~C. Seal}, {\em Positivity-preserving
  discontinuous galerkin methods with lax–wendroff time discretizations},
  Journal of Scientific Computing, 71 (2017), pp.~44--70.

\bibitem{Pazner2021}
{\sc W.~Pazner}, {\em Sparse invariant domain preserving discontinuous galerkin
  methods with subcell convex limiting}, Computer Methods in Applied Mechanics
  and Engineering, 382 (2021), p.~113876.

\bibitem{Persson2006}
{\sc P.-O. Persson and J.~Peraire}, {\em Sub-{Cell} {Shock} {Capturing} for
  {Discontinuous} {Galerkin} {Methods}}, in 44th {AIAA} {Aerospace} {Sciences}
  {Meeting} and {Exhibit}, Aerospace {Sciences} {Meetings}, American Institute
  of Aeronautics and Astronautics, Jan. 2006.

\bibitem{Qiu2005b}
{\sc J.~Qiu, M.~Dumbser, and C.-W. Shu}, {\em The discontinuous {Galerkin}
  method with {Lax}–{Wendroff} type time discretizations}, Computer Methods
  in Applied Mechanics and Engineering, 194 (2005), pp.~4528--4543.

\bibitem{Qiu2003}
{\sc J.~Qiu and C.-W. Shu}, {\em Finite {Difference} {WENO} {Schemes} with
  {Lax}--{Wendroff}-{Type} {Time} {Discretizations}}, SIAM Journal on
  Scientific Computing, 24 (2003), pp.~2185--2198.
\newblock bibtex: Qiu2003.

\bibitem{Qiu2005}
{\sc J.~Qiu and C.-W. Shu}, {\em {Runge Kutta discontinuous Galerkin method
  using WENO limiters}}, SIAM J. Sci. Comput., 26 (2005), pp.~907--929.

\bibitem{Ranocha2022}
{\sc H.~Ranocha, M.~Schlottke-Lakemper, A.~R. Winters, E.~Faulhaber, J.~Chan,
  and G.~Gassner}, {\em Adaptive numerical simulations with {T}rixi.jl: {A}
  case study of {J}ulia for scientific computing}, Proceedings of the JuliaCon
  Conferences, 1 (2022), p.~77.

\bibitem{reed1973}
{\sc W.~H. Reed and T.~R. Hill}, {\em Triangular mesh methods for the neutron
  transport equation}, in National topical meeting on mathematical models and
  computational techniques for analysis of nuclear systems, Ann Arbor,
  Michigan, Oct. 1973, Los Alamos Scientific Lab., N.Mex. (USA).

\bibitem{ramirez2021}
{\sc A.~Rueda-Ram{\'\i}rez and G.~Gassner}, {\em A subcell finite volume
  positivity-preserving limiter for {DGSEM} discretizations of the euler
  equations}, in 14th {WCCM-ECCOMAS} Congress, CIMNE, 2021.

\bibitem{Ramirez2020}
{\sc A.~M. Rueda-Ram'irez, S.~Hennemann, F.~Hindenlang, A.~R. Winters, and
  G.~J. Gassner}, {\em An entropy stable nodal discontinuous galerkin method
  for the resistive mhd equations. part ii: Subcell finite volume shock
  capturing}, J. Comput. Phys., 444 (2020), p.~110580.

\bibitem{ramirez2022}
{\sc A.~M. Rueda-Ramírez, W.~Pazner, and G.~J. Gassner}, {\em Subcell limiting
  strategies for discontinuous galerkin spectral element methods}, Computers \&
  Fluids, 247 (2022), p.~105627.

\bibitem{Rusanov1962}
{\sc V.~Rusanov}, {\em The calculation of the interaction of non-stationary
  shock waves and obstacles}, USSR Computational Mathematics and Mathematical
  Physics, 1 (1962), pp.~304--320.

\bibitem{schlottkelakemper2021purely}
{\sc M.~Schlottke-Lakemper, A.~R. Winters, H.~Ranocha, and G.~J. Gassner}, {\em
  A purely hyperbolic discontinuous {G}alerkin approach for self-gravitating
  gas dynamics}, Journal of Computational Physics, 442 (2021), p.~110467.

\bibitem{sedov1959}
{\sc L.~SEDOV}, {\em Chapter iv - one-dimensional unsteady motion of a gas}, in
  Similarity and Dimensional Methods in Mechanics, L.~SEDOV, ed., Academic
  Press, 1959, pp.~146--304.

\bibitem{Shu1989}
{\sc C.-W. Shu and S.~Osher}, {\em Efficient implementation of essentially
  non-oscillatory shock-capturing schemes, {II}}, Journal of Computational
  Physics, 83 (1989), pp.~32--78.

\bibitem{Sonntag2014}
{\sc M.~Sonntag and C.~D. Munz}, {\em {Shock capturing for discontinuous
  Galerkin methods using finite volume subcells}}, in Finite Volumes for
  Complex Applications VII, Springer, 2014, pp.~945--953.

\bibitem{Spiegel2016}
{\sc S.~C. Spiegel, J.~R. DeBonis, and H.~Huynh}, {\em Overview of the {NASA}
  {Glenn} {Flux} {Reconstruction} {Based} {High}-{Order} {Unstructured} {Grid}
  {Code}}, in 54th {AIAA} {Aerospace} {Sciences} {Meeting}, San Diego,
  California, USA, Jan. 2016, American Institute of Aeronautics and
  Astronautics.
\newblock bibtex: Spiegel2016.

\bibitem{Volker2010}
{\sc V.~Springel}, {\em {E pur si muove: Galilean-invariant cosmological
  hydrodynamical simulations on a moving mesh}}, Monthly Notices of the Royal
  Astronomical Society, 401 (2010), pp.~791--851.

\bibitem{subcommittee2014}
{\sc A.~Subcommittee et~al.}, {\em Top ten exascale research challenges}, US
  Department Of Energy Report,  (2014).

\bibitem{Takayama1991}
{\sc K.~Takayama and O.~Inoue}, {\em Shock wave diffraction over a 90 degree
  sharp corner ? posters presented at 18th {ISSW}}, Shock Waves, 1 (1991),
  pp.~301--312.

\bibitem{Titarev2005}
{\sc V.~Titarev and E.~Toro}, {\em {ADER} schemes for three-dimensional
  non-linear hyperbolic systems}, Journal of Computational Physics, 204 (2005),
  pp.~715--736.
\newblock bibtex: Titarev2005.

\bibitem{Titarev2002}
{\sc V.~A. Titarev and E.~F. Toro}, {\em {ADER}: {Arbitrary} {High} {Order}
  {Godunov} {Approach}}, Journal of Scientific Computing, 17 (2002),
  pp.~609--618.
\newblock bibtex: Titarev2002.

\bibitem{Trojak2021}
{\sc W.~Trojak and F.~D. Witherden}, {\em A new family of weighted
  one-parameter flux reconstruction schemes}, Computers \& Fluids, 222 (2021),
  p.~104918.
\newblock bibtex: Trojak2021.

\bibitem{vanleer1977}
{\sc B.~{Van Leer}}, {\em Towards the ultimate conservative difference scheme.
  iv. a new approach to numerical convection}, Journal of Computational
  Physics, 23 (1977), pp.~276--299.

\bibitem{vanleer1984}
{\sc B.~van Leer}, {\em On the relation between the upwind-differencing schemes
  of godunov, engquist–osher and roe}, SIAM Journal on Scientific and
  Statistical Computing, 5 (1984), pp.~1--20.

\bibitem{Vilar2019}
{\sc F.~Vilar}, {\em A posteriori correction of high-order discontinuous
  {Galerkin} scheme through subcell finite volume formulation and flux
  reconstruction}, Journal of Computational Physics, 387 (2019), pp.~245--279.

\bibitem{Vincent2011a}
{\sc P.~E. Vincent, P.~Castonguay, and A.~Jameson}, {\em A {New} {Class} of
  {High}-{Order} {Energy} {Stable} {Flux} {Reconstruction} {Schemes}}, Journal
  of Scientific Computing, 47 (2011), pp.~50--72.

\bibitem{Vincent2015}
{\sc P.~E. Vincent, A.~M. Farrington, F.~D. Witherden, and A.~Jameson}, {\em An
  extended range of stable-symmetric-conservative {Flux} {Reconstruction}
  correction functions}, Computer Methods in Applied Mechanics and Engineering,
  296 (2015), pp.~248--272.
\newblock bibtex: Vincent2015.

\bibitem{Witherden2021}
{\sc F.~Witherden and P.~Vincent}, {\em On nodal point sets for flux
  reconstruction}, Journal of Computational and Applied Mathematics, 381
  (2021), p.~113014.
\newblock bibtex: Witherden2021.

\bibitem{Woodward1984}
{\sc P.~Woodward and P.~Colella}, {\em The numerical simulation of
  two-dimensional fluid flow with strong shocks}, Journal of Computational
  Physics, 54 (1984), pp.~115--173.

\bibitem{Xu2022}
{\sc Z.~Xu and C.-W. Shu}, {\em Third order maximum-principle-satisfying and
  positivity-preserving lax-wendroff discontinuous galerkin methods for
  hyperbolic conservation laws}, Journal of Computational Physics, 470 (2022),
  p.~111591.

\bibitem{Yee1999}
{\sc H.~Yee, N.~Sandham, and M.~Djomehri}, {\em Low-{Dissipative}
  {High}-{Order} {Shock}-{Capturing} {Methods} {Using} {Characteristic}-{Based}
  {Filters}}, Journal of Computational Physics, 150 (1999), pp.~199--238.

\bibitem{Zhang1990}
{\sc T.~Zhang and Y.~Zheng}, {\em Conjecture on the structure of solutions of
  the riemann problem for two-dimensional gas dynamics systems}, Siam Journal
  on Mathematical Analysis, 21 (1990), pp.~593--630.

\bibitem{Zhang2010b}
{\sc X.~Zhang and C.-W. Shu}, {\em On maximum-principle-satisfying high order
  schemes for scalar conservation laws}, Journal of Computational Physics, 229
  (2010), pp.~3091--3120.

\bibitem{zhang2010c}
{\sc X.~Zhang and C.-W. Shu}, {\em On positivity-preserving high order
  discontinuous galerkin schemes for compressible euler equations on
  rectangular meshes}, Journal of Computational Physics, 229 (2010),
  pp.~8918--8934.

\bibitem{Zhang2012}
\leavevmode\vrule height 2pt depth -1.6pt width 23pt, {\em
  Positivity-preserving high order finite difference weno schemes for
  compressible euler equations}, Journal of Computational Physics, 231 (2012),
  pp.~2245--2258.

\bibitem{Zorio2017}
{\sc D.~Zorío, A.~Baeza, and P.~Mulet}, {\em An {Approximate}
  {Lax}–{Wendroff}-{Type} {Procedure} for {High} {Order} {Accurate} {Schemes}
  for {Hyperbolic} {Conservation} {Laws}}, Journal of Scientific Computing, 71
  (2017), pp.~246--273.

\end{thebibliography}
\end{document}